\newtheorem{lemma}{Lemma}[section]
\newtheorem{theorem}{Theorem}[section]
\newtheorem{proposition}{Proposition}[section]
\newtheorem{remark}{Remark}[section]
\newtheorem{corollary}{Corollary}[section]
\numberwithin{equation}{section}
\newcommand{\p}{\partial}
\newcommand{\eps}{\varepsilon}
\begin{document}
\title[Boundary layer theory for steady MHD]{Global-in-$x$ stability of Prandtl layer expansions for steady magnetohydrodynamics flows over a moving plate}
\author[Shijin Ding]{Shijin Ding}
\address[S. Ding]{School of Mathematical Sciences, South China Normal University, Guangzhou, 510631, China}
\email{dingsj@scnu.edu.cn}
\author[Zhijun Ji]{Zhijun Ji}
\address[Z. Ji]{School of Mathematical Sciences, South China Normal University, Guangzhou, 510631, China}
\email{zhijunji@m.scnu.edu.cn}
\author[Zhilin Lin]{Zhilin Lin}
\address[Z. Lin]{School of Mathematical Science, Peking University, Beijing, 100871, China}
\email{zllin@m.scnu.edu.cn}

\date{\today}

\begin{abstract}
In this paper, we obtain the global-in-$x$ Sobolev stability of Prandtl layer expansions for 2-D steady incompressible MHD flows with shear outer ideal MHD flows $(1,0,\sigma,0)$ ($\sigma\geq 0$) on a moving plate. It is worth noticing that in the Sobolev sense, the degeneracy of the tangential magnetic field in our result is allowed, which is much different from both the unsteady case in \cite{LXYwell,LXYjus,LXY_ill} and the steady case in \cite{DLJ,DLXmhd}.
\end{abstract}
\maketitle
\tableofcontents

\vspace{-5mm}

\section{Introduction}
\subsection{Formulation of the problem}
This paper aims to verify the validity of Prandtl layer expansions for the following two dimensional steady incompressible magnetohydrodynamics(MHD) system in a domain $\Omega=\{(X,Y)\in [1,\infty)\times \mathbb{R}_+\}$
\begin{equation}\label{MHDflows_system}
\begin{cases}
  \mathbf{U}\cdot\nabla_{X,Y}\mathbf{U}-\mathbf{B}\cdot\nabla_{X,Y}\mathbf{B}+\nabla_{X,Y} P=\nu_1\eps\Delta_{X,Y}\mathbf{U}, \\
  \mathbf{U}\cdot\nabla_{X,Y}\mathbf{B}-\mathbf{B}\cdot\nabla_{X,Y}\mathbf{U}=\nu_2\eps\Delta_{X,Y}\mathbf{B}, \\
  \nabla_{X,Y}\cdot\mathbf{U}=0,\quad\nabla_{X,Y}\cdot\mathbf{B}=0,
\end{cases}
\end{equation}
in which $\mathbf{U}=(U,V)$ is the velocity field and $\mathbf{B}=(H,G)$ represents the magnetic field. For simplicity, the viscosity and the resistivity coefficients are taken to be of the same order of $\varepsilon$ with $\nu_1=\nu_2=1$.
We consider Dirichlet boundary condition on $\{Y=0\}$ (with moving speed $u_b>0$) for the velocity field
\begin{equation}\label{MHDflowsbc1}
  (U,V)|_{Y=0}=(u_b,0)=(1-\delta,0),~{\rm for~} 0<\delta\ll 1,
\end{equation}
and insulating boundary condition for the magnetic field
\begin{equation}\label{MHDflowsbc2}
  (H,G)|_{Y=0}=(0,0).
\end{equation}
 More about the insulating boundary condition can be found in \cite{BLMHD1,Gilbert1} and references therein.

Formally, letting $\varepsilon\rightarrow 0$, then \eqref{MHDflows_system} is reduced to the following ideal MHD equations
\begin{equation}\label{idealMHD_system}
\begin{cases}
  \mathbf{U}^0\cdot\nabla_{X,Y}\mathbf{U}^0-\mathbf{B}^0\cdot\nabla_{X,Y}\mathbf{B}^0+\nabla_{X,Y} P^0=0, \\
  \mathbf{U}^0\cdot\nabla_{X,Y}\mathbf{B}^0-\mathbf{B}^0\cdot\nabla_{X,Y}\mathbf{U}^0=0, \\
  \nabla_{X,Y}\cdot\mathbf{U}^0=0,\quad\nabla_{X,Y}\cdot\mathbf{B}^0=0.
\end{cases}
\end{equation}
To solve the ideal MHD system \eqref{idealMHD_system}, it suffices to impose the non-penetration conditions for both velocity and magnetic field on the boundary $\{Y=0\}$
 \begin{equation}\label{idealMHDbc}
  (\mathbf{U}^0\cdot\mathbf{n})|_{Y=0}= ( \mathbf{B}^0\cdot\mathbf{n})|_{Y=0}=0.
\end{equation}

In this paper, we focus on the following shear outer ideal MHD flows
\begin{equation}\label{idealMHD}
  (\mathbf{U}^0,\mathbf{B}^0,P^0)=(1,0,\sigma,0,0),
\end{equation}
where the constant $\sigma$ is chosen so that $0\leq\sigma\ll1$.

We will rewrite the MHD flows in the scaled boundary layer variable $(x,y)$
\begin{equation*}
  x=X,\quad y=\frac{Y}{\sqrt\eps}.
\end{equation*}
with the scaled unknowns as follows
\begin{equation}\label{scale}
  (U^{\eps},V^{\eps},H^{\eps},G^{\eps},P^{\eps})(x,y)= \left(U,\frac{V}{\sqrt\eps}, H,\frac{G}{\sqrt\eps},P\right)(X,Y),
\end{equation}
which preserve the divergence-free conditions. Then problem \eqref{MHDflows_system}-\eqref{MHDflowsbc2} is rewritten as
\begin{equation}\label{scalesystem}
\left\{
\begin{aligned}
  &(U^{\eps}\p_x+V^{\eps}\p_y)U^{\eps}-(H^{\eps}\p_x+G^{\eps}\p_y)H^{\eps}+\p_x P^{\eps}
  =\eps\p_{xx}U^{\eps}+\p_{yy}U^{\eps}, \\
  &(U^{\eps}\p_x+V^{\eps}\p_y)V^{\eps}-(H^{\eps}\p_x+G^{\eps}\p_y)G^{\eps}+\frac{\p_y P^{\eps}}{\eps}
  =\eps\p_{xx}V^{\eps}+\p_{yy}V^{\eps}, \\
  &(U^{\eps}\p_x+V^{\eps}\p_y)H^{\eps}-(H^{\eps}\p_x+G^{\eps}\p_y)U^{\eps}
  =\eps\p_{xx}H^{\eps}+\p_{yy}H^{\eps},\\
  &(U^{\eps}\p_x+V^{\eps}\p_y)G^{\eps}-(H^{\eps}\p_x+G^{\eps}\p_y)V^{\eps}
  =\eps\p_{xx}G^{\eps}+\p_{yy}G^{\eps},\\
  &\p_x U^\eps+\p_y V^\eps=0,\quad \p_x H^\eps+\p_y G^\eps=0,\\
  &(U^\eps,V^\eps)|_{y=0}=(u_b,0)=(1-\delta,0),\\
  &(H^\eps,G^\eps)|_{y=0}=(0,0).
\end{aligned}
\right.
\end{equation}

To formulate our problem, let us introduce the following asymptotic expansions
\begin{equation}\label{expansion}
  (U^{\eps},V^{\eps},H^{\eps},G^{\eps},P^{\eps})
  =(u_{app},v_{app},h_{app},g_{app},p_{app})
   +\eps^{\frac{n}{2}+\gamma}(u,v,h,g,p)
\end{equation}
for some constants $\gamma>0$. In the above expressions, $(u,v,h,g,p)$ are the error profiles of the exact solutions to problem (\ref{scalesystem}), and $(u_{app},v_{app},h_{app},g_{app},p_{app})$ are the approximate solutions defined by
\begin{equation}\label{app}
\left\{
\begin{aligned}
  &u_{app}=1+u_p^0(x,y)+\sum_{i=1}^{n}\eps^{\frac{i}{2}}[u_e^i(x,\sqrt\eps y)+u_p^i(x,y)],\\
  &v_{app}=v_p^0(x,y)+\sum_{i=1}^{n}\eps^{\frac{i-1}{2}}[v_e^i(x,\sqrt\eps y)+\sqrt{\eps}v_p^i(x,y)],\\
  &h_{app}=\sigma+h_p^0(x,y)+\sum_{i=1}^{n}\eps^{\frac{i}{2}}[h_e^i(x,\sqrt\eps y)+h_p^i(x,y)],\\
  &g_{app}=g_p^0(x,y)+\sum_{i=1}^{n}\eps^{\frac{i-1}{2}}[g_e^i(x,\sqrt\eps y)+\sqrt{\eps}g_p^i(x,y)],\\
  &p_{app}=p_p^0(x,y)+\sum_{i=1}^{n}\eps^{\frac{i}{2}}[p_e^i(x,\sqrt\eps y)+p_p^i(x,y)]\\
  &\qquad\quad +\sum_{i=1}^{n}\eps^{\frac{i}{2}}[\eps^{\frac{i}{2}}p_{e}^{i,a}(x,\sqrt\eps y)+\sqrt{\eps}p_{p}^{i,a}(x,y)],
\end{aligned}
\right.
\end{equation}
in which the leading order ideal MHD flows have been taken as the shear flows $(u_e^0,v_e^0,h_e^0,g_e^0)=(1,0,\sigma,0)$.

To analyze the problem, it is important to figure out the boundary conditions for each profile to match the exact boundary conditions \eqref{MHDflowsbc1} and \eqref{MHDflowsbc2}.

(i) For the leading order boundary layers $(u_p^0,v_p^0,h_p^0,g_p^0)$, the boundary conditions are prescribed as
\begin{align}\label{bc_0_prandtl}
  &(u_p^0,h_p^0)(x,0)=(-\delta,-\sigma),\quad (u_p^0,h_p^0)(x,\infty)=(0,0),\\
  &(u_p^0,h_p^0)(1,y)=(u_0^0,h_0^0)(y).
\end{align}
We suppose that the initial data $(u_0^0,h_0^0)(y)$ decay rapidly
\begin{equation}\label{u_p^0_initialdecay}
  \Vert \langle y\rangle ^m\p_y^j(u_0^0,h_0^0)(y)\Vert_{L^\infty}\leq C(m,j),~{\rm for~ any~ } m,j\in\mathbb{N},
\end{equation}
and enjoy the following small conditions
\begin{equation}\label{u_p^0_initialsmall}
  \Vert \langle y\rangle ^m\p_y^j(u_0^0,h_0^0)(y)\Vert_{L^\infty}\leq\mathcal{O}(\delta,\sigma;m,j),~{\rm for~ any~ } m\in\mathbb{N},~{\rm when~ }j=0,1,2,3.
\end{equation}
In addition, we assume that
\begin{equation}\label{u_p^0_initial_uh}
  1+u_0^0(y)\gg \sigma+h_0^0(y)\geq 0
\end{equation}
and
\begin{equation}\label{u_p^0_initial_u}
  1+u_0^0(y)\geq\vartheta_0 ~{\rm~uniform~in~} y ~{\rm~for~some~}\vartheta_0>0.
\end{equation}

(ii) Since the ideal MHD flows $(u_e^i,v_e^i,h_e^i,g_e^i) (1\leq i\leq n)$ satisfy elliptic system, we consider the following boundary conditions
\begin{align}\label{bc_i_idealmhd}
  &(v_e^i,g_e^i)(x,0)=-(v_p^{i-1},g_p^{i-1})(x,0),\\
  &(v_e^i,g_e^i)\rightarrow (0,0){\rm ~as~} Y\rightarrow\infty.
\end{align}

Note that we will derive the explicit expression for the ideal MHD profiles via Poisson extension to the boundary conditions on $\{Y=0\}$ without extra in-flow information on $\{x=1\}$.

(iii) For the boundary layer correctors $(u_p^i,v_p^i,h_p^i,g_p^i),~ 1\leq i\leq n-1$, which solve the parabolic system, the boundary conditions are taken as
\begin{align}\label{bc_i_prandtl}
  &(u_p^i,h_p^i)(x,0)=(-\overline u_e^i(x),-\overline h_e^i(x)),\\
  &(u_p^i,v_p^i,h_p^i,g_p^i)(x,\infty)=(0,0,0,0).
\end{align}
Meanwhile, the in-flow conditions are also needed
\begin{equation}\label{bc_i_prandtl_inflow}
  (u_p^i,h_p^i)(1,y)=(u_0^i,h_0^i)(y), \ 1\leq i\leq n-1.
\end{equation}

(iv) The final boundary layer profiles are considered to satisfy the following boundary conditions
\begin{align}\label{bc_n_prandtl}
  &(u_p^n,v_p^n,h_p^n,g_p^n)(x,0)=(-\overline u_e^n(x),0,-\overline h_e^n(x),0),\\
  &(u_p^n,h_p^n)(x,\infty)=(0,0), \ (u_p^n,h_p^n)(1,y)=(u_0^n,h_0^n)(y).
\end{align}
Note that the final boundary layer correctors $(\tilde v_p^n,\tilde g_p^n)$ are constructed by cutting off $v_p^n,g_p^n$(see \eqref{u_p^n_def} and \eqref{h_p^n_def} for details), and the in-flow data for $(\tilde u_p^n,\tilde h_p^n)$ will be given through analysis, which are bounded profiles with decay as $y\rightarrow \infty$.

(v) For the remainder profiles $(u,v,h,g)$, we impose the matching conditions
\begin{align}\label{bc_remainder}
  &(u,v,h,g)|_{y=0}=(u,v,h,g)|_{y\rightarrow\infty}=(0,0,0,0),\\
  &(u,v,h,g)|_{x=1}=(u,v,h,g)|_{x\rightarrow\infty}=(0,0,0,0).
\end{align}

Since we will construct the approximate solutions with large $n$, we need to impose the following compatibility conditions for the in-flow data
\begin{align}\label{cp_1}
(u_0^i,h_0^i)(0)=-(u_e^i,h_e^i)(1,0),\ \p_y^2(u_0^0,h_0^0)(0)=(0,0),.
\end{align}
The in-flow data also satisfy the rapid decay
\begin{align}\label{cp_2}
  \Vert \langle y\rangle^m (u_0^i,h_0^i)(y)\Vert_{L^\infty}\leq C(i,m), ~ 1\leq i\leq n.
\end{align}

Throughout this paper, we always assume that conditions \eqref{bc_0_prandtl}-\eqref{cp_2} hold and we may also need some higher-order compatibility conditions similar to \eqref{cp_1} for $\partial_y^j(u^j_0,h^j_0)(j\geq 2)$. However, we do not specify them everywhere for simplicity.

\subsection{Literature reviews}\label{sec1.3}
The singularity behavior of the inviscid limit for viscous fluids is a longstanding problem in fluid mechanics. Specifically, in our consideration, there is a contradiction to the convergence in the vanishing viscosity and resistivity limit process, since the tangential velocity and magnetic field of the viscous and diffusive MHD flows mismatch that of the ideal MHD flows on the boundary $\{y=0\}$. To correct this mismatch, the boundary layer corrector functions should be introduced in a thin layer with width $O(\sqrt\eps)$ near the boundary. This idea was initially proposed by Prandtl \cite{Prandtl} in 1904 for investigating the inviscid limit of the Navier-Stokes equations.

From the physical point of view, the inviscid limit problem is very important and has been extensively studied in the fluid mechanics literatures. To understand the problem, two topics are needed to be considered: the well-posedness theory of each profile (including the ideal flows and the boundary layer correctors) and the convergence from the viscous solutions to the approximate solutions.

For the Navier-Stokes equations, the known results of the inviscid limit problem are concluded in some special framework of analytic functions \cite{Maekawa,Sammartino1,Sammartino2,Zhang} and Gevrey class \cite{Gerard1,LWX,LY}. However, the validity for the Prandtl layer theory in finite regularity function spaces remains open for the unsteady cases. Roughly speaking, due to strong boundary layer effect (resulted from the no-slip boundary condition), it is very difficult to control the vorticitiy near the boundaries in Sobolev sense, see also the review papers \cite{review2,review1}. However, when the boundary layer is absent or very weak, there are extensive literatures on the vanishing viscosity limit, see among many other references \cite{weakbdary1,Masmoudi,xiaoxin}.

Compared with the Navier-Stokes flows, due to the presence of magnetic field, the problem in magnetohydrodynamics may be much more challenging. There are few literatures on the inviscid limit problem for the MHD equations. For the well-posednees theory of the MHD boundary layer equations, see for instance \cite{analyticMHD1,analyticMHD2} in the framework of analytic functions, and \cite{GevreyMHD1} in the Gevrey space for 2-D and 3-D problems. In Sobolev space, Liu, Xie and Yang \cite{LXYwell} established local well-posedness theory, under the assumption of nondegenerate tangential magnetic field. Gerard-Varet and Prestipino \cite{BLMHD1} also investigated the stabilizing effect of nondegenerate magnetic field. Recently, Chen, Ren, Wang and Zhang \cite{BLMHD2} proved long time existence for two dimensional MHD boundary layer equations.

Due to strong coupling effect between the velocity filed and magnetic field, it seems very difficult to verify the validity of the boundary layer expansion for MHD flows in Sobolev space. Wang and Xin \cite{WangXin} established uniform stability of the boundary layers for a class of initial data considering Dirichlet boundary conditions for both velocity and magnetic fields. Zero viscosity and diffusion vanishing limit of the three dimensional MHD system with perfectly conducting wall was investigated by Wu and Wang \cite{WangWu1}, they established the convergence in $L^2$-norm. Under the assumption of nondegenerate tangential magnetic field, Liu, Xie and Yang \cite{LXYjus} justified the Prandtl ansatz for MHD boundary layer and gave the $L^\infty$ convergence estimates in Sobolev sense. Wang and Wang \cite{WangWang1} studied the boundary layer problem for viscous MHD system with non-characteristic boundary conditions.

The first result about the validity of the boundary layer theory in Sobolev space for steady Navier-Stokes equations was obtained by Guo and Nguyen \cite{GN17}, in which the local-in-$x$ validity for the Prandtl layer theory was established for the shear outer flows $(u_e^0(Y),0)$ in the domain $[0,L]\times\mathbb{R}_+$. Similar results were extended to the rotating disk \cite{Iyerrotating}, the bounded domain \cite{Iyerbounded,LD} and the case with nonshear outer flows \cite{Iyernonshear}. The global in $x$ expansion over a moving plate was verified by Iyer in a series of works \cite{Iyerglobal1,Iyerglobal2,Iyerglobal3}. The moving boundary assumption was removed by Guo and Iyer \cite{GuoIyer}, see also Gao and Zhang \cite{GaoZhang}. Recently, Iyer and Masmoudi \cite{IyerMasmoudi} verified global-in-$x$ Prandtl boundary layer theory for a large class of boundary layers.

On the other hand, for steady MHD flows, the MHD boundary layer equations without magnetic diffusion was studied by Wang and Ma \cite{WangMa}, in which well-posedness and non-existence theory of the boundary layer system were obtained for different ratios of the magnetic field and the velocity field. Ding, Lin and Xie \cite{DLXmhd} verified the Prandtl boundary layer ansatz of the steady MHD flows with a moving boundary on the domain $[0,L]\times\mathbb{R}_+$ for the case with shear outer ideal MHD flows $(u_e^0(Y),0,h_e^0(Y),0)$. And we generalized the result to nonshear outer flows $(u_e^0,v_e^0,h_e^0,g_e^0)(X,Y)$ in \cite{DLJ}. Liu, Yang and Zhang \cite{LYZ} proved stability of shear flows of Prandtl type and manage the degeneracy on the boundary caused by no-slip boundary condition, which relies the spectral analysis of the linearized operator around the Prandtl-type shear flow.

It is interesting to note that the results of \cite{DLJ,DLXmhd} not only rely heavily on the nondegeneracy of the tangential magnetic field as in the unsteady cases, but also on the smallness of $L$, but the main purpose of this paper is to remove these two restrictions for a simple outer flow $(1, 0, \sigma, 0)$.

More clearly, we summarize the results for the boundary layer theory of MHD related to this paper by the following Table \ref{tab1}.
\begin{table}[!h]
\caption{Some results related to this paper}
\begin{tabular}{|c|c|cc|c|}
\hline
\multirow{2}{*}{\textbf{Problem}}                                                 & \multirow{2}{*}{\textbf{\begin{tabular}[c]{@{}c@{}}Tangential \\ magnetic field\end{tabular}}} & \multicolumn{2}{c|}{\textbf{\begin{tabular}[c]{@{}c@{}}Boundary \\ conditions\end{tabular}}}                                                      & \multirow{2}{*}{\textbf{Result}}                                                \\ \cline{3-4}
                                                                                   &                                                                                                 & \multicolumn{1}{c|}{\textbf{\begin{tabular}[c]{@{}c@{}} Velocity \\ field\end{tabular}} }& \textbf{\begin{tabular}[c]{@{}c@{}}Magnetic \\ field\end{tabular}}      &                                                                                  \\ \hline
\multirow{2}{*}{\begin{tabular}[c]{@{}c@{}}2-D unsteady \\ MHD flows\end{tabular}} & \begin{tabular}[c]{@{}c@{}}Non-degeneracy\end{tabular}                                        & \multicolumn{1}{c|}{No-slip}                                                    & \begin{tabular}[c]{@{}c@{}}Perfectly\\  conducting\end{tabular} & \begin{tabular}[c]{@{}c@{}}Stable \\ \cite{LXYwell,LXYjus}\end{tabular}                \\ \cline{2-5}
                                                                                   & Degeneracy                                                                                      & \multicolumn{1}{c|}{No-slip}                                                    & \begin{tabular}[c]{@{}c@{}}Insulating\\ (no-slip)\end{tabular}  & Ill-posed \cite{LXY_ill}                                                                      \\ \hline
\multirow{2}{*}{\begin{tabular}[c]{@{}c@{}}2-D steady \\ MHD flows\end{tabular}}   & \begin{tabular}[c]{@{}c@{}}Non-degeneracy\end{tabular}                                        & \multicolumn{1}{c|}{\begin{tabular}[c]{@{}c@{}}Moving\\ plate\end{tabular}}     & \begin{tabular}[c]{@{}c@{}}Perfectly\\  conducting\end{tabular} & \begin{tabular}[c]{@{}c@{}}Local-in-$x$\\ Stable\\ \cite{DLJ,DLXmhd}\end{tabular}                \\ \cline{2-5}
                                                                                   & \begin{tabular}[c]{@{}c@{}}Degeneracy \\ \textbf{allowed }\end{tabular}                                   & \multicolumn{1}{c|}{\begin{tabular}[c]{@{}c@{}}Moving\\ plate\end{tabular}}     & \begin{tabular}[c]{@{}c@{}}Insulating\\ (no-slip)\end{tabular}  & \begin{tabular}[c]{@{}c@{}}Global-in-$x$\\ Stable  \\ (this paper)\end{tabular} \\ \hline
\end{tabular}
\label{tab1}
\end{table}

\subsection{Main result and comments}\label{sec1.2}
Our main result in this paper is stated as follows.
\begin{theorem}\label{maintheorem}
Consider the outer ideal MHD flows $(u_e^0,v_e^0,h_e^0,g_e^0)=(1,0,\sigma,0)$ with $0\leq \sigma\ll 1 $, and assume that each profile in the expansion \eqref{expansion} satisfies the conditions stated as in \eqref{bc_0_prandtl}-\eqref{cp_2}. For sufficiently small mismatch (measured by constant $\delta$) between the viscous MHD solutions and the ideal MHD flows on boundary $\{Y=0\}$ and the small viscosity coefficient $\eps\ll\eps_*$, then there exists $n\in \mathbb{N}$ such that \eqref{expansion} is valid globally on $\Omega=[1,\infty)\times\mathbb{R}_+$. The remainder solution $(u,v,h,g)$ in \eqref{expansion} is unique on the domain $\Omega=[1,\infty)\times\mathbb{R}_+$ with
\begin{equation}\label{maintheorem_estimate}
  \eps^{\frac{n}{2}+\gamma}\|(u,h)x^\frac{1}{4}+\sqrt\eps(v,g)x^{\frac{1}{2}}\|_{L^\infty}\lesssim \eps^{\frac{1}{4}-\gamma-\kappa},
\end{equation}
in which $\eps_*=\delta$ if $\sigma=0$ and $\eps_*=
\min\{\delta,\sigma\}$ for $\sigma>0$, and $\gamma, \kappa$ are arbitrary constants with $0\leq\gamma<\frac{1}{4},0<\kappa\ll 1$ and $\gamma+\kappa<\frac{1}{4}$.
\end{theorem}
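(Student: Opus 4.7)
My approach follows the standard three-stage framework for Prandtl validation: first construct the profiles in \eqref{app}, then derive the remainder equation from \eqref{scalesystem}, and finally close a weighted nonlinear estimate uniformly in $x\in[1,\infty)$. The main novelty is that the last stage must work without a positive lower bound on $h_{app}$, so the Alfv\'en-type cancellation mechanism used in the previous steady works \cite{DLJ,DLXmhd} is unavailable, and it must also propagate globally in $x$ rather than on a short interval $[0,L]$.

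For the profile construction I would proceed order by order. The leading Prandtl layer $(u_p^0,v_p^0,h_p^0,g_p^0)$ solves a parabolic system in which $x$ plays the role of time; exploiting the smallness \eqref{u_p^0_initialsmall}, dominance \eqref{u_p^0_initial_uh}, and positivity \eqref{u_p^0_initial_u} conditions together with a maximum-principle / weighted energy argument should yield rapid $y$-decay and polynomial $x$-decay at rates compatible with the weights $x^{1/4}$, $x^{1/2}$ appearing in \eqref{maintheorem_estimate}. The Euler correctors $(u_e^i,v_e^i,h_e^i,g_e^i)$ are obtained via Poisson extension of the boundary traces $-(v_p^{i-1},g_p^{i-1})(x,0)$ with no extra in-flow data required. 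The higher-order Prandtl correctors then solve linear parabolic systems forced by the lower-order profiles, and the top-order layer is cut off as indicated in \eqref{u_p^n_def}--\eqref{h_p^n_def} so that the remainder has homogeneous boundary data and the source $R^{app}$ is of size $\eps^{N}$ with $N$ tunable through $n$.

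Substituting \eqref{expansion} into \eqref{scalesystem} and cancelling the profile equations produces a linearized elliptic--parabolic system for $(u,v,h,g,p)$ with forcing $R^{app}$ and a quadratic self-interaction of order $\eps^{n/2+\gamma}$. For the linear estimate I would introduce stream and flux functions $u=\p_y\psi$, $v=-\p_x\psi$, $h=\p_y\phi$, $g=-\p_x\phi$, and test against weighted multipliers of the form $\psi\, x^{\alpha}$ and $\Delta\psi/(1+u_{app})$, together with their magnetic analogues $\phi\, x^{\alpha}$ and $\Delta\phi$. The weight exponent $\alpha$ must be chosen so that the sign of the convection term $(1+u_p^0)\p_x\psi\cdot\p_y\psi$ is preserved after integration by parts and no uncontrolled boundary contribution appears as $x\to\infty$. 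The main obstacle is precisely at this point: in \cite{DLJ,DLXmhd} a nondegenerate $h_{app}$ generated an independent positive quadratic form $|h_{app}|\,|\p_y\phi|^2$ that absorbed the magnetic coupling, but here this quantity can vanish identically when $\sigma=0$ and $h_p^0\to 0$. My plan to overcome this is to exploit the joint smallness of $\sigma$ and $\delta$ so that every coupling term between $\psi$ and $\phi$ carries an explicit factor of $\sigma+\|h_p^0\|_{L^\infty}$, allowing it to be absorbed into the velocity dissipation $\|\p_y^2\psi\|^2$, whose positivity is now supplied by the lower bound $1+u_p^0\ge \vartheta_0$ from \eqref{u_p^0_initial_u} rather than by $h_{app}$.

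The weighted $L^2$ bounds obtained in this way will be upgraded to the $L^\infty$ estimate \eqref{maintheorem_estimate} by controlling higher-order vorticities $\Delta\psi$, $\Delta\phi$ via the same multiplier scheme applied to the once-differentiated equations, followed by a weighted Sobolev embedding that produces the $x^{1/4}$, $x^{1/2}$ powers. The nonlinear self-interaction is then handled by noting that $\eps^{n/2+\gamma}(u,v,h,g)\cdot\nabla(u,v,h,g)$ is smaller than the linear source $R^{app}$ once $n$ is taken large enough and $(u,v,h,g)$ is constrained to the ball of radius $\eps^{1/4-\gamma-\kappa}$ dictated by \eqref{maintheorem_estimate}. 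A standard contraction-mapping argument in that ball then yields existence, uniqueness, and the stated bound.
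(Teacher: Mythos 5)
Your proposal reproduces the overall three-stage skeleton (profile construction $\to$ remainder equation $\to$ weighted energy / positivity estimates $\to$ fixed point), and the observation that the velocity dissipation (via $1+u_p^0\ge\vartheta_0$) must do the absorbing work that the magnetic dissipation did in \cite{DLJ,DLXmhd} is the right high-level reading of why degeneracy can be allowed here. However, there are two places where the plan as written does not credibly deliver what is needed, and both are exactly where the paper's technical content is concentrated.

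\textbf{The leading-order layer.} You propose to get rapid $y$-decay and polynomial $x$-decay for $(u_p^0,h_p^0)$ from ``a maximum-principle / weighted energy argument.'' A weighted energy estimate on the boundary-layer system in $(x,y)$ can give uniform control for $x$ in a bounded interval, but it does not by itself produce the precise global rates $\|z^m\p_x^k\p_y^j(u_p^0,h_p^0)\|_{L^p_y}\lesssim x^{-k-j/2+1/(2p)}$ of Theorem~\ref{u_p^0_theorem}, because there is no reference profile to which the solution relaxes as $x\to\infty$. The paper obtains these rates by first passing to von~Mises coordinates $\eta=\int_0^y(1+u_p^0)$ (making the system quasilinear parabolic in $(x,\eta)$, and crucially using only positivity of $1+u_p^0$, never of $\sigma+h_p^0$), and then \emph{decomposing} $u_p^0+\delta=\phi_*(\eta/\sqrt x)+w$, $h_p^0+\sigma=\psi_*(\eta/\sqrt x)+\Omega$, with $(\phi_*,\psi_*)$ a self-similar approximate solution built by Lax--Milgram plus a contraction in the weighted space $H^2_w$ (Lemma~\ref{u_p^0_lemma_phipsi}), and $(w,\Omega)$ an error driven to zero in $L_x^2L_\eta^2$-type norms $\mathcal{Q}(\sigma_0)$ with $\sigma_0<1/4$ (Proposition~\ref{u_p^0_lemma_error}). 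The self-similar ansatz is what supplies the decay scale $z=\eta/\sqrt x$ — without it the weighted energy has nothing to anchor the powers of $x$ to. This step also explains why the MHD case is genuinely different from Navier--Stokes: there is no off-the-shelf Blasius analogue, so one must construct an approximate self-similar pair $(\phi_*,\psi_*)$ solving the coupled ODE system~\eqref{u_p^0_system_selfsimilar}. Your sketch does not engage with this at all, and it is the heart of Section~\ref{sec2}.

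\textbf{The global-in-$x$ closure.} You say the weight exponent $\alpha$ must be chosen so the convection sign is preserved and no boundary contribution appears at $x\to\infty$. That is necessary but not the decisive issue. In the local-in-$x$ works the coupling terms in $S_u,\dots,S_g$ (which are $O(\delta,\sigma)$ but \emph{not} small in $\eps$) were absorbed using the Poincar\'e bound $\|f\|_{L^2}\lesssim L\|f_x\|_{L^2}$, which fails when $L=\infty$. The replacement in the paper is the Hardy-in-$x$ inequality $\|f\,x^{-1+\sigma'}\|_{L^2}\lesssim\|f_x\,x^{\sigma'}\|_{L^2}$ (Lemma~\ref{Hardy-type-1}; see also Remark~\ref{remark_global_hardy} for how it is actually applied to $hh_{sx}u$ and $gh_{sy}u$). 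Your plan never names this mechanism, and without it the $\mathcal{O}(\delta,\sigma)$-sized coupling terms accumulate over $x\in[1,\infty)$ and the scheme does not close. Relatedly, you describe the purpose of taking $n$ large as making $R^{app}$ a high power of $\eps$, but the equally essential point (Remark~\ref{remark_n>2}, Remark~\ref{remark_n>2_S6}) is that expanding to $n\ge2$ \emph{upgrades the $x$-decay of the source} to $x^{-5/4+\cdots}$, which is what makes the weighted source integrals in $\mathcal{W}_1,\mathcal{W}_2,\mathcal{W}_3$ finite. This in turn requires the enhanced decay rates $x^{-1/2+\sigma_i}$ for the intermediate boundary-layer correctors (Theorem~\ref{u_p^i_theorem}), driven by the improved boundary traces $|v_e^i(x,0)|\lesssim x^{-3/4+\sigma_{i-1}}$ in~\eqref{v_e^i_bc_decay}. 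Your proposal treats the higher-order expansion purely as an $\eps$-bookkeeping device and misses this decay-bootstrapping chain. Finally, the actual closure uses Iyer's multi-level norm $\mathcal{Z}=X_1\cap X_2\cap X_3$ plus $Y_1,Y_2,\mathcal{U}$ with cut-offs $\rho_k$, needed because the third-order estimates only hold away from $x=1$; your single-weight multiplier scheme would need an analogue of this structure and of Lemmas~\ref{short-long-x}--\ref{Z-embeding} to pass from $L^2$ to the $L^\infty$ bound~\eqref{maintheorem_estimate}.
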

With Theorem \ref{maintheorem} at hands, we can obtain the following corollary.
\begin{corollary}\label{convergencetheorem}
With the assumptions stated in Theorem \ref{maintheorem}, there exists a solution $(U,V,H,G)$ to the viscous MHD equations \eqref{MHDflows_system} with \eqref{MHDflowsbc1}-\eqref{MHDflowsbc2} on $\Omega=[1,\infty)\times\mathbb{R}_+$, so that
\begin{align}\label{convergence}
  &\sup_{(X,Y)\in\Omega}\left|(U-1-u_p^0,H-\sigma-h_p^0)\right|X^{\frac{1}{4}}\lesssim\sqrt\eps,\\
  &\sup_{(X,Y)\in\Omega}\left|(V-\sqrt\eps v_p^0-\sqrt\eps v_e^1,G-\sqrt\eps g_p^0-\sqrt\eps g_e^1)\right|X^{\frac{1}{2}}\lesssim\eps,
\end{align}
where $(1,0,\sigma,0)$ is the ideal MHD flow, $(u_p^0,v_p^0,h_p^0,g_p^0)$ and $(v_e^1,g_e^1)$ are profiles constructed as in Section \ref{sec2} and \ref{sec3}.
\end{corollary}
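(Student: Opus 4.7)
The plan is to derive the corollary as a direct consequence of Theorem \ref{maintheorem} by unpacking the asymptotic expansion \eqref{expansion} in the original physical variables and isolating the leading-order contributions. First, I would invoke Theorem \ref{maintheorem} to obtain the scaled solution $(U^{\eps},V^{\eps},H^{\eps},G^{\eps})$ on $[1,\infty)\times\mathbb{R}_+$, with the expansion \eqref{expansion} and the remainder estimate \eqref{maintheorem_estimate}. Reversing the scaling \eqref{scale} via $(U,V,H,G)(X,Y)=(U^{\eps},\sqrt\eps V^{\eps},H^{\eps},\sqrt\eps G^{\eps})(x,y)$ with $x=X,\,y=Y/\sqrt\eps$ then produces a solution to the unscaled system \eqref{MHDflows_system}--\eqref{MHDflowsbc2} on $\Omega$.

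For the first estimate, I would subtract the ideal shear and the leading Prandtl corrector from $U$ and $H$ using \eqref{app}, which after cancellation leaves
\[
U-1-u_p^0=\sum_{i=1}^{n}\eps^{\frac{i}{2}}\bigl[u_e^i(x,\sqrt\eps y)+u_p^i(x,y)\bigr]+\eps^{\frac{n}{2}+\gamma}u,
\]
and the analogous identity for $H-\sigma-h_p^0$. Multiplying by $x^{1/4}$ and taking $L^\infty$, the first sum is dominated by its $i=1$ term $\eps^{1/2}[u_e^1+u_p^1]$, which is $\mathcal{O}(\sqrt\eps)$ in the weighted norm provided the profiles have the $x^{1/4}$-weighted $L^\infty$ bounds produced during the construction of Euler and Prandtl correctors (Sections \ref{sec2}--\ref{sec3}); the remaining terms with $i\geq 2$ are strictly higher order. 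The final piece is controlled directly by \eqref{maintheorem_estimate}, and a suitable choice of $\gamma,\kappa$ (together with taking $n$ sufficiently large in the construction) absorbs it into the leading $\sqrt\eps$ bound.

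For the second estimate, I would apply the same bookkeeping to $V=\sqrt\eps V^{\eps}$ and $G=\sqrt\eps G^{\eps}$. Writing $\sqrt\eps\, v_{app}=\sqrt\eps v_p^0+\sum_{i=1}^n\eps^{\frac{i}{2}}[v_e^i+\sqrt\eps v_p^i]$, two of the three leading terms exactly match the subtractions in the corollary, leaving
\[
V-\sqrt\eps v_p^0-\sqrt\eps v_e^1=\sum_{i=2}^n\eps^{\frac{i}{2}}v_e^i(x,\sqrt\eps y)+\sum_{i=1}^n\eps^{\frac{i+1}{2}}v_p^i(x,y)+\eps^{\frac{n+1}{2}+\gamma}v,
\]
whose lowest-order contribution is $\mathcal{O}(\eps)$. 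Weighting by $x^{1/2}$ and using the $x^{1/2}$-weighted estimates on $v_e^2$ and $v_p^1$ (and analogously $g_e^2$, $g_p^1$), together with the remainder bound from \eqref{maintheorem_estimate} applied to the $\sqrt\eps (v,g)x^{1/2}$ component, yields the claimed $\eps$-rate.

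The main obstacle is verifying that the weighted decay-in-$X$ bounds on the profiles — the Euler correctors $(u_e^i,v_e^i,h_e^i,g_e^i)$ produced by Poisson extension of the boundary matching data, and the Prandtl correctors $(u_p^i,v_p^i,h_p^i,g_p^i)$ solving coupled parabolic problems with prescribed $\langle y\rangle^m$-decay initial data — are at least as strong as $x^{-1/4}$ (resp.\ $x^{-1/2}$) in $L^\infty$, matching the weights appearing in the corollary. These bounds are not immediate from the definitions \eqref{bc_0_prandtl}--\eqref{cp_2} but follow from the well-posedness analysis carried out in the profile-construction sections; once they are invoked, the corollary reduces to counting powers of $\eps$ in \eqref{app} and matching them against \eqref{maintheorem_estimate}.
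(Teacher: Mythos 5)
Your overall plan (invert the scaling, split $U-1-u_p^0$ into the tail of the approximate expansion plus the remainder, and bound piece by piece) is the natural one and is surely what the paper intends. However, the absorption of the remainder as you describe it does not close. Equation \eqref{maintheorem_estimate} already carries the prefactor $\eps^{n/2+\gamma}$ on its left side, and its right side is $\eps^{1/4-\gamma-\kappa}$. For $\gamma,\kappa\geq 0$ with $\gamma+\kappa<\frac{1}{4}$ the exponent $\frac{1}{4}-\gamma-\kappa$ lies strictly in $(0,\frac{1}{4})$, so $\eps^{1/4-\gamma-\kappa}$ is always strictly \emph{larger} than $\sqrt\eps=\eps^{1/2}$ for small $\eps$, and the right side of \eqref{maintheorem_estimate} as written is $n$-independent, so taking $n$ larger does not help that particular inequality. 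Citing \eqref{maintheorem_estimate} therefore only gives $\eps^{1/4-\gamma-\kappa}$ for the remainder's contribution, not $\sqrt\eps$. To close the argument you must go behind \eqref{maintheorem_estimate} to the $\mathcal{Z}$-norm bound of Theorem \ref{existencetheorem}, $\|(u,v,h,g)\|_{\mathcal{Z}(\Omega)}\lesssim\eps^{1/4-\gamma-\kappa}$, from whose $\mathcal{U}$ piece $\eps^{N_3}\|(u,h)x^{1/4}+\sqrt\eps(v,g)x^{1/2}\|_{L^\infty}$ one reads off $\|(u,h)x^{1/4}\|_{L^\infty}\lesssim\eps^{1/4-\gamma-\kappa-N_3}$. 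Multiplying by the prefactor $\eps^{n/2+\gamma}$ from \eqref{expansion} and using that the choice \eqref{n-choice} forces $n/2-N_3>99$, the remainder contributes $\lesssim\eps^{n/2+1/4-\kappa-N_3}\ll\sqrt\eps$. That is the actual mechanism: it hinges on $n$ being chosen large compared to the fixed exponents $N_i$, not on a choice of $\gamma,\kappa$.

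A second gap you flagged but did not resolve: Theorem \ref{u_p^1_theorem} only gives $\|u_p^1\|_{L_y^\infty}\lesssim x^{-1/4+\sigma_1}$ with $\sigma_1>0$, so $\eps^{1/2}|u_p^1|X^{1/4}\lesssim\eps^{1/2}X^{\sigma_1}$ grows in $X$. Your assertion that the profile bounds ``are at least as strong as $x^{-1/4}$'' is slightly too optimistic; one must either accept a $\sigma_1$-weaker $X$-weight for this term or verify that the constants in Proposition \ref{u_p^1_prop} remain uniform as $\sigma_1\to 0$, which is not immediate. The terms $\eps^{1/2}u_e^1$ (decaying like $x^{-1/2}$) and the $i\geq 2$ corrections pose no such difficulty, and the analogous bookkeeping for the vertical component goes through cleanly once the remainder is handled via the $\mathcal{Z}$-norm as above.
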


Let us give some comments about our results as follows.

(1) The global existence and uniqueness for the leading order boundary layer profiles will be proved via the von Mises transformation with suitable decomposition instead of the modified energy framework with cancellation mechanism, which is different from \cite{DLJ,DLXmhd,LXYwell}. Indeed, for the unsteady case, Liu, Xie and Yang show the MHD boundary layer system is ill-posed with insulating (no-slip) boundary conditions \cite{LXY_ill}, which implies the necessity of the non-degeneracy of the tangential magnetic field for the stability of the unsteady MHD boundary layer in Sobolev spaces. And our result implies that the degeneracy of tangential magnetic field is allowed in steady case,  which provides an interesting example to say that the non-degeneracy of the tangential magnetic field is not necessary for steady MHD Prandtl theory. This is different from the unsteady case.

 (2) It is well known that in the construction of the approximate solutions, the strong nonlinearity and singularity come from the leading order profiles. It is challenging to establish the global-in-$x$ well-posedness of leading order boundary layer system with decay, which is the key ingredient to obtain the global stability for MHD boundary layer theory.  Inspired by Iyer \cite{Iyerglobal1,Iyerglobal2,Iyerglobal3}, we introduce a suitable decomposition for the solutions. Formally, we decompose the solutions as
$$\mathrm{MHD\ boundary\ layer}\sim \mathrm{Gaussian\ part}+\mathrm{self\ similar\ part}+\mathrm{error}.$$

The main contribution to the decay comes from the self-similar part. Inspired by the view in \cite{GuoIyer}, ``{\it when $x$ gets large (downstream), solutions to the Prandtl equation, converge to an appropriately renormalized Blasius profile}", we believe that the behavior of self-similar solutions would be important for boundary layer system. However, it seems difficult to search for the self-similar solutions like Blasius profiles used in \cite{IyerMasmoudi} for MHD boundary layer system. Therefore, we first turn to look for the self-similar approximate solutions to the problem, which provide the desired decay in our arguments. See Section \ref{sec2} for details.

(3) Some other topics about the stabilizing effects of magnetic field for the MHD boundary layer theory have attracted great attentions. The great progress for the unsteady MHD flows has been obtained by Liu, Xie and Yang \cite{LXYwell,LXYjus,LXY_ill}: the well-posedness theory and the high Reynolds number limit are established in Sobolev space with non-degenerate tangential magnetic field  \cite{LXYwell,LXYjus} and the necessity for the non-degeneracy of the tangential magnetic field in Sobolev spaces was verified in \cite{LXY_ill}. For the steady MHD equations, the stability for Prandtl-type shear flow was obtained by Liu, Yang and Zhang \cite{LYZ} by some careful spectral analysis for the linearized operator around the Prandtl-type shear flow.

Our goal in this paper is to verify the validity of vanishing viscosity limit for the steady MHD by asymptotic expansion of boundary layer thickness $\sqrt{\varepsilon}$. The setting and the aim of this paper are different from those in \cite{LYZ}. By assuming that tangential velocity is much larger than tangential magnetic field, we prove the global in $x$ stability of the Prandtl expansions for the shear outer ideal MHD flows $(1,0,\sigma,0)$ by multi-scale analysis.

(4) Note that the assumption of sufficiently small mismatch (measured by $\delta$) on the moving boundary will play a key role in our analysis, which is inspired by the series of works \cite{Iyerglobal1,Iyerglobal2,Iyerglobal3}. And in this paper, the no moving boundary case $(u_b=0)$ has not been investigated yet, this interesting and challenging problem will be left to our future work.

(5) In this paper, the boundary condition for magnetic field is the insulating boundary condition (\ref{MHDflowsbc2}) for the technical reasons in dealing with the estimates for the remainders. We will consider the perfectly conducting boundary condition problem in the future.

Let us introduce some notations to end this section.

{\bf Notation.}

The quantities denoted by $\mathcal{O}(\delta,\sigma)$ refer to those which can be made small with the smallness of the constants $\delta,\sigma$.
In addition, we introduce the partial expansions
\begin{align}
\label{u_s}
  &u_s^{(i)}:=\sum_{j=0}^{i-1} \eps^{\frac{j}{2}}(u_p^j+u_e^j)+\eps^{\frac{i}{2}}u_e^i,
  \quad \bar u_s^{(i)}:=\sum_{j=0}^i \eps^{\frac{j}{2}}(u_p^j+u_e^j),\\
\label{v_s}
  &v_s^{(i)}:=\sum_{j=0}^{i-1} \eps^{\frac{j}{2}}v_p^j+\sum_{j=1}^{i}\eps^{\frac{j}{2}-\frac{1}{2}}v_e^j,
  \quad \bar v_s^{(i)}:=\sum_{j=0}^i \eps^{\frac{j}{2}}v_p^j+\sum_{j=1}^i\eps^{\frac{j}{2}-\frac{1}{2}}v_e^j,\\
\label{h_s}
  &h_s^{(i)}:=\sum_{j=0}^{i-1} \eps^{\frac{j}{2}}(h_p^j+h_e^j)+\eps^{\frac{i}{2}}h_e^i,
  \quad \bar u_s^{(i)}:=\sum_{j=0}^i \eps^{\frac{j}{2}}(u_p^j+u_e^j),\\
\label{g_s}
  &g_s^{(i)}:=\sum_{j=0}^{i-1} \eps^{\frac{j}{2}}g_p^j+\sum_{j=1}^{i}\eps^{\frac{j}{2}-\frac{1}{2}}g_e^j,
  \quad \bar g_s^{(i)}:=\sum_{j=0}^i \eps^{\frac{j}{2}}g_p^j+\sum_{j=1}^i\eps^{\frac{j}{2}-\frac{1}{2}}g_e^j,\\
\label{p_s}
  &p_s^{(i)}:=p_p^0+\sum_{j=1}^{i-1} \eps^{\frac{j}{2}}(p_p^j+\eps^{\frac{1}{2}}p_p^{j,a})+\sum_{j=1}^{i}\eps^{\frac{j}{2}}(p_e^j+\eps^\frac{j}{2}p_e^{j,a}),\\
\label{p_s_bar}
  &\bar p_s^{(i)}:=p_p^0+\sum_{j=1}^{i} \eps^{\frac{j}{2}}(p_p^j+\eps^{\frac{1}{2}}p_p^{j,a})+\sum_{j=1}^{i}\eps^{\frac{j}{2}}(p_e^j+\eps^\frac{j}{2}p_e^{j,a}).
\end{align}

\section{Construction of the leading order boundary layer}\label{sec2}
In this section, we will give the detailed construction of the leading order boundary layer. To this end, plugging the approximate solutions \eqref{app} into the expansions \eqref{expansion}, and collecting the $\eps^0$-order terms of $(\bar u_s^{(0)},\bar v_s^{(0)},\bar h_s^{(0)},\bar g_s^{(0)},\bar p_s^{(0)})$, we have
\begin{align*}
  &R_{app}^{u,0}=-\Delta_\eps\bar u_s^{(0)}+\bar u_s^{(0)}\bar u_{sx}^{(0)}+\bar v_s^{(0)}\bar u_{sy}^{(0)}+\p_x p_p^0-\bar h_s^{(0)}\bar h_{sx}^{(0)}- \bar g_s^{(0)}\bar h_{sy}^{(0)},\\
  &R_{app}^{v,0}=-\Delta_\eps\bar v_s^{(0)}+\bar u_s^{(0)}\bar v_{sx}^{(0)}+\bar v_s^{(0)}\bar v_{sy}^{(0)}+\frac{\p_y p_p^0}{\eps}-\bar h_s^{(0)}\bar g_{sx}^{(0)}-\bar g_s^{(0)}\bar g_{sy}^{(0)},\\
  &R_{app}^{h,0}=-\Delta_\eps\bar h_s^{(0)}+\bar u_s^{(0)}\bar h_{sx}^{(0)}+\bar v_s^{(0)}\bar h_{sy}^{(0)}-\bar h_s^{(0)}\bar u_{sx}^{(0)}-\bar g_s^{(0)}\bar u_{sy}^{(0)},\\
  &R_{app}^{g,0}=-\Delta_\eps\bar g_s^{(0)}+\bar u_s^{(0)}\bar g_{sx}^{(0)}+\bar v_s^{(0)}\bar g_{sy}^{(0)}-\bar h_s^{(0)}\bar v_{sx}^{(0)}-\bar g_s^{(0)}\bar v_{sy}^{(0)}.
\end{align*}

Then one can deduce that the leading order boundary layer profiles $(u_p^0,v_p^0,h_p^0,g_p^0)$ satisfy
\begin{equation}\label{zero_order_system}
\left\{
\begin{aligned}
  &[(1+u_p^0)\p_x+(v_p^0+\overline v_e^1)\p_y] u_p^0+\p_xp_p^0\\
  &\qquad -[(\sigma+h_p^0)\p_x+(g_p^0+\overline g_e^1)\p_y] h_p^0=\p_y^2 u_p^0,\\
  &[(1+u_p^0)\p_x+(v_p^0+\overline v_e^1)\p_y] h_p^0\\
  &\qquad -[(\sigma+h_p^0)\p_x+(g_p^0+\overline g_e^1)\p_y] u_p^0
  =\p_y^2 h_p^0,\\
  &[(1+u_p^0)\p_x+(v_p^0+\overline v_e^1)\p_y] (g_p^0+\bar g_e^1)\\
  &\qquad -[(\sigma+h_p^0)\p_x+(g_p^0+\overline g_e^1)\p_y] (v_p^0+\bar v_e^1)
  =\p_y^2 g_p^0,\\
  &p_{py}^0=0,\\
  &\p_x u_p^0+\p_y v_p^0=\p_x h_p^0+\p_y g_p^0=0,\\
  &(v_p^0,g_p^0)(x,y)=\int_y^{\infty}\p_x(u_p^0,h_p^0)(x,\theta){\rm{d}}\theta,
\end{aligned}
\right.
\end{equation}
with the boundary conditions
\begin{equation}\label{zero_order_system_BC}
\left\{
\begin{aligned}
  &(v_p^0,g_p^0)(x,0)=-\int_0^{\infty}\p_x(u_p^0,h_p^0)(x,y){\rm{d}}y=-(\overline v_e^1,\overline g_e^1),\\
  &(u_p^0,h_p^0)(x,0)=(-\delta,-\sigma),\quad (u_p^0,h_p^0)(x,\infty)=(0,0),\\
  &(u_p^0,h_p^0)(1,y)=(u_0^0,h_0^0)(y).
\end{aligned}
\right.
\end{equation}
Note that the terms with boundary value $\overline v_e^1$ are extracted from $v_e^1u_{py}^0$ and $v_e^1h_{py}^0$ via the following computation
\begin{align}\label{overlinev_e^1}
 v_e^1(x,Y)\p_y u_p^0=\bar v_e^1\p_y u_p^0+\sqrt\eps yv_{eY}^1u_{py}^0+\eps u_{py}^0\int_0^y\int_y^{\theta}v_{eYY}^1(\sqrt\eps\tau){\rm d}\tau{\rm d}\theta.
\end{align}
The terms of boundary value $\overline g_e^1$ from $g_e^1u_{py}^0$ and $g_e^1h_{py}^0$ are similar. In this way, some new terms should be put into the remainder terms of leading order as follows
\begin{align}
\label{R^u_0}
  R^{u,0}=&-\eps u_{pxx}^0+\sqrt\eps yv_{eY}^1u_{py}^0+\eps u_{py}^0\int_0^y\int_y^{\theta}v_{eYY}^1(\sqrt\eps\tau){\rm d}\tau{\rm d}\theta\nonumber\\
  &-\sqrt\eps yg_{eY}^1h_{py}^0-\eps h_{py}^0\int_0^y\int_y^{\theta}g_{eYY}^1(\sqrt\eps\tau){\rm d}\tau{\rm d}\theta,\\
\label{R^v_0}
  R^{v,0}=&-\Delta_\eps\bar v_s^{(0)}+\bar u_s^{(0)}\bar v_{sx}^{(0)}+\bar v_s^{(0)}\bar v_{sy}^{(0)}-\bar h_s^{(0)}\bar g_{sx}^{(0)}-\bar g_s^{(0)}\bar g_{sy}^{(0)},\\
\label{R^h_0}
  R^{h,0}=&-\eps h_{pxx}^0+(v_e^1-\bar v_e^1)h_{py}^0-(g_e^1-\bar g_e^1)u_{py}^0\nonumber\\
  =&-\eps h_{pxx}^0+\sqrt\eps yv_{eY}^1h_{py}^0+\eps h_{py}^0\int_0^y\int_y^{\theta}v_{eYY}^1(\sqrt\eps\tau){\rm d}\tau{\rm d}\theta\nonumber\\
  &-\sqrt\eps yg_{eY}^1u_{py}^0-\eps u_{py}^0\int_0^y\int_y^{\theta}g_{eYY}^1(\sqrt\eps\tau){\rm d}\tau{\rm d}\theta,\\
\label{R^g_0}
  R^{g,0}=&-\eps g_{pxx}^0+(v_e^1-\bar v_e^1)g_{py}^0-(g_e^1-\bar g_e^1)v_{py}^0\nonumber\\
  &+(1+u_p^0)\p_x(g_e^1-\bar g_e^1)-(\sigma+h_p^0)\p_x(v_e^1-\bar v_e^1)\nonumber\\
  =&-\eps g_{pxx}^0+\sqrt\eps yv_{eY}^1g_{py}^0+\eps g_{py}^0\int_0^y\int_y^{\theta}v_{eYY}^1(\sqrt\eps\tau){\rm d}\tau{\rm d}\theta\nonumber\\
  &-\sqrt\eps yg_{eY}^1v_{py}^0-\eps v_{py}^0\int_0^y\int_y^{\theta}g_{eYY}^1(\sqrt\eps\tau){\rm d}\tau{\rm d}\theta\nonumber\\
  &+\sqrt\eps u_p^0\int_0^y \p_{xY}g_e^1(x,\sqrt\eps\tau){\rm d}\tau-\sqrt\eps h_p^0\int_0^y \p_{xY}v_e^1(x,\sqrt\eps\tau){\rm d}\tau,
\end{align}
where we have used the equation $g_e^1-\sigma v_e^1=0$ (i.e.,\eqref{v_e^1_relation}, verified in Subsection \ref{sec3.1}) in \eqref{R^g_0}.

It follows from $p_{py}^0=0$ and the Bernoulli's law that $p_{p}^0$ is a constant. In this paper we take $p_p^0=0$ without loss of generality. Utilizing divergence-free conditions, we can rewrite the equation \eqref{zero_order_system}$_2$ for the magnetic field as
\begin{equation*}
  \p_y\left[-(1+u_p^0)(g_p^0+\overline g_e^1)+(v_p^0+\overline v_e^1)(\sigma+h_p^0)\right]=\p^2_y h_p^0,
\end{equation*}
which gives that
\begin{equation}\label{h_p^0_equation}
  -(1+u_p^0)(g_p^0+\overline g_e^1)+(v_p^0+\overline v_e^1)(\sigma+h_p^0)=\p_y h_p^0,
\end{equation}
where we have used \eqref{v_e^1_relation} and the vanishing behavior for $\p_y h_p^0$ at $y=\infty$.

By \eqref{h_p^0_equation}, it is easy to verify that  \eqref{zero_order_system}$_3$ is equivalent to \eqref{zero_order_system}$_2$, therefore it is reasonable for us to investigate the system without the third equation. In other words, we will focus on the nonlinear boundary layer system as follows
\begin{equation}\label{u_p^0_system}
\left\{
\begin{aligned}
  &[(1+u_p^0)\p_x+(v_p^0+\overline v_e^1)\p_y] u_p^0-[(\sigma+h_p^0)\p_x+(g_p^0+\overline g_e^1)\p_y] h_p^0
  =\p_y^2 u_p^0,\\
  &[(1+u_p^0)\p_x+(v_p^0+\overline v_e^1)\p_y] h_p^0-[(\sigma+h_p^0)\p_x+(g_p^0+\overline g_e^1)\p_y] u_p^0
  =\p_y^2 h_p^0,\\
  &(v_p^0,g_p^0)(x,y)=\int_y^{\infty}\p_x(u_p^0,h_p^0)(x,\theta){\rm{d}}\theta,\\
  &(v_p^0,g_p^0)(x,0)=-\int_0^{\infty}\p_x(u_p^0,h_p^0)(x,y){\rm{d}}y=-(\overline v_e^1,\overline g_e^1),\\
  &(u_p^0,h_p^0)(x,0)=(-\delta,-\sigma),\quad (u_p^0,h_p^0)(x,\infty)=(0,0),\\
  &(u_p^0,h_p^0)(1,y)=(u_0^0,h_0^0)(y).
\end{aligned}
\right.
\end{equation}

The main result of this section is stated as follows.
\begin{theorem}\label{u_p^0_theorem}
Suppose that \eqref{u_p^0_initialdecay}-\eqref{u_p^0_initial_u} hold.
 For any $k,j,m\in\mathbb{N}$, $2\leq p\leq \infty$, there exists solution $(u_p^0,v_p^0,h_p^0,g_p^0)$ to the nonlinear boundary layer problem \eqref{u_p^0_system} in the domain $\Omega=[1,\infty)\times \mathbb{R}_+$ with the following estimates
\begin{align}
\label{uh_p^0_estimate1}
  &\Vert z^m\p_x^k\p_y^j (u_p^0,h_p^0)\Vert_{L_y^p}\leq C(m,k,j)x^{-k-\frac{j}{2}+\frac{1}{2p}},{\rm ~for~} 2k+j> 2,\\
\label{uh_p^0_estimate2}
  &\Vert z^m\p_x^k\p_y^j (u_p^0,h_p^0)\Vert_{L_y^p}\leq \mathcal{O}(\delta,\sigma;m,k,j)x^{-k-\frac{j}{2}+\frac{1}{2p}},{\rm ~for~} 2k+j\leq 2,\\
\label{vg_p^0_estimate1}
  &\Vert z^m\p_x^k\p_y^j (v_p^0,g_p^0)\Vert_{L_y^p}\leq C(m,k,j)x^{-k-\frac{j}{2}-\frac{1}{2}+\frac{1}{2p}},{\rm ~for~} 2k+j> 1,\\
\label{vg_p^0_estimate2}
  &\Vert z^m\p_x^k\p_y^j (v_p^0,g_p^0)\Vert_{L_y^p}\leq \mathcal{O}(\delta,\sigma;m,k,j)x^{-k-\frac{j}{2}-\frac{1}{2}+\frac{1}{2p}},{\rm ~for~} 2k+j\leq 1.
\end{align}
\end{theorem}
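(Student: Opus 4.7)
The proof strategy is to combine a symmetrization by Elsässer-type variables with a von Mises transformation and a Gaussian-plus-self-similar decomposition, along the lines sketched in comment (2) of Subsection \ref{sec1.2}.

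First I would introduce the Elsässer variables $z_\pm := u_p^0 \pm h_p^0$. Adding and subtracting the first two equations of \eqref{u_p^0_system}, each $z_\pm$ satisfies a scalar quasilinear parabolic equation
\begin{equation*}
  [(1\mp\sigma)+z_\mp]\,\partial_x z_\pm + \bigl[(v_p^0 \mp g_p^0)+(\bar v_e^1 \mp \bar g_e^1)\bigr]\,\partial_y z_\pm = \partial_{yy} z_\pm,
\end{equation*}
with boundary data $z_\pm(x,0)=-\delta\mp\sigma$, $z_\pm(x,\infty)=0$, $z_\pm(1,y)=u_0^0\pm h_0^0$. The smallness/ordering assumptions \eqref{u_p^0_initialsmall}--\eqref{u_p^0_initial_u} guarantee that both transport speeds $(1\mp\sigma)+z_\mp$ stay uniformly bounded away from zero, so the Elsässer transport fields $((1\mp\sigma)+z_\mp,\ (v_p^0 \mp g_p^0)+(\bar v_e^1 \mp \bar g_e^1))$ are divergence-free and non-degenerate. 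To each scalar equation I would then apply the von Mises transformation built on the corresponding stream function $\psi_\pm$; this collapses the first-order transport to $[(1\mp\sigma)+z_\mp]\partial_x Z_\pm$ and turns the diffusion into a quasilinear second-order operator in $\psi_\pm$, yielding a scalar parabolic problem on $\psi_\pm \in [0,\infty)$ with $x$ playing the role of time. In particular, the nonlocal unknowns $v_p^0, g_p^0$ disappear from the principal part.

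Second, following comment (2), I would decompose each transformed unknown as
\begin{equation*}
  Z_\pm = G_\pm + S_\pm + \mathcal{E}_\pm,
\end{equation*}
where $G_\pm$ is a Gaussian piece solving the linearization with frozen coefficients and the prescribed Dirichlet data, encoding the short-range contribution of the initial profile at $x=1$ through a heat-kernel representation; $S_\pm$ is an \emph{approximate} self-similar profile in the variable $\psi_\pm/\sqrt{x}$---the MHD analogue of a Blasius profile---designed to match the far-downstream behavior of $G_\pm$ and to deliver the sharp algebraic decay $x^{-k-j/2+1/(2p)}$ announced in \eqref{uh_p^0_estimate1}; and $\mathcal{E}_\pm$ is the error. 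This error satisfies a quasilinear parabolic equation whose source is the (small, localized) residual produced by $S_\pm$ not being an exact self-similar solution, and is then bounded by a combination of maximum-principle arguments and weighted $L^2/L^\infty$ energy estimates, absorbing the quasilinear perturbations through the smallness $\mathcal{O}(\delta,\sigma)$. Differentiating the transformed equation and inducting on the order of derivatives propagates the weighted $z^m\partial_x^k\partial_y^j$ bounds using the rapid decay \eqref{u_p^0_initialdecay} of the initial data, and a self-consistent fixed-point iteration handles the nonlocal coupling through $\bar v_e^1 = -\int_0^\infty \partial_x u_p^0\,dy$ and $\bar g_e^1 = -\int_0^\infty \partial_x h_p^0\,dy$.

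Third, pulling back the von Mises maps and inverting the Elsässer change of variable yields \eqref{uh_p^0_estimate1}--\eqref{uh_p^0_estimate2}. The estimates \eqref{vg_p^0_estimate1}--\eqref{vg_p^0_estimate2} for $(v_p^0, g_p^0)$ then follow from the reconstruction $(v_p^0, g_p^0)(x,y)=\int_y^\infty \partial_x(u_p^0, h_p^0)(x,\theta)\,d\theta$, the additional $x^{-1/2}$ factor reflecting the self-similar scaling $\theta \sim \sqrt{x}$ inside the $y$-integral. The main obstacle I expect is the construction and accuracy of the self-similar approximate profile $S_\pm$: unlike the Navier--Stokes setting where a genuine Blasius profile is available, the magnetic coupling prevents an exact self-similar solution of the full MHD system, so $S_\pm$ must be chosen delicately---plausibly via a truncated expansion in $\sigma$ around a Blasius-type profile---so that its residual is small enough to be absorbed into the $\mathcal{E}_\pm$ equation without destroying the positivity of the transport coefficients and the validity of the maximum-principle arguments.
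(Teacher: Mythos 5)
Your high-level scheme---von Mises transformation, then a decomposition into Gaussian plus self-similar plus error, with weighted energy and fixed-point arguments---does match the paper's, but the central technical device you propose has a genuine gap. You pass to Els\"asser variables $z_\pm = u_p^0 \pm h_p^0$ and then apply a \emph{separate} von Mises transformation to each scalar equation, using the stream function of the corresponding Els\"asser transport field. But the transport coefficient in the $z_+$ equation is $(1-\sigma)+z_-$, so the stream function that flattens its transport, $\psi_+=\int_0^y[(1-\sigma)+z_-]\,d\theta$, depends on $z_-$; symmetrically $\psi_-$ depends on $z_+$. You therefore obtain two genuinely different $(x,\psi_\pm)$ coordinate systems, and each transformed equation carries the \emph{other} Els\"asser variable in its principal coefficient, an object that lives naturally in the \emph{other} coordinate system. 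Any energy estimate or contraction step must repeatedly transfer data between the $\psi_+$ and $\psi_-$ variables, and you give no mechanism for doing so; this is not a small fix, since it destroys the very thing von Mises is supposed to buy, namely a single fixed-coordinate quasilinear heat problem.

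The paper sidesteps this by using a \emph{single} von Mises map $\eta=\int_0^y(1+u_p^0)\,d\theta$ built only on the velocity stream function and accepting the residual coupling $-(\sigma+h_p^0)\partial_x(\cdot)$ in both equations (see \eqref{u_p^0_system_eta}). The elimination of the vertical magnetic component $g_p^0$ is achieved not by a second von Mises map but by integrating the magnetic equation in $y$ to obtain the first-integral identity \eqref{h_p^0_equation}, which expresses $g_p^0+\overline g_e^1$ in terms of $v_p^0+\overline v_e^1$, $h_p^0$, $u_p^0$ and $\partial_y h_p^0$; this is the trick your proposal is missing. After that, the self-similar ansatz $(\phi_*,\psi_*)(\eta/\sqrt{x})$ yields the \emph{exact} coupled ODE system \eqref{u_p^0_system_selfsimilar}, which is solved by Lax--Milgram and a contraction map in $H_w^2$ (Lemma \ref{u_p^0_lemma_phipsi}); there is no need for the perturbative-in-$\sigma$ construction you anticipate in your final paragraph. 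Finally, the error system \eqref{u_p^0_system_error} is handled entirely by weighted $L^2$ energy estimates in the $\mathcal{Q}(\sigma_0)$ norm and a further contraction argument (Proposition \ref{u_p^0_lemma_error}); no maximum-principle argument appears.
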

\begin{remark}\label{rk-leading}
We first remark here that Theorem \ref{u_p^0_theorem} still holds for the perfectly conducting boundary condition as $(u_p^0,\p_y h_p^0)=(-\delta,0)$.

In addition, compared with \cite{DLJ,DLXmhd}, Theorem \ref{u_p^0_theorem} allows the degeneracy of tangential magnetic field, i.e., $\sigma+h_0^0(y)\geq 0.$
\end{remark}

To perform our arguments of proof, let us introduce the von Mises coordinates as follows
\begin{equation*}
  \eta=\int_0^y (1+u_p^0(x,\theta)){\rm{d}}\theta,
\end{equation*}
then the problem \eqref{u_p^0_system} is reduced to the following quasilinear parabolic system
\begin{equation}\label{u_p^0_system_eta}
\begin{cases}
  (1+u_p^0)\p_x u_p^0-(\sigma+h_p^0)\p_x h_p^0-(1+u_p^0)^2\p_\eta^2 u_p^0\\
  \qquad=(1+u_p^0)(|\p_\eta u_p^0|^2-|\p_\eta h_p^0|^2),\\
  (1+u_p^0)\p_x h_p^0-(\sigma+h_p^0)\p_x u_p^0-(1+u_p^0)^2\p_\eta^2 h_p^0=0,\\
  u_p^0(x,0)=-\delta,\quad u_p^0(x,\infty)=0,\\
  h_p^0(x,0)=-\sigma,\quad h_p^0(x,\infty)=0,\\
  (u_p^0,h_p^0)(1,\eta)=(u_0^0,h_0^0)(\eta),
\end{cases}
\end{equation}
in which the equation \eqref{h_p^0_equation} has been used.

To ensure the invertibility of the von Mises transformation, we first assume the {\it a priori assumption} that $1+u_p^0\geq c_0>0$, which will be verified in the following way
\begin{equation}\label{1+u_p^0}
  1+u_p^0(x,\eta)=1-\int_\eta^\infty \p_\eta u_p^0 {\rm{d}}\eta \geq 1-\mathcal{O}(\delta,\sigma):= c_0.
\end{equation}
The detailed proof will be given later, here we sketch the key points.
In the last inequality, thanks to \eqref{u_p^0_estimate2}
 (stated latter), the following estimate has been used
\begin{equation}\label{1+u_p^0_pre}
\begin{aligned}
  \int_\eta^\infty \p_\eta u_p^0(x,\eta){\rm{d}}\eta
  &=\int_\eta^\infty \big[(z+1)\p_\eta u_p^0(x,\eta)\cdot (z+1)^{-1}\big]{\rm{d}}\eta\\
  &\leq \Vert (z+1)\p_\eta u_p^0 \Vert_{L_\eta^2(0,\infty)}\cdot \bigg(\int_\eta^\infty \frac{1}{|z+1|^2}{\rm{d}}\eta\bigg)^{\frac{1}{2}}\\
  &\leq \mathcal{O}(\delta,\sigma)x^{-\frac{1}{4}}\cdot \bigg(\int_0^\infty \frac{1}{|z+1|^2}{\rm{d}}z\cdot \sqrt x\bigg)^{\frac{1}{2}}
  \leq \mathcal{O}(\delta,\sigma)
  \end{aligned}
  \end{equation}
with the variable $z=\frac{\eta}{\sqrt x}$.

In a similar manner to \eqref{1+u_p^0}, we have
\begin{align}\label{1+u_p^0-sigma-h_p^0}
  (1+u_p^0)-(\sigma+h_p^0)\geq 1-\sigma-\mathcal{O}(\delta,\sigma):=c_1>0.
\end{align}

The following Proposition \ref{prop_pre_u_p^0} gives the estimates for the solutions $(u_p^0,h_p^0)$ to problem \eqref{u_p^0_system_eta}, which is important to prove Theorem \ref{u_p^0_theorem}.
\begin{proposition}\label{prop_pre_u_p^0}
Under the assumptions of Theorem \ref{u_p^0_theorem}, suppose that $(u_p^0,h_p^0)$ solve problem \eqref{u_p^0_system_eta}, then for any $k,j,m\in\mathbb{N}$, $2\leq p\leq \infty$, it holds that
\begin{align}
\label{u_p^0_estimate1}
  &\Vert z^m\p_x^k\p_\eta^j (u_p^0,h_p^0)\Vert_{L_\eta^p}\leq C(m,k,j)x^{-k-\frac{j}{2}+\frac{1}{2p}},{\rm ~for~} 2k+j> 2,\\
\label{u_p^0_estimate2}
  &\Vert z^m\p_x^k\p_\eta^j (u_p^0,h_p^0)\Vert_{L_\eta^p}\leq \mathcal{O}(\delta,\sigma;m,k,j)x^{-k-\frac{j}{2}+\frac{1}{2p}},{\rm ~for~} 2k+j\leq 2.
\end{align}
\end{proposition}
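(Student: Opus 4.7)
The plan is to follow the ``Gaussian $+$ self-similar $+$ error'' decomposition indicated in Comment~(2) of the introduction. In the von Mises coordinates of \eqref{u_p^0_system_eta}, the a priori lower bound $1+u_p^0\geq c_0$ from \eqref{1+u_p^0} makes the diffusion $(1+u_p^0)^2\p_\eta^2$ uniformly elliptic, so the system is effectively a coupled parabolic system with principal part close to $\p_x-\p_\eta^2$. Passing to $f_\pm := u_p^0\pm h_p^0$ diagonalizes the transport and yields the scalar equations
\[
[(1+u_p^0)\mp(\sigma+h_p^0)]\,\p_x f_\pm - (1+u_p^0)^2\,\p_\eta^2 f_\pm = (1+u_p^0)\,\p_\eta f_+\,\p_\eta f_-,
\]
with wave speeds bounded below by the positive constants from \eqref{1+u_p^0}--\eqref{1+u_p^0-sigma-h_p^0}, Dirichlet data $f_\pm(x,0)=-\delta\mp\sigma$, $f_\pm(x,\infty)=0$, and in-flow data inherited from $(u_0^0\pm h_0^0)$.

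I would first build the self-similar profile $\Phi_\pm(z)$ with $z=\eta/\sqrt{x}$ solving $\Phi_\pm''+\tfrac{z}{2}\Phi_\pm'=0$, $\Phi_\pm(0)=-\delta\mp\sigma$, $\Phi_\pm(\infty)=0$: this is an explicit erfc profile of size $\mathcal O(\delta,\sigma)$ whose derivatives in $(x,\eta)$ scale exactly as claimed in \eqref{u_p^0_estimate1}--\eqref{u_p^0_estimate2}, since each $\p_x$ loses $x^{-1}$, each $\p_\eta$ loses $x^{-1/2}$, and $L^p_\eta$-integration on the natural scale $\sqrt{x}$ produces the factor $x^{1/(2p)}$. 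Next, I would define the Gaussian part $\mathcal G_\pm$ as the heat-kernel solution driven by the in-flow correction $(u_0^0\pm h_0^0)-\Phi_\pm|_{x=1}$; the rapid decay and smallness \eqref{u_p^0_initialdecay}--\eqref{u_p^0_initialsmall} of the initial data guarantee that $\mathcal G_\pm$ satisfies the same weighted $L^p$ bounds and is of size $\mathcal O(\delta,\sigma)$.

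The remainder $r_\pm := f_\pm-\Phi_\pm-\mathcal G_\pm$ then satisfies a scalar parabolic equation with homogeneous boundary and in-flow data; its source is (i) the quadratic nonlinearity $(1+u_p^0)\p_\eta f_+\p_\eta f_-$ and (ii) coefficient-mismatch residuals of the form $[-u_p^0\mp h_p^0\mp\sigma-(u_p^0)^2]\p_\eta^2\Phi_\pm$, both of which are $\mathcal O(\delta,\sigma)^2\,x^{-1}$ pointwise thanks to the sizes of $\Phi_\pm$ and $\mathcal G_\pm$. I would close a weighted $L^2$ energy estimate with weight $z^{2m}$, exploiting the strict dissipation $(1+u_p^0)^2\|\p_\eta r_\pm\|_{L^2}^2$, together with a maximum-principle bound for $L^\infty$ to cover the $p=2$ and $p=\infty$ endpoints (general $p$ by interpolation). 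Higher derivative bounds ($2k+j>2$) follow by differentiating the scalar equations and bootstrapping on lower-order estimates, each extra derivative costing only $x^{-1/2}$. Finally, the a priori assumption $1+u_p^0\geq c_0$ is recovered from the control of $\p_\eta u_p^0$ via \eqref{1+u_p^0_pre} and closed by a continuity argument in $x$.

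The main obstacle I anticipate is that the leading nonlinearity $(1+u_p^0)\p_\eta f_+\p_\eta f_-$ sits exactly at the borderline scale $x^{-1}$, which is not integrable on $[1,\infty)$; a pointwise maximum-principle bound alone would grow like $\log x$ and fail globally in $x$. Moreover, because $\sigma+h_p^0\geq 0$ is allowed to degenerate, the cancellation mechanism exploited in \cite{DLJ,DLXmhd,LXYwell} is unavailable. The $f_\pm$-diagonalization replaces the delicate MHD structure with two scalar equations whose transport coefficients $(1+u_p^0)\mp(\sigma+h_p^0)$ remain strictly positive, but closing the global remainder estimate then requires that the dissipation in the weighted $L^2$ scheme precisely beats the borderline source, and that the smallness of $\delta,\sigma$ together with the $z^m$-weights keep the remainder sub-critical in $x$---this is, I expect, the heart of the difficulty.
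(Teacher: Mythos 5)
The diagonalization $f_\pm := u_p^0\pm h_p^0$ you propose is correct: adding and subtracting the two equations in \eqref{u_p^0_system_eta} does yield
\[
[(1+u_p^0)\mp(\sigma+h_p^0)]\,\p_x f_\pm - (1+u_p^0)^2\,\p_\eta^2 f_\pm = (1+u_p^0)\,\p_\eta f_+\,\p_\eta f_-,
\]
with the transport coefficients kept strictly positive by \eqref{1+u_p^0}--\eqref{1+u_p^0-sigma-h_p^0}. This is a genuinely different structural choice from the paper, which never diagonalizes: it works with the coupled $(u_p^0,h_p^0)$ system throughout, and the coupling survives into the self-similar ODE system \eqref{u_p^0_system_phipsi} and the error system \eqref{u_p^0_system_error}. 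Your reformulation buys you a simpler linear principal part, at the cost that the two scalar equations remain coupled through the coefficients $1+u_p^0 = 1+\tfrac{f_++f_-}{2}$ and through the common quadratic source $\p_\eta f_+ \p_\eta f_-$; it is a useful simplification but not a decoupling.

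There is, however, a concrete difference in the self-similar profile and a genuine gap in closing the error estimate. You take $\Phi_\pm$ to solve the \emph{linear} Gaussian ODE $\Phi''+\tfrac{z}{2}\Phi'=0$. The paper instead takes the self-similar profiles $(\phi_*,\psi_*)$ to solve the full coupled \emph{nonlinear} ODE system \eqref{u_p^0_system_selfsimilar}, and only the leading erfc part $(e_\delta,e_\sigma)$ of that solution satisfies the linear ODE; the correction $(\phi,\psi)=(\phi_*-e_\delta,\psi_*-e_\sigma)$ is constructed by a contraction argument in $H_w^2$ (Lemma \ref{u_p^0_lemma_phipsi}). Solving the nonlinear self-similar problem absorbs the critical self-similar nonlinearity so it never appears in the error source; with your choice of $\Phi_\pm$ that nonlinearity is left in $r_\pm$'s forcing and must be re-controlled there.

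The real missing ingredient is the mechanism that beats the borderline $x^{-1}$ source, which you correctly flag as ``the heart of the difficulty'' but do not resolve. A plain weighted-in-$z$ energy estimate for $r_\pm$ still produces a logarithmic divergence in $x$, and smallness of $\delta,\sigma$ alone gives a good constant but not integrability. The paper's resolution is the $\mathcal{Q}(\sigma_0)$ framework \eqref{u_p^0_error_norm}--\eqref{u_p^0_error_norm_def2} with $0<\sigma_0<\tfrac14$: multiplying the error equation by $w\,x^{-2\sigma_0}$ produces, after integrating by parts in $x$, the extra coercive term $(1-\delta)\sigma_0\|w\,x^{-\sigma_0-\frac12}\|_{L^2_xL^2_\eta}^2$ (see \eqref{u_p^0_error_1.1} and Remark \ref{0BL_remark_sigma0}), which is precisely what pairs against the critical quadratic sources such as \eqref{u_p^0_error_1.2}--\eqref{u_p^0_error_1.10}. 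One then trades a harmless $x^{\sigma_0}$ loss (recovered via interpolation in the $p$-range as in Proposition \ref{u_p^0_lemma_selfsimilar}) for global-in-$x$ integrability, and closes by a contraction mapping rather than a continuity argument. Without this $x^{-\sigma_0}$ multiplier your proposal, as written, does not close.
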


Let us solve the quasilinear parabolic problem \eqref{u_p^0_system_eta}, and come back to our original problem \eqref{u_p^0_system} for the proof of Theorem \ref{u_p^0_theorem} at the final part of this section.

One can introduce the shifted unknown for the tangential components
\begin{equation}\label{q_definition}
  \hat{u_p^0}=u_p^0+\delta,\quad \hat{h_p^0}=h_p^0+\sigma
\end{equation}
to shift the moving boundary on $\{\eta=0\}$. With this, the boundary value for the shifted unknown becomes
\begin{equation}\label{q_BC}
  (\hat{u_p^0},\hat{h_p^0})(x,0)=(0,0), \quad
  (\hat{u_p^0},\hat{h_p^0})(x,\infty)=(\delta,\sigma).
\end{equation}

To fix the mismatch on boundaries $\{\eta=0\}$ and $\{\eta=\infty\}$, inspired by the idea in \cite{RGmethod,Iyerglobal1}, we seek for the self-similar solution $(\phi_*,\psi_*)(z)=(\phi_*,\psi_*)(\frac{\eta}{\sqrt x})$ to the following ordinary differential system
\begin{equation}\label{u_p^0_system_selfsimilar}
\begin{cases}
  (1-\delta+\phi_*)^2 \phi''_*+\frac{z}{2}(1-\delta+\phi_*) \phi'_*-\frac{z}{2}\psi_* \psi'_*\\
  \qquad =(1-\delta+\phi_*)(|\psi_*'|^2-|\phi_*'|^2),\\
  (1-\delta+\phi_*)^2 \psi_*''+\frac{z}{2}(1-\delta+\phi_*) \psi_*'-\frac{z}{2}\psi_* \phi'_*=0,\\
  \phi_*(0)=0,\quad \phi_*(\infty)=\delta,\\
  \psi_*(0)=0,\quad \psi_*(\infty)=\sigma.
\end{cases}
\end{equation}

To solve the above problem, let us decompose the leading order correctors $(u_p^0,h_p^0)$ into two parts: the self-similar profile $(\phi_*,\psi_*)(z)$ and the error profile $(w,\Omega)(x,\eta)$, i.e.,
\begin{equation}\label{u_p^0_decomposition}
\begin{cases}
  u_p^0(x,\eta)+\delta=\phi_*(z)+w(x,\eta),\\
  h_p^0(x,\eta)+\sigma=\psi_*(z)+\Omega(x,\eta).
\end{cases}
\end{equation}
Moreover, let us define
\begin{equation}\label{phi_def}
  \phi(z)=\phi_*(z)-e_\delta(z),\quad \psi(z)=\psi_*(z)-e_\sigma(z).
\end{equation}
In \eqref{phi_def}, the Gaussian front solutions $(e_\delta,e_\sigma)$ are introduced to match the value at infinity with an explicit expression (see also \cite{Iyerglobal1})
\begin{equation}\label{e_delta_def}
  e_\delta(z)=\frac{\delta}{\sqrt\pi}\int_0^z e^{-\frac{t^2}{4}}{\rm{d}}t,\quad
  e_\sigma(z)=\frac{\sigma}{\sqrt\pi}\int_0^z e^{-\frac{t^2}{4}}{\rm{d}}t,
\end{equation}
which solve the following ODEs
\begin{equation}\label{e_delta_equation}
\begin{cases}
  e''_\delta+\frac{z}{2}e'_\delta=0,\\
  e_\delta(0)=0,\quad e_\delta(\infty)=\delta,
\end{cases}
\end{equation}
and
\begin{equation}\label{e_sigma_equation}
\begin{cases}
  e''_\sigma+\frac{z}{2}e'_\sigma=0,\\
  e_\sigma(0)=0,\quad e_\sigma(\infty)=\sigma.
\end{cases}
\end{equation}
With $z=\frac{\eta}{\sqrt x}$, we also have $(\p_x-\p_{\eta\eta})e_\delta=0$ and $(\p_x-\p_{\eta\eta})e_\sigma=0$ by chain rule. And one can easily deduce that
\begin{align}
\label{e_sigma_estimate}
  \Vert e_\sigma\Vert_{L_z^\infty}\leq \sigma
  ~{\rm and}~\Vert z^m\p_z^i (e_\delta-\delta,e_\sigma-\sigma)\Vert_{L_z^p}\leq \mathcal{O}(\delta,\sigma)
\end{align}
for any $m,i\in\mathbb{N}$ and $2\leq p\leq\infty$.
\subsection{Estimates for the self-similar profiles}
The problem for the self-similar profiles $(\phi,\psi)(z)$ reads as
\begin{equation}\label{u_p^0_system_phipsi}
\begin{cases}
  (1-\delta+\phi+e_\delta)^2\phi''+\frac{z}{2}(1-\delta+\phi+e_\delta)\phi'\\
  \qquad +(1-\delta+\phi+e_\delta)(-\delta+\phi+e_\delta)e_\delta''-\frac{z}{2}(\psi+e_\sigma)(\psi'+e_\sigma')\\
  \qquad =(1-\delta+\phi+e_\delta)\cdot[(\psi'+e_\sigma)^2-(\phi'+e'_\delta)^2],\\
  (1-\delta+\phi+e_\delta)^2\psi''+\frac{z}{2}(1-\delta+\phi+e_\delta)\psi'-\frac{z}{2}(e_\sigma+\psi)(\phi'+e_\delta')\\
  \qquad +(1-\delta+\phi+e_\delta)(-\delta+\phi+e_\delta)e_\sigma''=0,\\
  \phi(0)=\phi(\infty)=0,\quad \psi(0)=\psi(\infty)=0.
\end{cases}
\end{equation}

\begin{lemma}\label{u_p^0_lemma_phipsi}
For sufficiently small constants $\delta,\sigma$, there exists a unique solution $(\phi,\psi)$ to the problem \eqref{u_p^0_system_phipsi} and there holds
\begin{equation}\label{u_p^0_estimate_phiH2}
  \Vert (\phi,\psi)\Vert_{H_w^2}\lesssim R(\delta,\sigma),
\end{equation}
where the $H_w^2$-norm is defined by
\begin{equation}\label{u_p^0_phiH2_norm}
  \Vert f\Vert_{H_w^2}^2:=\Vert f''\Vert_{L^2_z}^2+\Vert (1+z) f'\Vert_{L^2_z}^2+\Vert (1+z) f\Vert_{L^2_z}^2.
\end{equation}
Moreover, for any $m,k\in\mathbb{N}$, it holds that
\begin{equation}\label{u_p^0_estimate_phiHk}
  \Vert z^m(\phi,\psi)\Vert_{H_z^k}\lesssim R(\delta,\sigma;m,k).
\end{equation}
The quantity $R(\delta,\sigma;m,k)$ is defined by $R(\delta,\sigma;m,k)=\max \{\frac{\sigma}{2}, \Vert z^m\p_z^k(e_\delta-\delta)\Vert_{L_z^2}\}.$
\end{lemma}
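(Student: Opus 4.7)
The plan is to treat \eqref{u_p^0_system_phipsi} as a small perturbation of the decoupled Gaussian ODE $\mathcal{L}_0 f := f''+\tfrac{z}{2}f'=F$ with homogeneous Dirichlet data at $0$ and $\infty$, and to run a contraction mapping argument in the $H_w^2$-ball of radius $C\,R(\delta,\sigma)$. After dividing both equations by the positive leading coefficient $(1-\delta+\phi+e_\delta)^2$, which is close to $1$ when $\delta,\sigma$ and $\phi$ are small, the right-hand sides collect into a Gaussian forcing generated by $e_\delta,e_\sigma$ and their derivatives (of size $R(\delta,\sigma)$ by \eqref{e_sigma_estimate}), plus contributions that are at least quadratic in $(\phi,\psi,\phi',\psi')$ or carry an explicit small prefactor from $\sigma+\psi$, $-\delta+\phi+e_\delta$, etc.

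The first ingredient is a linear a priori estimate for $\mathcal{L}_0 f=F$ with $f(0)=f(\infty)=0$. Testing against $f$ and using the identity $\int_0^\infty \tfrac{z}{2}ff'\,dz=-\tfrac{1}{4}\int_0^\infty f^2\,dz$ yields $\|f'\|_{L^2}^2+\tfrac{1}{4}\|f\|_{L^2}^2=-\int fF$, so $\|f\|_{H^1}\lesssim \|F\|_{L^2}$. Testing next against $z^2 f$ and exploiting $\int \tfrac{z^3}{2} f f'=-\tfrac{3}{4}\int z^2 f^2$, together with the previous $L^2$-bound to absorb a harmless $\int f^2$ term, gives $\|zf'\|_{L^2}+\|zf\|_{L^2}\lesssim \|(1+z)F\|_{L^2}$. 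Reading $f''=F-\tfrac{z}{2}f'$ pointwise then yields $\|f''\|_{L^2}\lesssim \|(1+z)F\|_{L^2}$, so $\|f\|_{H_w^2}\lesssim \|(1+z)F\|_{L^2}$.

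Based on this linear theory, I define a map $T:(\tilde\phi,\tilde\psi)\mapsto(\phi,\psi)$ by solving the linear system $\mathcal{L}_0\phi=\mathcal{F}_1[\tilde\phi,\tilde\psi]$, $\mathcal{L}_0\psi=\mathcal{F}_2[\tilde\phi,\tilde\psi]$ with the right-hand sides described above, and show that $T$ maps the $H_w^2$-ball $B_\rho$ with $\rho=CR(\delta,\sigma)$ into itself and is a contraction. The Sobolev embedding $H_w^2\hookrightarrow L^\infty$ controls products such as $\tilde\phi\tilde\phi''$ and $(\tilde\phi')^2$ by $\rho^2\ll\rho$, while the smallness of $\delta,\sigma$ absorbs the perturbative coefficients arising from $(1-\delta+\tilde\phi+e_\delta)^{-1}\approx 1$ and from the $\sigma$-weighted cross terms. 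The Banach fixed-point theorem produces the unique solution and delivers \eqref{u_p^0_estimate_phiH2}. The higher-order estimate \eqref{u_p^0_estimate_phiHk} follows by differentiating $k$ times, testing against $z^{2m}\partial_z^k\phi$ (and analogously for $\psi$), and controlling commutators by the previously established lower-order norms; the Gaussian decay of $e_\delta,e_\sigma$ supplies exactly the quantity $\|z^m\partial_z^k(e_\delta-\delta)\|_{L^2}$ appearing in $R(\delta,\sigma;m,k)$.

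The main obstacle is the design of the weighted test functions in Step~2: the natural candidate $(1+z)^2 f$ produces a coefficient on $f^2$ with the wrong sign near $z=0$, so one must either introduce a corrective unweighted estimate (as sketched above) or invoke a Hardy-type absorption based on $f(0)=0$. A secondary subtlety is that the coupling term $\tfrac{z}{2}(\psi+e_\sigma)\phi'$ looks linear in $\phi'$ without a decaying prefactor, but it comes multiplied by the small factor $\sigma+\psi$, so after the sharp linear estimate it is absorbable into the contraction constant provided $\delta,\sigma$ are chosen small enough.
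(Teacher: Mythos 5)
Your plan goes off the rails at the point where you move the linear cross-coupling $\tfrac{z}{2}e_\sigma\psi'$ (in the $\phi$-equation) and $\tfrac{z}{2}e_\sigma\phi'$ (in the $\psi$-equation) to the right-hand side and run a contraction mapping built on the scalar Gaussian operator $\mathcal{L}_0$. With your linear estimate $\|f\|_{H_w^2}\lesssim\|(1+z)F\|_{L^2}$, the forcing would have to contain $\tfrac{z}{2}e_\sigma\tilde\psi'$, and you would need $\|(1+z)\tfrac{z}{2}e_\sigma\tilde\psi'\|_{L^2}\lesssim\rho$ for $\tilde\psi$ in the $H_w^2$-ball. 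But $e_\sigma(z)\to\sigma$ as $z\to\infty$ without decay, so the relevant coefficient $\tfrac{z}{2}(1+z)e_\sigma$ behaves like $\tfrac{\sigma}{2}z^2$ at infinity, while $H_w^2$ only gives you control of $(1+z)\tilde\psi'$. The extra power of $z$ is not absorbable: no matter how you distribute the factors ($\|ze_\sigma\|_{L^\infty}$, $\|z(1+z)e_\sigma\|_{L^2}$, or $\|z(1+z)\psi'\|_{L^2}$), one of the norms is infinite or not controlled by $\|\psi\|_{H_w^2}$. The smallness of $\sigma$ reduces the constant but cannot render an infinite quantity finite; your remark that the term ``is absorbable into the contraction constant'' because of the small prefactor conflates smallness of the coefficient with integrability of the weighted term. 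The same problem arises already at the unweighted level (your ``Step 1'') if you insist on decoupling: $|\int\tfrac{z}{2}e_\sigma\psi'\phi|$ cannot be bounded by $\|\psi'\|_{L^2}\|\phi\|_{L^2}$ without a linearly growing coefficient.

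The paper avoids this precisely by \emph{not} decoupling: the symmetric cross terms $\tfrac{z}{2}e_\sigma\psi'$ and $\tfrac{z}{2}e_\sigma\phi'$ are kept on the left-hand side of the linearized system \eqref{u_p^0_system_self_linear1} and \eqref{u_p^0_system_self_linear2}, and in every energy identity the contribution is paired as $\int\tfrac{z^{k+1}}{2}e_\sigma(\psi'\phi+\phi'\psi)=\int\tfrac{z^{k+1}}{2}e_\sigma(\phi\psi)'$, which after integration by parts produces $-\tfrac{1}{2}\int\partial_z(z^{k+1}e_\sigma)\,\phi\psi$. The derivative lands on the weight and on $e_\sigma$, the factor $z^{k+1}e_\sigma'$ is Gaussian-decaying, and the remaining $(k+1)z^ke_\sigma$ is of the same weight order as $\phi\psi$, with an $O(\sigma)$ coefficient. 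This algebraic cancellation — which exists only because the coupling is antisymmetric-in-structure and the two equations are tested simultaneously — is what allows the paper's Lax–Milgram coercivity in Step~1 and the weighted estimate with multipliers $z^2\phi,z^2\psi$ to close. To repair your proof you would have to restore this structure, i.e.\ keep the coupled linear operator $\Phi\mapsto(-\phi''-\tfrac{z}{2}\phi'+\tfrac{z}{2}e_\sigma\psi',\,-\psi''-\tfrac{z}{2}\psi'+\tfrac{z}{2}e_\sigma\phi')$ and prove its solvability and a priori bound directly (for instance via the paper's bilinear form), rather than bootstrapping from the scalar $\mathcal{L}_0$ theory.

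Two smaller points: (i) the division by $(1-\delta+\phi+e_\delta)^2$ is legitimate inside the ball but is heavier than the paper's cleaner device of writing $(1-\delta+\phi+e_\delta)^2\phi''=\phi''+[(1-\delta+\phi+e_\delta)^2-1]\phi''$ and sending the perturbative coefficient to the right; (ii) your computation of the weighted identities (testing against $f$ and $z^2f$, then reading $f''=F-\tfrac{z}{2}f'$) is correct as far as it goes and matches in spirit the paper's Steps~1--4, including the need for a separate unweighted estimate to absorb the sign-indefinite piece near $z=0$.
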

\begin{proof}
To perform our analysis, let us assume that $z\in[0,r]$ for any fixed constant $r>0$ and let $r\rightarrow\infty$ finally. First, let us consider the following linearized problem of \eqref{u_p^0_system_phipsi}
\begin{equation}\label{u_p^0_system_self_linear1}
\begin{cases}
  -\phi''-\frac{z}{2}\phi'+\frac{z}{2}e_\sigma\psi'=f_1,\\
  -\psi''-\frac{z}{2}\psi'+\frac{z}{2}e_\sigma\phi'=f_2,\\
  \phi(0)=\phi(r)=0,\quad \psi(0)=\psi(r)=0.
\end{cases}
\end{equation}

To study \eqref{u_p^0_system_self_linear1}, let us denote
\begin{align*}
  \Phi=(\phi,\psi)\in H_0^1(0,r)\times H_0^1(0,r)\triangleq X,~ {\rm{and}}~
  F=(f_1,f_2)\in H^{-1}\times H^{-1}\triangleq X^{-1}.
\end{align*}
With this, define the bilinear form $B[\cdot,\cdot]:X\times X\rightarrow \mathbb{R}$ as follows
\begin{align*}
  B[\Phi,\Phi_1]
  :=\int_0^r \left[\phi'\phi'_1-\frac{z}{2}\phi'\phi_1+\frac{z}{2}e_\sigma\psi'\phi_1\right]+\int_0^r \left[\psi'\psi'_1-\frac{z}{2}\psi'\psi_1+\frac{z}{2}e_\sigma\phi'\psi_1\right]~
\end{align*}
for any $\Phi_1=(\phi_1,\psi_1)\in X$.

It is easy to verify that the bilinear form is bounded and coercive on $X$. Indeed, we have
\begin{equation*}
  \big| B[\Phi,\Phi_1] \big|\leq C(\sigma,r)\Vert \Phi\Vert_{X}\cdot\Vert \Phi_1\Vert_{X}
\end{equation*}
and
\begin{align*}
  \big| B[\Phi,\Phi] \big|
  =&\int_0^r|\Phi'|^2+\frac{1}{4}\int_0^r|\Phi|^2-\frac{1}{2}\int_0^r\psi\phi\cdot\p_z(ze_\sigma)\\
  \geq& \int_0^r|\Phi'|^2+\frac{1}{4}\int_0^r|\Phi|^2-\frac{1}{2}(\Vert e_\sigma\Vert_{L^\infty}
  +\Vert ze_\sigma'\Vert_{L^\infty})\Vert \psi\Vert_{L^2}\Vert \phi\Vert_{L^2}\\
  \geq& C(\sigma)\Vert \Phi\Vert_{X}^2
\end{align*}
for $\sigma\ll 1$. Then for any $F\in X^{-1}$, applying the Lax-Milgram theorem, one can establish the existence and uniqueness of the $H^1$ weak solution $(\psi,\phi)$ to problem \eqref{u_p^0_system_self_linear1}. The elliptic regularity theory implies $H^2$ regularity of $\Phi$ for any $F\in L^2(0,r)$.

In next step, we are going to use the contraction mapping theorem by investigating the linearized problem from the nonlinear problem \eqref{u_p^0_system_phipsi} as below:
\begin{equation}\label{u_p^0_system_self_linear2}
\begin{cases}
    -\phi''-\frac{z}{2}\phi'+\frac{z}{2}e_\sigma\psi'
  =\frac{z}{2}(\bar\phi+e_\delta-\delta)\bar\phi'-\frac{z}{2}\bar\psi\bar\psi'-\frac{z}{2}(e_\sigma+\bar\psi)e_\sigma'\\
  \qquad\qquad\qquad\qquad\qquad
    +(\bar\phi+e_\delta-\delta)(2-\delta+\bar\phi+e_\delta)\bar\phi''\\
  \qquad\qquad\qquad\qquad\qquad
  +(\bar\phi+e_\delta-\delta)(1-\delta+\bar\phi+e_\delta)e''_\delta\\
  \qquad\qquad\qquad\qquad\qquad
  -(1-\delta+\bar\phi+e_\delta)\big[(\bar\psi'+e_\sigma')^2-(\bar\phi'+e'_\delta)^2\big],\\
    -\psi''-\frac{z}{2}\psi'+\frac{z}{2}e_\sigma\phi'
  =\frac{z}{2}(\bar\phi+e_\delta-\delta)\bar\psi'-\frac{z}{2}\bar\psi\bar \phi'-\frac{z}{2}(e_\sigma+\bar\psi)e'_\delta\\
  \qquad\qquad\qquad\qquad\qquad
  +(\bar\phi+e_\delta-\delta)(2-\delta+\bar\phi+e_\delta)\bar\psi''\\
  \qquad\qquad\qquad\qquad\qquad
  +(\bar\phi+e_\delta-\delta)(1-\delta+\bar\phi+e_\delta)e_\sigma'',\\
  \phi(0)=\phi(r)=0,\quad \psi(0)=\psi(r)=0.
\end{cases}
\end{equation}

To derive the estimates, we perform the following several steps.

{\bf Step 1. The basic $L^2$ estimates for $(\phi^{(i)},\psi^{(i)})$, $i$=0,1.}

Multiplying $\eqref{u_p^0_system_self_linear2}_1$ by $\phi$ and $\eqref{u_p^0_system_self_linear2}_2$ by $\psi$, integrating them over $[0,r]$ with respect to $z$ variable, for the left hand side, one can obtain that
\begin{align}\label{0BL_err1}
\begin{split}
  &\int_0^r \left[(-\phi''-\frac{z}{2}\phi'+\frac{z}{2}e_\sigma\psi')\cdot\phi+(-\psi''-\frac{z}{2}\psi'+\frac{z}{2}e_\sigma\phi')\cdot\psi\right]{\rm{d}}z\\
  &=\int_0^r |(\phi',\psi')|^2+\frac{1}{4}\int_0^r |(\phi,\psi)|^2-\frac{1}{2}\int_0^r (e_\sigma+ze_\sigma')\psi\phi.
\end{split}
\end{align}

For the terms in the right-hand side, there holds that
\begin{align}\label{0BL_err1.1}
\begin{split}
  I_1&=\int_0^r \left[\frac{z}{2}(\bar\phi+e_\delta-\delta)(\bar\phi',\bar\psi')\cdot (\phi,\psi)\right]{\rm{d}}z\\
  &\leq\Vert (\phi,\psi)\Vert_{L^2}\cdot \Vert (\bar\phi',\bar\psi')\Vert_{L^2}
  \cdot\frac{1}{2}\left[\Vert z\bar\phi\Vert_{L^\infty}+\Vert z(e_\delta-\delta)\Vert_{L^\infty}\right],
\end{split}
\end{align}

\begin{align}\label{0BL_err1.2}
\begin{split}
  I_2=\int_0^r \left[-\frac{z}{2}(\bar\psi \bar\psi'\cdot \phi+\bar\psi \bar\phi'\cdot\psi)\right]{\rm{d}}z
  \leq\Vert (\phi,\psi)\Vert_{L^2}\cdot \Vert (\bar\phi',\bar\psi')\Vert_{L^2}\cdot\frac{1}{2}\Vert z\bar\psi\Vert_{L^\infty},
\end{split}
\end{align}

\begin{align}\label{0BL_err1.3}
\begin{split}
  I_3&=\int_0^r \left[(\bar\phi+e_\delta-\delta)(2-\delta+\bar\phi+e_\delta)(\bar\phi'',\bar\psi'')\cdot (\phi,\psi)\right]{\rm{d}}z\\
  &\leq\Vert (\phi,\psi)\Vert_{L^2}\cdot \Vert (\bar\phi'',\bar\psi'')\Vert_{L^2}
  \cdot\Vert (\bar\phi+e_\delta-\delta)(2-\delta+\bar\phi+e_\delta)\Vert_{L^\infty},
\end{split}
\end{align}

\begin{align}\label{0BL_err1.4}
\begin{split}
  I_4&=\int_0^r -\frac{z}{2}(e_\sigma+\bar\psi)\cdot (e'_\sigma \phi +e'_\delta \psi){\rm{d}}z\\
  &\leq \frac{1}{2}\Vert (\phi,\psi)\Vert_{L^2}\cdot \Vert z(e'_\delta,e'_\sigma)\Vert_{L^2}\cdot
  \left(\Vert e_\sigma\Vert_{L^\infty}+\Vert \bar\psi\Vert_{L^\infty}\right),
\end{split}
\end{align}

\begin{align}\label{0BL_err1.5}
\begin{split}
  I_5&=\int_0^r (\bar\phi+e_\delta-\delta)(1-\delta+\bar\phi+e_\delta)\cdot(e''_\delta \phi+e''_\sigma\psi){\rm{d}}z\\
  &\leq\Vert (\phi,\psi)\Vert_{L^2}\cdot \Vert (e''_\delta,e''_\sigma)\Vert_{L^2}
  \cdot\Vert (\bar\phi+e_\delta-\delta)(1-\delta+\bar\phi+e_\delta)\Vert_{L^\infty},
\end{split}
\end{align}

\begin{align}\label{0BL_err1.6}
\begin{split}
  I_6&=\int_0^r \left\{-(1-\delta+\bar\phi+e_\delta)\left[(\bar\psi'+e'_\sigma)^2-(\bar\phi'+e'_\delta)^2\right]\cdot \phi\right\}{\rm{d}}z\\
  &\leq
  \Vert \phi\Vert_{L^2}\cdot \left[\Vert (\bar\phi',\bar\psi')\Vert_{L^2}\cdot\Vert (\bar\phi',\bar\psi')\Vert_{L^\infty}\cdot\Vert (1-\delta+\bar\phi+e_\delta)\Vert_{L^\infty}\right.\\
  &\quad \left. +(2\Vert (\bar\phi',\bar\psi')\Vert_{L^2}+\Vert (e'_\sigma,e'_\delta)\Vert_{L^2})\Vert e'_\delta\Vert_{L^\infty}\cdot\Vert (1-\delta+\bar\phi+e_\delta)\Vert_{L^\infty}\right].
\end{split}
\end{align}

{\bf Step 2. The higher estimates for $(\phi,\psi)$.}

Testing the system \eqref{u_p^0_system_self_linear2}$_{1,2}$ by $-\phi''$ and $-\psi''$ respectively, we have
\begin{align}\label{0BL_err2}
\begin{split}
  &\int_0^r \left[(\phi''+\frac{z}{2}\phi'-\frac{z}{2}e_\sigma\psi')\cdot \phi''
  +(\psi''+\frac{z}{2}\psi'-\frac{z}{2}e_\sigma\phi')\cdot \psi''\right]{\rm{d}}z\\
  &=\int_0^r |(\phi'',\psi'')|^2+\frac{1}{2}\int_0^r z(\phi'\phi''+\psi'\psi'')-\frac{1}{2}\int_0^r ze_\sigma(\psi'\phi''+\phi'\psi'').
\end{split}
\end{align}

The terms on the right-hand side are handled by similar arguments in estimates \eqref{0BL_err1.1}-\eqref{0BL_err1.6} in Step 1.

{\bf Step 3. The weighted estimates.}

The application of the multipliers $-z\phi'$ and $-z\psi'$ yields that
\begin{align}\label{0BL_err3}
\begin{split}
  &\int_0^r \left[(\phi''+\frac{z}{2}\phi'-\frac{z}{2}e_\sigma\psi')\cdot z\phi'+(\psi''+\frac{z}{2}\psi'-\frac{z}{2}e_\sigma\phi')\cdot z\psi'
\right]{\rm{d}}z\\
  &=\frac{1}{2}\int_0^r |z(\phi',\psi')|^2+\int_0^r z(\phi'\phi''+\psi'\psi'')-\int_0^r z^2e_\sigma\phi'\psi'.
\end{split}
\end{align}

The estimates for the remaining terms can be obtained by following the similar arguments as in \eqref{0BL_err1.1}-\eqref{0BL_err1.6}.

We take the multipliers $z^{2}\phi$ and $z^{2}\psi$ in the final step to achieve that
\begin{align}\label{0BL_err4}
\begin{split}
&  \int_0^r \left[(-\phi''-\frac{z}{2}\phi'+\frac{z}{2}e_\sigma\psi')\cdot (z^{2}\phi)
  +(-\psi''-\frac{z}{2}\psi'+\frac{z}{2}e_\sigma\phi')\cdot (z^{2}\psi)\right]{\rm{d}}z\\
  =&\int_0^r \left( |z(\phi',\psi')|^2+\frac{3}{4} |z(\phi,\psi)|^2+2z(\phi'\phi+\psi'\psi)- \frac{3 e_\sigma+ze_\sigma'}{2}z^{2}\phi\psi\right).
\end{split}
\end{align}

 We treat the terms on the right-hand side as follows
\begin{align}\label{0BL_err4.1}
\begin{split}
  J_1&=\int_0^r \left[\frac{z}{2}(\bar\phi+e_\delta-\delta)(\bar\phi',\bar\psi')\cdot z^{2}(\phi,\psi)\right]{\rm{d}}z\\
  &\leq\Vert z(\phi,\psi)\Vert_{L^2}\cdot \Vert z(\bar\phi',\bar\psi')\Vert_{L^2}
  \cdot\frac{1}{2}\left[\Vert z\bar\phi\Vert_{L^\infty}+\Vert z(e_\delta-\delta)\Vert_{L^\infty}\right],
\end{split}
\end{align}

\begin{align}\label{0BL_err4.2}
\begin{split}
  J_2=&\int_0^r -\frac{z^3}{2}(\bar\psi \bar\psi'\cdot\phi+\bar\psi \bar\phi'\cdot\psi){\rm{d}}z\\
  \leq&\Vert z(\phi,\psi)\Vert_{L^2}\cdot \Vert z(\bar\phi',\bar\psi')\Vert_{L^2}\cdot\frac{1}{2}\Vert z\bar\psi\Vert_{L^\infty},
\end{split}
\end{align}

\begin{align}\label{0BL_err4.3}
\begin{split}
  J_3&=\int_0^r \left[(\bar\phi+e_\delta-\delta)(2-\delta+\bar\phi+e_\delta)(\bar\phi'',\bar\psi'')\cdot z^{2}(\phi,\psi)\right]{\rm{d}}z\\
  &\leq\Vert z(\phi,\psi)\Vert_{L^2}\cdot \Vert (\bar\phi'',\bar\psi'')\Vert_{L^2}
  \cdot(\Vert z\bar\phi\Vert_{L^\infty}\cdot \Vert (2-\delta+\bar\phi+e_\delta)\Vert_{L^\infty}\\
  &\quad +\Vert z(e_\delta-\delta)\Vert_{L^\infty}\cdot \Vert (2-\delta+\bar\phi+e_\delta)\Vert_{L^\infty}),
\end{split}
\end{align}

\begin{align}\label{0BL_err4.4}
\begin{split}
  J_4&=\int_0^r -\frac{z^3}{2}(e_\sigma+\bar\psi)\cdot (e'_\sigma\phi +e'_\delta\psi){\rm{d}}z\\
  &\leq\Vert z(\phi,\psi)\Vert_{L^2}\cdot \Vert z^{2}(e'_\delta,e'_\sigma)\Vert_{L^2}
  \cdot\frac{1}{2}(\Vert e_\sigma\Vert_{L^\infty}+\Vert \bar\psi\Vert_{L^\infty}),
\end{split}
\end{align}

\begin{align}\label{0BL_err4.5}
\begin{split}
  J_5&=\int_0^r z^{2}(\bar\phi+e_\delta-\delta)(1-\delta+\bar\phi+e_\delta)\cdot (e''_\delta \phi+e''_\sigma\psi){\rm{d}}z\\
  &\leq\Vert z(\phi,\psi)\Vert_{L^2}\cdot \Vert z (e''_\delta,e''_\sigma)\Vert_{L^2}
  \cdot\Vert (\bar\phi+e_\delta-\delta)(1-\delta+\bar\phi+e_\delta)\Vert_{L^\infty},
\end{split}
\end{align}

\begin{align}\label{0BL_err4.6}
\begin{split}
  J_6&=\int_0^r \left\{-(1-\delta+\bar\phi+e_\delta)\left[(\bar\psi'+e'_\sigma)^2-(\bar\phi'+e'_\delta)^2\right]\cdot z^{2}\phi\right\}{\rm{d}}z\\
  &\leq
  \Vert z\phi\Vert_{L^2}\cdot \left[\Vert z(\bar\phi',\bar\psi')\Vert_{L^2}\cdot\Vert (\bar\phi',\bar\psi')\Vert_{L^\infty}\cdot\Vert (1-\delta+\bar\phi+e_\delta)\Vert_{L^\infty}\right.\\
  &\quad \left.+(2\Vert z(\bar\phi',\bar\psi')\Vert_{L^2}+\Vert z(e'_\sigma,e'_\delta)\Vert_{L^2})\Vert e'_\delta\Vert_{L^\infty}\cdot\Vert (1-\delta+\bar\phi+e_\delta)\Vert_{L^\infty})\right].
\end{split}
\end{align}

Combining the above steps, choosing a suitable positive constant $K$, we derive
\begin{align}\label{0BL_err5}
\begin{split}
  & \Vert (\phi'',\psi'')\Vert_{L^2}^2+\frac{3}{2}\Vert z(\phi',\psi')\Vert_{L^2}^2\\
  &+K\Vert (\phi',\psi')\Vert_{L^2}^2+\frac{3}{4}\Vert z(\phi,\psi)\Vert_{L^2}^2+\frac{K}{4}\Vert (\phi,\psi)\Vert_{L^2}^2\\
 \leq &\frac{3+\sigma}{2}\Vert z(\phi',\psi')\Vert_{L^2}\Vert (\phi'',\psi'')\Vert_{L^2}+\frac{\sigma}{2}\Vert z(\phi',\psi')\Vert_{L^2}^2\\
 &
  +2\Vert (\phi,\psi)\Vert_{L^2}\Vert z(\phi',\psi')\Vert_{L^2} +\frac{ K}{4}(\sigma+\Vert ze'_\sigma\Vert_{L^\infty})\Vert (\phi,\psi)\Vert_{L^2}^2\\
  &
  +\frac{3\sigma+\Vert ze'_\sigma\Vert_{L^\infty}}{4}\Vert z(\phi,\psi)\Vert_{L^2}^2+\mathcal{J},
\end{split}
\end{align}
where
\begin{equation}\label{0BL_err5.1}
\begin{aligned}
  \mathcal{J}=
  & \left\{\Vert (\phi'',\psi'')\Vert_{L^2}+\Vert z(\phi',\psi')\Vert_{L^2}+\Vert z(\phi,\psi)\Vert_{L^2}+K\Vert (\phi,\psi)\Vert_{L^2}\right\}\\
  & \cdot\left\{\Vert (\bar\phi'',\bar\psi'')\Vert_{L^2}\cdot
  \Vert (1+z)(\bar\phi,e_\delta-\delta)\Vert_{L^\infty}\cdot\Vert (2-\delta+\bar\phi+e_\delta)\Vert_{L^\infty}\right.\\
  &+\frac{1}{2}\Vert (1+z)(\bar\phi',\bar\psi')\Vert_{L^2}\cdot\Vert z(\bar\phi,\bar\psi,e_\delta-\delta)\Vert_{L^\infty}\\
  &+\frac{1}{2}\Vert (1+z)ze'_\delta\Vert_{L^2}\Vert (e_\sigma,\bar\psi)\Vert_{L^\infty}\\
  & \left.+\Vert (1+z) (e'_\delta,e'_\sigma)\Vert_{L^2}\cdot\Vert e'_\delta\Vert_{L^\infty}\cdot\Vert (1-\delta+\bar\phi+e_\delta)\Vert_{L^\infty}\right.\\
  & \left.+\Vert (1+z)(\bar\phi',\bar\psi')\Vert_{L^2}\cdot\Vert (\bar\phi',\bar\psi',e'_\delta)\Vert_{L^\infty}\cdot\Vert (1-\delta+\bar\phi+e_\delta)\Vert_{L^\infty}\right.\\
  & \left.+\Vert (1+z) (e''_\delta,e''_\sigma)\Vert_{L^2}\cdot\Vert (\bar\phi+e_\delta-\delta)(1-\delta+\bar\phi+e_\delta)\Vert_{L^\infty} \right\}.
\end{aligned}
\end{equation}

For any $l\in\mathbb{N}$, we denote the parameter
\begin{equation}\label{R_delta_sigma_def}
  R(\delta,\sigma):={\rm{max}}\{\Vert (e_\sigma,z^l(e_\delta-\delta))\Vert_{L^\infty},\Vert z^l (e'_\delta,e'_\sigma)\Vert_{L^\infty},\Vert z^l (e'_\delta,e'_\sigma,e''_\delta,e''_\sigma)\Vert_{L^2}\}.
\end{equation}

For the right-hand side in \eqref{0BL_err5}, one may use the smallness of $\sigma$ to absorb some other terms to the left-hand side.
In addition, the $L^\infty$ norms of $(\bar\phi,\bar\psi)$ in $\mathcal{J}$ will be controlled by  $\Vert (\bar\phi,\bar\psi)\Vert_{H_w^2}$, for example,
\begin{equation*}
  |z\bar\psi|^2=\int_0^z\p_z|z\bar\psi|^2\leq 2\Vert z\bar\psi\Vert_{L^2}\Vert \bar\psi\Vert_{L^2}+2\Vert z\bar\psi\Vert_{L^2}\Vert z\p_z\bar\psi\Vert_{L^2}
  \leq 4\Vert \bar\psi\Vert_{H_w^2}^2
\end{equation*}
and
\begin{equation*}
  |\bar\psi|^2=\int_z^\infty\p_z|\bar\psi|^2\leq 2\Vert \bar\psi\Vert_{L^2}\Vert \p_z\bar\psi\Vert_{L^2}\leq 2\Vert \bar\psi\Vert_{H_w^2}^2.
\end{equation*}
Similar arguments yield that
\begin{equation*}
  |\bar\psi'|^2\leq 2\Vert \bar\psi\Vert_{H_w^2}^2.
\end{equation*}

Suppose that $\Vert (\bar\phi,\bar\psi)\Vert_{H_w^2}\leq R(\delta,\sigma)$. Applying Cauchy inequality to \eqref{0BL_err5}, we obtain
\begin{align}
  \frac{1}{8}\Vert (\phi,\psi)\Vert_{H_w^2}^2
  \leq (37+K)R(\delta,\sigma)^5+(27+K)R(\delta,\sigma)^3.
\end{align}

Choosing small enough constants $\delta,\sigma$ so that $R(\delta,\sigma)^2[(37+K)R(\delta,\sigma)^2+27+K]<\frac{1}{8}$, we obtain that
\begin{equation}\label{0BL_err_contraction1}
  (\phi,\psi)=T(\bar\phi,\bar\psi)\in B_{R(\delta,\sigma)}\subset H_w^2, ~{\rm{for}}~{\rm{any}}~(\bar\phi,\bar\psi)\in B_{R(\delta,\sigma)}\subset H_w^2,
\end{equation}
which yields that the mapping $T:(\bar\phi,\bar\psi)\mapsto (\phi,\psi)$ maps the ball $B_{R(\delta,\sigma)}=\{\Vert (\bar\phi,\bar\psi)\Vert_{H_w^2}\leq R(\delta,\sigma)\}$ in $H_w^2$ into itself.

It suffices to verify that the mapping $T$ is contracting on $B_{R(\delta,\sigma)}\subset H_w^2$. For any two pairs $(\bar\phi_{1},\bar\psi_{1})$, $(\bar\phi_{2},\bar\psi_{2})$ in $B_{R(\delta,\sigma)}$, the system for $(\phi_{1}-\phi_{2},\psi_{1}-\psi_{2})$ reads as
\begin{equation}\label{u_p^0_system_self_minus_linear2}
\begin{cases}
    -(\phi_{1}''-\phi_{2}'')-\frac{z}{2}(\phi_{1}'-\phi_{2}')+\frac{z}{2}e_\sigma(\psi_{1}'-\psi_{2}')\\
  \quad =\frac{z}{2}(e_\delta-\delta)(\bar\phi_{1}'-\bar\phi_{2}')+\frac{z}{2}[\bar\phi_{1}(\bar\phi_{1}'-\bar\phi_{2}')
  +\bar\phi_{2}'(\bar\phi_{1}-\bar\phi_{2})]\\
  \qquad -\frac{z}{2}e'_\sigma(\bar\psi_{1}-\bar\psi_{2})-\frac{z}{2}[\bar\psi_{1}(\bar\psi_{1}'-\bar\psi_{2}')+\bar\psi_{2}'(\bar\psi_{1}-\bar\psi_{2})]\\
  \qquad +(e_\delta-\delta)(2-\delta+e_\delta)(\bar\phi''_{1}-\bar\phi''_{2})+\bar\phi_{1}''(\bar\phi_{1}+\bar\phi_{2})(\bar\phi_{1}-\bar\phi_{2})\\
  \qquad +|\bar\phi_{2}|^2(\bar\phi_{1}''-\bar\phi_{2}'')+2(1-\delta+e_\delta)[\bar\phi_{1}(\bar\phi_{1}''-\bar\phi_{2}'')
  +\bar\phi_{2}''(\bar\phi_{1}-\bar\phi_{2})]\\
  \qquad +(1-2\delta+2e_\delta)(\bar\phi_{1}-\bar\phi_{2})e''_\delta +e_\delta''(\bar\phi_{1}+\bar\phi_{2})(\bar\phi_{1}-\bar\phi_{2})\\
  \qquad -(1-\delta+e_\delta+\bar\phi_{1})\cdot \left[(\bar\psi_{1}'+\bar\psi_{2}')(\bar\psi_{1}'-\bar\psi_{2}')-(\bar\phi_{1}'+\bar\phi_{2}')(\bar\phi_{1}'-\bar\phi_{2}')\right]\\
  \qquad +2(1-\delta+e_\delta+\bar\phi_{1})\left[(\bar\psi_{1}'-\bar\psi_{2}')e_\sigma'+(\bar\phi_{1}'-\bar\phi_{2}')e_\delta'\right]\\
  \qquad -(\bar\phi_{1}-\bar\phi_{2})\left[|\bar\psi_2'|^2-|\bar\phi_2'|^2+(|e_\sigma'|^2-|e_\delta'|^2)+2(e_\sigma'\bar\psi_2'-e_\delta'\bar\phi_2')\right],\\
      -(\psi_{1}''-\psi_{2}'')-\frac{z}{2}(\psi_{1}'-\psi_{2}')+\frac{z}{2}e_\sigma(\phi_{1}'-\phi_{2}')\\
  \quad =\frac{z}{2}(e_\delta-\delta)(\bar\psi_{1}'-\bar\psi_{2}')+\frac{z}{2}[\bar\phi_{1}(\bar\psi_{1}'-\bar\psi_{2}')
  +\bar\psi_{2}'(\bar\phi_{1}-\bar\phi_{2})]\\
  \qquad -\frac{z}{2}e'_\delta(\bar\psi_{1}-\bar\psi_{2})-\frac{z}{2}[\bar\psi_{1}(\bar\phi_{1}'-\bar\phi_{2}')
  +\bar\phi_{2}'(\bar\psi_{1}-\bar\psi_{2})]\\
  \qquad +(e_\delta-\delta)(2-\delta+e_\delta)(\bar\psi''_{1}-\bar\psi''_{2})
  +\bar\psi_{1}''(\bar\phi_{1}+\bar\phi_{2})(\bar\phi_{1}-\bar\phi_{2})\\
  \qquad +|\bar\phi_{2}|^2(\bar\psi_{1}''-\bar\psi_{2}'')+2(1-\delta+e_\delta)[\bar\phi_{1}(\bar\psi_{1}''-\bar\psi_{2}'')
  +\bar\psi_{2}''(\bar\phi_{1}-\bar\phi_{2})]\\
  \qquad +(1-2\delta+2e_\delta)(\bar\phi_{1}-\bar\phi_{2})e''_\sigma +e_\sigma''(\bar\phi_{1}+\bar\phi_{2})(\bar\phi_{1}-\bar\phi_{2}),\\
  (\phi_{1}-\phi_{2},\psi_{1}-\psi_{2})|_{z=j}=0,\quad j=0,r.
\end{cases}
\end{equation}
Modifying the arguments applied on the problem \eqref{u_p^0_system_self_linear2}, one obtains that
\begin{equation}
  \Vert (\phi_{1}-\phi_{2},\psi_{1}-\psi_{2})\Vert_{H_w^2}^2\leq R(\delta,\sigma)^2 \Vert (\bar\phi_{1}-\bar\phi_{2},\bar\psi_{1}-\bar\psi_{2})\Vert_{H_w^2}^2.
\end{equation}

Then the contraction mapping theorem ensures a pair of unique solution $(\phi,\psi)$ to the nonlinear problem \eqref{u_p^0_system_phipsi} for $z\in [0,r]$. Besides, since the solution belongs to the ball $B_{R(\delta,\sigma)}$, it implies that
\begin{equation}
  \Vert (\phi,\psi)\Vert_{H_w^2(0,r)}\leq R(\delta,\sigma),
\end{equation}
which is independent of the parameter $r$. Thus, let $r\to \infty$, one can achieve the solution to the original nonlinear problem \eqref{u_p^0_system_phipsi}  with estimate \eqref{u_p^0_estimate_phiH2}.

Similarly, the estimate \eqref{u_p^0_estimate_phiH2} with any weight can be also obtained. We shall derive higher-order estimate \eqref{u_p^0_estimate_phiHk} with any weight by taking corresponding derivatives of system \eqref{u_p^0_system_phipsi}. The arguments are similar and we omit the details here.
\end{proof}

As a corollary, the $L_\eta^p$ estimates for the self-similar profile follow. To state the results, let us denote
$  (\phi_*,\psi_*)(z)=(\phi_*,\psi_*)(x,\eta).
$

\begin{proposition}\label{u_p^0_lemma_selfsimilar}
For any $m,k,j\in \mathbb{N}$ and $2\leq p\leq\infty$, the self-similar profiles $(\psi_*,\phi_*)$ satisfy
\begin{equation}\label{u_p^0_estimate_selfsimilar}
  \Vert z^m\p_x^k\p_\eta^j (\phi_*-\delta,\psi_*-\sigma)\Vert_{L_\eta^p}\leq \mathcal{O}(\delta,\sigma;m,k,j)x^{-k-\frac{j}{2}+\frac{1}{2p}}.
\end{equation}
\end{proposition}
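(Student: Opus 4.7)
The plan is to reduce everything to the $z$-variable estimates already established in Lemma \ref{u_p^0_lemma_phipsi} together with the explicit decay of the Gaussian fronts $e_\delta, e_\sigma$. By construction \eqref{phi_def}, we decompose
\begin{equation*}
  \phi_*-\delta = \phi + (e_\delta - \delta), \qquad \psi_* - \sigma = \psi + (e_\sigma - \sigma),
\end{equation*}
so it suffices to estimate each summand separately, with all four profiles being pure functions of $z = \eta/\sqrt{x}$.

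The first step is to work out the chain rule. Since $z = \eta x^{-1/2}$, we have $\partial_\eta = x^{-1/2}\partial_z$ and $\partial_x = -\tfrac{z}{2x}\partial_z$, so iterating gives
\begin{equation*}
  \partial_x^k \partial_\eta^j f(z) \;=\; x^{-k-j/2} \sum_{a,b} c_{a,b,k,j}\, z^a (\partial_z^b f)(z),
\end{equation*}
where the sum is finite and the indices satisfy $b \le k+j$ and $a \le k$ (this is a simple induction). Multiplying by the weight $z^m$ keeps the form of the sum but raises the power of $z$. Next I will change variables $\eta = \sqrt{x}\, z$ in the $L^p_\eta$ norm: $\|g(z(\eta))\|_{L^p_\eta} = x^{1/(2p)} \|g\|_{L^p_z}$, which produces exactly the claimed $x^{1/(2p)}$ factor. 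Combining the two observations,
\begin{equation*}
  \|z^m \partial_x^k \partial_\eta^j f\|_{L^p_\eta} \;\lesssim\; x^{-k-j/2+1/(2p)} \sum_{a,b} \|z^{m+a} \partial_z^b f\|_{L^p_z}.
\end{equation*}

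The second step is to bound the $L^p_z$ norms on the right-hand side by $\mathcal{O}(\delta,\sigma; m,k,j)$. For $f = e_\delta - \delta$ or $f = e_\sigma - \sigma$, this is immediate from \eqref{e_sigma_estimate}: every $z$-derivative of these Gaussian fronts carries a factor proportional to $\delta$ or $\sigma$ times an exponentially decaying profile, so weighted $L^p_z$ norms are $\mathcal{O}(\delta,\sigma)$ for any polynomial weight. For $f = \phi$ or $f = \psi$, I will use the higher-order weighted estimate \eqref{u_p^0_estimate_phiHk} together with the 1-D Sobolev embedding $H^1_z \hookrightarrow L^\infty_z$ (controlling $L^\infty_z$ norms of derivatives in terms of slightly higher $H^k_z$ norms), noting that the smallness parameter $R(\delta,\sigma; m,k)$ in Lemma \ref{u_p^0_lemma_phipsi} is of order $\max\{\sigma, \|z^m\partial_z^k(e_\delta-\delta)\|_{L^2_z}\} = \mathcal{O}(\delta,\sigma)$. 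For $2 \le p < \infty$ between $L^2$ and $L^\infty$, the usual interpolation suffices.

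No real obstacle should arise here since Lemma \ref{u_p^0_lemma_phipsi} provides estimates of arbitrary order with arbitrary polynomial weights: the whole matter is the bookkeeping of the chain rule and the observation that both the self-similar remainder $(\phi,\psi)$ and the Gaussian deviation $(e_\delta-\delta, e_\sigma-\sigma)$ are $\mathcal{O}(\delta,\sigma)$-small in every weighted $z$-norm. Assembling the two decomposed pieces yields \eqref{u_p^0_estimate_selfsimilar}.
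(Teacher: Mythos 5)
Your proposal is correct and follows essentially the same route as the paper: decompose via \eqref{phi_def}, transport the derivatives and norms to the $z$-variable with the chain rule and the change of variables $\eta = \sqrt{x}\,z$ (which produces the $x^{-k-j/2+1/(2p)}$ factor), then invoke the weighted bounds of Lemma \ref{u_p^0_lemma_phipsi} and \eqref{e_sigma_estimate} for $p=2$, a Gagliardo--Nirenberg/Sobolev-type argument for $p=\infty$, and interpolation for $2<p<\infty$. The only cosmetic difference is that the paper carries out the $L^\infty$ step directly in the $\eta$-variable via the inequality $|f(\eta)|^2\le 2\|f\|_{L^2_\eta}\|\p_\eta f\|_{L^2_\eta}$ rather than citing the 1-D embedding in $z$, but the content is identical.
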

\begin{proof}
With the definition \eqref{phi_def} for $\phi,\psi$ and the estimate \eqref{e_sigma_estimate} for $e_\delta,e_\sigma$, it suffices to bound $\psi$ and $\phi$ in $L_\eta^p$-norm. First, for  $p=2$, one gets from \eqref{u_p^0_estimate_phiHk} that
\begin{equation}\label{u_p^0_estimate_selfsimilar_pre1}
\begin{aligned}
  \Vert z^m\p_x^k\p_\eta^j (\phi,\psi)\Vert_{L_\eta^2}
  =&\Vert z^{m+k}\p_z^{k+j}(\phi,\psi)\Vert_{L_z^2}\cdot \frac{x^{-k-\frac{j}{2}+\frac{1}{4}}}{2^k}\\
  \leq &R(\delta,\sigma;m,k,j)x^{-k-\frac{j}{2}+\frac{1}{4}}.
  \end{aligned}
\end{equation}

For $p=\infty$, by the estimate \eqref{u_p^0_estimate_selfsimilar_pre1}, we have
\begin{align*}
  |z^m\p_x^k\p_\eta^j (\phi,\psi)|^2
  &\leq\Vert z^m\p_x^k\p_\eta^j (\phi,\psi)\Vert_{L_\eta^2}\cdot 2m\Vert z^{m-1}\p_x^k\p_\eta^j (\phi,\psi)\Vert_{L_\eta^2}\cdot x^{-\frac{1}{2}}\\
  &\quad +\Vert z^m\p_x^k\p_\eta^j (\phi,\psi)\Vert_{L_\eta^2}\cdot\Vert z^m\p_x^k\p_\eta^{j+1} (\phi,\psi)\Vert_{L_\eta^2}\\
  &\leq R(\delta,\sigma;m,k,j)^2x^{-2k-j},
\end{align*}
which gives that
\begin{equation}\label{u_p^0_estimate_selfsimilar_pre2}
  \Vert z^m\p_x^k\p_\eta^j (\phi,\psi)\Vert_{L_\eta^\infty}\leq R(\delta,\sigma;m,k,j)x^{-k-\frac{j}{2}}.
\end{equation}

Therefore, for $2< p<\infty$, by the interpolation and \eqref{u_p^0_estimate_selfsimilar_pre1}-\eqref{u_p^0_estimate_selfsimilar_pre2}, one has
\begin{align}\label{u_p^0_estimate_selfsimilar_pre3}
\begin{split}
  \Vert z^m\p_x^k\p_\eta^j (\phi,\psi)\Vert_{L_\eta^p}
  &\leq \Vert z^m\p_x^k\p_\eta^j (\phi,\psi)\Vert_{L_\eta^2}^{\frac{2}{p}}\cdot \Vert z^m\p_x^k\p_\eta^j (\phi,\psi)\Vert_{L_\eta^\infty}^{1-\frac{2}{p}}\\
  &\leq \mathcal{O}(\delta,\sigma;m,k,j)x^{-k-\frac{j}{2}+\frac{1}{2p}}.
\end{split}
\end{align}

Similar arguments yield that
\begin{align}\label{u_p^0_estimate_selfsimilar_pre4}
\begin{split}
  \Vert z^m\p_x^k\p_\eta^j (e_\delta-\delta ,e_\sigma-\sigma)\Vert_{L_\eta^p}\leq \mathcal{O}(\delta,\sigma;m,k,j)x^{-k-\frac{j}{2}+\frac{1}{2p}}.
\end{split}
\end{align}

Finally, combining the decomposition \eqref{phi_def}, we finish the proof.
\end{proof}

\subsection{Estimates for the error solutions}
Using the system \eqref{u_p^0_system_eta} for $(u_p^0,h_p^0)$ and \eqref{u_p^0_system_selfsimilar} for $(\phi_*,\psi_*)$, we can derive the following system for the error profile $(w,\Omega)$
\begin{equation}\label{u_p^0_system_error}
\begin{cases}
      (1-\delta)\p_x w-(1-\delta)^2\p_{\eta\eta}w\\
  \quad =[w^2+2(1-\delta)w]\p_\eta^2w-w\p_xw+\Omega\p_x \Omega-\phi_*\p_x w+\psi_*\p_x \Omega\\
  \quad\quad +[w^2+2(1-\delta+\phi_*)w]\p_\eta^2\phi_*-w\p_x\phi_*+\Omega\p_x\psi_*\\
  \quad\quad +[|\phi_*|^2+2(1-\delta+w)\phi_*]\p_\eta^2w+w(|\p_\eta\phi_*|^2-|\p_\eta \psi_*|^2)\\
  \quad\quad +(1-\delta+w+\phi_*)(|\p_\eta w|^2-|\p_\eta \Omega|^2+2\p_\eta w\p_\eta\phi_*-2\p_\eta\Omega\p_\eta\psi_*),\\
      (1-\delta)\p_x \Omega-(1-\delta)^2\p_{\eta\eta}\Omega\\
  \quad =[w^2+2(1-\delta)w]\p_\eta^2\Omega-w\p_x\Omega+\Omega\p_x w\\
  \quad\quad +[w^2+2(1-\delta+\phi_*)w]\p_\eta^2\psi_*-w\p_x\psi_*+\Omega\p_x\phi_*\\
  \quad\quad +[|\phi_*|^2+2(1-\delta+w)\phi_*]\p_\eta^2\Omega-\phi_*\p_x \Omega+\psi_*\p_x w,\\
  w(x,0)=w(x,\infty)=0,\quad w(1,\eta)=w_0(\eta),\\
  \Omega(x,0)=\Omega(x,\infty)=0,\quad \Omega(1,\eta)=\Omega_0(\eta).
\end{cases}
\end{equation}

Due to the assumptions \eqref{u_p^0_initialsmall}, the initial data $(w_0,\Omega_0)(\eta)$ decay rapidly. In addition, the initial data are small in the following sense
\begin{equation}\label{u_p^0_error_intialsmall}
  \Vert \p_\eta^j(w_0,\Omega_0)\langle\eta\rangle^m\Vert_{L_\eta^\infty}\leq \mathcal{O}(\delta,\sigma;m,j),~{\rm for ~ any ~} m\in\mathbb{N},{\rm~and~}j=0,1,2,3.
\end{equation}

To derive the estimate for later use, in this subsection we assume that the small quantity $\mathcal{O}(\delta,\sigma;m,j)$ equals to $R(\delta,\sigma;m,j)^\frac{3}{2}$ when $j=0,1,2,3$. To perform our arguments, let us make precise the norms for controlling the error profile $(w,\Omega)$
\begin{align}\label{u_p^0_error_norm}
  \Vert (w,\Omega)\Vert_{\mathcal{Q}(\sigma_0)}
  :=\Vert (w,\Omega)\Vert_{\mathcal{Q}(\sigma_0,0)}+\Vert (w,\Omega)\Vert_{\mathcal{Q}(\sigma_0,1)},
\end{align}
where
\begin{align}\label{u_p^0_error_norm_def0}
\begin{split}
  \Vert (w,\Omega)\Vert_{\mathcal{Q}(\sigma_0,0)}
  &:=\sup_{x\geq 1}\Vert (w,\Omega)\cdot x^{-\sigma_0}\Vert_{L_\eta^2}+\sup_{x\geq 1}\Vert \p_\eta(w,\Omega)\cdot x^{-\sigma_0+\frac{1}{2}}\Vert_{L_\eta^2}\\
  &\quad +\Vert (w,\Omega)\cdot x^{-\sigma_0-\frac{1}{2}}\Vert_{L_x^2L_\eta^2}+\Vert \p_\eta(w,\Omega)\cdot x^{-\sigma_0}\Vert_{L_x^2L_\eta^2}\\
  &\quad +\Vert \p_{\eta\eta}(w,\Omega)\cdot x^{-\sigma_0+\frac{1}{2}}\Vert_{L_x^2L_\eta^2}+\Vert \p_x(w,\Omega)\cdot x^{-\sigma_0+\frac{1}{2}}\Vert_{L_x^2L_\eta^2}
\end{split}
\end{align}
and
\begin{align}\label{u_p^0_error_norm_def1}
\begin{split}
  \Vert (w,\Omega)\Vert_{\mathcal{Q}(\sigma_0,1)}
:=  &\sup_{x\geq 1}\Vert \p_x(w,\Omega)\cdot x^{-\sigma_0+1}\Vert_{L_\eta^2}+\sup_{x\geq 1}\Vert \p_{x\eta}(w,\Omega)\cdot x^{-\sigma_0+\frac{3}{2}}\Vert_{L_\eta^2}\\
  & +\sup_{x\geq 1}\Vert \p_{\eta\eta}(w,\Omega)\cdot x^{-\sigma_0+1}\Vert_{L_\eta^2}+\Vert \p_{x\eta}(w,\Omega)\cdot x^{-\sigma_0+1}\Vert_{L_x^2L_\eta^2}\\
  & +\Vert \p_{x\eta\eta}(w,\Omega)\cdot x^{-\sigma_0+\frac{3}{2}}\Vert_{L_x^2L_\eta^2}+\Vert \p_{xx}(w,\Omega)\cdot x^{-\sigma_0+\frac{3}{2}}\Vert_{L_x^2L_\eta^2}.
\end{split}
\end{align}

The higher derivatives of the norms for $k> 1$ are defined by
\begin{align}\label{u_p^0_error_norm_def2}
\begin{split}
  \Vert (w,\Omega)\Vert_{\mathcal{Q}(\sigma_0,k)}
  &:=\sup_{x\geq 1}\Vert \p_x^k(w,\Omega)\cdot x^{-\sigma_0+k}\Vert_{L_\eta^2}+\sup_{x\geq 1}\Vert \p_x^k\p_\eta(w,\Omega)\cdot x^{-\sigma_0+k+\frac{1}{2}}\Vert_{L_\eta^2}\\
  &+\Vert \p_x^k\p_\eta(w,\Omega)\cdot x^{-\sigma_0+k}\Vert_{L_x^2L_\eta^2}
  +\Vert \p_x^{k}\p_{\eta}^2(w,\Omega)\cdot x^{-\sigma_0+k+\frac{1}{2}}\Vert_{L_x^2L_\eta^2}.
\end{split}
\end{align}

\begin{remark}\label{0BL_remark_sigma0}
Compared with Iyer \cite{Iyerglobal1}, there is a new term $\Vert (w,\Omega)\cdot x^{-\sigma_0-\frac{1}{2}}\Vert_{L_x^2L_\eta^2}$ in the energy functional \eqref{u_p^0_error_norm_def0}. Due to this dissipative term, our arguments work for both cases with insulating boundary condition and perfectly conducting condition. In particular, for the case with insulating boundary condition, this term can be ignored since the Hardy inequality is valid.
\end{remark}

Since $(w,\Omega)$ decay as $\eta\rightarrow\infty$, one can deduce the following $L_\eta^\infty$ estimate (refer to \cite{Iyerglobal1} for the detailed proof):
\begin{lemma}\label{u_p^0_lemma_Linfty}
For any $0<\sigma_0<\frac{1}{4}$ and $k\geq 0$, it holds that
\begin{align}\label{u_p^0_estimate_Linfty}
\begin{split}
  &\sup_{x\geq 1}\Vert \p_x^k(w,\Omega)x^{\frac{1}{4}-\sigma_0+k}\Vert_{L_\eta^\infty}
  +\sup_{x\geq 1}\Vert \p_x^k(w_\eta,\Omega_\eta)x^{\frac{3}{4}-\sigma_0+k}\Vert_{L_\eta^\infty}\\
  &\quad \lesssim \Vert (w,\Omega)\Vert_{\mathcal{Q}(\sigma_0,k)}+\Vert (w,\Omega)\Vert_{\mathcal{Q}(\sigma_0,k+1)}.
\end{split}
\end{align}
\end{lemma}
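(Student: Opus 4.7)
My plan is to deduce \eqref{u_p^0_estimate_Linfty} from an Agmon-type inequality at fixed $x$, together with the equation $\eqref{u_p^0_system_error}_1$ in the higher-order case. Since $w(x,0)=0$ and $w(x,\eta)\to 0$ as $\eta\to\infty$ (and the analogous decay is inherited by $\p_x^k w$ and $\p_x^k w_\eta$ from differentiating in the tangential variable $x$ and from the $L_\eta^2$ weights built into $\mathcal{Q}(\sigma_0,k)$), the fundamental theorem of calculus combined with Cauchy--Schwarz gives, for any smooth $f(x,\cdot)$ vanishing at $\eta=\infty$,
\begin{equation*}
  |f(x,\eta)|^2 = -2\int_\eta^\infty f\,\p_\eta f\,{\rm d}\eta' \leq 2\|f(x,\cdot)\|_{L_\eta^2}\|\p_\eta f(x,\cdot)\|_{L_\eta^2}.
\end{equation*}

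First I would take $f=\p_x^k w$ and multiply through by the weight $x^{1/2-2\sigma_0+2k}$, which distributes as
\begin{equation*}
  \bigl(x^{\frac{1}{4}-\sigma_0+k}|\p_x^k w|\bigr)^2 \leq 2\,\bigl(x^{-\sigma_0+k}\|\p_x^k w\|_{L_\eta^2}\bigr)\bigl(x^{-\sigma_0+k+\frac{1}{2}}\|\p_x^k\p_\eta w\|_{L_\eta^2}\bigr).
\end{equation*}
Both weighted $\sup_x L_\eta^2$-factors appear explicitly in $\|(w,\Omega)\|_{\mathcal{Q}(\sigma_0,k)}$, so taking $\sup_{x,\eta}$ and using the identical argument for $\Omega$ yields the first sum in \eqref{u_p^0_estimate_Linfty}.

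Next, for the $\p_\eta$-term I would apply the Agmon inequality to $f=\p_x^k w_\eta$ and weight by $x^{3/2-2\sigma_0+2k}$, giving
\begin{equation*}
  \bigl(x^{\frac{3}{4}-\sigma_0+k}|\p_x^k w_\eta|\bigr)^2 \leq 2\,\bigl(x^{-\sigma_0+k+\frac{1}{2}}\|\p_x^k w_\eta\|_{L_\eta^2}\bigr)\bigl(x^{-\sigma_0+k+1}\|\p_x^k w_{\eta\eta}\|_{L_\eta^2}\bigr).
\end{equation*}
The first factor lies in $\|(w,\Omega)\|_{\mathcal{Q}(\sigma_0,k)}$. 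The second factor is exactly the component $\sup_x\|\p_{\eta\eta}w\cdot x^{-\sigma_0+1}\|_{L_\eta^2}$ of $\|(w,\Omega)\|_{\mathcal{Q}(\sigma_0,1)}$ when $k=0$, so that case is immediate. For $k\geq 1$, I would invoke $\eqref{u_p^0_system_error}_1$ to express
\begin{equation*}
  \p_x^k\p_{\eta\eta}w = \frac{1}{1-\delta}\,\p_x^{k+1}w - \frac{1}{(1-\delta)^2}\,\p_x^k\mathcal{N}[w,\Omega;\phi_*,\psi_*],
\end{equation*}
where $\mathcal{N}$ collects the semilinear quadratic terms on the right-hand side of $\eqref{u_p^0_system_error}_1$. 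The principal contribution $\p_x^{k+1}w$ is controlled, in the desired weighted $\sup_x L_\eta^2$ norm, by the term $\sup_x\|\p_x^{k+1}w\cdot x^{-\sigma_0+k+1}\|_{L_\eta^2}$ of $\|(w,\Omega)\|_{\mathcal{Q}(\sigma_0,k+1)}$, which accounts precisely for why the bound on the right-hand side of \eqref{u_p^0_estimate_Linfty} involves both the $k$-th and the $(k+1)$-th norm.

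The main obstacle is then the nonlinear remainder $\p_x^k\mathcal{N}$ for $k\geq 1$. Expanding by Leibniz, each summand is a product of at most two low-order factors among $\{w,\Omega,\p_\eta w,\p_\eta\Omega,\p_\eta^2 w,\p_\eta^2\Omega\}$ with tangential derivatives of $(\phi_*,\psi_*)$. The self-similar factors are handled through the decay estimates of Proposition~\ref{u_p^0_lemma_selfsimilar}, which supply both the smallness $\mathcal{O}(\delta,\sigma)$ and the appropriate powers of $x$. The $(w,\Omega)$-factors are placed one in $L_\eta^\infty$ via the already-established estimate at an order $k'\leq k-1$ (bootstrap/induction on $k$) and the other in a weighted $L_\eta^2$ norm contained in $\mathcal{Q}(\sigma_0,k')$ for some $k'\leq k$. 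A careful but routine weight count shows these contributions are all absorbed into $\|(w,\Omega)\|_{\mathcal{Q}(\sigma_0,k)}+\|(w,\Omega)\|_{\mathcal{Q}(\sigma_0,k+1)}$, provided $\delta,\sigma$ are small. The treatment of $\Omega$ via $\eqref{u_p^0_system_error}_2$ is entirely analogous, and this is the argument whose detailed bookkeeping is carried out in~\cite{Iyerglobal1}.
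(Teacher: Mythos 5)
Your treatment of the first term (for all $k$) and of the second term at $k=0$ is exactly the standard argument: the one-dimensional Agmon inequality $|f(\eta)|^2 \leq 2\|f\|_{L_\eta^2}\|\p_\eta f\|_{L_\eta^2}$ applied at fixed $x$, with the two factors read directly off the $\sup_x L_\eta^2$ pieces of $\mathcal{Q}(\sigma_0,k)$ and, for the $w_\eta$-term at $k=0$, the piece $\sup_x\|\p_{\eta\eta}(w,\Omega)x^{-\sigma_0+1}\|_{L_\eta^2}$ of $\mathcal{Q}(\sigma_0,1)$. The problem is the second term for $k\geq 1$. There you invoke the error equation $\eqref{u_p^0_system_error}_1$ to trade $\p_x^k\p_{\eta\eta}w$ for $\p_x^{k+1}w$ plus nonlinear remainders. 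This overshoots in two ways. First, the lemma is stated as a universal interpolation inequality: it assumes nothing about $(w,\Omega)$ beyond decay in $\eta$ and finiteness of the $\mathcal{Q}$-norms, and in particular it does not assume that $(w,\Omega)$ solves \eqref{u_p^0_system_error} or that $\delta,\sigma$ are small. Second, even granting that $(w,\Omega)$ solves the system, the Leibniz expansion of $\p_x^k\mathcal{N}$ contains the term $[w^2+2(1-\delta)w+\cdots]\p_x^k\p_{\eta\eta}w$ — the very quantity you are trying to bound — so the argument is not a straightforward reduction to $\p_x^{k+1}w$; you must first absorb that coefficient, which requires the $k=0$ $L^\infty$ bound on $w$ and the smallness of $\|(w,\Omega)\|_{\mathcal{Q}(\sigma_0)}$. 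What this proves is a nonlinear, solution-dependent variant with products of norms on the right, not the linear estimate \eqref{u_p^0_estimate_Linfty} as stated.

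The factor you are missing can be obtained without the equation at all, by a second Fundamental-Theorem interpolation, this time in the $x$-variable. Let $A(x):=\|\p_x^k w_{\eta\eta}(x,\cdot)\|_{L_\eta^2}$ and set $h(x):=A(x)^2\,x^{-2\sigma_0+2k+2}$. From $\mathcal{Q}(\sigma_0,k)$ you control $\|\p_x^k\p_\eta^2 w\,x^{-\sigma_0+k+\frac12}\|_{L_x^2L_\eta^2}$, i.e. $\int_1^\infty h(x)\,x^{-1}\,\mathrm{d}x<\infty$, whence $\liminf_{x\to\infty}h(x)=0$. Differentiating in $x$ gives $|h'(x)|\lesssim A(x)B(x)\,x^{-2\sigma_0+2k+2}+A(x)^2 x^{-2\sigma_0+2k+1}$ with $B(x):=\|\p_x^{k+1}w_{\eta\eta}(x,\cdot)\|_{L_\eta^2}$, and by Cauchy--Schwarz in $x$ the integral $\int_1^\infty|h'|\,\mathrm{d}x$ is bounded by $\|(w,\Omega)\|_{\mathcal{Q}(\sigma_0,k)}^2+\|(w,\Omega)\|_{\mathcal{Q}(\sigma_0,k)}\,\|(w,\Omega)\|_{\mathcal{Q}(\sigma_0,k+1)}$, the latter coming from the piece $\|\p_x^{k+1}\p_\eta^2 w\,x^{-\sigma_0+k+\frac32}\|_{L_x^2L_\eta^2}$ of $\mathcal{Q}(\sigma_0,k+1)$. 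Since $h$ vanishes along a sequence and has integrable derivative, $\sup_x h(x)\leq\int_1^\infty|h'|\,\mathrm{d}x$, giving $\sup_x\|\p_x^k w_{\eta\eta}\,x^{-\sigma_0+k+1}\|_{L_\eta^2}\lesssim\|(w,\Omega)\|_{\mathcal{Q}(\sigma_0,k)}+\|(w,\Omega)\|_{\mathcal{Q}(\sigma_0,k+1)}$, which is exactly the factor you need in the Agmon step and keeps the lemma a purely embedding statement, as the paper's phrasing (``Since $(w,\Omega)$ decay as $\eta\to\infty$, one can deduce$\ldots$'') intends.
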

As a direct corollary, the following inequality will be frequently used in this subsection
\begin{align}\label{u_p^0_estimate_Linfty_use}
\begin{split}
    \sup_{x\geq 1}\Vert (w,\Omega)\Vert_{L_\eta^\infty}+\sup_{x\geq 1}\Vert \p_x(w,\Omega)x\Vert_{L_\eta^\infty}+\sup_{x\geq 1}\Vert \p_\eta(w,\Omega)x^{\frac{1}{2}}\Vert_{L_\eta^\infty}
    \lesssim \Vert (w,\Omega)\Vert_{\mathcal{Q}(\sigma_0)}.
\end{split}
\end{align}

With the above preparations, one can derive the estimates for $(w,\Omega)$ as follows.
\begin{proposition}\label{u_p^0_lemma_error}
For any fixed $0<\sigma_0<\frac{1}{4}$ and $m,k\in\mathbb{N}$, there exists a unique solution $(w,\Omega)$ for the error system \eqref{u_p^0_system_error} that satisfies
\begin{align}
\label{u_p^0_estimate_error_H0}
  &\Vert (w,\Omega)\Vert_{\mathcal{Q}(\sigma_0)}\leq R(\delta,\sigma;\sigma_0),\\
\label{u_p^0_estimate_error_H0weighted}
  &\Vert z^m(w,\Omega)\Vert_{\mathcal{Q}(\sigma_0)}\leq \mathcal{O}(\delta,\sigma;\sigma_0,m),\\
\label{u_p^0_estimate_error_Hk}
  &\Vert z^m(w,\Omega)\Vert_{\mathcal{Q}(\sigma_0,k)}\leq C(\sigma_0,k,m),{\rm ~for~} k>1,
\end{align}
where the norms $\mathcal{Q}(\sigma_0)$ and $\mathcal{Q}(\sigma_0,k)$ are defined by \eqref{u_p^0_error_norm}-\eqref{u_p^0_error_norm_def2}, respectively.
\end{proposition}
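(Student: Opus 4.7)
My plan is to prove Proposition \ref{u_p^0_lemma_error} by a contraction mapping argument in the weighted space associated with the norm $\mathcal{Q}(\sigma_0)$. The crucial structural observation is that the left-hand side of \eqref{u_p^0_system_error} is the linear heat operator $(1-\delta)\partial_x-(1-\delta)^2\partial_{\eta\eta}$ acting separately on $w$ and $\Omega$, with $x$ playing the role of evolution time. All remaining terms fall into three classes: (i) genuine nonlinearities in $(w,\Omega)$, (ii) semilinear coupling with the self-similar background $(\phi_*,\psi_*)$, (iii) pure source terms built from $(\phi_*,\psi_*)$ alone. Class (iii) is controlled by Proposition \ref{u_p^0_lemma_selfsimilar} and generates the $R(\delta,\sigma;\sigma_0)$ bound on the right of \eqref{u_p^0_estimate_error_H0}; classes (i)--(ii) are forced into the left-hand side using smallness in $(\delta,\sigma)$ or in $\Vert(w,\Omega)\Vert_{\mathcal{Q}(\sigma_0)}$.

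For the a priori estimates on the linearized problem, I would first establish control in $\mathcal{Q}(\sigma_0,0)$. Testing $\eqref{u_p^0_system_error}_1$ against $w\, x^{-2\sigma_0}$ and $\eqref{u_p^0_system_error}_2$ against $\Omega\, x^{-2\sigma_0}$, integrating over $[1,X]\times\mathbb{R}_+$, and summing, the $x$-derivative falling on the weight $x^{-2\sigma_0}$ produces the key positive term $\sigma_0(1-\delta)\Vert(w,\Omega)\cdot x^{-\sigma_0-\frac12}\Vert_{L_x^2L_\eta^2}^2$, while the $(1-\delta)^2\partial_{\eta\eta}$ contribution yields the dissipation $\Vert\partial_\eta(w,\Omega)\cdot x^{-\sigma_0}\Vert_{L_x^2L_\eta^2}^2$. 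The MHD-flavored coupling terms $\Omega\partial_x\Omega$ in the $w$-equation and $\Omega\partial_x w$ in the $\Omega$-equation combine symmetrically and cancel modulo lower-order commutators, exactly as in the classical MHD energy identity. The $H^1$-level control then follows by testing with $-\partial_{\eta\eta}(w,\Omega)\cdot x^{-2\sigma_0+1}$, yielding $\Vert\partial_{\eta\eta}(w,\Omega)\cdot x^{-\sigma_0+\frac12}\Vert_{L_x^2L_\eta^2}$; the $\partial_x$ norms in \eqref{u_p^0_error_norm_def0} are then recovered algebraically from the equations.

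The $\mathcal{Q}(\sigma_0,k)$ bounds in \eqref{u_p^0_estimate_error_Hk} are obtained by commuting $\partial_x^k$ through the system and repeating the above scheme with weight $x^{-2\sigma_0+2k}$, which is the precise boost needed so that each extra $\partial_x$ is paid for by one extra power of $x$. The weighted bounds \eqref{u_p^0_estimate_error_H0weighted} come from multiplying by $z^{2m}$ and using the rapid decay of the background, via Proposition \ref{u_p^0_lemma_selfsimilar} and the smallness \eqref{u_p^0_error_intialsmall} of the initial data. For nonlinear closure, the semilinear terms $[w^2+2(1-\delta)w]\partial_\eta^2 w$, $(1-\delta+w+\phi_*)(|\partial_\eta w|^2-|\partial_\eta\Omega|^2)$ and their magnetic analogues are bounded by (bounded $L_\eta^\infty L_x^\infty$ factor)$\,\times\,$(weighted $L_x^2L_\eta^2$ factor), invoking Lemma \ref{u_p^0_lemma_Linfty} to absorb the $L^\infty$ factor into $\Vert(w,\Omega)\Vert_{\mathcal{Q}(\sigma_0)}$. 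The resulting quadratic estimate $\Vert(w,\Omega)\Vert_{\mathcal{Q}(\sigma_0)}\leq CR(\delta,\sigma;\sigma_0)+C\Vert(w,\Omega)\Vert_{\mathcal{Q}(\sigma_0)}^2$ closes on the small ball of radius $R(\delta,\sigma;\sigma_0)$ provided $\delta,\sigma$ are chosen small.

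The main obstacle, as I see it, is the cubic coupling terms $w^2\partial_{\eta\eta}w$, $w^2\partial_{\eta\eta}\Omega$ and the transport-type terms $\phi_*\partial_x w$, $\psi_*\partial_x\Omega$, $\Omega\partial_x\psi_*$, $w\partial_x\phi_*$. These involve top-order derivatives, so no further differentiation can be spent on them: one must pair the top derivative, controlled only in $L_x^2L_\eta^2$ with weight $x^{-\sigma_0+\frac12}$, against a coefficient that is simultaneously small in $L_\eta^\infty L_x^\infty$ with the correct $x$-weight. This is where both the smallness of $(\phi_*,\psi_*)$ from Lemma \ref{u_p^0_lemma_phipsi} and the symmetric MHD-type cancellation between the $w$- and $\Omega$-equations become indispensable. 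Once the a priori estimate is established, applying the same linearized estimate to the difference of two solutions gives contraction in $\mathcal{Q}(\sigma_0)$, and the Banach fixed-point theorem on $\{\Vert(w,\Omega)\Vert_{\mathcal{Q}(\sigma_0)}\leq R(\delta,\sigma;\sigma_0)\}$ yields the unique solution asserted in the proposition.
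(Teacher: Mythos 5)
Your proposal follows essentially the same route as the paper: set up a linearized iteration, test with $(w\,x^{-2\sigma_0},\Omega\,x^{-2\sigma_0})$ to extract the damping term from differentiating the weight, test with $(-\partial_{\eta\eta}w\,x^{-2\sigma_0+1},-\partial_{\eta\eta}\Omega\,x^{-2\sigma_0+1})$ for the second-level dissipation, recover $\partial_x$-norms from the equation, commute $\partial_x^k$ with weight boost $x^{2k}$ for higher orders, and close the quadratic estimate on the small ball of radius $R(\delta,\sigma)$ by invoking Lemma~\ref{u_p^0_lemma_Linfty} and the smallness of the self-similar background. That is exactly the scheme the paper carries out in Steps 1--4 of the proof.

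One caveat: your remark that the coupling terms $\Omega\partial_x\Omega$ in the $w$-equation and $\Omega\partial_x w$ in the $\Omega$-equation ``combine symmetrically and cancel \ldots as in the classical MHD energy identity'' does not apply to the argument you then invoke. That cancellation is a feature of the full nonlinear system tested against $(w,\Omega)$ (where the cross terms $\int\Omega w\,\partial_x\Omega$ from the $w$-equation and $-\int w\Omega\,\partial_x\Omega$ from the $\Omega$-equation genuinely vanish), but once you pass to the linearized problem \eqref{u_p^0_system_error_linear} needed for the fixed-point and for contraction on differences, those terms become $\bar\Omega\partial_x\bar\Omega$ and $\bar\Omega\partial_x\bar w$ in the iterate and no longer pair off against the test functions. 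The paper therefore does not use cancellation here: it estimates $K_4$ directly by Cauchy--Schwarz against the dissipative norm $\Vert(w,\Omega)x^{-\sigma_0-\frac12}\Vert_{L_x^2L_\eta^2}$, paying with the smallness of $\Vert(\bar w,\bar\Omega)\Vert_{\mathcal{Q}(\sigma_0)}\le R(\delta,\sigma)$. Your argument still closes if you replace the cancellation claim with this direct bound, so this is a local inaccuracy rather than a gap, but it should be corrected since the cancellation you describe is unavailable in the iteration you ultimately rely on.
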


\begin{remark}\label{u_p^0_remark1}
One may also derive estimate for $\sup\limits_{x\geq 1}\Vert z^m\p_x^{k}\p_\eta^j(w,\Omega)x^{-\sigma_0+k+\frac{j}{2}}\Vert_{L_\eta^2}$ for $k,j\in\mathbb{N}$ from Proposition \ref{u_p^0_lemma_error}, which is inspired by $w_x\approx w_{\eta\eta}$ and $\Omega_x\approx\Omega_{\eta\eta}$, due to the system \eqref{u_p^0_system_error} is quasilinear parabolic.
\end{remark}

\begin{proof}
Let us start with the following linearized system to \eqref{u_p^0_system_error}
\begin{equation}\label{u_p^0_system_error_linear}
\begin{cases}
      (1-\delta)\p_x w-(1-\delta)^2\p_{\eta\eta}w\\
  \quad =[\bar w^2+2(1-\delta)\bar w]\p_\eta^2\bar w-\bar w\p_x\bar w+\bar \Omega\p_x\bar \Omega-\phi_*\p_x\bar w+\psi_*\p_x\bar \Omega\\
  \quad\quad +[\bar w^2+2(1-\delta+\phi_*)\bar w]\p_\eta^2\phi_*-\bar w\p_x\phi_*+\bar \Omega\p_x\psi_*\\
  \quad\quad +[|\phi_*|^2+2(1-\delta+\bar w)\phi_*]\p_\eta^2\bar w+\bar w(|\p_\eta\phi_*|^2-|\p_\eta \psi_*|^2)\\
  \quad\quad +(1-\delta+\bar w+\phi_*)(|\p_\eta\bar w|^2-|\p_\eta\bar \Omega|^2+2\p_\eta\bar w\p_\eta\phi_*-2\p_\eta\bar \Omega\p_\eta\psi_*),\\
      (1-\delta)\p_x \Omega-(1-\delta)^2\p_{\eta\eta}\Omega\\
  \quad =[\bar w^2+2(1-\delta)\bar w]\p_\eta^2\bar \Omega-\bar w\p_x\bar \Omega+\bar \Omega\p_x\bar w\\
  \quad\quad +[\bar w^2+2(1-\delta+\phi_*)\bar w]\p_\eta^2\psi_*-\bar w\p_x\psi_*+\bar \Omega\p_x\phi_*\\
  \quad\quad +[|\phi_*|^2+2(1-\delta+\bar w)\phi_*]\p_\eta^2\bar \Omega-\phi_*\p_x\bar \Omega+\psi_*\p_x\bar w,\\
  w(x,0)=w(x,\infty)=0,\quad w(1,\eta)=w_0(\eta),\\
  \Omega(x,0)=\Omega(x,\infty)=0,\quad \Omega(1,\eta)=\Omega_0(\eta).
\end{cases}
\end{equation}

First, multiplying $\eqref{u_p^0_system_error_linear}_1$ by $w x^{-2\sigma_0}$ and $\eqref{u_p^0_system_error_linear}_2$ by $\Omega x^{-2\sigma_0}$, and integrating them over $[0,\infty)$ with respect to $\eta$ variable, we know that the left-hand side reads as
\begin{align}\label{u_p^0_error_1.1}
\begin{split}
  &\int_0^\infty [(1-\delta)\p_x-(1-\delta)^2\p_{\eta\eta}](w,\Omega)\cdot(w,\Omega) x^{-2\sigma_0}{\rm d}\eta\\
  &=\frac{1-\delta}{2}\frac{\rm d}{{\rm d}x}\int_0^\infty |(w,\Omega)|^2 x^{-2\sigma_0}{\rm d}\eta
  +(1-\delta)^2\int_0^\infty |(w_\eta,\Omega_\eta)|^2 x^{-2\sigma_0}{\rm d}\eta\\
  &\quad +(1-\delta)\sigma_0\int_0^\infty |(w,\Omega)|^2 x^{-2\sigma_0-1}{\rm d}\eta.
\end{split}
\end{align}

The terms on the right-hand side can be controlled as follows
\begin{align}\label{u_p^0_error_1.2}
\begin{split}
  K_1=&\int_0^\infty [\bar w^2+2(1-\delta)\bar w]\p_\eta^2 (\bar w,\bar \Omega)\cdot (w,\Omega) x^{-2\sigma_0}{\rm d}\eta\\
  \leq& \Vert (w,\Omega) x^{-\sigma_0-\frac{1}{2}}\Vert_{L_\eta^2} \Vert \p_\eta^2(\bar w,\bar \Omega) x^{-\sigma_0+\frac{1}{2}}\Vert_{L_\eta^2}
   \Vert \bar w\Vert_{L_\eta^\infty}\cdot [\Vert \bar w\Vert_{L_\eta^\infty}+2(1-\delta)],
\end{split}
\end{align}

\begin{align}\label{u_p^0_error_1.3}
\begin{split}
  K_2=&\int_0^\infty [\bar w^2+2(1-\delta+\phi_*)\bar w]\p_\eta^2(\phi_*,\psi_*)\cdot (w,\Omega) x^{-2\sigma_0}{\rm d}\eta\\
  \leq& \Vert (w,\Omega) x^{-\sigma_0-\frac{1}{2}}\Vert_{L_\eta^2} \Vert \p_{\eta}^2(\phi_*,\psi_*)x\Vert_{L_\eta^\infty}
  \Vert \bar w x^{-\sigma_0-\frac{1}{2}}\Vert_{L_\eta^2}\cdot 2[\Vert \bar w,\phi_*\Vert_{L_\eta^\infty}+(1-\delta)],
\end{split}
\end{align}

\begin{align}\label{u_p^0_error_1.4}
\begin{split}
  K_3=&\int_0^\infty [|\phi_*|^2+2(1-\delta+\bar w)\phi_*]\p_\eta^2(\bar w,\bar \Omega)\cdot (w,\Omega) x^{-2\sigma_0}{\rm d}\eta\\
  \leq& \Vert (w,\Omega) x^{-\sigma_0-\frac{1}{2}}\Vert_{L_\eta^2} \Vert \p_\eta^2(\bar w,\bar \Omega) x^{-\sigma_0+\frac{1}{2}}\Vert_{L_\eta^2}
   \Vert \phi_*\Vert_{L_\eta^\infty}\cdot 2[\Vert \bar w,\phi_*\Vert_{L_\eta^\infty}+(1-\delta)],
\end{split}
\end{align}

\begin{align}\label{u_p^0_error_1.5}
\begin{split}
  K_4=&\int_0^\infty \left[(-\bar w\p_x\bar w+\bar \Omega\p_x\bar \Omega)\cdot w+(-\bar w\p_x\bar \Omega+\bar \Omega\p_x\bar w)\cdot\Omega \right] x^{-2\sigma_0}{\rm d}\eta\\
  \leq& \Vert (w,\Omega) x^{-\sigma_0-\frac{1}{2}}\Vert_{L_\eta^2}\cdot \Vert \p_x(\bar w,\bar \Omega)x^{-\sigma_0+\frac{1}{2}}\Vert_{L_\eta^2}\cdot \Vert (\bar w,\bar \Omega)\Vert_{L_\eta^\infty},
\end{split}
\end{align}

\begin{align}\label{u_p^0_error_1.6}
\begin{split}
  K_5=&\int_0^\infty \left[(-\bar w\p_x\phi_*+\bar \Omega\p_x\psi_*)\cdot w+(-\bar w\p_x\psi_*+\bar \Omega\p_x\phi_*)\cdot\Omega\right] x^{-2\sigma_0}{\rm d}\eta\\
  \leq& \Vert (w,\Omega) x^{-\sigma_0-\frac{1}{2}}\Vert_{L_\eta^2}\cdot \Vert (\bar w,\bar\Omega) x^{-\sigma_0-\frac{1}{2}}\Vert_{L_\eta^2}
  \cdot \Vert \p_x(\phi_*,\psi_*)x\Vert_{L_\eta^\infty},
\end{split}
\end{align}

\begin{align}\label{u_p^0_error_1.7}
\begin{split}
  K_6=&\int_0^\infty \left[(-\phi_*\p_x\bar w+\psi_*\p_x\bar \Omega) w+(-\phi_*\p_x\bar \Omega+\psi_*\p_x\bar w)\Omega\right]x^{-2\sigma_0}{\rm d}\eta\\
  \leq& \Vert (w,\Omega) x^{-\sigma_0-\frac{1}{2}}\Vert_{L_\eta^2}\cdot \Vert \p_x(\bar w,\bar\Omega)x^{-\sigma_0+\frac{1}{2}}\Vert_{L_\eta^2}\cdot
  \Vert (\phi_*,\psi_*)\Vert_{L_\eta^\infty},
\end{split}
\end{align}

\begin{align}\label{u_p^0_error_1.8}
\begin{split}
  K_7=&\int_0^\infty (1-\delta+\bar w+\phi_*)(|\p_\eta\bar w|^2-|\p_\eta\bar \Omega|^2)\cdot w x^{-2\sigma_0}{\rm d}\eta\\
  \leq& \Vert w x^{-\sigma_0-\frac{1}{2}}\Vert_{L_\eta^2}\cdot \Vert \p_\eta(\bar w,\bar\Omega)x^{-\sigma_0}\Vert_{L_\eta^2}\cdot
  \Vert \p_\eta(\bar w,\bar\Omega)x^{\frac{1}{2}}\Vert_{L_\eta^\infty}\cdot[\Vert \bar w,\phi_*\Vert_{L_\eta^\infty}+(1-\delta)],
\end{split}
\end{align}

\begin{align}\label{u_p^0_error_1.9}
\begin{split}
  K_8=&\int_0^\infty (1-\delta+\bar w+\phi_*)(2\p_\eta\bar w\p_\eta\phi_*-2\p_\eta\bar \Omega\p_\eta\psi_*)\cdot w x^{-2\sigma_0}{\rm d}\eta\\
  \leq& \Vert w x^{-\sigma_0-\frac{1}{2}}\Vert_{L_\eta^2} \Vert \p_\eta(\bar w,\bar\Omega)x^{-\sigma_0}\Vert_{L_\eta^2}
   \Vert \p_\eta(\psi_*,\phi_*)x^{\frac{1}{2}}\Vert_{L_\eta^\infty}\cdot 2[\Vert \bar w,\phi_*\Vert_{L_\eta^\infty}+(1-\delta)],
\end{split}
\end{align}

\begin{align}\label{u_p^0_error_1.10}
\begin{split}
  K_9=&\int_0^\infty \bar w(|\p_\eta\phi_*|^2-|\p_\eta \psi_*|^2)\cdot w x^{-2\sigma_0}{\rm d}\eta\\
  \leq& \Vert w x^{-\sigma_0-\frac{1}{2}}\Vert_{L_\eta^2}\cdot \Vert \bar wx^{-\sigma_0-\frac{1}{2}}\Vert_{L_\eta^2}\cdot
   \Vert \p_\eta(\psi_*,\phi_*)x^{\frac{1}{2}}\Vert_{L_\eta^\infty}^2.
\end{split}
\end{align}

Putting the above estimates together, using Cauchy inequality and integrating the result  over $[1,x)$, one has
\begin{align}\label{u_p^0_error_1pre}
\begin{split}
  &\sup_{x\geq 1}\Vert (w,\Omega)x^{-\sigma_0}\Vert_{L_\eta^2}^2+\frac{(1-\delta)\sigma_0}{2}\Vert (w,\Omega)x^{-\sigma_0-\frac{1}{2}}\Vert_{L_x^2L_\eta^2}^2
  +\Vert (w_\eta,\Omega_\eta)x^{-\sigma_0}\Vert_{L_x^2L_\eta^2}^2\\
  \leq& \Vert (w_0,\Omega_0)\Vert_{L_\eta^2}^2+\frac{1}{(1-\delta)^2\sigma_0}\Vert (\bar w,\bar\Omega)\Vert_{\mathcal{Q}(\sigma_0)}^2\cdot
   \left[2\Vert (\bar w,\bar\Omega)\Vert_{\mathcal{Q}(\sigma_0)}^4+4\Vert (\bar w,\bar\Omega)\Vert_{\mathcal{Q}(\sigma_0)}^2\right.\\
  &\left. +6R(\delta,\sigma)^4+10R(\delta,\sigma)^2 +7R(\delta,\sigma)^2\Vert (\bar w,\bar\Omega)\Vert_{\mathcal{Q}(\sigma_0)}^2\right],
\end{split}
\end{align}
in which the $L_\eta^\infty$ inequality \eqref{u_p^0_estimate_Linfty_use} for any $0<\sigma_0<\frac{1}{4}$ and the estimate \eqref{u_p^0_estimate_selfsimilar_pre1} for self-similar profile have been used.

Suppose that $\Vert (\bar\Omega,\bar w)\Vert_{\mathcal{Q}(\sigma_0)}\leq R(\delta,\sigma)$. By \eqref{u_p^0_error_intialsmall}, we get
\begin{align}\label{u_p^0_error_1}
\begin{split}
  &\sup_{x\geq 1}\Vert (w,\Omega)x^{-\sigma_0}\Vert_{L_\eta^2}^2+\Vert (w,\Omega)x^{-\sigma_0-\frac{1}{2}}\Vert_{L_x^2L_\eta^2}^2
   +\Vert (w_\eta,\Omega_\eta)x^{-\sigma_0}\Vert_{L_x^2L_\eta^2}^2\\
   &\quad \lesssim R(\delta,\sigma)^3+R(\delta,\sigma)^4+R(\delta,\sigma)^6.
\end{split}
\end{align}

Second, testing the system \eqref{u_p^0_system_error_linear} by $-w_{\eta\eta} x^{-2\sigma_0+1}$ and $-\Omega_{\eta\eta} x^{-2\sigma_0+1}$, we have
\begin{align}\label{u_p^0_error_2.1}
\begin{split}
  &\int_0^\infty [(1-\delta)\p_x-(1-\delta)^2\p_{\eta\eta}](w,\Omega)\cdot(-w_{\eta\eta},-\Omega_{\eta\eta}) x^{-2\sigma_0+1}{\rm d}\eta\\
  &=\frac{1-\delta}{2}\frac{{\rm d}}{{\rm d}x}\int_0^\infty |(w_\eta,\Omega_\eta)|^2 x^{-2\sigma_0+1}
  +(1-\delta)^2\int_0^\infty |(w_{\eta\eta},\Omega_{\eta\eta})|^2 x^{-2\sigma_0+1}\\
  &\quad +\frac{1-\delta}{2}(2\sigma_0-1)\int_0^\infty |(w_\eta,\Omega_\eta)|^2 x^{-2\sigma_0}.
\end{split}
\end{align}

Moreover, from the system \eqref{u_p^0_system_error_linear}, we can bound the term $\Vert (w_x,\Omega_x)x^{-\sigma_0+\frac{1}{2}}\Vert_{L_\eta^2}^2$ as follows
\begin{align}\label{u_p^0_error_2.2}
\begin{split}
 & \Vert (w_x,\Omega_x)x^{-\sigma_0+\frac{1}{2}}\Vert_{L_\eta^2}^2\\
  \leq& (1-\delta)^2\Vert (w_{\eta\eta},\Omega_{\eta\eta}) x^{-\sigma_0+\frac{1}{2}}\Vert_{L_\eta^2}^2
  +\Vert \bar wx^{-\sigma_0-\frac{1}{2}}\Vert_{L_\eta^2}^2\Vert \p_\eta(\phi_*,\psi_*)x^{\frac{1}{2}}\Vert_{L_\eta^\infty}^4
   \\
   &+\Vert \p_{\eta\eta}(\bar w,\bar\Omega) x^{-\sigma_0+\frac{1}{2}}\Vert_{L_\eta^2}^2 \Vert (\bar w,\phi_*)\Vert_{L_\eta^\infty}^4\\
  &+\Vert (\bar w,\bar\Omega) x^{-\sigma_0-\frac{1}{2}}\Vert_{L_\eta^2}^2
  \cdot\Vert (\p_{\eta}^2\phi_*,\p_x\phi_*,\p_x\psi_*)x\Vert_{L_\eta^\infty}^2\\
  &+\Vert (\bar w,\bar\Omega,\phi_*,\psi_*)\Vert_{L_\eta^\infty}^2\cdot\Vert (\bar w_x,\bar\Omega_x)x^{-\sigma_0+\frac{1}{2}}\Vert_{L_\eta^2}^2\\
  &+\Vert \p_\eta(\bar w,\bar\Omega)x^{-\sigma_0}\Vert_{L_\eta^2}^2
  \Vert \p_\eta(\bar w,\bar\Omega,\phi_*,\psi_*)x^{\frac{1}{2}}\Vert_{L_\eta^\infty}^2
 \Vert (\bar w,\phi_*)\Vert_{L_\eta^\infty}^2.
\end{split}
\end{align}

Applying the similar arguments as in the first step, we have
\begin{align}\label{u_p^0_error_2}
\begin{split}
  &\sup_{x\geq 1}\Vert (w_\eta,\Omega_\eta)x^{-\sigma_0+\frac{1}{2}}\Vert_{L_\eta^2}^2
  +\Vert \p_{\eta\eta}(w,\Omega)x^{-\sigma_0+\frac{1}{2}}\Vert_{L_x^2L_\eta^2}^2\\
   &\qquad +\Vert \p_x(w,\Omega)x^{-\sigma_0+\frac{1}{2}}\Vert_{L_x^2L_\eta^2}^2
   \lesssim R(\delta,\sigma)^3+R(\delta,\sigma)^4+R(\delta,\sigma)^6,
\end{split}
\end{align}
where we have used the assumption that $\Vert (\bar w,\bar\Omega)\Vert_{\mathcal{Q}(\sigma_0)}\leq R(\delta,\sigma)$.

Next, we take $x$ derivative to the linear system \eqref{u_p^0_system_error_linear} to get
\begin{equation}\label{u_p^0_system_errorpx}
\begin{cases}
    (1-\delta)\p_{xx} w-(1-\delta)^2\p_{x\eta\eta} w
    =2[(1-\delta+\bar w+\phi_*)\p_x\phi_*+\phi_*\bar w_x]\p_{\eta\eta}\bar w\\
  \quad +[\bar w^2+2(1-\delta)\bar w]\p_{x\eta\eta}\bar w+2(1-\delta+\bar w)\bar w_x\p_{\eta\eta}\bar w-(\bar w+\phi_*)\p_{xx}\bar w+ \bar\Omega\p_{xx}\bar\Omega\\
  \quad +[|\phi_*|^2+2(1-\delta+\bar w)\phi_*]\p_{x\eta\eta}\bar w-\bar w\p_{xx}\phi_*+\bar\Omega\p_{xx}\psi_*+\psi_*\p_{xx}\bar \Omega\\
  \quad +[\bar w^2+2(1-\delta+\phi_*)\bar w]\p_{x\eta\eta}\phi_*+\p_x\phi_*(|\bar w_\eta|^2-|\bar\Omega_\eta|^2+2\bar w_\eta\p_\eta\phi_*-2\bar \Omega_\eta\p_\eta\psi_*)\\
  \quad +2[(1-\delta+\phi_*+\bar w)\bar w_x+\p_x\phi_*\bar w]\p_{\eta\eta}\phi_*-2\bar w_x\p_x\phi_*+2\bar\Omega_x\p_x\psi_*-|\bar\Omega_x|^2\\
  \quad +\bar w_x(|\bar w_\eta|^2-|\bar\Omega_\eta|^2+|\p_\eta\phi_*|^2-|\p_\eta\psi_*|^2-2\bar\Omega_\eta\p_\eta\psi_*+2\bar w_\eta\p_\eta\phi_*)-|\bar w_x|^2\\
  \quad +2(1-\delta+\bar w+\phi_*)(\p_\eta\bar w\p_{x\eta}\bar w-\p_\eta\bar\Omega\p_{x\eta}\bar\Omega-\p_\eta\psi_*\p_{x\eta}\bar\Omega+\p_\eta\psi_*\p_{x\eta} \bar w\\
  \quad -\p_\eta\bar \Omega \p_{x\eta}\psi_*+\p_\eta\bar w \p_{x\eta}\phi_*)+2\bar w(\p_\eta\phi_*\p_{x\eta}\phi_*-\p_\eta\psi_*\p_{x\eta}\psi_*),\\
    (1-\delta)\p_{xx} \Omega-(1-\delta)^2\p_{x\eta\eta}\Omega
    =2[(1-\delta+\bar w+\phi_*)\p_x\phi_*+\phi_* w_x]\p_{\eta\eta}\bar \Omega\\
  \quad +[\bar w^2+2(1-\delta)\bar w]\p_{x\eta\eta}\bar \Omega+2(1-\delta+\bar w)\bar w_x\p_{\eta\eta}\bar\Omega- (\bar w+\phi_*)\p_{xx}\bar\Omega+ \bar\Omega\p_{xx}\bar w\\
  \quad +[|\phi_*|^2+2(1-\delta+\bar w)\phi_*]\p_{x\eta\eta}\bar \Omega-\bar w\p_{xx}\psi_*+\bar \Omega\p_{xx}\phi_*+\psi_*\p_{xx}\bar w\\
  \quad +[\bar w^2+2(1-\delta+\phi_*)\bar w]\p_{x\eta\eta}\psi_*+2[(1-\delta+\phi_*+\bar w)\bar w_x+\p_x\phi_*\bar w]\p_{\eta\eta}\psi_*.
\end{cases}
\end{equation}

Applying the multipliers $w_x x^{-2\sigma_0+2}$ and $\Omega_x x^{-2\sigma_0+2}$, we get
\begin{align}\label{u_p^0_error_3.1}
\begin{split}
  &\int_0^\infty [(1-\delta)\p_x-(1-\delta)^2\p_{\eta\eta}](w_x,\Omega_x)\cdot(w_x,\Omega_x) x^{-2\sigma_0+2}{\rm d}\eta\\
  =&\frac{1-\delta}{2}\frac{{\rm d}}{{\rm d}x}\int_0^\infty |(w_x,\Omega_x)|^2 x^{-2\sigma_0+2}{\rm d}\eta\\
  &
  +(1-\delta)^2\int_0^\infty |(w_{x\eta},\Omega_{x\eta})|^2 x^{-2\sigma_0+2}{\rm d}\eta\\
  & +(1-\delta)(\sigma_0-1)\int_0^\infty |(w_x,\Omega_x)|^2 x^{-2\sigma_0+1}{\rm d}\eta.
\end{split}
\end{align}

One can estimate the terms on the right-hand side as follows:
\begin{align}\label{u_p^0_error_3.2}
\begin{split}
  S_1=&\int_0^\infty [\bar w^2+2(1-\delta)\bar  w+|\phi_*|^2+2(1-\delta+\bar  w)\phi_*]\\
  &\qquad \cdot \left(\p_{x\eta\eta}\bar  w\cdot w_x+\p_{x\eta\eta}\bar \Omega\cdot\Omega_x \right) x^{-2\sigma_0+2}{\rm d}\eta\\
  \leq& \Vert (w_x,\Omega_x)x^{-\sigma_0+\frac{1}{2}}\Vert_{L_\eta^2}\cdot \Vert \p_{x\eta\eta}(w,\Omega)x^{-\sigma_0+\frac{3}{2}}\Vert_{L_\eta^2}\cdot \Vert (\bar  w,\phi_*)\Vert_{L_\eta^\infty}\\
  & \cdot [2\Vert\bar w\Vert_{L_\eta^\infty}+2(1-\delta)+\Vert \phi_*\Vert_{L_\eta^\infty}],
\end{split}
\end{align}

\begin{align}\label{u_p^0_error_3.3}
\begin{split}
  S_2=&\int_0^\infty [\bar w^2+2(1-\delta+\phi_*)\bar w](\p_{x\eta\eta}\phi_*\cdot w_x+\p_{x\eta\eta}\psi_*\cdot\Omega_x) x^{-2\sigma_0+2}{\rm d}\eta\\
  \leq& \Vert (w_x,\Omega_x)x^{-\sigma_0+\frac{1}{2}}\Vert_{L_\eta^2}\cdot \Vert (\p_{x\eta\eta}\phi_*,\p_{x\eta\eta}\psi_*)x^2\Vert_{L_\eta^\infty}
  \cdot \Vert\bar  w x^{-\sigma_0-\frac{1}{2}}\Vert_{L_\eta^2}\\
  & \cdot [\Vert\bar  w\Vert_{L_\eta^\infty}+2(1-\delta)+2\Vert \phi_*\Vert_{L_\eta^\infty}],
\end{split}
\end{align}

\begin{align}\label{u_p^0_error_3.4}
\begin{split}
  S_3=&\int_0^\infty 2\bigg[(1-\delta+\bar w)\bar w_x+(1-\delta+\bar w+\phi_*)\p_x\phi_*\\
  &\qquad\qquad+\phi_*\bar w_x\bigg] \cdot (\p_{\eta\eta}\bar w\cdot w_x+\p_{\eta\eta}\bar \Omega\cdot\Omega_x) x^{-2\sigma_0+2}{\rm d}\eta\\
  \leq& \Vert (w_x,\Omega_x)x^{-\sigma_0+\frac{1}{2}}\Vert_{L_\eta^2}\cdot \Vert (\p_{\eta\eta}\bar w,\p_{\eta\eta}\bar\Omega )x^{-\sigma_0+\frac{1}{2}}\Vert_{L_\eta^2}\cdot \Vert (\bar w_x,\p_x\phi_*)x\Vert_{L_\eta^\infty}\\
  &\quad \cdot 2[\Vert\bar w\Vert_{L_\eta^\infty}+(1-\delta)+\Vert \phi_*\Vert_{L_\eta^\infty}],
\end{split}
\end{align}

\begin{align}\label{u_p^0_error_3.5}
\begin{split}
  S_4=&\int_0^\infty 2[(1-\delta+\phi_*+\bar w)\bar w_x+\p_x\phi_*\bar w] (\p_{\eta\eta}\phi_*w_x+\p_{\eta\eta}\psi\Omega_x) x^{-2\sigma_0+2}{\rm d}\eta\\
  \leq& \Vert (w_x,\Omega_x)x^{-\sigma_0+\frac{1}{2}}\Vert_{L_\eta^2}\cdot 2\Vert \p_{\eta\eta}(\phi_*,\psi_*)x\Vert_{L_\eta^\infty}
  \cdot \left[\Vert \bar w_xx^{-\sigma_0+\frac{1}{2}}\Vert_{L_\eta^2}\right.\\
  &\quad\left. \cdot(\Vert \phi_*\Vert_{L_\eta^\infty}+1-\delta)+\Vert \bar w x^{-\sigma_0-\frac{1}{2}}\Vert_{L_\eta^2}
  \cdot \Vert \p_x\phi_*x\Vert_{L_\eta^\infty}\right],
\end{split}
\end{align}

\begin{align}\label{u_p^0_error_3.6}
\begin{split}
  S_5=&\int_0^\infty \left[-\bar w\p_{xx}\bar w+\bar\Omega\p_{xx}\bar\Omega-\phi_*\p_{xx}\bar w+\psi_*\p_{xx}\bar\Omega\right]\cdot w_x x^{-2\sigma_0+2}{\rm d}\eta\\
  &+\int_0^\infty \left[-\bar w\p_{xx}\bar\Omega+\bar\Omega\p_{xx}\bar w-\phi_*\p_{xx}\bar\Omega+\psi_*\p_{xx}\bar w\right]\cdot\Omega_x x^{-2\sigma_0+2}{\rm d}\eta\\
  \leq& \Vert (w_x,\Omega_x)x^{-\sigma_0+\frac{1}{2}}\Vert_{L_\eta^2}\cdot \Vert \p_{xx}(\bar w,\bar\Omega)x^{-\sigma_0+\frac{3}{2}}\Vert_{L_\eta^2}\cdot\Vert \bar w,\bar\Omega,\phi_*,\psi_*\Vert_{L_\eta^\infty},
\end{split}
\end{align}

\begin{align}\label{u_p^0_error_3.7}
\begin{split}
  S_6=&\int_0^\infty\bigg[(-\bar w\p_{xx}\phi_*+\bar \Omega\p_{xx}\psi_*)w_x\\
  &\qquad\qquad +(-\bar w\p_{xx}\psi_*+\bar \Omega\p_{xx}\phi_*)\Omega_x\bigg]x^{-2\sigma_0+2}{\rm d}\eta\\
  \leq& \Vert (w_x,\Omega_x)x^{-\sigma_0+\frac{1}{2}}\Vert_{L_\eta^2} \cdot\Vert (\bar w,\bar\Omega)x^{-\sigma_0-\frac{1}{2}}\Vert_{L_\eta^2}
  \cdot\Vert \p_{xx}(\phi_*,\psi_*)x^2\Vert_{L_\eta^\infty},
\end{split}
\end{align}

\begin{align}\label{u_p^0_error_3.8}
\begin{split}
  S_7=&\int_0^\infty \left[\bar w_x(|\bar w_\eta|^2-|\bar \Omega_\eta|^2+|\p_\eta\phi_*|^2-|\p_\eta\psi_*|^2-2\bar \Omega_\eta\p_\eta\psi_*+2 \bar w_\eta\p_\eta\phi_*)\right.\\
  &\quad \left.-|\bar w_x|^2-|\bar \Omega_x|^2-2\bar w_x\p_x\phi_*+2\bar \Omega_x\p_x\psi_*\right]\cdot w_x x^{-2\sigma_0+2}{\rm d}\eta\\
  \leq& \Vert w_x x^{-\sigma_0+\frac{1}{2}}\Vert_{L_\eta^2}\cdot \Vert (\bar w_x,\bar\Omega_x)x^{-\sigma_0+\frac{1}{2}}\Vert_{L_\eta^2}\cdot
  \left[\Vert \p_x(\phi_*,\psi_*)x\Vert_{L_\eta^\infty}\right.\\
  &\left.+\Vert (\bar w_x,\bar\Omega_x)x\Vert_{L_\eta^\infty}+\Vert \p_\eta(\phi_*,\psi_*)x^{\frac{1}{2}}\Vert_{L_\eta^\infty}^2+\Vert \p_\eta(\bar w,\bar\Omega)x^{\frac{1}{2}}\Vert_{L_\eta^\infty}^2\right.\\
  &\left.+2\Vert \p_\eta(\phi_*,\psi_*)x^{\frac{1}{2}}\Vert_{L_\eta^\infty}\cdot\Vert \p_\eta(\bar w,\bar\Omega)x^{\frac{1}{2}}\Vert_{L_\eta^\infty}\right],
\end{split}
\end{align}

\begin{align}\label{u_p^0_error_3.9}
\begin{split}
  S_8=&\int_0^\infty \p_x\phi_*(|\bar w_\eta|^2-|\bar \Omega_\eta|^2+2\bar w_\eta\p_\eta\phi_*-2\bar \Omega_\eta\p_\eta\psi_*)\cdot
   w_x x^{-2\sigma_0+2}{\rm d}\eta\\
  \leq& \Vert w_xx^{-\sigma_0+\frac{1}{2}}\Vert_{L_\eta^2} \Vert \p_\eta(\bar w,\bar\Omega)x^{-\sigma_0}\Vert_{L_\eta^2}\\
  &
   \cdot 2\Vert \p_\eta(\bar w,\bar\Omega,\phi_*,\psi_*)x^{\frac{1}{2}}\Vert_{L_\eta^\infty}\Vert\p_x\phi_* x\Vert_{L_\eta^\infty},
\end{split}
\end{align}

\begin{align}\label{u_p^0_error_3.10}
\begin{split}
  S_9=&\int_0^\infty 2(1-\delta+\bar w+\phi_*)(\p_\eta\bar w\p_{x\eta}\bar w-\p_\eta\bar \Omega\p_{x\eta}\bar \Omega-\p_\eta\psi_* \p_{x\eta}\bar \Omega\\
  &\qquad +\p_\eta\psi_* \p_{x\eta}\bar  w)\cdot w_x x^{-2\sigma_0+2}{\rm d}\eta\\
  \leq& \Vert w_xx^{-\sigma_0+\frac{1}{2}}\Vert_{L_\eta^2}\cdot \Vert \p_{x\eta}(\bar w,\bar\Omega)x^{-\sigma_0+1}\Vert_{L_\eta^2}
  \cdot \Vert \p_\eta(\bar w,\bar\Omega,\phi_*,\psi_*)x^{\frac{1}{2}}\Vert_{L_\eta^\infty}\\
  &\quad \cdot 2[(1-\delta)+\Vert\bar  w\Vert_{L_\eta^\infty}+\Vert \phi_*\Vert_{L_\eta^\infty}],
\end{split}
\end{align}

\begin{align}\label{u_p^0_error_3.11}
\begin{split}
  S_{10}=&\int_0^\infty 2(1-\delta+\bar w+\phi_*)(-\p_\eta\bar \Omega \p_{x\eta}\psi_*+\p_\eta\bar w \p_{x\eta}\phi_*)\cdot\bar w_x x^{-2\sigma_0+2}{\rm d}\eta\\
  \leq& \Vert w_xx^{-\sigma_0+\frac{1}{2}}\Vert_{L_\eta^2}\cdot \Vert \p_{\eta}(\bar w,\bar\Omega)x^{-\sigma_0}\Vert_{L_\eta^2}
  \cdot \Vert \p_{x\eta}(\phi_*,\psi_*)x^{\frac{3}{2}}\Vert_{L_\eta^\infty}\\
  &\quad \cdot 2[(1-\delta)+\Vert \bar w\Vert_{L_\eta^\infty}+\Vert \phi_*\Vert_{L_\eta^\infty}],
\end{split}
\end{align}

\begin{align}\label{u_p^0_error_3.12}
\begin{split}
  S_{11}=&\int_0^\infty 2\bar w(\p_\eta\phi_*\p_{x\eta}\phi_*-\p_\eta\psi_*\p_{x\eta}\psi_*)\cdot w_x x^{-2\sigma_0+2}{\rm d}\eta\\
  \leq& \Vert w_xx^{-\sigma_0+\frac{1}{2}}\Vert_{L_\eta^2} \Vert \bar wx^{-\sigma_0-\frac{1}{2}}\Vert_{L_\eta^2}\\
 &  \cdot 2\Vert \p_{x\eta}(\phi_*,\psi_*)x^{\frac{3}{2}}\Vert_{L_\eta^\infty} \Vert \p_\eta(\phi_*,\psi_*)x^{\frac{1}{2}}\Vert_{L_\eta^\infty}.
\end{split}
\end{align}

Combining the above estimates, using the Cauchy inequality and integrating the result on $x\in [1,\infty)$, we have
\begin{align}\label{u_p^0_error_3}
\begin{split}
  &\frac{1-\delta}{2}\sup_{x\geq 1}\Vert (w_x,\Omega_x)x^{-\sigma_0+1}\Vert_{L_\eta^2}^2+(1-\delta)^2
  \Vert \p_{x\eta}(w,\Omega)x^{-\sigma_0+1}\Vert_{L_x^2L_\eta^2}^2\\
  \leq& (1-\delta)\Vert \p_{x}(w,\Omega)x^{-\sigma_0+\frac{1}{2}}\Vert_{L_x^2L_\eta^2}^2
  +\Vert \p_{x}(w_0,\Omega_0)\Vert_{L_\eta^2}^2+\Vert \p_{x}(w,\Omega)x^{-\sigma_0+\frac{1}{2}}\Vert_{L_x^2L_\eta^2}\\
  &\cdot\left[R(\delta,\sigma)+\Vert (\bar w,\bar\Omega)\Vert_{\mathcal{Q}(\sigma_0)}\right]\left[1+R(\delta,\sigma)+\Vert (\bar w,\bar\Omega)\Vert_{\mathcal{Q}(\sigma_0)}\right]\Vert (\bar w,\bar\Omega)\Vert_{\mathcal{Q}(\sigma_0)}\\
  \lesssim& R(\delta,\sigma)^3+R(\delta,\sigma)^4+R(\delta,\sigma)^6,
\end{split}
\end{align}
where \eqref{u_p^0_error_2} and the assumption $\Vert (\bar w,\bar\Omega)\Vert_{\mathcal{Q}(\sigma_0)}\leq R(\delta,\sigma)$ have been used.

Finally, testing the system \eqref{u_p^0_system_errorpx} by functions $-(\p_{x\eta\eta}w,\p_{x\eta\eta}\Omega) x^{-2\sigma_0+3}$, for the left-hand side, we have
\begin{align}\label{u_p^0_error_4.1}
\begin{split}
  &\int_0^\infty [(1-\delta)\p_x-(1-\delta)^2\p_{\eta\eta}](w_x,\Omega_x)\cdot-\p_{x\eta\eta}(w,\Omega) x^{-2\sigma_0+3}{\rm d}\eta\\
  =&\frac{1-\delta}{2}\frac{{\rm d}}{{\rm d}x}\int_0^\infty |\p_{x\eta}(w,\Omega)|^2 x^{-2\sigma_0+3}{\rm d}\eta\\
  &
  +(1-\delta)^2\int_0^\infty |\p_{x\eta\eta}(w,\Omega)|^2 x^{-2\sigma_0+3}{\rm d}\eta\\
  & +\frac{1-\delta}{2}(2\sigma_0-3)\int_0^\infty |(w_{x\eta},\Omega_{x\eta})|^2 x^{-2\sigma_0+2}{\rm d}\eta.
\end{split}
\end{align}

Using the similar arguments as before to bound the right-hand side, together with \eqref{u_p^0_error_4.1}, one has
\begin{align}
  &\frac{1-\delta}{2}\sup_{x\geq 1}\Vert (w_{x\eta},\Omega_{x\eta})x^{-\sigma_0+\frac{3}{2}}\Vert_{L_\eta^2}^2+(1-\delta)^2
  \Vert \p_{x\eta\eta}(w,\Omega)x^{-\sigma_0+\frac{3}{2}}\Vert_{L_x^2L_\eta^2}^2\nonumber\\
  \leq& 2(1-\delta)\cdot\Vert \p_{x\eta}(w,\Omega)x^{-\sigma_0+1}\Vert_{L_x^2L_\eta^2}^2
  +\Vert \p_{x\eta}(w_0,\Omega_0)\Vert_{L_\eta^2}^2+\Vert \p_{x\eta\eta}(w,\Omega)x^{-\sigma_0+\frac{3}{2}}\Vert_{L_x^2L_\eta^2}\nonumber\\
  &\cdot\left[R(\delta,\sigma)+\Vert (\bar w,\bar\Omega)\Vert_{\mathcal{Q}(\sigma_0)}\right]\left[1+R(\delta,\sigma)+\Vert (\bar \Omega, \bar w)\Vert_{\mathcal{Q}(\sigma_0)}\right]\Vert (\bar w,\bar\Omega)\Vert_{\mathcal{Q}(\sigma_0)}\nonumber\\
  \lesssim& R(\delta,\sigma)^3+R(\delta,\sigma)^4+R(\delta,\sigma)^6\label{u_p^0_error_4},
\end{align}
where \eqref{u_p^0_error_3} and the assumption $\Vert (\bar\Omega,\bar w)\Vert_{\mathcal{Q}(\sigma_0)}\leq R(\delta,\sigma)$ have been used.

Putting the estimates \eqref{u_p^0_error_1},\eqref{u_p^0_error_2},\eqref{u_p^0_error_3} and \eqref{u_p^0_error_4} together, as the positive constants $\delta,\sigma$ are chosen to be small enough, we find that the mapping $\tilde T:(\bar w,\bar\Omega)\mapsto (w,\Omega)$ maps the ball $B_{R(\delta,\sigma)}=\{\Vert (\bar w,\bar\Omega)\Vert_{\mathcal{Q}(\sigma_0)}\leq R(\delta,\sigma)\}$ in $\mathcal{Q}(\sigma_0)$ into itself.

It remains to prove that $\tilde T$ is contracting. Let $(w_1,\Omega_1)$ and $(w_2,\Omega_2)$ be any two pairs solving the problem \eqref{u_p^0_system_error_linear} with corresponding $(\bar w_i,\bar\Omega_i)\in B_{R(\delta,\sigma)},i=1,2$, then $(w_1- w_2,\Omega_1-\Omega_2)$ satisfies the following system
\begin{equation}\label{u_p^0_system_error_minus_linear}
\begin{cases}
      (1-\delta)\p_x ( w_1- w_2)-(1-\delta)^2\p_{\eta\eta}( w_1- w_2)\\
  \quad =[\bar w_1^2+2(1-\delta)\bar w_1]\p_\eta^2(\bar w_1-\bar w_2)+(\bar w_1-\bar w_2)(|\p_\eta\phi_*|^2-|\p_\eta \psi_*|^2)\\
  \quad\quad +(\bar w_1-\bar w_2)(|\p_\eta\bar w_1|^2-|\p_\eta\bar \Omega_1|^2+2\p_\eta\bar w_2\p_\eta\phi_*-2\p_\eta\bar \Omega_2\p_\eta\psi_*-\p_x\bar w_2-\p_x\phi_*)\\
  \quad\quad +[(\bar w_1+\bar w_2)+2(1-\delta+\phi_*)](\bar w_1-\bar w_2)\p_\eta^2\phi_*+(\bar\Omega_1-\bar\Omega_2)(\p_x\bar\Omega_2+\p_x\psi_*)\\
  \quad\quad +[|\phi_*|^2+2(1-\delta+\bar w_1)\phi_*]\p_\eta^2(\bar w_1-\bar w_2)+(\p_x\bar\Omega_1-\p_x\bar\Omega_2)(\bar\Omega_1+\psi_*)\\
  \quad\quad +[(\bar w_1+\bar w_2)+2(1-\delta+\phi_*)](\bar w_1-\bar w_2)\p_\eta^2\bar w_2-(\p_x\bar w_1-\p_x\bar w_2)(\bar w_1+\phi_*)\\
  \quad\quad +(1-\delta+\bar w_2+\phi_*)(\p_\eta\bar w_1+\p_\eta\bar w_2+2\p_\eta\phi_*)(\p_\eta\bar w_1-\p_\eta\bar w_2)\\
  \quad\quad +(1-\delta+\bar w_2+\phi_*)(\p_\eta\bar \Omega_1+\p_\eta\bar \Omega_2+2\p_\eta\psi_*)(\p_\eta\bar \Omega_1-\p_\eta\bar \Omega_2),\\
      (1-\delta)\p_x (\Omega_1-\Omega_2)-(1-\delta)^2\p_{\eta\eta}(\Omega_1-\Omega_2)\\
  \quad =[\bar w_1^2+2(1-\delta)\bar w_1]\p_\eta^2(\bar\Omega_1-\bar\Omega_2)-(\bar w_1-\bar w_2)(\p_x\bar\Omega_2+\p_x\psi_*)\\
  \quad\quad +[(\bar w_1+\bar w_2)+2(1-\delta+\phi_*)](\bar w_1-\bar w_2)\p_\eta^2\psi_*+(\bar\Omega_1-\bar\Omega_2)(\p_x\bar w_2+\p_x\phi_*)\\
  \quad\quad +[|\phi_*|^2+2(1-\delta+\bar w_1)\phi_*]\p_\eta^2(\bar\Omega_1-\bar\Omega_2)+(\p_x\bar w_1-\p_x\bar w_2)(\bar\Omega_1+\psi_*)\\
  \quad\quad +[(\bar w_1+\bar w_2)+2(1-\delta+\phi_*)](\bar w_1-\bar w_2)\p_\eta^2\bar\Omega_2-(\p_x\bar\Omega_1-\p_x\bar\Omega_2)(\bar w_1+\phi_*),
\end{cases}
\end{equation}
with the initial data
$( w_1- w_2,\Omega_1-\Omega_2)(1,\eta)=0$
and boundary conditions
$$( w_1- w_2)(x,0)=( w_1- w_2)(x,\infty)=(\Omega_1-\Omega_2)(x,0)=(\Omega_1-\Omega_2)(x,\infty)=0.$$

Following the similar arguments applied in the problem \eqref{u_p^0_system_error_linear}, we have
\begin{equation}
  \Vert (w_1- w_2,\Omega_1-\Omega_2)\Vert_{\mathcal{Q}(\sigma_0)}^2\leq R(\delta,\sigma)^2\Vert (\bar w_1-\bar w_2,\bar\Omega_1-\bar\Omega_2)\Vert_{\mathcal{Q}(\sigma_0)}^2
\end{equation}
for any $(\bar w_i,\bar\Omega_i)\in\mathcal{Q}(\sigma_0)$ satisfying $\Vert (\bar w_i,\bar\Omega_i)\Vert_{\mathcal{Q}(\sigma_0)}\leq R(\delta,\sigma)$,  $i=1,2$.
And thus the contraction mapping theorem admits the unique global solution to the problem \eqref{u_p^0_system_error} satisfying the estimate \eqref{u_p^0_estimate_error_H0}.
One can also repeat the above calculations with any weight to get \eqref{u_p^0_estimate_error_H0weighted} by applying mathematical induction to the terms with weight $z^{m-1}$.

Finally, the estimate \eqref{u_p^0_estimate_error_Hk} follows from taking higher-order derivatives to the nonlinear system $\eqref{u_p^0_system}$ and following the similar arguments as above. Note that for estimates of the higher-order derivative in the forthcoming analysis, the smallness is not necessary, which is the reason why the smallness condition in \eqref{u_p^0_initialsmall} is imposed on $u^0_0,h^0_0$ only for $j<4$.

The proof is completed.
\end{proof}

We turn to prove Proposition \ref{prop_pre_u_p^0}.
\begin{proof}[ Proof of Proposition \ref{prop_pre_u_p^0}]
Due to the decomposition \eqref{u_p^0_decomposition} of $(u_p^0,h_p^0)$, we have
\begin{align*}
  &\Vert z^m\p_x^k\p_\eta^j u_p^0\Vert_{L_\eta^p}=\Vert z^m\p_x^k\p_\eta^j (\phi_*-\delta)\Vert_{L_\eta^p}+\Vert z^m\p_x^k\p_\eta^j w\Vert_{L_\eta^p},\\
  &\Vert z^m\p_x^k\p_\eta^j h_p^0\Vert_{L_\eta^p}=\Vert z^m\p_x^k\p_\eta^j (\psi_*-\sigma)\Vert_{L_\eta^p}+\Vert z^m\p_x^k\p_\eta^j \Omega\Vert_{L_\eta^p}.
\end{align*}

The first part can be controlled via using Proposition \ref{u_p^0_lemma_selfsimilar}. One may prove boundedness of the second part by applying Proposition \ref{u_p^0_lemma_error} with $0<\sigma_0<\frac{1}{4}$, and performing interpolation with similar arguments as in Proposition \ref{u_p^0_lemma_selfsimilar}. Thus, we have
\begin{align*}
  &\Vert z^m\p_x^k\p_\eta^j (u_p^0,h_p^0)\Vert_{L_\eta^p}\leq C(m,k,j)x^{-k-\frac{j}{2}+\frac{1}{2p}},{\rm ~for~} 2k+j> 2,\\
  &\Vert z^m\p_x^k\p_\eta^j (u_p^0,h_p^0)\Vert_{L_\eta^p}\leq \mathcal{O}(\delta,\sigma;m,k,j)x^{-k-\frac{j}{2}+\frac{1}{2p}},{\rm ~for~} 2k+j\leq 2.
\end{align*}

This completes the proof.
\end{proof}

\begin{remark}\label{u_p^0_remark}
Meanwhile, note that the estimate \eqref{u_p^0_estimate2} gives the positive lower bound for $1+u_p^0$ as stated in \eqref{1+u_p^0}.
\end{remark}
Now we are on the position to prove Theorem \ref{u_p^0_theorem}.
\begin{proof}[Proof of Theorem \ref{u_p^0_theorem}]
Let us begin with transferring the estimates \eqref{u_p^0_estimate1}-\eqref{u_p^0_estimate2} to $(x,y)$ variables. First of all, the estimates \eqref{1+u_p^0} and \eqref{u_p^0_estimate1}-\eqref{u_p^0_estimate2} show that $c_0\leq 1+u_p^0\leq C_0$, then we get from $\eta=\int_0^y (1+u_p^0(x,\theta)){\rm d}\theta$ that $c_0y\leq \eta\leq C_0y$. Furthermore, the Jacobians of the coordinates transformation: $\eta'(y)=1+u_p^0$ and $y'(\eta)=\frac{1}{1+u_p^0}$ are not degenerate. Then the estimates \eqref{uh_p^0_estimate1}-\eqref{uh_p^0_estimate2} follow.

The estimates \eqref{vg_p^0_estimate1}-\eqref{vg_p^0_estimate2} for the vertical components can be deduced easily from \eqref{uh_p^0_estimate1}-\eqref{uh_p^0_estimate2} via divergence-free conditions. Precisely, for $p=2$, when $j=0$, one has
\begin{align*}
  \Vert z^m\p_x^k (v_p^0,g_p^0)\Vert_{L_y^2}
  &\leq \Vert z^my\p_x^k\p_y (v_p^0,g_p^0)\Vert_{L_y^2}\\
  &\leq \Vert z^{m+1}\p_x^{k+1} (u_p^0,h_p^0)\Vert_{L_y^2}\cdot \sqrt x \leq C(m,k)x^{-k-\frac{1}{4}}.
\end{align*}
For $j\geq 1$, we have
\begin{align*}
  \Vert z^m\p_x^k\p_y^j (v_p^0,g_p^0)\Vert_{L_y^2}
  \leq \Vert z^my\p_x^{k+1}\p_y^{j-1} (u_p^0,g_p^0)\Vert_{L_y^2}
  \leq C(m,k,j)x^{-k-\frac{j}{2}-\frac{1}{4}}.
\end{align*}

Particularly, in the last two inequalities, for $2k+j\leq 1$, the coefficient $C(m,k,j)$ can be made small by $\delta,\sigma$, which is denoted by $\mathcal{O}(\delta,\sigma;m,k,j)$.
Additionally, the result for the case $2<p\leq\infty$ can be obtained by interpolation, which is similar to that in Proposition \ref{u_p^0_lemma_selfsimilar}, see also Corollary 2.12 in \cite{Iyerglobal1}.
\end{proof}

\section{Construction of the $\sqrt{\eps}$-order correctors}\label{sec3}
\subsection{The $\sqrt{\eps}$-order ideal MHD correctors}\label{sec3.1}
We begin this subsection with investigating the $\sqrt{\eps}$-order ideal MHD correctors which satisfy the following system
\begin{equation}\label{u_e^1_system}
\begin{cases}
  \p_x u_e^1-\sigma\p_x h_e^1+\p_x p_e^1=0,\\
  \p_x v_e^1-\sigma\p_x g_e^1+\p_Y p_e^1=0,\\
  \p_x h_e^1-\sigma\p_x u_e^1=0,\\
  \p_x g_e^1-\sigma\p_x v_e^1=0,\\
 \p_x u_e^1+\p_Y v_e^1=\p_x h_e^1+\p_Y g_e^1=0,\\
  (v_e^1,g_e^1)(x,0)=-(v_p^0,g_p^0)(x,0),\\
  (v_e^1,g_e^1)\rightarrow (0,0)\ {\rm ~as~} Y\rightarrow\infty.
\end{cases}
\end{equation}

It follows from the equations \eqref{u_e^1_system}$_{3,4}$ and divergence-free conditions that
\begin{equation*}
  \nabla_{x,Y}(g_e^1-\sigma v_e^1)=0,
\end{equation*}
and thus, the boundary condition \eqref{u_e^1_system}$_7$ shows that
\begin{equation}\label{v_e^1_relation}
  g_e^1=\sigma v_e^1.
\end{equation}

Specifically, for the case of $\sigma=0$, equations \eqref{u_e^1_system}$_{3,4}$ imply that $g_e^1=b$ for some constants $b$, and since $g_e^1\rightarrow 0$ as $Y\to \infty$, one shall obtain that $g_e^1=0$. Then any function $h_e^1(Y)$ only depends on the variable $Y$ creates a solution to the system by virtue of equation \eqref{u_e^1_system}$_3$. In other words, when $\sigma=0$, it would be an interesting topic to investigate more general solutions $(u_e^1,v_e^1,h_e^1(Y),0)$ to system \eqref{u_e^1_system}. However, in this paper, for convenience, we take $h_e^1(Y)$ to be zero if $\sigma=0$, as we need the decay property in $x$ variable, which will be useful for estimating our remainder terms.

Therefore, based on the above analysis, using the divergence-free conditions again, one has
\begin{equation*}
  u_e^1=\int_x^\infty v_{eY}^1(\theta,Y){\rm d}\theta,\quad h_e^1=\int_x^\infty g_{eY}^1(\theta,Y){\rm d}\theta,
\end{equation*}
then for any fixed constant $0\leq \sigma\ll 1$, we have
\begin{equation}\label{u_e^1_relation}
  h_e^1=\sigma u_e^1.
\end{equation}

On the other hand, the vorticity formulation of the equations \eqref{u_e^1_system}$_{1,2}$ reads as
\begin{equation}\label{u_e^1_relation2}
  -\Delta v_e^1+\sigma\Delta g_e^1=0.
\end{equation}

Hence, we deduce from \eqref{v_e^1_relation} with \eqref{u_e^1_relation2} that
\begin{equation}\label{v_e^1_system}
  -\Delta v_e^1=0,\quad v_e^1(x,0)=-v_p^0(x,0),\quad v_e^1(x,\infty)=0.
\end{equation}

Without loss of generality, taking the pressure to be $p_e^1=-(1-\sigma^2)u_e^1$, and using \eqref{u_e^1_system}$_2$, one can derive that the following Cauchy-Riemann equations for $(u_e^1,v_e^1)$
\begin{equation}\label{u_e^1_CR}
  v_{ex}^1-u_{eY}^1=0,\quad u_{ex}^1+v_{eY}^1=0.
\end{equation}
Cauchy-Riemann equations \eqref{u_e^1_CR} with the harmonic structure \eqref{v_e^1_system} of $v_e^1$ also imply $\Delta u_e^1=0$.

Note that one can deduce the following decay rate for the boundary condition on $\{x=0\}$ via using the estimate \eqref{vg_p^0_estimate2} for $v_p^1$
\begin{align}\label{u_e^1_bc_decay}
  |v_e^1(x,0)|\leq \mathcal{O}(\delta,\sigma)x^{-\frac{1}{2}}.
\end{align}
It ensures us to omit the technical construction and analysis of Lemma 3.4 and Proposition 3.6 in \cite{Iyerglobal1}, and thus gives that:
\begin{proposition}\label{u_e^1_prop}
For $(u_e^1,v_e^1)$ solving the following boundary value problem
\begin{equation}\label{u_e^1vhg_system}
  -\Delta v_e^1=0,\quad v_e^1(x,0)=-v_p^0(x,0),\quad v_e^1(x,\infty)=0,\quad u_e^1=\int_x^\infty v_{eY}^1(\theta,Y){\rm d}\theta,
\end{equation}
and $(h_e^1,g_e^1)$ constructed by $(h_e^1,g_e^1)=\sigma(u_e^1,v_e^1)$, with the pressure $p_e^1=-(1-\sigma^2)u_e^1$. Then $(u_e^1,v_e^1,h_e^1,g_e^1,p_e^1)$ solve system \eqref{u_e^1_system} with
\begin{align}\label{u_e^1_estimate}
  &\Vert (u_e^1,v_e^1,h_e^1,g_e^1)\Vert_{L_Y^\infty}\leq \mathcal{O}(\delta,\sigma)x^{-\frac{1}{2}},\\
  &\Vert (u_{ex}^1,v_{eY}^1,h_{ex}^1,g_{eY}^1)\Vert_{L_Y^\infty}\leq \mathcal{O}(\delta,\sigma)x^{-\frac{3}{2}},\\
  &\Vert \p_x^k(v_e^1,g_e^1)Y\Vert_{L_Y^\infty}\leq C(k,j)x^{-k-\frac{1}{2}},\\
  &\Vert \p_x^k\p_Y^j(u_e^1,v_e^1,h_e^1,g_e^1)\Vert_{L_Y^\infty}\leq C(k,j)x^{-k-j-\frac{1}{2}}.
\end{align}
\end{proposition}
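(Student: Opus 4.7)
The plan is to split the argument into three steps: an algebraic verification that the constructed tuple satisfies \eqref{u_e^1_system}, the construction of $v_e^1$ by Poisson extension, and decay estimates via kernel bookkeeping.

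First I would carry out the algebraic verification. If $v_e^1$ is harmonic on $\{Y>0\}$ with the prescribed datum and $u_e^1$ is defined by the integral formula, the divergence-free identity $\partial_x u_e^1+\partial_Y v_e^1=0$ is automatic and
\begin{equation*}
  \partial_Y u_e^1=-\!\int_x^\infty v_{eYY}^1(\theta,Y)\,d\theta=\int_x^\infty v_{exx}^1(\theta,Y)\,d\theta=\partial_x v_e^1
\end{equation*}
by harmonicity of $v_e^1$, which gives the Cauchy--Riemann pair \eqref{u_e^1_CR} and hence $\Delta u_e^1=0$. Setting $(h_e^1,g_e^1)=\sigma(u_e^1,v_e^1)$ fulfills \eqref{u_e^1_system}$_{3,4}$ together with $\partial_x h_e^1+\partial_Y g_e^1=0$, while $p_e^1=-(1-\sigma^2)u_e^1$ converts \eqref{u_e^1_system}$_{1,2}$ into trivial identities via \eqref{u_e^1_CR}. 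The boundary matching at $Y=0$ is built into the Dirichlet datum for $v_e^1$ and the relation $g_e^1=\sigma v_e^1$.

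For the construction itself I would extend $-v_p^0(\,\cdot\,,0)$ from $[1,\infty)$ to all of $\mathbb{R}$, smoothly truncated to zero for $x<1$, and define $v_e^1$ by the half-plane Poisson formula. The crucial input is the boundary decay \eqref{vg_p^0_estimate2} from Theorem \ref{u_p^0_theorem}, which supplies $|\partial_x^k v_p^0(x,0)|\lesssim \mathcal{O}(\delta,\sigma;k)\,x^{-k-1/2}$; this is exactly \eqref{u_e^1_bc_decay} and, as the excerpt notes, obviates the more intricate construction of Lemma~3.4 and Proposition~3.6 in \cite{Iyerglobal1}. Differentiating under the Poisson integral and splitting the $\theta$-integration into regions $|\theta-x|\lesssim x/2$ and $|\theta-x|\gtrsim x/2$ then yields $\|\partial_x^k\partial_Y^j v_e^1\|_{L_Y^\infty}\lesssim x^{-k-j-1/2}$, with the smallness factor $\mathcal{O}(\delta,\sigma)$ preserved at leading order. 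The identity $u_e^1=\int_x^\infty v_{eY}^1\,d\theta$ combined with $\int_x^\infty \theta^{-3/2}\,d\theta\lesssim x^{-1/2}$ transfers these bounds to $u_e^1$, and $(h_e^1,g_e^1)=\sigma(u_e^1,v_e^1)$ handles the magnetic components; the weighted estimate $\|\partial_x^k(v_e^1,g_e^1)\,Y\|_{L_Y^\infty}\lesssim x^{-k-1/2}$ uses the pointwise bound $|v_e^1(x,Y)|\lesssim Y\,x^{-1/2}(x+Y)^{-1}$ obtained by retaining the leading $Y$-factor in the kernel.

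The main obstacle I anticipate is bookkeeping the mixed high-order derivatives uniformly in $Y$ when $Y$ is large: repeated differentiation of the Poisson kernel produces factors of $(|x-\theta|+Y)^{-1}$ that must be balanced against the boundary decay without logarithmic losses or loss of powers of $x$. This is precisely where the $x^{-1/2}$ head start from \eqref{u_e^1_bc_decay} is essential; with weaker boundary decay one would have to reproduce the compactly supported cutoff analysis of \cite{Iyerglobal1}. Once the kernel estimates are organized, the four stated bounds follow by direct assembly.
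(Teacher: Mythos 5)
Your proposal takes essentially the same route as the paper: construct $v_e^1$ by half-plane Poisson extension of the boundary datum, exploit the decay \eqref{u_e^1_bc_decay} furnished by Theorem~\ref{u_p^0_theorem}, verify the Cauchy--Riemann structure algebraically, and transfer the bounds to $u_e^1$ via the integral formula; the paper itself merely records that this decay lets one omit the more delicate analysis of \cite{Iyerglobal1} and states the proposition without further detail. One small slip: in your chain verifying $\partial_Y u_e^1=\partial_x v_e^1$, differentiating $u_e^1=\int_x^\infty v_{eY}^1\,d\theta$ gives $\partial_Y u_e^1=+\int_x^\infty v_{eYY}^1\,d\theta=-\int_x^\infty v_{exx}^1\,d\theta=v_{ex}^1$, whereas you wrote $-\int v_{eYY}^1=\int v_{exx}^1$; the two sign errors cancel, so the conclusion stands, but the intermediate identities should be corrected. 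It is also worth flagging explicitly (you gesture at this with ``preserved at leading order'') that \eqref{vg_p^0_estimate2} supplies the $\mathcal{O}(\delta,\sigma)$ factor only for $2k+j\le 1$; for $k\ge1$ the boundary datum obeys the $C(k)$ bound from \eqref{vg_p^0_estimate1}, which is exactly why the last two estimates of the proposition carry $C(k,j)$ rather than $\mathcal{O}(\delta,\sigma)$.
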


\subsection{The $\sqrt{\eps}$-order boundary layer}\label{sec3.2}
This subsection is devoted to the construction of the $\sqrt{\eps}$-order boundary layer correctors.  Recall the expressions of $R^{u,0},R^{v,0}$ in \eqref{R^u_0}-\eqref{R^v_0}, and rewrite the $\sqrt{\eps}$-order expansions of each equation as follows
\begin{align}
\label{R_app^u,1}
  R_{app}^{u,1}
  =&-\Delta_\eps\bar u_s^{(1)}+\bar u_s^{(1)}\bar u_{sx}^{(1)}+\bar v_s^{(1)}\bar u_{sy}^{(1)}+\p_x\bar p_s^{(1)}-\bar h_s^{(1)}\bar h_{sx}^{(1)}-\bar g_s^{(1)}\bar h_{sy}^{(1)}\nonumber\\
  =&R^{u,0}+\sqrt\eps[-\Delta_\eps u_p^1+\overline u_s^{(1)}u_{px}^1+u_p^1u_{sx}^{(1)}+\overline v_s^{(1)}u_{py}^1+v_p^1u_{sy}^{(1)}-\overline h_s^{(1)}h_{px}^1\nonumber\\
  &-h_p^1h_{sx}^{(1)}-\overline g_s^{(1)}h_{py}^1-g_p^1h_{sy}^{(1)}+p_{px}^1]
  +\eps p_{px}^{1,a}+R_E^{u,1}+(E_E^{u,1}+\eps p_{ex}^{1,a})\nonumber\\
  &-\eps^{\frac{3}{2}}\Delta u_e^1+\sqrt\eps(p_{ex}^1+u_{ex}^1-\sigma h_{ex}^1),\\
\label{R_app^v,1}
R_{app}^{v,1}
  =&-\Delta_\eps\bar v_s^{(1)}+\bar u_s^{(1)}\bar v_{sx}^{(1)}+\bar v_s^{(1)}\bar v_{sy}^{(1)}+\eps^{-1}\p_y\bar p_s^{(1)}-\bar h_s^{(1)}\bar g_{sx}^{(1)}-\bar g_s^{(1)}\bar g_{sy}^{(1)}\nonumber\\
  =&R^{v,0}+\sqrt\eps[-\Delta_\eps v_p^1+\overline u_s^{(1)}v_{px}^1+u_p^1v_{sx}^{(1)}+\overline v_s^{(1)}v_{py}^1+v_p^1v_{sy}^{(1)}-\overline h_s^{(1)}g_{px}^1\nonumber\\
  &-h_p^1g_{sx}^{(1)}-\overline g_s^{(1)}g_{py}^1-g_p^1g_{sy}^{(1)}+\eps^{-1}p_{py}^1]+p_{py}^{1,a}+R_E^{v,1}+(E_E^{v,1}+\p_y p_{e}^{1,a})\nonumber\\
  &-\eps\Delta v_e^1+(\p_Y p_{e}^1+v_{ex}^1-\sigma g_{ex}^1),
\end{align}
in which $R^{u,0},R^{v,0}$ are defined as in \eqref{R^u_0}-\eqref{R^v_0}. Note that the last two terms in $R_{app}^{u,1}$ and $R_{app}^{v,1}$ can be cancelled by utilizing Cauchy-Riemann equations \eqref{u_e^1_CR} of $(u_e^1,v_e^1)$ and equations \eqref{u_e^1_system}$_{1,2}$.
In the above expansions, $R_E^{u,1},R_E^{v,1}$ denote the following coupling Euler-Prandtl terms
\begin{align}\label{R^u,1_E}
  R_E^{u,1}=\sqrt\eps (u_{ex}^1u_p^0+ u_e^1u_{px}^0- h_{ex}^1h_p^0- h_e^1h_{px}^0)+\eps (v_p^0u_{eY}^1- g_p^0h_{eY}^1),
\end{align}
\begin{align}\label{R^v,1_E}
\begin{split}
  R_E^{v,1}=&(u_p^0v_{ex}^1+v_e^1v_{py}^0-h_p^0g_{ex}^1-g_e^1g_{py}^0)\\
  &+\sqrt\eps (u_e^1v_{px}^0+ v_p^0v_{eY}^1- h_e^1g_{px}^0- g_p^0g_{eY}^1).
\end{split}
\end{align}

We introduce an auxiliary boundary layer pressure
\begin{align}\label{p_p^1,a}
  p_p^{1,a}=\int_y^\infty (R^{v,0}+R_E^{v,1})
\end{align}
 in the expansion \eqref{R_app^v,1} of $R_{app}^{v,1}$. To preserve the divergence-free condition, let us introduce a new term $\eps p_{px}^{1,a}$ in the expansion \eqref{R_app^u,1} of $R_{app}^{u,1}$:
\begin{align}\label{p_px^1,a}
\begin{split}
&p_{px}^{1,a}
  =\int_y^\infty\left[
  -\Delta_\eps\bar v_{sx}^{(0)}+\bar u_s^{(0)}v_{sxx}^{(1)}+v_s^{(1)}\bar v_{sxy}^{(0)}-\bar h_s^{(0)} g_{sxx}^{(1)}-g_s^{(1)}\bar g_{sxy}^{(0)}+u_p^0v_{exx}^1\right.\\
  &\left.+v_e^1v_{pxy}^0-h_p^0g_{exx}^1-g_e^1g_{pxy}^0+\sqrt\eps\left( u_e^1v_{pxx}^0+ v_p^0v_{exY}^1- h_e^1g_{pxx}^0- g_p^0g_{exY}^1\right)\right].
\end{split}
\end{align}

The pure-Euler terms $E_E^{u,1},E_E^{v,1}$ will result in some new terms in our analysis, which are expressed as
\begin{align}\label{E_E^u,1}
\begin{split}
  E_E^{u,1}
  =&\eps u_e^1u_{ex}^1+\eps v_e^1u_{eY}^1-\eps h_e^1h_{ex}^1-\eps g_e^1h_{eY}^1\\
  =&\eps (1-\sigma^2)[u_e^1u_{ex}^1+v_e^1v_{ex}^1]
  =\frac{1-\sigma^2}{2}\eps\p_x\left(|u_e^1|^2+|v_e^1|^2\right)
\end{split}
\end{align}
and
\begin{align}\label{E_E^v,1}
\begin{split}
  E_E^{v,1}
  =&\sqrt\eps u_e^1v_{ex}^1+\sqrt\eps v_e^1v_{eY}^1-\sqrt\eps h_e^1g_{ex}^1-\sqrt\eps g_e^1g_{eY}^1\\
  =&\sqrt\eps (1-\sigma^2)[u_e^1u_{eY}^1+v_e^1v_{eY}^1]
  =\frac{1-\sigma^2}{2}\sqrt\eps\p_Y\left(|u_e^1|^2+|v_e^1|^2\right),
\end{split}
\end{align}
where the Cauchy-Riemann equations \eqref{u_e^1_CR} have been used. Note that the terms can be rewritten as gradient-type, one shall introduce an auxiliary ideal MHD pressure to cancel them. To be precise, we take
\begin{align}\label{p_e^1,a}
  p_e^{1,a}=-\frac{1-\sigma^2}{2}\left(|u_e^1|^2+|v_e^1|^2\right)
\end{align}
to ensure that
\begin{align}\label{E_E+p_e^1,a}
  E_E^{u,1}+\eps p_{ex}^{1,a}=0,\quad
  E_E^{v,1}+\p_y p_{e}^{1,a}=0.
\end{align}

Similarly, for the magnetic field, one can also derive
\begin{align*}
  R_{app}^{h,1}
  =&-\Delta_\eps\bar h_s^{(1)}+\bar u_s^{(1)}\bar h_{sx}^{(1)}+\bar v_s^{(1)}\bar h_{sy}^{(1)}-\bar h_s^{(1)}\bar u_{sx}^{(1)}-\bar g_s^{(1)}\bar u_{sy}^{(1)}\\
  =&R^{h,0}+\sqrt\eps[-\Delta_\eps h_p^1+\overline u_s^{(1)}h_{px}^1+u_p^1h_{sx}^{(1)}+\overline v_s^{(1)}h_{py}^1+v_p^1h_{sy}^{(1)}\\
  &-\overline h_s^{(1)}u_{px}^1-h_p^1u_{sx}^{(1)}-\overline g_s^{(1)}u_{py}^1-g_p^1u_{sy}^{(1)}]+R_E^{h,1}+E_E^{h,1},\\
  R_{app}^{g,1}
  =&-\Delta_\eps\bar g_s^{(1)}+\bar u_s^{(1)}\bar g_{sx}^{(1)}+\bar v_s^{(1)}\bar g_{sy}^{(1)}-\bar h_s^{(1)}\bar v_{sx}^{(1)}-\bar g_s^{(1)}\bar v_{sy}^{(1)}\\
  =&R^{g,0}+\sqrt\eps[-\Delta_\eps g_p^1+\overline u_s^{(1)}g_{px}^1+u_p^1g_{sx}^{(1)}+\overline v_s^{(1)}g_{py}^1+v_p^1g_{sy}^{(1)}\\
  &-\overline h_s^{(1)}v_{px}^1-h_p^1v_{sx}^{(1)}-\overline g_s^{(1)}v_{py}^1-g_p^1v_{sy}^{(1)}]+R_E^{g,1}+E_E^{g,1},
\end{align*}
in which $R^{h,0},R^{g,0}$ are defined as in \eqref{R^h_0}-\eqref{R^g_0}. The Euler-Prandtl terms $R_E^{h,1},R_E^{g,1}$ are expressed as
\begin{align}\label{R^h_1_E}
  R_E^{h,1}=\sqrt\eps(u_p^0 h_{ex}^1+u_e^1 h_{px}^0-h_p^0 u_{ex}^1-h_e^1 u_{px}^0)+\eps(v_p^0 h_{eY}^1-g_p^0 u_{eY}^1)
\end{align}
and
\begin{align}\label{R^g_1_E}
  R_E^{g,1}=\sqrt\eps(u_e^1 g_{px}^0+v_p^0 g_{eY}^1-h_e^1 v_{px}^0-g_p^0 v_{eY}^1).
\end{align}

Fortunately, the corresponding pure-Euler terms $E_E^{h,1},E_E^{g,1}$ vanish, i.e.,
\begin{align}\label{E_E^h,1}
\begin{split}
  E_E^{h,1}
  =&\eps u_e^1h_{ex}^1+\eps v_e^1h_{eY}^1-\eps h_e^1u_{ex}^1-\eps g_e^1u_{eY}^1\\
  =&\eps (\sigma-\sigma)[u_e^1u_{ex}^1+v_e^1v_{ex}^1]=0
\end{split}
\end{align}
and
\begin{align}\label{E_E^g,1}
\begin{split}
  E_E^{g,1}
  =&\sqrt\eps u_e^1g_{ex}^1+\sqrt\eps v_e^1g_{eY}^1-\sqrt\eps h_e^1v_{ex}^1-\sqrt\eps g_e^1v_{eY}^1\\
  =&\sqrt\eps (\sigma-\sigma)[u_e^1u_{eY}^1+v_e^1v_{eY}^1]=0.
\end{split}
\end{align}

Therefore, the initial boundary value problem for the $\sqrt{\eps}$-order boundary layer corrector $(u_p^1,v_p^1,h_p^1,g_p^1)$ is reduced to
\begin{equation}\label{u_p^1_system}
\begin{cases}
  -u_{pyy}^1+(1+u_p^0)u_{px}^1+p_{px}^1=(\sigma+h_p^0)h_{px}^1-\mathcal{P}_u^{(1)}+f_u^{(1)},\\
  -h_{pyy}^1+(1+u_p^0)h_{px}^1=(\sigma+h_p^0)u_{px}^1-\mathcal{P}_h^{(1)}+f_h^{(1)},\\
  -g_{pyy}^1+(1+u_p^0)\p_x(g_p^1-\bar g_p^1)=(\sigma+h_p^0)\p_x(v_p^1-\bar v_p^1)-\mathcal{P}_g^{(1)}+f_g^{(1)},\\
  p_{py}^1=0,\\
  u_{px}^1+v_{py}^1=h_{px}^1+g_{py}^1=0,\\
  (u_p^1,v_p^1,h_p^1,g_p^1)(x,0)=(-\overline u_e^1,-\overline v_e^2,-\overline h_e^1,-\overline g_e^2)(x),\\
  (u_p^1,v_p^1,h_p^1,g_p^1)(x,\infty)=(0,0,0,0),\quad (u_p^1,h_p^1)(1,y)=(u_0^1,h_0^1)(y),
\end{cases}
\end{equation}
where the convective terms and the forcing terms are defined by
\begin{align}\label{P_u1}
\begin{cases}
  \mathcal{P}_u^{(1)}=&u_p^1u_{sx}^{(1)}+(v_p^1-\bar v_p^1)u_{py}^0+v_s^{(1)}u_{py}^1\\
  &-h_p^1h_{sx}^{(1)}-(g_p^1-\bar g_p^1)h_{py}^0-g_s^{(1)}h_{py}^1,\\
  \mathcal{P}_h^{(1)}=&u_p^1h_{sx}^{(1)}+(v_p^1-\bar v_p^1)h_{py}^0+v_s^{(1)}h_{py}^1\\
  &-h_p^1u_{sx}^{(1)}-(g_p^1-\bar g_p^1)u_{py}^0-g_s^{(1)}u_{py}^1,\\
  \mathcal{P}_g^{(1)}=&u_p^1g_{sx}^{(1)}+(v_p^1-\bar v_p^1)g_{py}^0+v_s^{(1)}g_{py}^1\\
  &-h_p^1v_{sx}^{(1)}-(g_p^1-\bar g_p^1)v_{py}^0-g_s^{(1)}v_{py}^1,
\end{cases}
\end{align}
and
\begin{align}\label{f_u1}
\begin{split}
  &(f_u^{(1)},f_h^{(1)},f_g^{(1)})=-\eps^{-\frac{1}{2}}(R^{u,0}+R_E^{u,1}+\eps p_{px}^{1,a},R^{h,0}+R_E^{h,1},R^{g,0}+R_E^{g,1}).
\end{split}
\end{align}

The boundary contribution of $-\bar v_p^1u_{py}^0$ in \eqref{P_u1} is extracted from the following term
\begin{align}\label{overlinev_e^2}
  v_e^2(x,Y)u_{py}^0= \overline v_e^2u_{py}^0+\sqrt\eps yu_{py}^0v_{eY}^2+\eps u_{py}^0\int_0^y\int_y^\theta v_{eYY}^2(\sqrt\eps\tau) {\rm d}\tau{\rm d}\theta.
\end{align}

The arguments can be also applied for the other boundary terms in \eqref{u_p^1_system}. In addition, evaluating the equation \eqref{u_p^1_system}$_1$ at $y=\infty$, and using the identity $p_{py}^1=0$, one can deduce that $p_p^1=C$. Without loss of generality, we take $p_p^1=0$.
Then the remainder terms with $\sqrt{\eps}$-order are reduced to
\begin{align}
\label{R^u_1}
  R^{u,1}
  =&\sqrt\eps[-\eps u_{pxx}^1+\sqrt\eps(u_e^1+u_p^1)u_{px}^1+\sqrt\eps v_p^1(u_{py}^1+\sqrt\eps u_{eY}^1)-\sqrt\eps(h_e^1+h_p^1)h_{px}^1 \nonumber\\
  &-\sqrt\eps g_p^1(h_{py}^1+\sqrt\eps h_{eY}^1)+\sqrt\eps yv_{eY}^2u_{py}^0+\eps u_{py}^0\int_0^y\int_y^{\theta}v_{eYY}^2(\sqrt\eps\tau){\rm d}\tau{\rm d}\theta\nonumber\\
  &-\sqrt\eps yg_{eY}^2h_{py}^0-\eps h_{py}^0\int_0^y\int_y^{\theta}g_{eYY}^2(\sqrt\eps\tau){\rm d}\tau{\rm d}\theta],\\
\label{R^v_1}
  R^{v,1}
  =&\sqrt\eps[-\Delta_\eps v_p^1+\overline u_s^{(1)}v_{px}^1+u_p^1v_{sx}^{(1)}+v_s^{(1)}v_{py}^1+v_p^1\overline v_{sy}^{(1)}\nonumber\\
  &\qquad -\overline h_s^{(1)}g_{px}^1-h_p^1g_{sx}^{(1)}-g_s^{(1)}g_{py}^1-g_p^1\overline g_{sy}^{(1)}],\\
\label{R^h_1}
  R^{h,1}
  =&\sqrt\eps[-\eps h_{pxx}^1+\sqrt\eps(u_e^1+u_p^1)h_{px}^1+\sqrt\eps v_p^1(h_{py}^1+\sqrt\eps h_{eY}^1)-\sqrt\eps(h_e^1+h_p^1)u_{px}^1 \nonumber\\
  &-\sqrt\eps g_p^1(u_{py}^1+\sqrt\eps u_{eY}^1)+h_{py}^0(v_e^2-\bar v_e^2)-u_{py}^0(g_e^2-\bar g_e^2)]\nonumber\\
  =&\sqrt\eps[-\eps h_{pxx}^1+\sqrt\eps(u_e^1+u_p^1)h_{px}^1+\sqrt\eps v_p^1(h_{py}^1+\sqrt\eps h_{eY}^1)-\sqrt\eps(h_e^1+h_p^1)u_{px}^1 \nonumber\\
  &-\sqrt\eps g_p^1(u_{py}^1+\sqrt\eps u_{eY}^1)+\sqrt\eps yv_{eY}^2h_{py}^0+\eps h_{py}^0\int_0^y\int_y^{\theta}v_{eYY}^2(\sqrt\eps\tau){\rm d}\tau{\rm d}\theta\nonumber\\
  &-\sqrt\eps yg_{eY}^2u_{py}^0-\eps u_{py}^0\int_0^y\int_y^{\theta}g_{eYY}^2(\sqrt\eps\tau){\rm d}\tau{\rm d}\theta],\\
\label{R^g_1}
  R^{g,1}
  =&\sqrt\eps[-\eps g_{pxx}^1+\sqrt\eps(u_e^1+u_p^1)g_{px}^1+\sqrt\eps v_p^1(g_{py}^1+g_{eY}^1)\nonumber\\
  &-\sqrt\eps(h_e^1+h_p^1)v_{px}^1 -\sqrt\eps g_p^1(v_{py}^1+v_{eY}^1)+g_{py}^0(v_e^2-\bar v_e^2)\nonumber\\
  &-v_{py}^0(g_e^2-\bar g_e^2)+(1+u_p^0)\p_x(g_e^2-\bar g_e^2)-(\sigma+h_p^0)\p_x(v_e^2-\bar v_e^2)]\nonumber\\
  =&\sqrt\eps[-\eps g_{pxx}^1+\sqrt\eps(u_e^1+u_p^1)g_{px}^1+\sqrt\eps v_p^1(g_{py}^1+g_{eY}^1)-\sqrt\eps(h_e^1+h_p^1)v_{px}^1 \nonumber\\
  &-\sqrt\eps g_p^1(v_{py}^1+v_{eY}^1)+\sqrt\eps yv_{eY}^2g_{py}^0+\eps g_{py}^0\int_0^y\int_y^{\theta}v_{eYY}^2(\sqrt\eps\tau){\rm d}\tau{\rm d}\theta\nonumber\\
  &-\sqrt\eps yg_{eY}^2v_{py}^0-\eps v_{py}^0\int_0^y\int_y^{\theta}g_{eYY}^2(\sqrt\eps\tau){\rm d}\tau{\rm d}\theta\nonumber\\
  &+\sqrt\eps u_p^0\int_0^y\p_{xY}g_e^2(\sqrt\eps\tau){\rm d}\tau-\sqrt\eps h_p^0\int_0^y\p_{xY}v_e^2(\sqrt\eps\tau){\rm d}\tau].
\end{align}

Thanks to divergence-free conditions, we rewrite the equations \eqref{u_p^1_system}$_{2,3}$ as
\begin{align}\label{h_p^1_equation}
\begin{split}
  \p_y\left[-h_{py}^1-(1+u_p^0)(g_p^1-\bar g_p^1)+(\sigma+h_p^0)(v_p^1-\bar v_p^1)-u_p^1g_s^{(1)}+h_p^1v_s^{(1)}\right]\\
  +\eps^{\frac{1}{2}}\p_y\left[\eps g_{px}^0+\sqrt\eps(v_p^0h_e^1-g_p^0u_e^1)+h_p^0(v_e^1-\bar v_e^1)-u_p^0(g_e^1-\bar g_e^1)\right]=0
\end{split}
\end{align}
and
\begin{align}\label{g_p^1_equation}
\begin{split}
  \p_x\left[-h_{py}^1-(1+u_p^0)(g_p^1-\bar g_p^1)+(\sigma+h_p^0)(v_p^1-\bar v_p^1)-u_p^1g_s^{(1)}+h_p^1v_s^{(1)}\right]\\
  +\eps^{\frac{1}{2}}\p_x\left[\eps g_{px}^0+\sqrt\eps(v_p^0h_e^1-g_p^0u_e^1)+h_p^0(v_e^1-\bar v_e^1)-u_p^0(g_e^1-\bar g_e^1)\right]=0.
\end{split}
\end{align}

We can deduce from the equation \eqref{h_p^1_equation} with the boundary conditions of the profiles at infinity and the identity $\bar g_e^2=\sigma \bar v_e^2$ that
\begin{align}\label{h_p^1_equation1}
\begin{split}
  &-h_{py}^1-(1+u_p^0)(g_p^1-\bar g_p^1)+(\sigma+h_p^0)(v_p^1-\bar v_p^1)-u_p^1g_s^{(1)}+h_p^1v_s^{(1)}\\
  &+\eps^{\frac{1}{2}}\left[\eps g_{px}^0+\sqrt\eps(v_p^0h_e^1-g_p^0u_e^1)+h_p^0(v_e^1-\bar v_e^1)-u_p^0(g_e^1-\bar g_e^1)\right]=0.
\end{split}
\end{align}
It implies that the equation \eqref{g_p^1_equation} is a direct consequence of \eqref{h_p^1_equation}, i.e., the equation \eqref{u_p^1_system}$_3$ for vertical magnetic field is equivalent to \eqref{u_p^1_system}$_2$ for tangential component. Therefore, we only need to consider the problem \eqref{u_p^1_system} without \eqref{u_p^1_system}$_{3}$. Due to the divergence-free conditions and the decay behavior for  $(v_p^1,g_p^1)$ as $y\to \infty$, we stress that the vertical components $(v_p^1,g_p^1)$ are constructed by
\begin{equation}\label{v_p^1_identity}
  (v_p^1,g_p^1)(x,y)=\int_y^\infty \p_x(u_p^1,h_p^1)(x,\theta){\rm d}\theta.
\end{equation}

Note that the forcing terms $(f_u^{(1)},f_h^{(1)})$ can be estimated as follows (refer to \cite{Iyerglobal1} for the details)
\begin{equation}\label{f_u1_estimate}
  \Vert z^m \p_x^k(f_u^{(1)},f_h^{(1)})\Vert_{L_y^2}\leq C(k,m)x^{-k-\frac{5}{4}}.
\end{equation}

With $(u^0_p,v^0_p,h^0_p,g^0_p)$ and $(u^1_e,v^1_e,h^1_e,g^1_e)$ constructed in previous arguments, we can obtain the main result of this subsection as follows.
\begin{theorem}\label{u_p^1_theorem}
For any $m,k,j\in\mathbb{N}$, there exists solution $(u_p^1,v_p^1,h_p^1,g_p^1)$ to the  problem \eqref{u_p^1_system} in the domain $\Omega=[1,\infty)\times \mathbb{R}_+$ satisfying
\begin{align}
\label{uh_p^1_estimate}
  &\Vert z^m\p_x^k\p_y^j (u_p^1,h_p^1)\Vert_{L_y^\infty}\leq C(\delta,\sigma,\sigma_1;m,k,j)x^{-k-\frac{j}{2}-\frac{1}{4}+\sigma_1},\\
\label{vg_p^1_estimate}
  &\Vert z^m\p_x^k\p_y^j (v_p^1,g_p^1)\Vert_{L_y^\infty}\leq C(\delta,\sigma,\sigma_1;m,k,j)x^{-k-\frac{j}{2}-\frac{3}{4}+\sigma_1}.
\end{align}
\end{theorem}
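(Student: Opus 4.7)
The plan is to treat \eqref{u_p^1_system} as a linear coupled parabolic system in the tangential unknowns $(u_p^1,h_p^1)$ with $x$ playing the role of the evolution variable. All the coefficients and source terms are expressed in terms of already-constructed profiles: the leading-order boundary layer $(u_p^0,v_p^0,h_p^0,g_p^0)$ from Theorem \ref{u_p^0_theorem} and the first-order ideal MHD correctors $(u_e^1,v_e^1,h_e^1,g_e^1)$ from Proposition \ref{u_e^1_prop}, whose sharp decay in $x$ is available. The vertical components are recovered from \eqref{v_p^1_identity} and the third equation is redundant by \eqref{h_p^1_equation1}, so I only need to solve for $(u_p^1,h_p^1)$.

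\textbf{Step 1: Diagonalization of the principal part.} Introducing $U^{\pm}:=u_p^1\pm h_p^1$ and adding/subtracting \eqref{u_p^1_system}$_{1,2}$ yields two equations of the form
$$-\partial_{yy}U^{\pm}+a^{\pm}(x,y)\,\partial_x U^{\pm}=\mathcal{F}^{\pm}\bigl[U^+,U^-;(v_p^1,g_p^1)\bigr]+f^{\pm},$$
where $a^+=(1+u_p^0)-(\sigma+h_p^0)\geq c_1>0$ and $a^-=(1+u_p^0)+(\sigma+h_p^0)\geq c_0>0$ by \eqref{1+u_p^0} and \eqref{1+u_p^0-sigma-h_p^0}. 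The principal operator is thus uniformly parabolic. I would then apply a von Mises-type change of variable $\eta=\int_0^y(1+u_p^0)(x,\theta)\,\mathrm{d}\theta$, exactly as in Section 2, to flatten the coefficient in front of $\partial_x$ and reduce to a (coupled) pair of heat equations with forcing living on a half-line.

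\textbf{Step 2: Lift of boundary data and energy framework.} Because the Dirichlet data $(\overline u_e^1,\overline h_e^1)(x)$ at $y=0$ decay only like $x^{-1/2}$ (Proposition \ref{u_e^1_prop}), I would subtract a Gaussian lift of these boundary values, analogous to $(e_\delta,e_\sigma)$ in \eqref{e_delta_def}, thereby reducing to a system with zero boundary data at both $y=0$ and $y=\infty$ and a modified right-hand side that still decays at least like $x^{-5/4}$ thanks to \eqref{f_u1_estimate} and the pointwise bounds of Theorem \ref{u_p^0_theorem}. I would then run a weighted energy argument analogous to the one used for the error profile in Proposition \ref{u_p^0_lemma_error}, but with the weight exponent tuned to the target rate $x^{-1/4+\sigma_1}$: test $\eqref{u_p^1_system}$ by $U^{\pm}x^{-2\sigma_1-1/2}$ and $-\partial_{yy}U^{\pm}x^{-2\sigma_1+1/2}$, deriving a norm of the type $\mathcal{Q}(\sigma_0)$ with $\sigma_0=-\tfrac{1}{4}+\sigma_1$. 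A contraction-mapping/Lax--Milgram argument on a truncated strip followed by a limit as the truncation tends to infinity furnishes existence and uniqueness.

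\textbf{Step 3: $L^\infty$ bounds, vertical components, and higher derivatives.} The pointwise estimate \eqref{uh_p^1_estimate} follows from the weighted $L^2$ bounds by a Sobolev-embedding inequality of the type \eqref{u_p^0_estimate_Linfty}. The vertical components $(v_p^1,g_p^1)$ are reconstructed via \eqref{v_p^1_identity}, and their bound \eqref{vg_p^1_estimate} follows from integrating one $\partial_x$-derivative of \eqref{uh_p^1_estimate} in $y$, picking up the extra $x^{-1/2}$ factor via the weight $y\sim\sqrt{x}\,z$, exactly as in the derivation of \eqref{vg_p^0_estimate1}--\eqref{vg_p^0_estimate2} from \eqref{uh_p^0_estimate1}--\eqref{uh_p^0_estimate2} at the end of Section 2. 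Weighted estimates in $z^m$ and derivatives $\partial_x^k\partial_y^j$ follow by differentiating the system and proceeding inductively, as in \eqref{u_p^0_estimate_error_Hk}.

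\textbf{Main obstacle.} The delicate point is the coupling term $\mathcal{F}^{\pm}$: it contains $(v_p^1,g_p^1)=\int_y^\infty\partial_x(u_p^1,h_p^1)\,\mathrm{d}\theta$, which is effectively one $\partial_x$-derivative of the unknown integrated vertically. Controlling this non-local term against the horizontal dissipation (which is absent — the viscous operator acts only in $y$) is what forces the small loss $\sigma_1>0$ in the exponent of \eqref{uh_p^1_estimate}: one must close the estimate at a rate $x^{-1/4+\sigma_1}$ slightly weaker than what the forcing alone suggests, in order to absorb the $\bar v_e^1,\bar g_e^1$ contributions in $v_s^{(1)},g_s^{(1)}$ (which themselves only decay like $x^{-1/2}$). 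Balancing these two effects through an appropriately weighted $\mathcal{Q}(-\tfrac{1}{4}+\sigma_1)$-type functional is the technical heart of the proof.
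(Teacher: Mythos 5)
Your overall framework (weighted $L_x^2 L_y^2$ estimates, Sobolev interpolation for $L^\infty$, Hardy inequality for the vertical components) is the right shape, but the paper's actual proof is more direct than what you propose, and your Step~2 contains a genuine weight-exponent error that would prevent the estimate from closing.

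\textbf{What the paper does differently.} The paper works directly in $(x,y)$ with the coupled unknowns $(u_p^1,h_p^1)$ — no diagonalization, no von Mises change of variable (that was needed only to linearize the \emph{nonlinear} leading-order problem), and no Gaussian lift of the boundary data. The antisymmetric coupling $(\sigma+h_p^0)(h_{px}^1 u_p^1 + u_{px}^1 h_p^1)$ is exploited by writing it as a total derivative $(\sigma+h_p^0)\partial_x(u_p^1 h_p^1)$, which is then moved to the left and absorbed by the smallness of $\sigma$ and $h_p^0$; this is the same cancellation your $U^\pm$-diagonalization achieves, just packaged differently. The inhomogeneous Dirichlet data $(-\overline u_e^1,-\overline h_e^1)$ are kept, and the resulting boundary integrals from integration by parts are controlled by a trace inequality ($|\partial_y u_p^1(x,0)|\lesssim \|\partial_y u_p^1\|_{L_y^2}+\|\partial_y^2 u_p^1\|_{L_y^2}$) together with the decay $|\overline u_e^1(x)|\lesssim x^{-1/2}$. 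Incidentally, your proposal to compose the diagonalization with the von Mises transform $\eta=\int_0^y(1+u_p^0)$ is internally inconsistent: after diagonalization the convection coefficients become $a^\pm=(1+u_p^0)\mp(\sigma+h_p^0)$, which are not equal to $1+u_p^0$, so that transform flattens neither of the two equations.

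\textbf{The weight-exponent error.} You propose testing by $U^\pm x^{-2\sigma_1-1/2}$ and aiming for a norm ``of type $\mathcal{Q}(\sigma_0)$ with $\sigma_0=-\tfrac14+\sigma_1$.'' These two statements are incompatible ($\mathcal{Q}(\sigma_0)$ corresponds to the multiplier $x^{-2\sigma_0}=x^{1/2-2\sigma_1}$, not $x^{-2\sigma_1-1/2}$), and the target $\sigma_0=-\tfrac14+\sigma_1<0$ cannot be reached by this energy method: differentiating the weight produces the term $\sigma_0\int(1+u_p^0)|U^\pm|^2 x^{-2\sigma_0-1}$, which is \emph{negative} when $\sigma_0<0$ and hence cannot be absorbed into the left-hand side. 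Moreover, with either of your stated multipliers one does not recover the claimed $L^\infty$ rate: the multiplier $x^{-2\sigma_1-1/2}$ gives only $\|U^\pm\|_{L_y^2}\lesssim x^{\sigma_1+1/4}$ and $\|\partial_y U^\pm\|_{L_y^2}\lesssim x^{\sigma_1-1/4}$, whence $|U^\pm|\lesssim x^{\sigma_1}$, which does not decay at all. The paper instead uses the norm $\mathcal{P}(\sigma_1)$ with $\sigma_1>0$ — multiplier $u_p^1 x^{-2\sigma_1}$, giving $\|u_p^1\|_{L_y^2}\lesssim x^{\sigma_1}$ (mildly growing!) and $\|\partial_y u_p^1\|_{L_y^2}\lesssim x^{\sigma_1-1/2}$ — so that the extra factor $x^{-1/4}$ in the $L^\infty$ bound $|u_p^1|\lesssim x^{-1/4+\sigma_1}$ arises purely from the $\|u\|^{1/2}\|\partial_y u\|^{1/2}$ interpolation, not from the $L_y^2$ bound itself.

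\textbf{Where the loss $\sigma_1>0$ actually comes from.} Your ``main obstacle'' paragraph attributes the small loss to the nonlocal convective term $(v_p^1,g_p^1)=\int_y^\infty \partial_x(u_p^1,h_p^1)$. In the paper, however, those terms are all prefaced by coefficients of size $\mathcal{O}(\delta,\sigma)$ (e.g. $\|y u_{py}^0\|_{L^\infty}$, $\|v_s^{(1)}x^{1/2}\|_{L^\infty}$) and are absorbed outright by smallness; they do not cause the loss. The loss $\sigma_1>0$ is forced by the boundary contribution: after integrating by parts, the term $\partial_y u_p^1(x,0)\overline u_e^1(x)x^{-2\sigma_1}$ is bounded (via trace plus Cauchy--Young) by $\delta_0|\partial_y u_p^1(x,0)x^{-\sigma_1}|^2 + Cx^{-1-2\sigma_1}$, and the residual $Cx^{-1-2\sigma_1}$ is integrable over $x\in[1,\infty)$ only when $\sigma_1>0$.
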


To perform our arguments, let us introduce the following norms
\begin{align}\label{u_p^1_norm}
\begin{split}
  \Vert (u_p^1,h_p^1)\Vert_{\mathcal{P}(\sigma_1)}
  &:=\sup_{x\geq 1}\Vert (u_p^1,h_p^1)\cdot x^{-\sigma_1}\Vert_{L_y^2}+\sup_{x\geq 1}\Vert \p_y(u_p^1,h_p^1)\cdot x^{\frac{1}{2}-\sigma_1}\Vert_{L_y^2}\\
  &\quad +\Vert (u_p^1,h_p^1)\cdot x^{-\frac{1}{2}-\sigma_1}\Vert_{L_x^2L_y^2}+\Vert \p_y(u_p^1,h_p^1)\cdot x^{-\sigma_1}\Vert_{L_x^2L_y^2}\\
  &\quad +\Vert \p_x(u_p^1,h_p^1)\cdot x^{\frac{1}{2}-\sigma_1}\Vert_{L_x^2L_y^2},
\end{split}
\end{align}
and
\begin{align}\label{u_p^1_norm_k}
\begin{split}
  \Vert (u_p^1,h_p^1)\Vert_{\mathcal{P}_k(\sigma_1)}
  &:=\sup_{x\geq 1}\Vert \p_x^k(u_p^1,h_p^1)\cdot x^{k-\sigma_1}\Vert_{L_y^2}
  +\sup_{x\geq 1}\Vert \p_x^k\p_y(u_p^1,h_p^1)\cdot x^{k+\frac{1}{2}-\sigma_1}\Vert_{L_y^2}\\
  &\quad +\Vert \p_x^k(u_p^1,h_p^1)\cdot x^{k-\frac{1}{2}-\sigma_1}\Vert_{L_x^2L_y^2}
  +\Vert \p_x^k\p_y(u_p^1,h_p^1)\cdot x^{k-\sigma_1}\Vert_{L_x^2L_y^2}\\
  &\quad +\Vert \p_x^{k+1}(u_p^1,h_p^1)\cdot x^{k+\frac{1}{2}-\sigma_1}\Vert_{L_x^2L_y^2}.
\end{split}
\end{align}

\begin{proposition}\label{u_p^1_prop}
Suppose that  $(u_p^1,h_p^1)$ solve \eqref{u_p^1_system}, then for any fixed $\sigma_1>0$ and $m,k\in\mathbb{N}$, it holds that
\begin{align}
\label{u_p^1_estimate_P}
  &\Vert (u_p^1,h_p^1)\Vert_{\mathcal{P}(\sigma_1)}\leq C(\delta,\sigma,\sigma_1),\\
\label{u_p^1_estimate_Pk_weighted}
  &\Vert z^m(u_p^1,h_p^1)\Vert_{\mathcal{P}_k(\sigma_1)}\leq C(\delta,\sigma,\sigma_1;k,m),
\end{align}
where the norms $\mathcal{P}(\sigma_1)$ and $\mathcal{P}_k(\sigma_1)$ are defined by \eqref{u_p^1_norm}-\eqref{u_p^1_norm_k}, respectively.
\end{proposition}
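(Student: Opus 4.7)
Since system \eqref{u_p^1_system} for $(u_p^1,h_p^1)$ is linear (the nonlinear leading-order terms are absorbed into the coefficients $1+u_p^0$, $\sigma+h_p^0$ and into the forcings $\mathcal{P}_u^{(1)},\mathcal{P}_h^{(1)},f_u^{(1)},f_h^{(1)}$), my plan is to prove the $\mathcal{P}(\sigma_1)$ estimate by a direct energy method, rather than via contraction mapping as in the leading-order analysis. First I would reduce to the case of zero boundary data: subtract from $(u_p^1,h_p^1)$ a smooth lift of the wall values $(-\overline u_e^1,-\overline h_e^1)$ that vanishes at $y=\infty$ and decays in $x$ like $x^{-1/2}$ (this decay is inherited from Proposition \ref{u_e^1_prop}). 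The lift's contribution can be absorbed into the forcing, which still obeys an estimate of the type \eqref{f_u1_estimate} after readjusting constants. The vertical components $(v_p^1,g_p^1)$ are not independent: they are recovered a posteriori through \eqref{v_p^1_identity}, and Hardy's inequality converts their norms into those of $\partial_x(u_p^1,h_p^1)$.

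\textbf{Energy identities and the crucial cancellation.} For the basic $\mathcal{P}(\sigma_1)$ bound \eqref{u_p^1_estimate_P}, I would test equation \eqref{u_p^1_system}$_1$ by $u_p^1\, x^{-2\sigma_1}$ and \eqref{u_p^1_system}$_2$ by $h_p^1\, x^{-2\sigma_1}$ and sum. The diffusion produces $\|\partial_y(u_p^1,h_p^1)\, x^{-\sigma_1}\|_{L_x^2L_y^2}^2$, while the convective $(1+u_p^0)u_{px}^1\cdot u_p^1$ and $(1+u_p^0)h_{px}^1\cdot h_p^1$ pieces, after integrating by parts in $x$, produce the weighted dissipative term
\begin{equation*}
\sigma_1\int\!\!\int (1+u_p^0)\,|(u_p^1,h_p^1)|^2\, x^{-2\sigma_1-1}\,{\rm d}y\,{\rm d}x
\end{equation*}
minus a harmless $\tfrac{1}{2}\int\!\int u_{px}^0|(u_p^1,h_p^1)|^2 x^{-2\sigma_1}$ that is controlled by Theorem \ref{u_p^0_theorem}. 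The key symmetric cancellation comes from the two off-diagonal terms: $-(\sigma+h_p^0)h_{px}^1 u_p^1-(\sigma+h_p^0)u_{px}^1 h_p^1=-(\sigma+h_p^0)\partial_x(u_p^1 h_p^1)$, which after integrating by parts in $x$ also gives a weighted term with sign, controlled using $|\sigma+h_p^0|\ll 1+u_p^0$ (see \eqref{1+u_p^0-sigma-h_p^0}). Cauchy-Schwarz on the convective-forcing terms $\mathcal P_u^{(1)},\mathcal P_h^{(1)}$ (using the $L_y^\infty$ decay rates of the profiles in $(u_p^0,v_p^0,h_p^0,g_p^0)$ and $(u_e^1,v_e^1,h_e^1,g_e^1)$ from Theorem \ref{u_p^0_theorem} and Proposition \ref{u_e^1_prop}) and on $(f_u^{(1)},f_h^{(1)})$ using \eqref{f_u1_estimate} closes this first step. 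A second test with $-\partial_{yy}(u_p^1,h_p^1)\cdot x^{1-2\sigma_1}$ produces $\|\partial_{yy}(u_p^1,h_p^1)\,x^{\frac12-\sigma_1}\|_{L_x^2L_y^2}^2$, and then the equations themselves yield $\|\partial_x(u_p^1,h_p^1)\,x^{\frac12-\sigma_1}\|_{L_x^2L_y^2}$ as in Proposition \ref{u_p^0_lemma_error}.

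\textbf{Higher-order and weighted estimates; main obstacle.} The bound \eqref{u_p^1_estimate_Pk_weighted} I would obtain by differentiating the system $k$ times in $x$ and repeating the same testing scheme at each level, with weights boosted by $x^{k}$; the commutators generated when $\partial_x^k$ hits the coefficients $1+u_p^0$ or $\sigma+h_p^0$ land in lower-derivative norms that are already controlled by induction (each additional $\partial_x$ on these coefficients gains one power of $x^{-1}$ from Theorem \ref{u_p^0_theorem}). The $z^m$ weight is then inserted and propagated via further induction on $m$, handling $[z^m,\partial_{yy}]$ in the standard way. The main obstacle is in the basic energy step: the forcing $\mathcal{P}_u^{(1)}$ contains the nonlocal piece $(v_p^1-\overline v_p^1)u_{py}^0$, where $v_p^1-\overline v_p^1=-\int_0^y u_{px}^1\,{\rm d}\theta$ (after cancelling the boundary value) is a full antiderivative in $y$ of $u_{px}^1$ itself---precisely the highest-derivative quantity we are trying to bound. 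The cure is to exploit the rapid $y$-decay of $u_{py}^0$ (Theorem \ref{u_p^0_theorem} with large $m$) and apply Hardy's inequality
\begin{equation*}
\Big\|\Big(\int_0^y u_{px}^1\,{\rm d}\theta\Big)\,\langle y\rangle^{-1}\Big\|_{L_y^2}\lesssim \|\partial_x u_p^1\|_{L_y^2},
\end{equation*}
so that this term is controlled by the already-present $\|\partial_x(u_p^1,h_p^1)\,x^{\frac12-\sigma_1}\|_{L_x^2L_y^2}$ after paying a harmless factor of $\sigma_1^{-1}$; the analogous manipulation covers the magnetic piece $(g_p^1-\overline g_p^1)h_{py}^0$. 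With this nonlocal term tamed, the arbitrarily small loss $\sigma_1>0$ in the decay rate is exactly what the weighted-$x^{-2\sigma_1}$ framework purchases in exchange for the integral dissipation, and the proposition follows.
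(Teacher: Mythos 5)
Your proposal takes essentially the same route as the paper: a direct weighted energy method on the linear system \eqref{u_p^1_system}, exploiting the antisymmetric $(\sigma+h_p^0)$-coupling to combine the off-diagonal convective terms into a total $x$-derivative of $u_p^1h_p^1$ and using the $x^{-2\sigma_1}$ weight to create the integral dissipation that controls the nonlocal $(v_p^1-\overline v_p^1)u_{py}^0$ piece via $v_{py}^1=-u_{px}^1$. The micro-level variations you propose---subtracting a boundary lift rather than keeping the wall terms and invoking the trace bound \eqref{u_p^1_estimate_1.7}, testing with $-\partial_{yy}(u_p^1,h_p^1)x^{1-2\sigma_1}$ rather than the paper's $(u_{px}^1,h_{px}^1)x^{1-2\sigma_1}$, and using Hardy rather than the divergence-free identity directly---are all equivalent and change nothing structural.
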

\begin{proof}
Testing the system \eqref{u_p^1_system}$_{1,2}$ by $(u_p^1x^{-2\sigma_1},h_p^1x^{-2\sigma_1})$ respectively, and integrating by parts with respect to $y$ variable, one may obtain
\begin{align}
  &\frac{1}{2}\frac{\rm d}{{\rm d}x}\int_0^\infty (1+u_p^0)|(u_p^1,h_p^1)|^2 x^{-2\sigma_1}{\rm d}y
  +\int_0^\infty |\p_y(u_p^1,h_p^1)|^2 x^{-2\sigma_1}{\rm d}y \nonumber\\
  & +\sigma_1\int_0^\infty (1+u_p^0)|(u_p^1,h_p^1)|^2 x^{-2\sigma_1-1}{\rm d}y \nonumber\\
  =&\int_0^\infty (\sigma+h_p^0)\p_x(h_p^1u_p^1)x^{-2\sigma_1}{\rm d}y
  +\left[\p_yu_p^1(x,0)\overline u_e^1(x)+\p_yh_p^1(x,0)\overline h_e^1(x)\right]x^{-2\sigma_1} \nonumber\\
  &+\left[\frac{1}{2}\int_0^\infty u_{px}^0|(u_p^1,h_p^1)|^2 x^{-2\sigma_1}{\rm d}y
  +\int_0^\infty -(\mathcal{P}_u^{(1)},\mathcal{P}_h^{(1)})\cdot(u_p^1,h_p^1) x^{-2\sigma_1}{\rm d}y\right] \nonumber\\
  &+\int_0^\infty (f_u^{(1)},f_h^{(1)})\cdot(u_p^1,h_p^1) x^{-2\sigma_1}{\rm d}y
  :=I_1+I_2+I_3+I_4.\label{u_p^1_estimate_1.0}
\end{align}
For the first term on the right-hand side, we have
\begin{align}\label{u_p^1_estimate_1.1}
\begin{split}
I_1=&\int_0^\infty (\sigma+h_p^0)\p_x(h_p^1u_p^1)x^{-2\sigma_1}{\rm d}y=\frac{\rm d}{{\rm d}x}\int_0^\infty (\sigma+h_p^0)u_p^1h_p^1 x^{-2\sigma_1}{\rm d}y\\
  &-\int_0^\infty h_{px}^0u_p^1h_p^1 x^{-2\sigma_1}{\rm d}y+2\sigma_1\int_0^\infty (\sigma+h_p^0)u_p^1h_p^1 x^{-1-2\sigma_1}{\rm d}y\\
  \leq& \frac{\rm d}{{\rm d}x}\int_0^\infty (\sigma+h_p^0)u_p^1h_p^1 x^{-2\sigma_1}{\rm d}y
  +\Vert\sigma_1(\sigma+h_p^0)\Vert_{L^\infty}\Vert (u_{p}^1,h_{p}^1)x^{\frac{1}{2}-\sigma_1}\Vert_{L_y^2}^2\\
  &+\Vert xh_{px}^0\Vert_{L^\infty}\Vert (u_{p}^1,h_{p}^1)x^{\frac{1}{2}-\sigma_1}\Vert_{L_y^2}^2.
\end{split}
\end{align}
The boundary terms can be estimated as
\begin{align}\label{u_p^1_estimate_1.2}
\begin{split}
I_2=&\left[\p_yu_p^1(x,0)\overline u_e^1(x)+\p_yh_p^1(x,0)\overline h_e^1(x)\right]x^{-2\sigma_1}\\
  \leq& |\p_y(u_p^1,h_p^1)(x,0)x^{-\frac{1}{2}-2\sigma_1}|
  \leq\delta_0|\p_y(u_p^1,h_p^1)(x,0)x^{-\sigma_1}|^2+C|x^{-\frac{1}{2}-\sigma_1}|^2,
\end{split}
\end{align}
in which the estimate \eqref{u_e^1_estimate} has been used.

Due to \eqref{P_u1}, we treat the third term as follows
\begin{align}\label{u_p^1_estimate_1.3}
\begin{split}
I_3=&\frac{1}{2}\int_0^\infty u_{px}^0|(u_p^1,h_p^1)|^2 x^{-2\sigma_1}{\rm d}y
  +\int_0^\infty -(\mathcal{P}_u^{(1)},\mathcal{P}_h^{(1)})\cdot(u_p^1,h_p^1) x^{-2\sigma_1}{\rm d}y\\
  \leq& \Vert xu_{sx}^{(1)},yu_{py}^0,yh_{py}^0,xv_{sy}^{(1)},xg_{sy}^{(1)}\Vert_{L_y^\infty}\Vert (u_p^1,h_p^1) x^{-\frac{1}{2}-\sigma_1}\Vert_{L_y^2}^2\\
  & +\Vert yu_{py}^0,yh_{py}^0\Vert_{L_y^\infty}\Vert (v_{py}^1,g_{py}^1)x^{-\frac{1}{2}-\sigma_1}\Vert_{L_y^2}^2.
\end{split}
\end{align}

Thanks to \eqref{f_u1_estimate}, we have
\begin{align}\label{u_p^1_estimate_1.4}
\begin{split}
I_4=&\int_0^\infty (f_u^{(1)},f_h^{(1)})\cdot(u_p^1,h_p^1) x^{-2\sigma_1}{\rm d}y\\
  \leq& \Vert (f_u^{(1)},f_h^{(1)})x^{\frac{1}{2}-\sigma_1}\Vert_{L_y^2}\Vert (u_p^1,h_p^1) x^{-\frac{1}{2}-\sigma_1}\Vert_{L_y^2}\\
  \leq& Cx^{-\frac{3}{2}}+\delta_0\Vert (u_p^1,h_p^1) x^{-\frac{1}{2}-\sigma_1}\Vert_{L_y^2}^2.
\end{split}
\end{align}

Plugging \eqref{u_p^1_estimate_1.1}-\eqref{u_p^1_estimate_1.4} into \eqref{u_p^1_estimate_1.0}, integrating the result on $[1,x)$, we get
\begin{align}\label{u_p^1_estimate_1.5}
\begin{split}
  &\int_0^\infty |(u_p^1,h_p^1)|^2x^{-2\sigma_1}{\rm d}y+\int_1^x\int_0^\infty |(u_p^1,h_p^1)|^2x^{-1-2\sigma_1}{\rm d}y{\rm d}x\\
  &\quad +\int_1^x\int_0^\infty |(u_{py}^1,h_{py}^1)|^2x^{-2\sigma_1}{\rm d}y{\rm d}x\\
  \leq &\int_0^\infty (\sigma+h_p^0)u_p^1h_p^1 x^{-2\sigma_1}{\rm d}y
  +\delta_0\Vert \p_y(u_p^1,h_p^1)(x,0)x^{-\sigma_1}\Vert_{L_x^2L_y^2}^2\\
  &+\mathcal{O}(\delta)\Vert (v_{py}^1,g_{py}^1)x^{\frac{1}{2}-\sigma_1}\Vert_{L_x^2L_y^2}^2+C.
\end{split}
\end{align}

By virtue of the smallness of $\sigma$ and the estimate \eqref{uh_p^0_estimate2} of $h_p^0$, we have
\begin{align}\label{u_p^1_estimate_1.6}
  \int_0^\infty (\sigma+h_p^0)u_p^1h_p^1 x^{-2\sigma_1}{\rm d}y
  \leq \frac{1}{2}[\sigma+\mathcal{O}(\delta,\sigma)]\int_0^\infty |(u_p^1,h_p^1)|^2x^{-2\sigma_1}{\rm d}y.
\end{align}

Note that the term $\p_y(u_p^1,h_p^1)(x,0)$ can be bounded by
\begin{align}\label{u_p^1_estimate_1.7}
\begin{split}
  |\p_yu_p^1(x,0)|=\left|\int_0^\infty \p_y(e^{-y}u_{py}^1){\rm d}y\right|
  \leq \Vert e^{-y}\Vert_{L_y^2}\left(\Vert \p_y^2u_p^1\Vert_{L_y^2}+\Vert \p_yu_p^1\Vert_{L_y^2}\right),
\end{split}
\end{align}
then we obtain
\begin{align}\label{u_p^1_estimate_1.8}
\begin{split}
&  \Vert \p_y(u_p^1,h_p^1)(x,0)x^{-\sigma_1}\Vert_{L_x^2}^2\\
  &\leq \Vert \p_y(u_p^1,h_p^1)x^{-\sigma_1}\Vert_{L_x^2L_y^2}^2+\Vert \p_y^2(u_p^1,h_p^1)x^{-\sigma_1}\Vert_{L_x^2L_y^2}^2.
\end{split}
\end{align}

Using the estimates \eqref{u_p^1_estimate_1.6}-\eqref{u_p^1_estimate_1.8}, and taking supremum with respect to $x$ variable in \eqref{u_p^1_estimate_1.5}, one can deduce that
\begin{align}\label{u_p^1_estimate_1.9}
\begin{split}
  &\sup_{x\geq 1}\int_0^\infty |(u_p^1,h_p^1)|^2x^{-2\sigma_1}{\rm d}y+\int_1^\infty\int_0^\infty |(u_p^1,h_p^1)|^2x^{-1-2\sigma_1}{\rm d}y{\rm d}x\\
  &\quad +\int_1^\infty\int_0^\infty |(u_{py}^1,h_{py}^1)|^2x^{-2\sigma_1}{\rm d}y{\rm d}x\\
  \leq &\mathcal{O}(\delta)\Vert (v_{py}^1,g_{py}^1)x^{\frac{1}{2}-\sigma_1}\Vert_{L_x^2L_y^2}^2
  +\delta_0\Vert (u_{pyy}^1,h_{pyy}^1)x^{-\sigma_1}\Vert_{L_x^2L_y^2}^2+C.
\end{split}
\end{align}

Next, the application of multipliers $(u_{px}^1x^{1-2\sigma_1},h_{px}^1x^{1-2\sigma_1})$ to the system \eqref{u_p^1_system} gives that
\begin{align}\label{u_p^1_estimate_2.0}
\begin{split}
  &\frac{1}{2}\frac{\rm d}{{\rm d}x}\int_0^\infty |(u_{py}^1,h_{py}^1)|^2 x^{1-2\sigma_1}{\rm d}y
  +\int_0^\infty (1+u_p^0)|(u_{px}^1,h_{px}^1)|^2 x^{1-2\sigma_1}{\rm d}y\\
  =&2\int_0^\infty (\sigma+h_p^0)h_{px}^1u_{px}^1x^{1-2\sigma_1}{\rm d}y
  -\int_0^\infty(\mathcal{P}_u^{(1)},\mathcal{P}_h^{(1)})\cdot(u_{px}^1,h_{px}^1) x^{-2\sigma_1}{\rm d}y\\
  &+\int_0^\infty (f_u^{(1)},f_h^{(1)})\cdot(u_{px}^1,h_{px}^1) x^{-2\sigma_1}{\rm d}y+\left[u_{py}^1(x,0)\overline u_{ex}^1(x)x^{1-2\sigma_1}\right.\\
  &\left.+h_{py}^1(x,0)\overline h_{ex}^1(x)x^{1-2\sigma_1}\right]+\frac{1-2\sigma_1}{2}\int_0^\infty |(u_{py}^1,h_{py}^1)|^2 x^{-2\sigma_1}{\rm d}y\\
  :=&J_1+J_2+J_3+J_4+J_5.
\end{split}
\end{align}

The first four terms on the right hand side can be treated as follows
\begin{align}\label{u_p^1_estimate_2.1}
\begin{split}
  J_1=2\int_0^\infty (\sigma+h_p^0)h_{px}^1u_{px}^1x^{1-2\sigma_1}{\rm d}y
  \leq \Vert\sigma+h_p^0\Vert_{L^\infty}\Vert (u_{px}^1,h_{px}^1)x^{\frac{1}{2}-\sigma_1}\Vert_{L_y^2}^2,
\end{split}
\end{align}
\begin{align}\label{u_p^1_estimate_2.2}
\begin{split}
  J_2=&\int_0^\infty -(\mathcal{P}_u^{(1)},\mathcal{P}_h^{(1)})\cdot(u_{px}^1,h_{px}^1) x^{1-2\sigma_1}{\rm d}y\\
  \leq& \Vert xu_{sx}^{(1)}\Vert_{L^\infty}\Vert (u_{p}^1,h_{p}^1)x^{-\frac{1}{2}-\sigma_1}\Vert_{L_y^2}
  \Vert (u_{px}^1,h_{px}^1)x^{\frac{1}{2}-\sigma_1}\Vert_{L_y^2}\\
  &+\Vert yu_{py}^0,yh_{py}^0\Vert_{L^\infty}\Vert(v_{py}^1,g_{py}^1) x^{1-2\sigma_1}\Vert_{L_y^2}\Vert(u_{px}^1,h_{px}^1) x^{1-2\sigma_1}\Vert_{L_y^2}\\
  &+\Vert v_s^{(1)}x^{\frac{1}{2}},g_s^{(1)}x^{\frac{1}{2}}\Vert_{L^\infty}\Vert(u_{py}^1,h_{py}^1) x^{1-2\sigma_1}\Vert_{L_y^2}\Vert(u_{px}^1,h_{px}^1) x^{1-2\sigma_1}\Vert_{L_y^2},
\end{split}
\end{align}
\begin{align}\label{u_p^1_estimate_2.3}
\begin{split}
  J_3=&\int_0^\infty (f_u^{(1)},f_h^{(1)})\cdot(u_{px}^1,h_{px}^1) x^{1-2\sigma_1}{\rm d}y\\
  \leq& \Vert (f_u^{(1)},f_h^{(1)})x^{\frac{1}{2}-\sigma_1}\Vert_{L_y^2}\Vert (u_{px}^1,h_{px}^1) x^{\frac{1}{2}-\sigma_1}\Vert_{L_y^2}\\
  \leq& Cx^{-\frac{3}{2}}++\delta_0\Vert (u_{px}^1,h_{px}^1) x^{\frac{1}{2}-\sigma_1}\Vert_{L_y^2}^2,
\end{split}
\end{align}
\begin{align}\label{u_p^1_estimate_2.4}
\begin{split}
  J_4=&u_{py}^1(x,0)\overline u_{ex}^1(x)x^{1-2\sigma_1}+h_{py}^1(x,0)\overline h_{ex}^1(x)x^{1-2\sigma_1}\\
  \leq& |\p_y(u_p^1,h_p^1)(x,0)x^{-\frac{1}{2}-2\sigma_1}|
  \leq \delta_0|\p_y(u_p^1,h_p^1)(x,0)x^{-\sigma_1}|^2+C|x^{-\frac{1}{2}-\sigma_1}|^2.
\end{split}
\end{align}

 Plugging \eqref{u_p^1_estimate_2.1}-\eqref{u_p^1_estimate_2.4} into \eqref{u_p^1_estimate_2.0}, and integrating the result on $[1,x)$, we can obtain
\begin{align}\label{u_p^1_estimate_2.5}
\begin{split}
  &\sup_{x\geq 1}\int_0^\infty |(u_{py}^1,h_{py}^1)|^2 x^{1-2\sigma_1}{\rm d}y
  +\int_0^\infty (1+u_p^0)|(u_{px}^1,h_{px}^1)|^2 x^{1-2\sigma_1}{\rm d}y\\
  \leq& \Vert (u_{py}^1,h_{py}^1)x^{-\sigma_1}\Vert_{L_x^2L_y^2}^2
   +\mathcal{O}(\delta,\sigma)\Vert (u_{p}^1,h_{p}^1)x^{-\frac{1}{2}-\sigma_1}\Vert_{L_x^2L_y^2}^2\\
   &+\delta_0\Vert (u_{pyy}^1,h_{pyy}^1)x^{-\sigma_1}\Vert_{L_x^2L_y^2}^2+C.
\end{split}
\end{align}

According to the system \eqref{u_p^1_system}, we bound $\Vert (u_{pyy}^1,h_{pyy}^1)x^{-\sigma_1}\Vert_{L_x^2L_y^2}$ as
\begin{align}\label{u_p^1_estimate_2.6}
\begin{split}
  &\Vert (u_{pyy}^1,h_{pyy}^1)x^{-\sigma_1}\Vert_{L_x^2L_y^2}\\
  \leq& \Vert (u_{px}^1,h_{px}^1)x^{-\sigma_1}\Vert_{L_x^2L_y^2}\Vert (1+u_p^0,\sigma+h_p^0)\Vert_{L^\infty}
  +\Vert (f_{u}^{(1)},f_{h}^{(1)})x^{-\sigma_1}\Vert_{L_x^2L_y^2} \\
  &+\Vert xu_{sx}^{(1)},yu_{py}^0,yh_{py}^0,xv_{sy}^{(1)},xg_{sy}^{(1)}\Vert_{L_y^\infty}\Vert (u_p^1,h_p^1) x^{-\frac{1}{2}-\sigma_1}\Vert_{L_y^2}\\
  &+\Vert yu_{py}^0,yh_{py}^0\Vert_{L_y^\infty}\Vert (v_{py}^1,g_{py}^1)x^{-\frac{1}{2}-\sigma_1}\Vert_{L_y^2}.
\end{split}
\end{align}
Thus our desired estimate \eqref{u_p^1_estimate_P} follows from the inequalities \eqref{u_p^1_estimate_1.9},\eqref{u_p^1_estimate_2.5}-\eqref{u_p^1_estimate_2.6}.

One can also repeat the above arguments with any positive weight $z^m$ to get the weighted estimates. Finally, the estimate \eqref{u_p^1_estimate_Pk_weighted} follows from taking higher-order derivatives to the system $\eqref{u_p^1_system}$ and performing the similar arguments as above.
\end{proof}

Now we prove Theorem \ref{u_p^1_theorem}.

\begin{proof}[Proof of Theorem \ref{u_p^1_theorem}]
With Proposition \ref{u_p^1_prop} at hands, the existence of the solution to \eqref{u_p^1_system} can be obtained via the contraction mapping theorem and standard parabolic theory. Here we focus on the estimates. We take the estimates for velocity field $(u_p^1,v_p^1)$ as an example, similar arguments also work for magnetic field $(h_p^1,g_p^1)$. Let us begin with the first term, as for $j=m=0$, there holds that
\begin{align}\label{u_p^1_lemma_1}
\begin{split}
|\p_x^k u_p^1x^{k+\frac{1}{4}-\sigma_1}|^2
  &\leq\left|2\int_y^\infty \p_x^ku_p^1\cdot \p_x^ku_{py}^1x^{2(k+\frac{1}{4}-\sigma_1)}\right|\\
  &\leq 2\Vert \p_x^ku_p^1x^{k-\sigma_1}\Vert_{L_y^2}\cdot \Vert \p_x^ku_{py}^1x^{k+\frac{1}{2}-\sigma_1}\Vert_{L_y^2}
  \leq \Vert u_p^1\Vert_{\mathcal{P}_k(\sigma_1)}^2.
\end{split}
\end{align}
For $j=0$ with $m>0$, one has
\begin{align}\label{u_p^1_lemma_2}
\begin{split}
  &|z^m\p_x^ku_p^1x^{k+\frac{1}{4}-\sigma_1}|^2
  =\left|2\int_y^\infty\p_x^ku_p^1\cdot x^{2(k+\frac{1}{4}-\sigma_1)}\cdot \p_y(\p_x^ku_p^1 z^{2m})\right|\\
  \leq& \Vert z^m\p_x^ku_p^1x^{k-\sigma_1}\Vert_{L_y^2}\cdot\left(\Vert z^m\p_x^k\p_yu_p^1x^{k+\frac{1}{2}-\sigma_1}\Vert_{L_y^2}
  +\Vert z^{m-1}\p_x^ku_p^1x^{k-\sigma_1}\Vert_{L_y^2}\right)\\
  \leq& C(k,m)\Vert z^mu_p^1\Vert_{\mathcal{P}_k(\sigma_1)}^2.
\end{split}
\end{align}
Similarly, for $j>0$, we get that
\begin{align}\label{u_p^1_lemma_3}
\begin{split}
  |z^m\p_x^ku_{py}^1 x^{k+\frac{3}{4}-\sigma_1}|^2
  \leq& \Vert x^{k+\frac{1}{2}-\sigma_1}z^m\p_x^ku_{py}^1\Vert_{L_y^2}\cdot \Vert x^{k+1-\sigma_1}\p_y(z^m\p_x^k u_{py}^1)\Vert_{L_y^2}\\
  \leq& \Vert z^mu_p^1\Vert_{\mathcal{P}_k(\sigma_1)}^2+\Vert z^mu_p^1\Vert_{\mathcal{P}_{k+1}(\sigma_1)}^2.
\end{split}
\end{align}

For the vertical components, note that the behavior of $(v_p^1,g_p^1)$ as $y\to \infty$, then  we have
\begin{align}\label{v_p^1_lemma_1}
  |z^mv_{p}^1|^2
  =&\int_y^\infty \p_y|z^mv_{p}^1|^2
  \leq 2\int_y^\infty |z^mv_p^1|\cdot |z^mv_{py}^1+mz^{m-1}x^{-\frac{1}{2}}v_p^1|\nonumber\\
  \leq& 2\left\Vert z^m\int_y^\infty v_{py}^1\right\Vert_{L_y^2}\cdot \left(\Vert z^mv_{py}^1\Vert_{L_y^2}+m\left\Vert z^{m}y^{-1}\int_y^\infty v_{py}^1\right\Vert_{L_y^2}\right)\nonumber\\
  \lesssim& (m+1)\Vert x^{\frac{1}{2}}z^{m+1}v_{py}^1\Vert_{L_y^2}\cdot\Vert z^mv_{py}^1\Vert_{L_y^2}
  \leq C(\delta,\sigma,\sigma_1;m,k)x^{-\frac{3}{2}+2\sigma_1},
\end{align}
where we have used Lemma \ref{Hardy-type} with $p=2,\alpha=2$. Similarly, the case of $k\geq 1$ can be also concluded.
\end{proof}

\section{Construction of $\eps^{\frac{i}{2}}$-order correctors ($2\leq i\leq n$)}\label{sec4}
\subsection{Derivation of the system}\label{sec4.1}
To obtain the estimates for the remainder profiles for later use, we should expand the approximate solutions to higher order in $\sqrt{\eps}$. Let us start with deriving the system for the $\eps$-order correctors. And the similar arguments can be also applied for the $\eps^{\frac{i}{2}}$-order correctors with $i>2$. Recall that
\begin{align}
\label{R_app^u,2}
R_{app}^{u,2}
  =&-\Delta_\eps\bar u_s^{(2)}+\bar u_s^{(2)}\bar u_{sx}^{(2)}+\bar v_s^{(2)}\bar u_{sy}^{(2)}+\p_x\bar p_x^{(2)}-\bar h_s^{(2)}\bar h_{sx}^{(2)}-\bar g_s^{(2)}\bar h_{sy}^{(2)}\nonumber\\
  =&R^{u,1}+\eps[-\Delta_\eps u_p^2+\overline u_s^{(2)}u_{px}^2+u_p^2u_{sx}^{(2)}+v_s^{(2)}u_{py}^2+v_p^2\overline u_{sy}^{(2)}+p_{px}^2\nonumber\\
  &-\overline h_s^{(2)}h_{px}^2-h_p^2h_{sx}^{(2)}-g_s^{(2)}h_{py}^2-g_p^2\overline h_{sy}^{(2)}]+\eps^{\frac{3}{2}} p_{px}^{2,a}+R_E^{u,2}\nonumber\\
  &+(E_E^{u,2}+\eps^2 p_{ex}^{2,a})-\eps^2 \Delta u_e^2+\eps(p_{ex}^2+u_{ex}^2-\sigma h_{ex}^2),\\
\label{R_app^v,2}
R_{app}^{v,2}
  =&-\Delta_\eps\bar v_s^{(2)}+\bar u_s^{(2)}\bar v_{sx}^{(2)}+\bar v_s^{(2)}\bar v_{sy}^{(2)}+\eps^{-1}\p_y\bar p_s^{(2)}-\bar h_s^{(2)}\bar g_{sx}^{(2)}-\bar g_s^{(2)}\bar g_{sy}^{(2)}\nonumber\\
  =&R^{v,1}+\eps[-\Delta_\eps v_p^2+\overline u_s^{(2)}v_{px}^2+u_p^2v_{sx}^{(2)}+v_s^{(2)}v_{py}^2+v_p^2\overline v_{sy}^{(2)}+\eps^{-1}p_{py}^2\nonumber\\
  &-\overline h_s^{(2)}g_{px}^2-h_p^2g_{sx}^{(2)}-g_s^{(2)}g_{py}^2-g_p^2\overline g_{sy}^{(2)}]+\eps^{\frac{1}{2}}p_{py}^{2,a}+R_E^{v,2}\nonumber\\
  &+(E_E^{v,2}+\eps\p_y p_{e}^{2,a})-\eps^{\frac{3}{2}}\Delta v_e^2+\sqrt\eps(p_{eY}^2+v_{ex}^2-\sigma g_{ex}^2),\\
\label{R_app^h,2}
R_{app}^{h,2}
  =&-\Delta_\eps\bar h_s^{(2)}+\bar u_s^{(2)}\bar h_{sx}^{(2)}+\bar v_s^{(2)}\bar h_{sy}^{(2)}-\bar h_s^{(2)}\bar u_{sx}^{(2)}-\bar g_s^{(2)}\bar u_{sy}^{(2)}\nonumber\\
  =&R^{h,1}+\eps[-\Delta_\eps h_p^2+\overline u_s^{(2)}h_{px}^2+u_p^2h_{sx}^{(2)}+v_s^{(2)}h_{py}^2+v_p^2\overline h_{sy}^{(2)}-\overline h_s^{(2)}u_{px}^2\nonumber\\
  &-h_p^2u_{sx}^{(2)}-g_s^{(2)}u_{py}^2-g_p^2\overline u_{sy}^{(2)}]+R_E^{h,2}-\eps^2 \Delta h_e^2+\eps(h_{ex}^2-\sigma u_{ex}^2),\\
\label{R_app^g,2}
R_{app}^{g,2}
  =&-\Delta_\eps\bar g_s^{(2)}+\bar u_s^{(2)}\bar g_{sx}^{(2)}+\bar v_s^{(2)}\bar g_{sy}^{(2)}-\bar h_s^{(2)}\bar v_{sx}^{(2)}-\bar g_s^{(2)}\bar v_{sy}^{(2)}\nonumber\\
  =&R^{g,1}+\eps[-\Delta_\eps g_p^2+\overline u_s^{(2)}g_{px}^2+u_p^2g_{sx}^{(2)}+v_s^{(2)}g_{py}^2+v_p^2\overline g_{sy}^{(2)}-\overline h_s^{(2)}v_{px}^2\nonumber\\
  &-h_p^2v_{sx}^{(2)}-g_s^{(2)}v_{py}^2-g_p^2\overline v_{sy}^{(2)}]+R_E^{g,2}-\eps^{\frac{3}{2}}\Delta g_e^2+\sqrt\eps(g_{ex}^2-\sigma v_{ex}^2),
\end{align}
in which  $R^{u,1},R^{v,1},R^{h,1},R^{g,1}$ are defined as in \eqref{R^u_1}-\eqref{R^g_1}.

From the above expansions, we take the following equations for the $\eps$-order ideal MHD correctors $(u_e^2,v_e^2,h_e^2,g_e^2)$
\begin{equation}\label{u_e^2_system_pre}
\begin{cases}
  \p_x u_e^2-\sigma\p_x h_e^2+\p_x p_e^2=0,\\
  \p_x v_e^2-\sigma\p_x g_e^2+\p_Y p_e^2=0,\\
  \p_x h_e^2-\sigma\p_x u_e^2=0,\\
  \p_x g_e^2-\sigma\p_x v_e^2=0,\\
  \p_x u_e^2+\p_Y v_e^2=\p_x h_e^2+\p_Y g_e^2=0.
\end{cases}
\end{equation}

The Euler-Prandtl terms $R_E^{u,2},R_E^{v,2},R_E^{h,2},R_E^{g,2}$ are defined by
\begin{align}
\label{R^u,2_E}
R_E^{u,2}
  &=\eps \left[u_e^2 \sum_{j=0}^1 \eps^{\frac{j}{2}}u_{px}^j-h_e^2 \sum_{j=0}^1 \eps^{\frac{j}{2}}h_{px}^j
  + u_{ex}^2 \sum_{j=0}^1 \eps^{\frac{j}{2}}u_p^j-h_{ex}^2 \sum_{j=0}^1 \eps^{\frac{j}{2}}h_p^j\right]\nonumber\\
  &+\sqrt\eps [v_e^2\eps^{\frac{1}{2}}u_{py}^1- g_e^2\eps^{\frac{1}{2}}h_{py}^1]+\eps^{\frac{3}{2}}\left[u_{eY}^2\sum_{j=0}^1 \eps^{\frac{j}{2}}v_p^j-h_{eY}^2\sum_{j=0}^1 \eps^{\frac{j}{2}}g_p^j\right],\\
\label{R^v,2_E}
R_E^{v,2}
  &=\sqrt\eps \left[v_e^2 \sum_{j=0}^1 \eps^{\frac{j}{2}}v_{py}^j-g_e^2 \sum_{j=0}^1 \eps^{\frac{j}{2}}g_{py}^j
  + v_{ex}^2 \sum_{j=0}^1 \eps^{\frac{j}{2}}u_p^j-g_{ex}^2 \sum_{j=0}^1 \eps^{\frac{j}{2}}h_p^j\right]\nonumber\\
  &\qquad +\eps \left[u_e^2\sum_{j=0}^1 \eps^{\frac{j}{2}}v_{px}^j- h_e^2\sum_{j=0}^1 \eps^{\frac{j}{2}}g_{px}^j
  +v_{eY}^2\sum_{j=0}^1 \eps^{\frac{j}{2}}v_p^j-g_{eY}^2\sum_{j=0}^1 \eps^{\frac{j}{2}}g_p^j\right],\\
\label{R^h,2_E}
R_E^{h,2}
  &=\eps \left[u_e^2 \sum_{j=0}^1 \eps^{\frac{j}{2}}h_{px}^j-h_e^2 \sum_{j=0}^1 \eps^{\frac{j}{2}}u_{px}^j
  + h_{ex}^2 \sum_{j=0}^1 \eps^{\frac{j}{2}}u_p^j-u_{ex}^2 \sum_{j=0}^1 \eps^{\frac{j}{2}}h_p^j\right]\nonumber\\
  & +\sqrt\eps \left[v_e^2\eps^{\frac{1}{2}}h_{py}^1-g_e^2\eps^{\frac{1}{2}}u_{py}^1\right]+\eps^{\frac{3}{2}}\left[h_{eY}^2\sum_{j=0}^1 \eps^{\frac{j}{2}}v_p^j-u_{eY}^2\sum_{j=0}^1 \eps^{\frac{j}{2}}g_p^j\right],\\
\label{R^g,2_E}
R_E^{g,2}
  &=\eps \left[u_e^2 \sum_{j=0}^1 \eps^{\frac{j}{2}}g_{px}^j-h_e^2 \sum_{j=0}^1 \eps^{\frac{j}{2}}v_{px}^j
  + g_{eY}^2 \sum_{j=0}^1 \eps^{\frac{j}{2}}v_p^j-v_{eY}^2 \sum_{j=0}^1 \eps^{\frac{j}{2}}g_p^j\right]\nonumber\\
  &\qquad +\sqrt\eps \left[v_e^2\eps^{\frac{1}{2}}g_{py}^1- g_e^2\eps^{\frac{1}{2}}v_{py}^1\right]
  +\sqrt\eps \left[g_{ex}^2\eps^{\frac{1}{2}}u_p^1-v_{ex}^2\eps^{\frac{1}{2}}h_p^1\right].
\end{align}

As the discussions for the $\sqrt{\eps}$-order correctors, let us introduce the following auxiliary boundary layer pressure
\begin{align}\label{p_p^2,a}
p_p^{2,a}=\eps^{-\frac{1}{2}}\int_y^\infty (R^{v,1}+R_E^{v,2})
\end{align}
 in the expansion \eqref{R_app^v,2} of $R_{app}^{v,2}$, which appears as new term $\eps^{\frac{3}{2}} p_{px}^{2,a}$ in expansion \eqref{R_app^u,2} of $R_{app}^{u,2}$ to preserve divergence free.

Observe that the pure Euler terms $E_E^{u,2}$ and $E_E^{v,2}$ are of gradient type as follows
\begin{align}\label{E_E^u,2}
\begin{split}
  E_E^{u,2}
  =&\eps^2 (u_e^2u_{ex}^2+v_e^2u_{eY}^2-h_e^2h_{ex}^2-g_e^2h_{eY}^2)\\
  &+\eps^{\frac{3}{2}}(u_e^2u_{ex}^1+u_e^1u_{ex}^2-h_e^2h_{ex}^1-h_{ex}^2h_e^1)\\
  &+\eps^{\frac{3}{2}}(v_e^2u_{eY}^1+v_e^1u_{eY}^2-g_e^2h_{eY}^1-g_e^1h_{eY}^2)\\
  =&\eps^2(1-\sigma^2)\p_x
  \left[\frac{1}{2}|u_e^2|^2+\frac{1}{2}|v_e^2|^2+\eps^{-\frac{1}{2}}(u_e^2u_e^1+v_e^2v_e^1)\right]
\end{split}
\end{align}
and
\begin{align}\label{E_E^v,2}
\begin{split}
  E_E^{v,2}
  =&\eps^\frac{3}{2} (u_e^2v_{ex}^2+v_e^2v_{eY}^2-h_e^2g_{ex}^2-g_e^2g_{eY}^2)\\
  &+\eps(u_e^1v_{ex}^2+v_e^2v_{eY}^1-h_e^1g_{ex}^2-g_e^2g_{eY}^1)\\
  &+\eps(u_e^2v_{ex}^1+v_e^1v_{eY}^2-h_e^2g_{ex}^1-g_e^1g_{eY}^2),\\
  =&\eps^{\frac{3}{2}}(1-\sigma^2)\p_Y
  \left[\frac{1}{2}|u_e^2|^2+\frac{1}{2}|v_e^2|^2+\eps^{-\frac{1}{2}}(u_e^2u_e^1+v_e^2v_e^1)\right].
\end{split}
\end{align}

We introduce the auxiliary ideal MHD pressure $p_e^{2,a}$
\begin{align}\label{p_e^2,a}
  p_e^{2,a}=-(1-\sigma^2)\left[\frac{1}{2}|u_e^2|^2+\frac{1}{2}|v_e^2|^2+\eps^{-\frac{1}{2}}(u_e^2u_e^1+v_e^2v_e^1)\right]
\end{align}
to ensure that
\begin{align}\label{E_E+p_e^2,a}
  E_E^{u,2}+\eps^2 p_{ex}^{2,a}=0,\quad
  E_E^{v,2}+\eps p_{ey}^{2,a}=0.
\end{align}

The initial boundary value problem for the $\eps$-order boundary layer correctors $(u_p^2,v_p^2,h_p^2,g_p^2)$ is taken as
\begin{equation}\label{u_p^2_system}
\begin{cases}
  -u_{pyy}^2+(1+u_p^0)u_{px}^2+p_{px}^2=(\sigma+h_p^0)h_{px}^2 -\mathcal{P}_u^{(2)}+f_u^{(2)},\\
  -h_{pyy}^2+(1+u_p^0)h_{px}^2=(\sigma+h_p^0)u_{px}^2 -\mathcal{P}_h^{(2)}+f_h^{(2)},\\
  -g_{pyy}^2+(1+u_p^0)\p_x(g_p^2-\bar g_p^2)=(\sigma+h_p^0)\p_x(v_p^2-\bar v_p^2)-\mathcal{P}_g^{(2)}+f_g^{(2)},\\
  p_{py}^2=0,\\
  u_{px}^2+v_{py}^2=h_{px}^2+g_{py}^2=0,
\end{cases}
\end{equation}
where the convective terms and the forcing terms are defined by
\begin{equation}\label{P_u2}
\begin{cases}
  \mathcal{P}_u^{(2)}=&u_p^2u_{sx}^{(2)}+(v_p^2-\bar v_p^2)u_{py}^0+v_s^{(2)}u_{py}^2\\
  &-h_p^2h_{sx}^{(2)}-(g_p^2-\bar g_p^2)h_{py}^0-g_s^{(2)}h_{py}^2,\\
  \mathcal{P}_h^{(2)}=&u_p^2h_{sx}^{(2)}+(v_p^2-\bar v_p^2)h_{py}^0+v_s^{(2)}h_{py}^2\\
  &-h_p^2u_{sx}^{(2)}-(g_p^2-\bar g_p^2)u_{py}^0-g_s^{(2)}u_{py}^2,\\
  \mathcal{P}_g^{(2)}=&u_p^2g_{sx}^{(2)}+(v_p^2-\bar v_p^2)g_{py}^0+v_s^{(2)}g_{py}^2\\
  &-h_p^2v_{sx}^{(2)}-(g_p^2-\bar g_p^2)v_{py}^0-g_s^{(2)}v_{py}^2,
\end{cases}
\end{equation}
and
\begin{align}\label{f_u2}
\begin{split}
  &(f_u^{(2)},f_h^{(2)},f_g^{(2)})=-\eps^{-1}(R^{u,1}+R_E^{u,2}+\eps p_{px}^{2,a},R^{h,1}+R_E^{h,2},R^{g,1}+R_E^{g,2}),
\end{split}
\end{align}
here $R^{u,1},R^{h,1},R^{g,1}$ are defined as in \eqref{R^u_1},\eqref{R^h_1}-\eqref{R^g_1}. Similarly, one can derive the equations for the $\eps^\frac{i}{2}$-order correctors with $i>2$.

 By divergence-free conditions, one can rewrite equations \eqref{u_p^2_system}$_{2,3}$ as
\begin{equation}\label{h_p^2_equation}
\begin{aligned}
  &\p_y\left[-h_{py}^2-(1+u_p^0)(g_p^2-\bar g_p^2)+(\sigma+h_p^0)(v_p^2-\bar v_p^2)-u_p^2g_s^{(2)}+h_p^2v_s^{(2)}\right]\\
  &+\p_y\bigg[\eps^{\frac{3}{2}} g_{p x}^{1}+\sqrt\eps\{h_p^0(v_e^2+\bar v_e^2)-u_p^0(g_e^2+\bar g_e^2)\}\\
  &+\eps\{g_p^1(u_e^1+u_p^1)-v_p^1(h_e^1+h_p^1)\}+\eps(h_{p}^{1} v_{e}^{2}-u_{p}^{1} g_{e}^{2})\bigg]\\
  &+\eps\p_y\left(h_{e}^{2} \sum_{j=0}^{1} \eps^{\frac{j}{2}} v_{p}^{j}-u_{e}^{2} \sum_{j=0}^{1} \eps^{\frac{j}{2}} g_{p}^{j}\right)=0
\end{aligned}
\end{equation}
and
\begin{equation}\label{g_p^2_equation}
\begin{aligned}
  &\p_x\left[-h_{py}^2-(1+u_p^0)(g_p^2-\bar g_p^2)+(\sigma+h_p^0)(v_p^2-\bar v_p^2)-u_p^2g_s^{(2)}+h_p^2v_s^{(2)}\right]\\
  &+\p_x\bigg[\eps^{\frac{3}{2}} g_{p x}^{1}+\sqrt\eps\{h_p^0(v_e^2+\bar v_e^2)-u_p^0(g_e^2+\bar g_e^2)\}\\
  &+\eps\{g_p^1(u_e^1+u_p^1)-v_p^1(h_e^1+h_p^1)\}+\eps(h_{p}^{1} v_{e}^{2}-u_{p}^{1} g_{e}^{2})\bigg]\\
  &+\eps\p_x\left(h_{e}^{2} \sum_{j=0}^{1} \eps^{\frac{j}{2}} v_{p}^{j}-u_{e}^{2} \sum_{j=0}^{1} \eps^{\frac{j}{2}} g_{p}^{j}\right)=0.
\end{aligned}
\end{equation}

Note that the equation \eqref{u_p^2_system}$_3$ for vertical magnetic field is equivalent to \eqref{u_p^2_system}$_2$. Then, we only need to consider the problem \eqref{u_p^2_system} without the third equation \eqref{u_p^2_system}$_{3}$, so as for the problem of $\eps^{\frac{i}{2}}$-order boundary layer correctors.

And thus, the remainder terms with $\eps^{\frac{i}{2}}$-order ($2\leq i\leq n-1$) are reduced to
\begin{align}
\label{R^u_i}
  R^{u,i}
  =&\eps^{\frac{i}{2}}[-\eps u_{pxx}^i+ u_{px}^i \sum_{j=1}^i \eps^{\frac{j}{2}}(u_e^j+u_p^j)- h_{px}^i \sum_{j=1}^i \eps^{\frac{j}{2}}(h_e^j+h_p^j)
  +\sqrt\eps yv_{eY}^{i+1}u_{py}^0\nonumber\\
  &+v_p^i \sum_{j=1}^i \eps^{\frac{j}{2}}(u_{py}^j+\sqrt\eps u_{eY}^j)-g_p^i \sum_{j=1}^i \eps^{\frac{j}{2}}(h_{py}^j+\sqrt\eps h_{eY}^j)
  -\sqrt\eps yg_{eY}^{i+1}h_{py}^0\nonumber\\
  &+\sqrt\eps u_{py}^0\int_0^y\int_y^{\theta}v_{eYY}^{i+1}(\sqrt\eps\tau){\rm d}\tau{\rm d}\theta
  -\sqrt\eps h_{py}^0\int_0^y\int_y^{\theta}g_{eYY}^{i+1}(\sqrt\eps\tau){\rm d}\tau{\rm d}\theta],\\
\label{R^v_i}
  R^{v,i}
  =&\eps^{\frac{i}{2}}[-\Delta_\eps v_p^i+\overline u_s^{(i)}v_{px}^i+u_p^iv_{sx}^{(i)}+v_s^{(i)}v_{py}^i+v_p^i\overline v_{sy}^{(i)}\nonumber\\
  &\qquad -\overline h_s^{(i)}g_{px}^i-h_p^ig_{sx}^{(i)}-g_s^{(i)}g_{py}^i-g_p^i\overline g_{sy}^{(i)}],\\
\label{R^h_i}
  R^{h,i}
  =&\eps^{\frac{i}{2}}[-\eps h_{pxx}^i+ h_{px}^i \sum_{j=1}^i \eps^{\frac{j}{2}}(u_e^j+u_p^j)- u_{px}^i \sum_{j=1}^i \eps^{\frac{j}{2}}(h_e^j+h_p^j)
  +h_{py}^0(v_e^{i+1}-\bar v_e^{i+1})\nonumber\\
  &+v_p^i \sum_{j=1}^i \eps^{\frac{j}{2}}(h_{py}^j+\sqrt\eps h_{eY}^j)-g_p^i \sum_{j=1}^i \eps^{\frac{j}{2}}(u_{py}^j+\sqrt\eps u_{eY}^j)
  -u_{py}^0(g_e^{i+1}-\bar g_e^{i+1})]\nonumber\\
  =&\eps^{\frac{i}{2}}[-\eps h_{pxx}^i+ h_{px}^i \sum_{j=1}^i \eps^{\frac{j}{2}}(u_e^j+u_p^j)- u_{px}^i \sum_{j=1}^i \eps^{\frac{j}{2}}(h_e^j+h_p^j)
  +\sqrt\eps yv_{eY}^{i+1}h_{py}^0\nonumber\\
  &+v_p^i \sum_{j=1}^i \eps^{\frac{j}{2}}(h_{py}^j+\sqrt\eps h_{eY}^j)-g_p^i \sum_{j=1}^i \eps^{\frac{j}{2}}(u_{py}^j+\sqrt\eps u_{eY}^j)
  -\sqrt\eps yg_{eY}^{i+1}u_{py}^0\nonumber\\
  &+\sqrt\eps h_{py}^0\int_0^y\int_y^{\theta}v_{eYY}^{i+1}(\sqrt\eps\tau){\rm d}\tau{\rm d}\theta
  -\sqrt\eps u_{py}^0\int_0^y\int_y^{\theta}g_{eYY}^{i+1}(\sqrt\eps\tau){\rm d}\tau{\rm d}\theta],\\
\label{R^g_i}
  R^{g,i}
  =&\eps^{\frac{i}{2}}[-\eps g_{pxx}^i+ g_{px}^i \sum_{j=1}^i \eps^{\frac{j}{2}}(u_e^j+u_p^j)- v_{px}^i \sum_{j=1}^i \eps^{\frac{j}{2}}(h_e^j+h_p^j)+g_{py}^0(v_e^{i+1}-\bar v_e^{i+1})\nonumber\\
  &+v_p^i \sum_{j=1}^i \eps^{\frac{j}{2}}(g_{py}^j+g_{eY}^j)-g_p^i \sum_{j=1}^i \eps^{\frac{j}{2}}(v_{py}^j+ v_{eY}^j)-v_{py}^0(g_e^{i+1}-\bar g_e^{i+1})\nonumber\\
  &+(1+u_p^0)\p_x(g_e^{i+1}-\bar g_e^{i+1})-(\sigma+h_p^0)\p_x(v_e^{i+1}-\bar v_e^{i+1})]\nonumber\\
  =&\eps^{\frac{i}{2}}[-\eps g_{pxx}^i+ g_{px}^i \sum_{j=1}^i \eps^{\frac{j}{2}}(u_e^j+u_p^j)- g_{px}^i \sum_{j=1}^i \eps^{\frac{j}{2}}(h_e^j+h_p^j)
  +\sqrt\eps yv_{eY}^{i+1}g_{py}^0\nonumber\\
  &+v_p^i \sum_{j=1}^i \eps^{\frac{j}{2}}(g_{py}^j+g_{eY}^j)-g_p^i \sum_{j=1}^i \eps^{\frac{j}{2}}(v_{py}^j+ v_{eY}^j)
  -\sqrt\eps yg_{eY}^{i+1}v_{py}^0\nonumber\\
  &+\sqrt\eps g_{py}^0\int_0^y\int_y^{\theta}v_{eYY}^{i+1}(\sqrt\eps\tau){\rm d}\tau{\rm d}\theta
  -\sqrt\eps v_{py}^0\int_0^y\int_y^{\theta}g_{eYY}^{i+1}(\sqrt\eps\tau){\rm d}\tau{\rm d}\theta\nonumber\\
  &+\sqrt\eps u_p^0\int_0^y\p_{xY}g_e^{i+1}(\sqrt\eps\tau){\rm d}\tau-\sqrt\eps h_p^0\int_0^y\p_{xY}v_e^{i+1}(\sqrt\eps\tau){\rm d}\tau].
\end{align}

For later use, we also give the formulation of the general Euler-Prandtl terms $R_E^{u,i},R_E^{v,i},R_E^{h,i},R_E^{g,i}$ ($2\leq i\leq n-1$) as follows
\begin{align}
\label{R^u,i_E}
&R_E^{u,i}
  =\eps^{\frac{i}{2}} \left[u_e^i \sum_{j=0}^{i-1} \eps^{\frac{j}{2}}u_{px}^j-h_e^i \sum_{j=0}^{i-1} \eps^{\frac{j}{2}}h_{px}^j
  +\sqrt\eps (u_{eY}^i\sum_{j=0}^{i-1} \eps^{\frac{j}{2}}v_p^j-h_{eY}^i\sum_{j=0}^{i-1} \eps^{\frac{j}{2}}g_p^j)\right]\nonumber\\
  &~ +\eps^{\frac{i-1}{2}} \left[v_e^i\sum_{j=1}^{i-1}\eps^{\frac{j}{2}}u_{py}^j-g_e^i\sum_{j=1}^{i-1}\eps^{\frac{j}{2}}h_{py}^j
  +\sqrt\eps(u_{ex}^i \sum_{j=0}^{i-1} \eps^{\frac{j}{2}}u_p^j-h_{ex}^i \sum_{j=0}^{i-1} \eps^{\frac{j}{2}}h_p^j )\right],\\
\label{R^v,i_E}
&R_E^{v,i}
=\sqrt\eps \left[v_e^2 \sum_{j=0}^1 \eps^{\frac{j}{2}}v_{py}^j-g_e^2 \sum_{j=0}^1 \eps^{\frac{j}{2}}g_{py}^j
  + v_{ex}^2 \sum_{j=0}^1 \eps^{\frac{j}{2}}u_p^j-g_{ex}^2 \sum_{j=0}^1 \eps^{\frac{j}{2}}h_p^j\right]\nonumber\\
  &\qquad +\eps \left[u_e^2\sum_{j=0}^1 \eps^{\frac{j}{2}}v_{px}^j- h_e^2\sum_{j=0}^1 \eps^{\frac{j}{2}}g_{px}^j
  +v_{eY}^2\sum_{j=0}^1 \eps^{\frac{j}{2}}v_p^j-g_{eY}^2\sum_{j=0}^1 \eps^{\frac{j}{2}}g_p^j\right],\\
\label{R^h,i_E}
&R_E^{h,i}
=\eps^{\frac{i}{2}} \left[u_e^i \sum_{j=0}^{i-1} \eps^{\frac{j}{2}}h_{px}^j-h_e^i \sum_{j=0}^{i-1} \eps^{\frac{j}{2}}u_{px}^j
  +\sqrt\eps (h_{eY}^i\sum_{j=0}^{i-1} \eps^{\frac{j}{2}}v_p^j-u_{eY}^i\sum_{j=0}^{i-1} \eps^{\frac{j}{2}}g_p^j)\right]\nonumber\\
  &~ +\eps^{\frac{i-1}{2}} \left[v_e^i\sum_{j=1}^{i-1}\eps^{\frac{j}{2}}h_{py}^j-g_e^i\sum_{j=1}^{i-1}\eps^{\frac{j}{2}}u_{py}^j
  +\sqrt\eps(h_{ex}^i \sum_{j=0}^{i-1} \eps^{\frac{j}{2}}u_p^j-u_{ex}^i \sum_{j=0}^{i-1} \eps^{\frac{j}{2}}h_p^j )\right],\\
\label{R^g,i_E}
&R_E^{g,i}
=\eps^{\frac{i}{2}} \left[u_e^i \sum_{j=0}^{i-1} \eps^{\frac{j}{2}}g_{px}^j-h_e^i \sum_{j=0}^{i-1} \eps^{\frac{j}{2}}v_{px}^j
  + g_{eY}^i \sum_{j=0}^{i-1} \eps^{\frac{j}{2}}v_p^j-v_{eY}^i \sum_{j=0}^{i-1} \eps^{\frac{j}{2}}g_p^j\right]\nonumber\\
  &\qquad +\eps^{\frac{i-1}{2}} \left[v_e^i\sum_{j=1}^{i-1}\eps^{\frac{j}{2}}g_{py}^j-g_e^i\sum_{j=1}^{i-1}\eps^{\frac{j}{2}}v_{py}^j
  +g_{ex}^i\sum_{j=1}^{i-1} \eps^{\frac{j}{2}}u_p^j-v_{ex}^i\sum_{j=1}^{i-1} \eps^{\frac{j}{2}}h_p^j\right].
\end{align}
Similar to the arguments in the $\sqrt{\eps}$-order correctors, we introduce the following $\eps^{\frac{i}{2}}$-order ($2\leq i\leq n$) auxiliary boundary layer pressure
\begin{align}\label{p_p^i,a}
\eps^{\frac{i+1}{2}}p_p^{i,a}=\eps\int_y^\infty (R^{v,i-1}+R_E^{v,i}).
\end{align}

See also \cite{Iyerglobal1} for more details about the derivation.

\subsection{The $\eps^{\frac{i}{2}}$-order ideal MHD correctors ($2\leq i\leq n$)}\label{sec4.2}
As mentioned in the previous sections, the $\eps^\frac{i}{2}$-order ideal MHD correctors $(u_e^i,v_e^i,h_e^i,g_e^i)$ satisfy the following system
\begin{equation}\label{u_e^i_system}
\begin{cases}
  \p_x u_e^i-\sigma\p_x h_e^i+\p_x p_e^i=0,\\
  \p_x v_e^i-\sigma\p_x g_e^i+\p_Y p_e^i=0,\\
  \p_x h_e^i-\sigma\p_x u_e^i=0,\\
  \p_x g_e^i-\sigma\p_x v_e^i=0,\\
  \p_x u_e^i+\p_Y v_e^i=\p_x h_e^i+\p_Y g_e^i=0,\\
  (v_e^i,g_e^i)(x,0)=-(v_p^{i-1},g_p^{i-1})(x,0),\\
  (v_e^i,g_e^i)\rightarrow (0,0){\rm ~as~} Y\rightarrow\infty.
\end{cases}
\end{equation}
It is noted that the $\eps^{\frac{i}{2}}$-order auxiliary ideal MHD pressure
$$  p_e^{i,a}=\frac{-(1-\sigma^2)}{2}(|u_e^i|^2+|v_e^i|^2)-(1-\sigma^2)\sum_{j=1}^{i-1}\eps^{\frac{j-i}{2}}(u_e^iu_e^j+v_e^iv_e^j)
$$
has been introduced to ensure that
$E_E^{u,i}+\eps^i p_{ex}^{i,a}=0, E_E^{v,i}+\eps^{i-\frac{1}{2}} p_{eY}^{i,a}=0.$

Observe that system \eqref{u_e^i_system} has the same structure to that for the $\sqrt{\eps}$-order ideal MHD corrector as in \eqref{u_e^1_system}, one can perform the similar arguments of Section \ref{sec4} (see also \cite{Iyerglobal1} for details) to obtain the decay properties for $(u_e^i,v_e^i,h_e^i,g_e^i)$. Without loss of generality, we take the pressure to be $p_e^i=-(1-\sigma^2)u_e^i$, and thus derive the Cauchy-Riemann equations for $(u_e^i,v_e^i)$:
\begin{equation}\label{u_e^i_CR}
  v_{ex}^i-u_{eY}^i=0,\quad u_{ex}^i+v_{eY}^i=0.
\end{equation}
In addition, the magnetic field can be constructed by $(h_e^i,g_e^i)=\sigma(u_e^i,v_e^i)$.

By estimate \eqref{v_p^1_lemma_1}, for $2\leq i\leq n$, the boundary term on $\{x=0\}$ enjoys the following decay estimate
\begin{align}\label{v_e^i_bc_decay}
  |v_e^i(x,0)|\leq C(\sigma_{i-1},n)x^{-\frac{3}{4}+\sigma_{i-1}}.
\end{align}
Compared with \eqref{u_e^1_bc_decay}, there is a better $x^{\frac{1}{4}-\sigma_{i-1}}$
in \eqref{v_e^i_bc_decay}, which is the key point to improve the decay rate for the intermediate ideal MHD correctors. Precisely, we have the following proposition.
\begin{proposition}\label{u_e^i_prop}
For $2\leq i\leq n$, let $(u_e^i,v_e^i)$ solve
\begin{equation}\label{u_e^ivhg_system}
  -\Delta v_e^i=0,\ v_e^i(x,0)=-v_p^{i-1}(x,0),\ v_e^i(x,\infty)=0,\ u_e^i=\int_x^\infty v_{eY}^i(\theta,Y){\rm d}\theta,
\end{equation}
and $(h_e^i,g_e^i)=\sigma(u_e^i,v_e^i)$, $p_e^i=-(1-\sigma^2)u_e^i$. Then $(u_e^i,v_e^i,h_e^i,g_e^i,p_e^i)$ solve system \eqref{u_e^i_system} with
\begin{align}\label{u_e^i_estimate1}
  &\Vert \p_x^k\p_Y^j(u_e^i,v_e^i,h_e^i,g_e^i)\Vert_{L_Y^\infty}\leq C(k,j,n)x^{-k-j-\frac{3}{4}+\sigma_{i-1}}.
\end{align}
In addition, we have
\begin{align}\label{u_e^i_estimate2}
  &\Vert \p_x^k(v_e^i,g_e^i)Y\Vert_{L_Y^\infty}\leq C(k,j,n)x^{-k-\frac{3}{4}+\sigma_{i-1}}.
\end{align}
\end{proposition}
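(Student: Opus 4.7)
The approach is to construct $v_e^i$ as the harmonic extension of the boundary trace $-v_p^{i-1}(x,0)$, and then define the other profiles from it, exactly paralleling the $i=1$ construction in Subsection \ref{sec3.1}. The essential new input is the improved boundary decay $|v_p^{i-1}(x,0)|\lesssim x^{-\frac{3}{4}+\sigma_{i-1}}$ (for $i\geq 2$), which is faster than the rate $x^{-\frac{1}{2}}$ for $v_p^0$ used in Proposition \ref{u_e^1_prop} and therefore allows us to bypass the delicate Lemma 3.4 and Proposition 3.6 of \cite{Iyerglobal1}. This boundary decay itself is obtained by the Hardy-type interpolation used in \eqref{v_p^1_lemma_1}: since $v_p^{i-1}(x,0)=\int_0^\infty v_{py}^{i-1}(x,\theta)\,{\rm d}\theta=-\int_0^\infty u_{px}^{i-1}(x,\theta)\,{\rm d}\theta$, the weighted $\mathcal{P}_k(\sigma_{i-1})$ bounds of Proposition \ref{u_p^1_prop} (extended to higher-order correctors by induction on $i$) give precisely $|v_p^{i-1}(x,0)|\lesssim x^{-\frac{3}{4}+\sigma_{i-1}}$.

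Next I solve the half-plane Dirichlet problem $-\Delta v_e^i=0$ in $\{x\geq 1,\,Y\geq 0\}$ with $v_e^i|_{Y=0}=-v_p^{i-1}(x,0)$ and $v_e^i\to 0$ as $Y\to\infty$, by Poisson extension after a harmless extension of the boundary data past $x=1$. The needed pointwise bounds
\begin{equation*}
\Vert \p_x^k\p_Y^j v_e^i\Vert_{L_Y^\infty}\lesssim x^{-k-j-\frac{3}{4}+\sigma_{i-1}}
\end{equation*}
follow by differentiating under the Poisson integral: each $\p_x$ transfers to $\p_s$ on the boundary data (contributing $x^{-1}$ through its own decay rate after integration by parts), while each $\p_Y$ produces a kernel factor of size $(|x-s|+Y)^{-1}$, gaining a further power of $(x+Y)^{-1}$. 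Combined with the boundary decay rate $x^{-\frac{3}{4}+\sigma_{i-1}}$ this yields \eqref{u_e^i_estimate1}. Once $v_e^i$ is controlled, I set $u_e^i(x,Y)=\int_x^\infty v_{eY}^i(\theta,Y)\,{\rm d}\theta$ (the improved integrability in $x$ is exactly what makes this integral converge), $(h_e^i,g_e^i)=\sigma(u_e^i,v_e^i)$, and $p_e^i=-(1-\sigma^2)u_e^i$. Harmonicity of $v_e^i$ together with $u_{ex}^i=-v_{eY}^i$ yields the Cauchy-Riemann pair $v_{ex}^i=u_{eY}^i$, $u_{ex}^i+v_{eY}^i=0$, so that $u_e^i$ is also harmonic, and then every equation in \eqref{u_e^i_system} and the matching boundary conditions are verified exactly as in Subsection \ref{sec3.1}.

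The main technical point is the Y-weighted bound \eqref{u_e^i_estimate2}, i.e.\ $\Vert v_e^i\cdot Y\Vert_{L_Y^\infty}\lesssim x^{-\frac{3}{4}+\sigma_{i-1}}$ (and the analogue with $\p_x^k$), which is \emph{not} an obvious consequence of differentiating the Poisson kernel: near $Y=0$ one has $|v_e^i|$ only as small as the boundary data. I expect to prove it by splitting the Poisson integral into a near regime $|x-s|+Y\lesssim x$ and a far regime $|x-s|\gtrsim x$: in the near regime the factor $Y$ cancels the singularity of $\p_Y$ of the potential so one recovers the boundary size $x^{-\frac{3}{4}+\sigma_{i-1}}$, while the far regime is handled by the $x$-decay of the boundary data itself. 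Finally, weighted $L^\infty$ control of $\p_x^k (u_e^i,h_e^i,g_e^i)\cdot Y$ follows from Cauchy–Riemann and $(h_e^i,g_e^i)=\sigma(u_e^i,v_e^i)$, completing the estimates. The only delicate step is keeping track of the parameter $\sigma_{i-1}$ across the inductive chain linking $v_p^{i-1}$ to $v_e^i$; since each level loses at most an arbitrarily small exponent, finitely many iterations ($i\leq n$) produce no divergence.
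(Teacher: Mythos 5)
Your proposal follows the same route the paper intends (and largely omits): construct $v_e^i$ by Poisson extension of $-v_p^{i-1}(x,0)$, exploit the improved boundary decay $|v_p^{i-1}(x,0)|\lesssim x^{-3/4+\sigma_{i-1}}$ coming from the Hardy-type interpolation of \eqref{v_p^1_lemma_1} and its higher-order analogues, set $u_e^i=\int_x^\infty v_{eY}^i$, take $(h_e^i,g_e^i)=\sigma(u_e^i,v_e^i)$ and $p_e^i=-(1-\sigma^2)u_e^i$, and verify \eqref{u_e^i_system} via Cauchy--Riemann. The paper's proof is a one-line reference to the $\sqrt\eps$-order case, so your write-up supplies precisely the details being suppressed. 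A small structural difference: you derive Cauchy--Riemann from harmonicity of $v_e^i$ plus the definition of $u_e^i$ (so $u_{eY}^i=\int_x^\infty v_{eYY}^i=-\int_x^\infty v_{exx}^i=v_{ex}^i$), whereas the paper obtains it from the pressure choice together with the second momentum equation; the two are equivalent and yours is slightly cleaner.

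One place the sketch is thin is the $Y$-weighted estimate \eqref{u_e^i_estimate2}. Your near/far decomposition (near: $|x-s|+Y\lesssim x$; far: $|x-s|\gtrsim x$) leaves out the regime $Y\gg x$ with $|x-s|\lesssim x$. In that regime the raw Poisson extension of data decaying like $s^{-3/4+\sigma_{i-1}}$ is of size $Y^{-3/4+\sigma_{i-1}}$, so $Y v_e^i$ is \emph{not} uniformly bounded in $Y$. Making this rigorous requires additional input that your sketch does not supply — e.g.\ exploiting that $v_p^{i-1}(\cdot,0)$ is an exact $x$-derivative of $\int_0^\infty u_p^{i-1}\,dy$ (so $v_e^i=\partial_x(P_Y\ast\Phi)$, which does gain a factor $Y^{-1}$ at large $Y$), or the observation that in every actual use the weight $Y$ accompanies at least one $\partial_Y$ of $v_e^i$ and is localized to $Y\lesssim\sqrt{\eps x}$ by the rapid $y$-decay of the Prandtl factors it multiplies. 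Since the paper states \eqref{u_e^i_estimate2} without proof (and with a stray index $j$), this is as much a gap in the paper's exposition as in your proposal, but it should be addressed rather than handled by the split as sketched.
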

\begin{proof}
The proof of this proposition is similar to that for the $\sqrt{\eps}$-order ideal MHD correctors and we omit it here.
\end{proof}

\subsection{The $\eps^\frac{i}{2}$-order boundary layer correctors ($2\leq i\leq n-1$)}\label{sec4.3}
The initial boundary value problem for the boundary layer correctors is taken as
\begin{equation}\label{u_p^i_system}
\begin{cases}
  -u_{pyy}^i+(1+u_p^0)u_{px}^i+p_{px}^i=(\sigma+h_p^0)h_{px}^i +\mathcal{P}_u^{(i)}+f_u^{(i)},\\
  -h_{pyy}^i+(1+u_p^0)h_{px}^i=(\sigma+h_p^0)u_{px}^i +\mathcal{P}_h^{(i)}+f_h^{(i)},\\
  p_{py}^i=0,\\
  u_{px}^i+v_{py}^i=h_{px}^i+g_{py}^i=0,\\
  (u_p^i,v_p^i,h_p^i,g_p^i)(x,0)=(-\overline u_e^i,-\overline v_e^{i+1},-\overline h_e^i,-\overline g_e^{i+1})(x),\\
  (u_p^i,v_p^i,h_p^i,g_p^i)(x,\infty)=(0,0,0,0),\\
   (u_p^i,h_p^i)(1,y)=(u_0^i,h_0^i)(y),
\end{cases}
\end{equation}
in which
\begin{equation}\label{P_ui}
\begin{cases}
  \mathcal{P}_u^{(i)}=&u_p^iu_{sx}^{(i)}+(v_p^i-\overline v_p^i)u_{py}^0+v_s^{(i)}u_{py}^i\\
  &-h_p^ih_{sx}^{(i)}-(g_p^i-\overline g_p^i)h_{py}^0-g_s^{(i)}h_{py}^i,\\
  \mathcal{P}_h^{(i)}=&u_p^ih_{sx}^{(i)}+(v_p^i-\overline v_p^i)h_{py}^0+v_s^{(i)}h_{py}^i\\
  &-h_p^iu_{sx}^{(i)}-(g_p^i-\overline g_p^i)u_{py}^0-g_s^{(i)}u_{py}^i,\\
  f_u^{(i)}=&-\eps^{-\frac{i}{2}}(R^{u,i-1}+R_E^{u,i}+\eps^{\frac{i+1}{2}} p_{px}^{i,a}),\\
  f_h^{(i)}=&-\eps^{-\frac{i}{2}}(R^{h,i-1}+R_E^{h,i}),
\end{cases}
\end{equation}
where the terms $R^{u,i-1},R^{h,i-1},R_E^{u,i},R_E^{h,i}, p_{p}^{i,a}$ are defined as in \eqref{R^u_i}, \eqref{R^h_i}, \eqref{R^u,i_E}, \eqref{R^h,i_E} and \eqref{p_p^i,a}. The source terms $(f_u^{(i)},f_h^{(i)})$ satisfy the following estimate (refer to \cite{Iyerglobal1} for the details)
\begin{equation}\label{f_ui_estimate}
  \Vert z^m \p_x^k(f_u^{(i)},f_h^{(i)})\Vert_{L_y^2}\leq C(k,m)x^{-k-\frac{5}{4}+2\sigma_{i-1}}.
\end{equation}

One shall infer $p_p^i=C$ by evaluating the equation \eqref{u_p^i_system}$_1$ at $y=\infty$ and using $p_{py}^i=0$. Here we prescribe $p_p^i=0$, without loss of generality. Thanks to the divergence free conditions and the behavior at $\infty$, we will construct the vertical components $(v_p^i,g_p^i)$ by
\begin{align}\label{v_p^i}
  (v_p^i,g_p^i)(x,y)=\int_y^\infty \p_x(u_p^i,h_p^i)(x,\theta){\rm d}\theta.
\end{align}

Now we are ready to give the well-posedness result for the intermediate boundary layer profiles with enhanced decay.
\begin{theorem}\label{u_p^i_theorem}
For any $m,k,j\in\mathbb{N}$ and $2\leq i\leq n-1$, there exists solution $(u_p^i,v_p^i,h_p^i,g_p^i)$ to the problem \eqref{u_p^i_system} on the domain $\Omega=[1,\infty)\times \mathbb{R}_+$ with
\begin{align}
\label{uh_p^i_estimate}
  &\Vert z^m\p_x^k\p_y^j (u_p^i,h_p^i)\Vert_{L_y^\infty}\leq C(\delta,\sigma,\sigma_i;m,k,j)x^{-k-\frac{j}{2}-\frac{1}{2}+\sigma_i},\\
\label{vg_p^i_estimate}
  &\Vert z^m\p_x^k\p_y^j (v_p^i,g_p^i)\Vert_{L_y^\infty}\leq C(\delta,\sigma,\sigma_i;m,k,j)x^{-k-\frac{j}{2}-1+\sigma_i}.
\end{align}
The constant $\sigma_i$ is prescribed by $\sigma_i=\frac{3^i}{3^n\times 1000}$.
\end{theorem}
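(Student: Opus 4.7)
The plan is to follow the strategy of Theorem~\ref{u_p^1_theorem} essentially line by line, treating the ideal MHD profiles $(u_e^j,v_e^j,h_e^j,g_e^j)$ for $j\leq i$ and the lower-order boundary layer profiles $(u_p^j,v_p^j,h_p^j,g_p^j)$ for $j<i$ as already constructed by induction (with the bounds from Proposition~\ref{u_e^i_prop}, Theorem~\ref{u_p^1_theorem}, and the analogues of Theorem~\ref{u_p^i_theorem} at level $i-1$). The structural observation is that the system \eqref{u_p^i_system} is linear parabolic in $(u_p^i,h_p^i)$ with exactly the same principal part $-\p_{yy}+(1+u_p^0)\p_x-(\sigma+h_p^0)\p_x$ as in the $i=1$ case; only the forcing $(f_u^{(i)},f_h^{(i)})$ and the convective coefficients $(\mathcal{P}_u^{(i)},\mathcal{P}_h^{(i)})$ change. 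The key quantitative inputs are \eqref{f_ui_estimate} for the forcing and \eqref{u_e^i_estimate1}--\eqref{u_e^i_estimate2} for the boundary data $\overline{u_e^i},\overline{h_e^i}$, both carrying the loss $\sigma_{i-1}$ in the decay exponent.

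First I would introduce weighted energy functionals $\mathcal{P}(\sigma_i)$ and $\mathcal{P}_k(\sigma_i)$ defined exactly as in \eqref{u_p^1_norm}--\eqref{u_p^1_norm_k} with $\sigma_1$ replaced by $\sigma_i$, then reprove the analogue of Proposition~\ref{u_p^1_prop} by a contraction-mapping argument on a ball in $\mathcal{P}(\sigma_i)$. The two driving energy estimates are the basic one, multiplying \eqref{u_p^i_system}$_{1,2}$ by $(u_p^i,h_p^i)x^{-2\sigma_i}$, and the $\p_x$-energy, multiplying by $(u_{px}^i,h_{px}^i)x^{1-2\sigma_i}$. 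These produce the $\sigma_i$-dissipative gains exactly as in \eqref{u_p^1_estimate_1.0} and \eqref{u_p^1_estimate_2.0}; the boundary contributions from $\overline{u_e^i},\overline{h_e^i}$ are absorbed via the trace bound \eqref{u_p^1_estimate_1.7}, using \eqref{u_e^i_estimate1} to give the improved pointwise decay $x^{-3/4+\sigma_{i-1}}$ at $y=0$; and the forcing contribution is handled by pairing \eqref{f_ui_estimate} against $(u_p^i,h_p^i)x^{-2\sigma_i}$, which closes provided $\sigma_i\geq 2\sigma_{i-1}$. The magnetic coupling $(\sigma+h_p^0)h_{px}^i u_p^i$ is treated as in \eqref{u_p^1_estimate_1.6} using smallness of $\sigma$ and \eqref{uh_p^0_estimate2}, and the convective terms $\mathcal{P}_u^{(i)},\mathcal{P}_h^{(i)}$ are controlled via the induction hypothesis together with $\Vert yu_{py}^0,yh_{py}^0\Vert_{L^\infty}=\mathcal{O}(\delta,\sigma)$ and $\Vert v_s^{(i)}x^{1/2},g_s^{(i)}x^{1/2}\Vert_{L^\infty}\lesssim 1$. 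Weighted and higher-derivative analogues of \eqref{u_p^1_estimate_P}--\eqref{u_p^1_estimate_Pk_weighted} follow by repeating the procedure with the multipliers $z^{2m}\p_x^k u_p^i\cdot x^{2k-2\sigma_i}$, etc.

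To upgrade these energy bounds to the pointwise $L^\infty$ estimates \eqref{uh_p^i_estimate}--\eqref{vg_p^i_estimate}, I would reuse the interpolation arguments \eqref{u_p^1_lemma_1}--\eqref{u_p^1_lemma_3}, which give
\begin{equation*}
|z^m\p_x^k u_p^i x^{k+\tfrac{1}{2}-\sigma_i}|^2\lesssim \Vert z^m u_p^i\Vert_{\mathcal{P}_k(\sigma_i)}\cdot\Vert z^m u_p^i\Vert_{\mathcal{P}_{k+1}(\sigma_i)},
\end{equation*}
gaining the extra half-power in $x$ relative to the $\sqrt{\eps}$-layer bound by exploiting that the target decay is now $x^{-1/2+\sigma_i}$, one half-power better than in Theorem~\ref{u_p^1_theorem}, consistent with the improved boundary data. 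The $L^\infty$ bounds \eqref{vg_p^i_estimate} for $(v_p^i,g_p^i)$ then follow from the representation \eqref{v_p^i} via the Hardy-type argument of \eqref{v_p^1_lemma_1}, which converts the $\p_x(u_p^i,h_p^i)$ estimate in $L^2_y$ into the desired $x^{-1+\sigma_i}$ decay in $L^\infty_y$.

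The main obstacle is verifying that the chain of losses closes with the prescribed exponent $\sigma_i=3\sigma_{i-1}$ rather than some larger geometric factor. The factor $3$ has to absorb three distinct losses simultaneously: (a) the $2\sigma_{i-1}$ loss from \eqref{f_ui_estimate} inherited by the energy norm via Cauchy--Schwarz, (b) the $\sigma_{i-1}$ loss inside $\mathcal{P}_u^{(i)},\mathcal{P}_h^{(i)}$ carried by the $\eps^{(i-1)/2}$-scale ideal corrector $v_e^i$ and by the lower-order boundary layer $v_p^{i-1}$ appearing in $v_s^{(i)}$ and in the boundary data through $\overline v_e^{i+1}$, and (c) the small further loss incurred in the $L^2\to L^\infty$ interpolation and in the Hardy-type conversion for $(v_p^i,g_p^i)$. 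Because $\sigma_n=1/1000$ by the choice $\sigma_i=3^i/(3^n\cdot 1000)$, all the quantities $\mathcal{O}(\delta,\sigma,\sigma_i)$ remain uniformly small along the induction, so the smallness conditions used in Theorem~\ref{u_p^0_theorem} and Theorem~\ref{u_p^1_theorem} propagate unchanged and the contraction argument closes at every level $i\leq n-1$.
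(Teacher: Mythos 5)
Your plan misses the single structural idea that makes the improved $x^{-1/2+\sigma_i}$ decay possible for $i\geq 2$: the energy norm has to be \emph{stronger} by a factor $x^{1/4}$, and this extra weight cannot be extracted from the $i=1$ argument by merely relabeling $\sigma_1\mapsto\sigma_i$. Compare the statements of Proposition~\ref{u_p^1_prop} and Proposition~\ref{u_p^i_prop}: the former bounds $\Vert (u_p^1,h_p^1)\Vert_{\mathcal{P}(\sigma_1)}$, the latter bounds $\Vert (u_p^i,h_p^i)\,x^{1/4}\Vert_{\mathcal{P}(\sigma_i)}$. Your proposal to ``reprove the analogue of Proposition~\ref{u_p^1_prop}'' with multipliers $(u_p^i,h_p^i)x^{-2\sigma_i}$ and $(u_{px}^i,h_{px}^i)x^{1-2\sigma_i}$ would give exactly $\Vert (u_p^i,h_p^i)\Vert_{\mathcal{P}(\sigma_i)}\lesssim 1$, which after the interpolation \eqref{u_p^1_lemma_2} yields only $|z^m\p_x^k u_p^i|\lesssim x^{-k-1/4+\sigma_i}$, one half-power short of \eqref{uh_p^i_estimate}. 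Your displayed interpolation inequality (with exponent $k+\tfrac{1}{2}-\sigma_i$ instead of $k+\tfrac{1}{4}-\sigma_i$) does not follow from the $\mathcal{P}_k$ norm as you define it, and no interpolation trick can manufacture decay that the energy functional does not control.

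What the paper actually does is add a \emph{new} zeroth layer of energy estimates at the stream-function level: it writes $(-\varphi_y,\varphi_x)=(u,v)$, $(-\psi_y,\psi_x)=(h,g)$ and tests by $\varphi\,x^{-1/2-2\sigma_i}$, $\psi\,x^{-1/2-2\sigma_i}$, and then tests the velocity equations by $u\,x^{1/2-2\sigma_i}$ and $u_x\,x^{3/2-2\sigma_i}$ --- each multiplier carrying $x^{1/2}$ more weight than in the $i=1$ proof. The stream-function step is exactly where the improved boundary decay $|v_e^i(x,0)|\lesssim x^{-3/4+\sigma_{i-1}}$ from \eqref{v_e^i_bc_decay} is consumed (see \eqref{u_p^i_estimate_1.2}), and it is unavailable at level $i=1$, where one only has $|v_e^1(x,0)|\lesssim x^{-1/2}$. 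You correctly identify the three sources of $\sigma_{i-1}$-loss and why $\sigma_i=3\sigma_{i-1}$ absorbs them, but without the heavier weights and the additional stream-function step the contraction argument simply cannot close at the stated decay rate. Note also the base case $i=2$ still uses Theorem~\ref{u_p^1_theorem} (through $v_p^1$ in the boundary data for $v_e^2$), so this is not a self-contained induction from $i=1$ but a genuinely sharpened argument once the improved boundary decay kicks in.
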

\begin{remark}\label{remark_enhance}
The enhanced decay rates, in essence, come from the improved decay of the boundary condition as stated in \eqref{v_e^i_bc_decay}, will play key roles in the global-in-$x$ stability for the boundary layer expansion, see also Remark \ref{remark_n>2} and \ref{remark_n>2_S6}.
\end{remark}

\begin{proposition}\label{u_p^i_prop}
 Suppose that $(u_p^i,h_p^i)$ solve \eqref{u_p^i_system}, then for any $m,k\in\mathbb{N}$ and $2\leq i\leq n-1$, it holds that
\begin{align}
\label{u_p^i_estimate_P}
  &\Vert (u_p^i,h_p^i)x^{\frac{1}{4}}\Vert_{\mathcal{P}(\sigma_i)}\leq C(\delta,\sigma,\sigma_i),\\
\label{u_p^i_estimate_Pk_weighted}
  &\Vert z^m(u_p^i,h_p^i)x^{\frac{1}{4}}\Vert_{\mathcal{P}_k(\sigma_i)}\leq C(\delta,\sigma,\sigma_i;k,m),
\end{align}
where the norms $\mathcal{P}(\sigma_i)$,$\mathcal{P}_k(\sigma_i)$ are defined as in \eqref{u_p^1_norm},\eqref{u_p^1_norm_k} with $\sigma_1$ replaced by $\sigma_i$. The constant $\sigma_i$ is prescribed by $\sigma_i=\frac{3^i}{3^n\times 1000}$.
\end{proposition}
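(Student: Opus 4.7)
The plan is to argue by induction on $i$, using the same energy scheme as in Proposition \ref{u_p^1_prop} but with an extra $x^{1/2}$ weight built into every multiplier to account for the enhanced decay in \eqref{u_p^i_estimate_P}. The base case $i=1$ has already been settled, and for the inductive step we may assume that $(u_p^j,v_p^j,h_p^j,g_p^j)$ for $1\leq j\leq i-1$ already obey \eqref{uh_p^i_estimate}--\eqref{vg_p^i_estimate} with exponents $\sigma_j$. The two key quantitative inputs are then the enhanced boundary-trace decay \eqref{v_e^i_bc_decay}, which controls $\overline u_e^i$ and $\overline h_e^i$ by $x^{-3/4+\sigma_{i-1}}$ instead of the $x^{-1/2}$ one gets at the first level, and the forcing estimate \eqref{f_ui_estimate}, which still provides the critical $x^{-5/4}$ decay modulo a small $2\sigma_{i-1}$ loss.

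First I would run the analogue of \eqref{u_p^1_estimate_1.0} by testing \eqref{u_p^i_system}$_{1,2}$ against $u_p^i x^{1/2-2\sigma_i}$ and $h_p^i x^{1/2-2\sigma_i}$. The left hand side produces, after integration by parts, the full $\mathcal{P}(\sigma_i)$-norm of $(u_p^i,h_p^i)x^{1/4}$ with damping coefficient $\tfrac{1}{2}-\sigma_i$ on the weighted $L^2$ dissipation. The cross term $(\sigma+h_p^0)\p_x(u_p^i h_p^i)$ is absorbed exactly as in \eqref{u_p^1_estimate_1.6} thanks to $\sigma\ll 1$ and \eqref{uh_p^0_estimate2}. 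The boundary contribution $[\p_y u_p^i(x,0)\overline u_e^i+\p_y h_p^i(x,0)\overline h_e^i]x^{1/2-2\sigma_i}$ is controlled by Cauchy--Schwarz, \eqref{u_p^1_estimate_1.7}, and crucially \eqref{v_e^i_bc_decay} which supplies the extra $x^{1/4}$ over the $i=1$ case; the forcing is bounded by \eqref{f_ui_estimate} with quadratic loss $4\sigma_{i-1}<\sigma_i$ since $\sigma_i=3\sigma_{i-1}$. Next I would repeat with multipliers $u_{px}^i x^{3/2-2\sigma_i}$ and $h_{px}^i x^{3/2-2\sigma_i}$ to close the $\p_x$ and $\p_y^2$ pieces of the $\mathcal{P}(\sigma_i)$-norm, exactly paralleling \eqref{u_p^1_estimate_2.0}--\eqref{u_p^1_estimate_2.6}.

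The convective terms $\mathcal{P}_u^{(i)}$ and $\mathcal{P}_h^{(i)}$ split into two families: those where $(u_p^i,h_p^i,v_p^i,g_p^i)$ is paired with a background profile $(u_s^{(i)},v_s^{(i)},h_s^{(i)},g_s^{(i)})$ and their derivatives, and those where the leading order $(u_{py}^0,h_{py}^0)$ acts on $(v_p^i-\bar v_p^i,g_p^i-\bar g_p^i)$. The first family is handled by the Hardy-type identity $v_p^i=\int_y^\infty u_{px}^i$ combined with the weighted $L^\infty$ bounds on $\sqrt\eps$-dependent sums deduced inductively from Theorem \ref{u_p^0_theorem} and (the inductive version of) Theorem \ref{u_p^i_theorem} for lower orders; the second is treated exactly as in \eqref{u_p^1_estimate_1.3}, using that $yu_{py}^0,yh_{py}^0$ are bounded. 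Each such term acquires a multiplicative smallness $\mathcal{O}(\delta,\sigma)+\mathcal{O}(\eps^{1/2})$, which can be absorbed into the dissipation. The weighted estimate \eqref{u_p^i_estimate_Pk_weighted} is then obtained by successively differentiating \eqref{u_p^i_system} in $x$ and repeating the above with multipliers $z^{2m}\p_x^k u_p^i x^{1/2+2k-2\sigma_i}$, just as in the final paragraph of the proof of Proposition \ref{u_p^1_prop}.

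The main obstacle I expect is bookkeeping the $\sigma_i$ budget. Each integration by parts in $x$ on a forcing term with decay $x^{-5/4+2\sigma_{i-1}}$ produces an $x^{-1/2-2\sigma_{i-1}+2\sigma_i}$ factor against the test function, so to close the estimate one needs $2\sigma_i>2\sigma_{i-1}+C\max_j\sigma_j$ for some absolute $C$; the choice $\sigma_i=3\sigma_{i-1}=\frac{3^i}{3^n\cdot 1000}$ is designed to accommodate this, and the smallness condition $\eps\ll\eps_*$ guarantees that the prefactors stay uniformly small across all $n$ iterations. A subtle secondary point is the treatment of the $\overline v_p^i$ trace appearing in $\mathcal{P}_u^{(i)}$: one must verify that the extraction procedure \eqref{overlinev_e^2} applied at this order still produces a lower-order remainder with decay at least $x^{-3/2+\sigma_{i-1}}$, which follows from Proposition \ref{u_e^i_prop} applied at level $i$. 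Once these decay bookkeeping issues are settled, the contraction mapping argument used in Proposition \ref{u_p^1_prop} transfers verbatim, and the pointwise bounds \eqref{uh_p^i_estimate}--\eqref{vg_p^i_estimate} of Theorem \ref{u_p^i_theorem} follow by the same interpolation computations as in \eqref{u_p^1_lemma_1}--\eqref{v_p^1_lemma_1}.
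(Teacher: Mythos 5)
Your scheme omits the paper's crucial first step of testing the stream-function formulation, and as a result the sign bookkeeping of the $x$-weight goes wrong. You claim that testing \eqref{u_p^i_system}$_{1,2}$ against $u_p^i x^{1/2-2\sigma_i}$, $h_p^i x^{1/2-2\sigma_i}$ produces ``damping coefficient $\tfrac{1}{2}-\sigma_i$ on the weighted $L^2$ dissipation,'' but integrating the transport term $(1+u_p^0)u_{px}^i\cdot u_p^i x^{1/2-2\sigma_i}$ by parts in $x$ gives
\[
\tfrac{1}{2}\tfrac{d}{dx}\int(1+u_p^0)|u_p^i|^2 x^{1/2-2\sigma_i}
- \bigl(\tfrac{1}{4}-\sigma_i\bigr)\int(1+u_p^0)|u_p^i|^2 x^{-1/2-2\sigma_i}
+\dots,
\]
and since $\sigma_i<\tfrac14$ the coefficient $-(\tfrac14-\sigma_i)$ is \emph{negative}. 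This is the opposite of the $i=1$ case, where the weight $x^{-2\sigma_1}$ produces a genuine positive dissipation $\sigma_1\int(1+u_p^0)|u_p^1|^2x^{-1-2\sigma_1}$, which is what delivers $\|(u_p^1,h_p^1)x^{-1/2-\sigma_1}\|_{L_x^2L_y^2}$ in \eqref{u_p^1_estimate_1.9}. For $i\geq 2$, the analogous quantity $\|(u_p^i,h_p^i)x^{-1/4-\sigma_i}\|_{L_x^2L_y^2}$ is part of $\|(u_p^i,h_p^i)x^{1/4}\|_{\mathcal P(\sigma_i)}$, yet with your multipliers it sits on the \emph{wrong side} of the inequality (this is the term $J_5$ in \eqref{u_p^i_estimate_2.0}), so your two-step energy argument cannot close.

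The paper recovers this norm by a preliminary step you do not have: it introduces stream functions $(-\varphi_y,\varphi_x)=(u_p^i,v_p^i)$, $(-\psi_y,\psi_x)=(h_p^i,g_p^i)$, passes to the heat-type system \eqref{u_p^i_system_streamfunction}, and tests it against $(\varphi x^{-1/2-2\sigma_i},\psi x^{-1/2-2\sigma_i})$. Because the exponent $-\tfrac12-2\sigma_i$ is negative, the $x$-weight contribution is now $+(\tfrac14+\sigma_i)\int|(\varphi,\psi)|^2x^{-3/2-2\sigma_i}$ with a favourable sign, and the viscous term yields $\int|\p_y(\varphi,\psi)|^2x^{-1/2-2\sigma_i}=\int|(u_p^i,h_p^i)|^2x^{-1/2-2\sigma_i}$, which is precisely the missing norm; this is recorded in \eqref{u_p^i_estimate1}. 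The stream-function test also means the enhanced boundary decay \eqref{v_e^i_bc_decay} is paired with $\varphi(x,0)$ rather than with the normal derivative $\p_y u_p^i(x,0)$, which is a cleaner pairing at this level. Your multipliers $u_p^i x^{1/2-2\sigma_i}$ and $u_{px}^i x^{3/2-2\sigma_i}$ reproduce the paper's second and third steps and are fine as far as they go, but the estimate hierarchy genuinely needs the additional, sign-favourable stream-function level; this is a missing ingredient, not just a slip in constants.
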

\begin{proof}
The estimate \eqref{u_p^i_estimate_P} will be obtained by the three steps. In what follows, we denote the unknowns $(u_p^i,v_p^i,h_p^i,g_p^i)$ as $(u,v,h,g)$ for convenience.

First of all, introducing the stream functions $(-\varphi_y,\varphi_x)=(u,v)$ and $(-\psi_y,\psi_x)=(h,g)$, one shall rewrite the system \eqref{u_p^i_system} as
\begin{equation}\label{u_p^i_system_streamfunction}
\left\{
\begin{aligned}
  &(\p_x-\p_y^2)\varphi=\sigma\p_x\psi-\int_y^\infty u_p^0u_x+\int_y^\infty h_p^0h_x-\int_y^\infty \mathcal{P}_u^{(i)}+\int_y^\infty f_u^{(i)},\\
  &(\p_x-\p_y^2)\psi=\sigma\p_x\varphi-\int_y^\infty u_p^0h_x+\int_y^\infty h_p^0u_x-\int_y^\infty \mathcal{P}_h^{(i)}+\int_y^\infty f_h^{(i)}.
\end{aligned}
\right.
\end{equation}

Then the boundary conditions and initial data are prescribed by
\begin{equation}\label{u_p^i_bc_streamfunction}
\left\{
\begin{aligned}
  &(\varphi_y,\psi_y)(x,0)=(\overline u_e^i,\overline h_e^i)(x),\quad (\varphi,\psi)|_{y\rightarrow\infty}=(0,0),\\
  &\varphi(1,y)=\int_y^\infty u_0^i(\theta){\rm d}\theta,\quad \psi(1,y)=\int_y^\infty h_0^i(\theta){\rm d}\theta.
\end{aligned}
\right.
\end{equation}

Testing \eqref{u_p^i_system_streamfunction}$_{1,2}$ by $(\varphi x^{-\frac{1}{2}-2\sigma_i},\psi x^{-\frac{1}{2}-2\sigma_i})$ respectively, we get
\begin{align}\label{u_p^i_estimate_1.0}
\begin{split}
  &\frac{1}{2}\frac{\rm d}{{\rm d}x}\int_0^\infty |(\varphi,\psi)|^2 x^{-\frac{1}{2}-2\sigma_i}{\rm d}y
  +\int_0^\infty |\p_y(\varphi,\psi)|^2 x^{-\frac{1}{2}-2\sigma_i}{\rm d}y\\
  & +(\frac{1}{4}+\sigma_i)\int_0^\infty |(\varphi,\psi)|^2 x^{-\frac{3}{2}-2\sigma_i-1}{\rm d}y\\
  =&\int_0^\infty \sigma\p_x(\varphi\psi)x^{-\frac{1}{2}-2\sigma_i}{\rm d}y
  +\left[\overline u_e^i(x)\varphi(x,0)+\overline h_e^i(x)\psi(x,0)\right]x^{-\frac{1}{2}-2\sigma_i}\\
  &+\left[\int_0^\infty \varphi x^{-\frac{1}{2}-2\sigma_i}\left(-\int_y^\infty u_{px}^0u_x+\int_y^\infty h_p^0h_x\right){\rm d}y\right.\\
  &\left.+\int_0^\infty \psi x^{-\frac{1}{2}-2\sigma_i}\left(-\int_y^\infty u_{px}^0h_x+\int_y^\infty h_p^0u_x\right){\rm d}y\right]\\
  &-\int_0^\infty \left(\varphi x^{-\frac{1}{2}-2\sigma_i}\int_y^\infty \mathcal{P}_u^{(i)}
  +\psi x^{-\frac{1}{2}-2\sigma_i}\int_y^\infty \mathcal{P}_h^{(i)}\right){\rm d}y\\
  &+\int_0^\infty \left(\varphi x^{-\frac{1}{2}-2\sigma_i}\int_y^\infty f_u^{(i)}
  +\psi x^{-\frac{1}{2}-2\sigma_i}\int_y^\infty f_h^{(i)}\right){\rm d}y\\
  :=&I_1+I_2+I_3+I_4+I_5.
\end{split}
\end{align}

The first term on the right hand side can be handled as
\begin{align}\label{u_p^i_estimate_1.1}
\begin{split}
I_1=&\frac{\rm d}{{\rm d}x}\int_0^\infty \sigma\varphi\psi x^{-\frac{1}{2}-2\sigma_i}{\rm d}y
  +(\frac{1}{2}+2\sigma_i)\int_0^\infty \sigma\varphi\psi x^{-\frac{3}{2}-2\sigma_i}{\rm d}y\\
  \leq& \frac{\rm d}{{\rm d}x}\int_0^\infty \sigma\varphi\psi x^{-\frac{1}{2}-2\sigma_i}{\rm d}y
  +(\frac{1}{4}+\sigma_i)\sigma\Vert (\varphi,\psi)x^{-\frac{3}{4}-\sigma_i}\Vert_{L_y^2}^2.
\end{split}
\end{align}

The boundary terms are bounded by
\begin{align}\label{u_p^i_estimate_1.2}
\begin{split}
I_2=&\left[\overline u_e^i(x)\varphi(x,0)+\overline h_e^i(x)\psi(x,0)\right]x^{-\frac{1}{2}-2\sigma_i}\\
  \leq& |(\varphi,\psi)(x,0)x^{-\frac{3}{4}+\sigma_{i-1}-\frac{1}{2}-2\sigma_i}|
  \leq \delta_0|(\varphi,\psi)(x,0)x^{-\frac{3}{4}-\sigma_i}|^2+C|x^{-\frac{1}{2}-\sigma'}|^2,
\end{split}
\end{align}
in which we have used $\sigma':=\sigma_i-\sigma_{i-1}>0$. Concerning the third term, we get
\begin{align}\label{u_p^i_estimate_1.3}
\begin{split}
I_3=&\int_0^\infty \varphi x^{-\frac{1}{2}-2\sigma_i}\left(-\int_y^\infty u_{px}^0u_x+\int_y^\infty h_p^0h_x\right){\rm d}y\\
  &+\int_0^\infty \psi x^{-\frac{1}{2}-2\sigma_i}\left(-\int_y^\infty u_{px}^0h_x+\int_y^\infty h_p^0u_x\right){\rm d}y\\
  \leq& \mathcal{O}(\delta,\sigma)\Vert (\varphi,\psi)x^{-\frac{3}{4}-\sigma_i}\Vert_{L_y^2}\Vert (\p_x u,\p_x h)x^{\frac{3}{4}-\sigma_i}\Vert_{L_y^2},
\end{split}
\end{align}
in which we have used the estimate
\begin{align}\label{u_p^i_estimate_1.31}
\begin{split}
  \left\Vert x^{\frac{1}{4}-\sigma_i}\int_y^\infty u_p^0u_x\right\Vert_{L_y^2}
  \leq& \Vert x^{\frac{1}{4}-\sigma_i}y u_p^0u_x\Vert_{L_y^2}
  \leq \Vert x^{\frac{1}{4}-\sigma_i}y u_p^0u_x\Vert_{L_y^2}\\
  \leq& \Vert x^{\frac{3}{4}-\sigma_i}u_x\Vert_{L_y^2}\Vert zu_p^0\Vert_{L_y^\infty}
  \leq \mathcal{O}(\delta,\sigma) \Vert x^{\frac{3}{4}-\sigma_i}u_x\Vert_{L_y^2}.
\end{split}
\end{align}

And the last two terms are bounded by
\begin{align}\label{u_p^i_estimate_1.4}
\begin{split}
I_4=&-\int_0^\infty \left(\varphi x^{-\frac{1}{2}-2\sigma_i}\int_y^\infty \mathcal{P}_u^{(i)}
  +\psi x^{-\frac{1}{2}-2\sigma_i}\int_y^\infty \mathcal{P}_h^{(i)}\right){\rm d}y\\
  \leq&\mathcal{O}(\delta,\sigma) \Vert x^{\frac{3}{4}-\sigma_i}(\p_yv,\p_yg)\Vert_{L_y^2}^2
  +\mathcal{O}(\delta,\sigma)\Vert (\varphi,\psi)x^{-\frac{3}{4}-\sigma_i}\Vert_{L_y^2}^2\\
  &+\mathcal{O}(\delta,\sigma)\Vert (\p_y\varphi,\p_y\psi)x^{-\frac{1}{4}-\sigma_i}\Vert_{L_y^2}^2,
\end{split}
\end{align}
and
\begin{align}\label{u_p^i_estimate_1.5}
\begin{split}
I_5=&\int_0^\infty \left(\varphi x^{-\frac{1}{2}-2\sigma_i}\int_y^\infty f_u^{(i)}
  +\psi x^{-\frac{1}{2}-2\sigma_i}\int_y^\infty f_h^{(i)}\right){\rm d}y\\
  \leq& \Vert (\varphi,\psi)x^{-\frac{3}{4}-\sigma_i}\Vert_{L_y^2}\cdot
  \left(\left\Vert x^{\frac{3}{4}-\sigma_i}z\frac{\int_y^\infty f_u^{(2)}}{y}\right\Vert_{L_y^2}
  +\left\Vert x^{\frac{3}{4}-\sigma_i}z\frac{\int_y^\infty f_h^{(2)}}{y}\right\Vert_{L_y^2}\right)\\
  \leq& \Vert (\varphi,\psi)x^{-\frac{3}{4}-\sigma_i}\Vert_{L_y^2}\cdot \Vert x^{\frac{3}{4}-\sigma_i}z(f_u^{(2)},f_h^{(2)})\Vert_{L_y^2}\\
  \leq& \delta_0\Vert (\varphi,\psi)x^{-\frac{3}{4}-\sigma_i}\Vert_{L_y^2}^2+Cx^{-1-\sigma'},
\end{split}
\end{align}
where we have used \eqref{f_ui_estimate} and $\sigma_i-2\sigma_{i-1}=\sigma_{i-1}$ in the last inequality.

Integrating with respect to $x$, and taking supremum, we conclude that
\begin{align}\label{u_p^i_estimate1}
\begin{split}
  &\sup_{x\geq 1}\int_0^\infty |(\varphi,\psi)|^2x^{-\frac{1}{2}-2\sigma_i}{\rm d}y
  +\int_1^\infty\int_0^\infty |(\varphi,\psi)|^2x^{-\frac{3}{2}-2\sigma_i}{\rm d}y{\rm d}x\\
  &+\int_1^\infty\int_0^\infty |(u,h)|^2x^{-\frac{1}{2}-2\sigma_i}{\rm d}y{\rm d}x\\
  \leq& C+\mathcal{O}(\delta,\sigma)\Vert (\p_xu,\p_xh)x^{\frac{3}{4}-\sigma_i}\Vert_{L_x^2L_y^2}^2.
\end{split}
\end{align}

Indeed, $I_1$ has been absorbed into the left hand side (due to the smallness of $\sigma$)
\begin{align}\label{u_p^i_estimate1.1}
  \int_1^x I_1{\rm d}x
  \leq \frac{\sigma}{2}\int_0^\infty |(\varphi,\psi)|^2x^{-\frac{1}{2}-2\sigma_i}{\rm d}y
  +\sigma\int_1^\infty\int_y^\infty |(\varphi,\psi)|^2x^{-\frac{3}{2}-2\sigma_i}{\rm d}y{\rm d}x.
\end{align}

Similarly, $I_2$ can be estimated by
\begin{align}\label{u_p^i_estimate1.2}
\begin{split}
  \int_1^x I_2{\rm d}x\leq \delta_0(\Vert(\varphi,\psi)x^{-\frac{3}{4}-\sigma_i}\Vert_{L_x^2L_y^2}^2
  +\Vert (u,h)x^{-\frac{3}{4}-\sigma_i}\Vert_{L_x^2L_y^2}^2)+C.
\end{split}
\end{align}

Second, we test \eqref{u_p^i_system}$_{1,2}$ by $(ux^{\frac{1}{2}-2\sigma_i},hx^{\frac{1}{2}-2\sigma_i})$ respectively and derive that
\begin{align}
  &\frac{1}{2}\frac{\rm d}{{\rm d}x}\int_0^\infty (1+u_p^0)|(u,h)|^2 x^{\frac{1}{2}-2\sigma_i}{\rm d}y
  +\int_0^\infty |(\p_y u,\p_y h)|^2 x^{\frac{1}{2}-2\sigma_i}{\rm d}y\nonumber\\
  =&\int_0^\infty (\sigma+h_p^0)\p_x(uh)x^{\frac{1}{2}-2\sigma_i}{\rm d}y+\left[\overline u_e^i(x)\p_yu(x,0)+\overline h_e^i(x)\p_yh(x,0)\right]x^{\frac{1}{2}-2\sigma_i}\nonumber\\
  &+\int_0^\infty \left[\frac{1}{2}u_{px}^0|(u,h)|^2 x^{\frac{1}{2}-2\sigma_i}
  -(\mathcal{P}_u^{(i)},\mathcal{P}_h^{(i)})\cdot(u,h) x^{-\frac{1}{2}-2\sigma_i}\right]{\rm d}y\nonumber\\
  &+\int_0^\infty (f_u^{(i)},f_h^{(i)})\cdot(u,h) x^{-\frac{1}{2}-2\sigma_i}{\rm d}y\nonumber\\
  &
  +(\frac{1}{4}-\sigma_i)\int_0^\infty (1+u_p^0)|(u,h)|^2 x^{-\frac{1}{2}-2\sigma_i}{\rm d}y
  :=J_1+J_2+J_3+J_4+J_5.\label{u_p^i_estimate_2.0}
\end{align}
All terms on the right hand side are estimated as follows
\begin{align}
\label{u_p^i_estimate_2.1}
J_1=&\frac{\rm d}{{\rm d}x}\int_0^\infty (\sigma+h_p^0)hux^{\frac{1}{2}-2\sigma_i}{\rm d}y-\int_0^\infty h_{px}^0hux^{\frac{1}{2}-2\sigma_i}{\rm d}y\nonumber\\
  &-(\frac{1}{2}-2\sigma_i)\int_0^\infty (\sigma+h_p^0)hux^{-\frac{1}{2}-2\sigma_i}{\rm d}y\nonumber\\
  \leq& \frac{\rm d}{{\rm d}x}\int_0^\infty (\sigma+h_p^0)hux^{\frac{1}{2}-2\sigma_i}{\rm d}y
  +\Vert h_{px}^0x\Vert_{L_y^\infty}\Vert (u,h)x^{-\frac{1}{4}-\sigma_i}\Vert_{L_y^2}^2\\
  &+(\frac{1}{2}-2\sigma_i)\Vert \sigma +h_p^0\Vert_{L_y^\infty}\Vert (u,h)x^{-\frac{1}{4}-\sigma_i}\Vert_{L_y^2}^2,\nonumber\\
\label{u_p^i_estimate_2.2}
J_2\leq& \delta_0|(\p_y u,\p_y h)(x,0)x^{\frac{1}{4}-\sigma_i}|^2+C|x^{-\frac{1}{2}-\sigma'}|^2,\\
\label{u_p^i_estimate_2.3}
J_3\leq& \mathcal{O}(\delta,\sigma)\Vert (\p_y v,\p_y g)x^{\frac{3}{4}-\sigma_i}\Vert_{L_y^2}^2
  +\mathcal{O}(\delta,\sigma)\Vert (u,h)x^{-\frac{1}{4}-\sigma_i}\Vert_{L_y^2}^2,\\
\label{u_p^i_estimate_2.5}
J_4+J_5&\leq \delta_0\|(u,h)x^{-\frac{1}{4}-\sigma_i}\|_{L^2_y}^2+Cx^{-1-\sigma'}+C\|(u,h)x^{-\frac{1}{4}-\sigma_i}\|_{L^2_y}^2.
 \end{align}

Combining the above estimates, we have
\begin{align}\label{u_p^i_estimate2}
\begin{split}
  &\sup_{x\geq 1}\int_0^\infty |(u,h)|^2x^{\frac{1}{2}-2\sigma_i}{\rm d}y
  +\int_1^\infty\int_0^\infty |(\p_y u,\p_y h)|^2x^{\frac{1}{2}-2\sigma_i}{\rm d}y{\rm d}x\\
  \leq& C+\Vert (u,h)x^{-\frac{1}{4}-\sigma_i}\Vert_{L_y^2}^2+\mathcal{O}(\delta,\sigma)\Vert (u_x,h_x)x^{\frac{1}{4}-\sigma_i}\Vert_{L_x^2L_y^2}^2\\
  & +\delta_0\Vert (\p_y^2u,\p_y^2h)x^{\frac{3}{4}-\sigma_i}\Vert_{L_x^2L_y^2}^2.
\end{split}
\end{align}

Third, testing \eqref{u_p^i_system}$_{1,2}$ by $(u_xx^{\frac{3}{2}-2\sigma_i},h_xx^{\frac{3}{2}-2\sigma_i})$ respectively, we have
\begin{align}
  &\frac{1}{2}\frac{\rm d}{{\rm d}x}\int_0^\infty |(\p_y u,\p_y h)|^2 x^{\frac{3}{2}-2\sigma_i}{\rm d}y
  +\int_0^\infty (1+u_p^0)|(u_x,h_x)|^2 x^{\frac{3}{2}-2\sigma_i}{\rm d}y\nonumber\\
  =&2\int_0^\infty (\sigma+h_p^0)u_x h_xx^{\frac{3}{2}-2\sigma_i}{\rm d}y
  +\left[\overline u_{ex}^i(x)\p_yu(x,0)+\overline h_{ex}^i(x)\p_yh(x,0)\right]x^{\frac{3}{2}-2\sigma_i}\nonumber\\
  &-\int_0^\infty (\mathcal{P}_u^{(i)},\mathcal{P}_h^{(i)})\cdot(u_x,h_x) x^{-\frac{3}{2}-2\sigma_i}{\rm d}y
+\int_0^\infty (f_u^{(i)},f_h^{(i)})\cdot(u_x,h_x) x^{\frac{3}{2}-2\sigma_i}{\rm d}y\nonumber\\
&+(\frac{3}{4}-\sigma_i)\int_0^\infty |(\p_y u,\p_y h)|^2 x^{\frac{1}{2}-2\sigma_i}{\rm d}y
  :=K_1+K_2+K_3+K_4+K_5.\label{u_p^i_estimate_3.0}
\end{align}

Similar to that in the second step, we obtain
\begin{align}
\label{u_p^i_estimate_3.1}
K_1\leq&\mathcal{O}(\delta,\sigma)\Vert (u_x,h_x)x^{\frac{3}{4}-\sigma_i}\Vert_{L_y^2}^2,\\
\label{u_p^i_estimate_3.2}
K_2\leq& \delta_0|(\p_y u,\p_y h)(x,0)x^{\frac{1}{4}-\sigma_i}|^2+C|x^{-1-\sigma'}|^2,\\
\label{u_p^i_estimate_3.3}
K_3\leq& \mathcal{O}(\delta,\sigma)\Vert (u_x,h_x)x^{\frac{3}{4}-\sigma_i}\Vert_{L_y^2}^2
  +\mathcal{O}(\delta,\sigma)\Vert (\p_y u,\p_y h)x^{\frac{1}{4}-\sigma_i}\Vert_{L_y^2}^2\nonumber\\
  & +\mathcal{O}(\delta,\sigma)\Vert (u,h)x^{-\frac{1}{4}-\sigma_i}\Vert_{L_y^2}^2,\\
\label{u_p^i_estimate_3.5}
K_4+&K_5\leq \delta_0\|(u_x,h_x)x^{\frac{3}{4}-\sigma_i}\|_{L^2_y}^2+Cx^{-1-\sigma'}+C\|(\p_y u,\p_y h)x^{\frac{1}{4}-\sigma_i}\|_{L^2_y}^2.
\end{align}

Plugging the estimates \eqref{u_p^i_estimate_3.1}-\eqref{u_p^i_estimate_3.5} to \eqref{u_p^i_estimate_3.0}, and integrating the result on $[1,x)$,
we deduce that
\begin{align}\label{u_p^i_estimate3}
\begin{split}
  &\sup_{x\geq 1}\int_0^\infty |(u_y,h_y)|^2x^{\frac{3}{2}-2\sigma_i}{\rm d}y
  +\int_1^\infty\int_0^\infty |(u_x,h_x)|^2x^{\frac{3}{2}-2\sigma_i}{\rm d}y{\rm d}x\\
  \leq& C+\Vert (\p_y u,\p_y h)x^{\frac{1}{4}-\sigma_i}\Vert_{L_x^2L_y^2}^2+\mathcal{O}(\delta,\sigma)\Vert (u,h)x^{-\frac{1}{4}-\sigma_i}\Vert_{L_x^2L_y^2}^2\\
  & +\delta_0\Vert (\p_y^2u,\p_y^2h)x^{\frac{3}{4}-\sigma_i}\Vert_{L_x^2L_y^2}^2.
\end{split}
\end{align}

Finally, due to the equations \eqref{u_p^i_system}, one has
\begin{align}\label{u_p^i_estimate4}
\begin{split}
  &\Vert (\p_y^2u,\p_y^2h)x^{\frac{3}{4}-\sigma_i}\Vert_{L_x^2L_y^2}^2\\
  \leq& \left(\Vert 1+u_p^0\Vert_{L_y^\infty}+\Vert \sigma+h_p^0\Vert_{L_y^\infty}\right)\Vert (\p_xu,\p_xh)x^{\frac{3}{4}-\sigma_i}\Vert_{L_x^2L_y^2}^2+C+\mathcal{O}(\delta,\sigma)\\
  &\cdot\left[\Vert (u,h)x^{-\frac{1}{4}-\sigma_i}\Vert_{L_x^2L_y^2}^2+\Vert (u_y,h_y)x^{\frac{1}{4}-\sigma_i}\Vert_{L_x^2L_y^2}^2+
  \Vert (u_x,h_x)x^{\frac{1}{4}-\sigma_i}\Vert_{L_x^2L_y^2}^2\right].
\end{split}
\end{align}

Therefore, the estimate \eqref{u_p^i_estimate_P} follows from summing up the estimates \eqref{u_p^i_estimate1}, \eqref{u_p^i_estimate2}, \eqref{u_p^i_estimate3} and \eqref{u_p^i_estimate4}. And similarly, we can derive the estimate \eqref{u_p^i_estimate_Pk_weighted} as well, because the above calculations with any positive weight $z^{2m}$ and higher-order derivatives do not bring extra difficulty, here we omit details.
\end{proof}

\subsection{The final boundary layer $(u_p^n,v_p^n,h_p^n,g_p^n)$}\label{sec4.3}
There are some differences between the final boundary layer and the intermediate layers, which are resulted from the zero boundary conditions for $(v_p^n,g_p^n)$ on $\{y=0\}$ compared with \eqref{u_p^i_system}$_6$.

The initial boundary value problem for $\eps^{\frac{n}{2}}$-order boundary layer corrector reads as
\begin{equation}\label{u_p^n_system}
\begin{cases}
  -u^n_{pyy}+(1+u_p^0)u^n_{px}+p_{px}^n=(\sigma+h_p^0)h^n_{px}-\mathcal{P}_u^{(n)}+f_u^{(n)},\\
  -h^n_{pyy}+(1+u_p^0)h^n_{px}=(\sigma+h_p^0)u^n_{px}-\mathcal{P}_h^{(n)}+f_h^{(n)},\\
  p_{py}^n=0,\\
  u^n_{px}+v^n_{py}=h^n_{px}+g^n_{py}=0,\\
  (u^n_p,v^n_p,h^n_p,g^n_p )(x,0)=(-\overline u_e^n(x),0,-\overline h_e^n(x),0),\\
  (u^n_p,h^n_p)(x,\infty)=(0,0),\quad (u^n_p,h^n_p)(1,y)=(u_0^n,h_0^n)(y),
\end{cases}
\end{equation}
in which
\begin{equation}\label{P_un}
\begin{cases}
  \mathcal{P}_u^{(n)}=u^n_pu_{sx}^{(n)}+v^n_pu_{py}^0+v_s^{(n)}u^n_{py}-h^n_ph_{sx}^{(n)}-g^n_ph_{py}^0-g_s^{(n)}h_{py}^n,\\
  \mathcal{P}_h^{(n)}=u^n_ph_{sx}^{(n)}+v^n_ph_{py}^0+v_s^{(n)}h^n_{py}-h_p^2u_{sx}^{(n)}-g^n_pu_{py}^0-g_s^{(n)}u^n_{py},\\
  f_u^{(n)}=-\eps^{-\frac{n}{2}}(R^{u,n-1}+R_E^{u,n}+\eps^{\frac{n+1}{2}} p_{px}^{n,a}),\\
  f_h^{(n)}=-\eps^{-\frac{n}{2}}(R^{h,n-1}+R_E^{h,n}).
\end{cases}
\end{equation}

Evaluating the equation \eqref{u_p^1_system}$_1$ at $y=\infty$, and using the identity \eqref{u_p^n_system}$_3$, one can deduce that $p_p^n=C$. Without loss of generality, we take $p_p^n=0$. By the divergence free conditions and the boundary conditions \eqref{u_p^n_system}$_5$ on $\{y=0\}$, we define the vertical components $(v^n_p,g^n_p)$ by
\begin{align}\label{v_p}
  (v^n_p,g^n_p)(x,y)=-\int_0^y \p_x(u^n_p,h^n_p)(x,\theta){\rm d}\theta.
\end{align}

The final boundary layer correctors $(u_p^n,v_p^n,h_p^n,g_p^n)$ will be defined by cutting off the profiles $(u_p,v_p,h_p,g_p)$, for the sake of preserving the vanishing boundary conditions as $y\rightarrow \infty$.
And the remainder terms are expressed as follows
\begin{align}
\label{R^u_n}
  R^{u,n}
  =&\eps^{\frac{n}{2}}[-\eps u_{pxx}^n+ u_{px}^n\sum_{j=1}^n \eps^{\frac{j}{2}}(u_e^j+u_p^j)- h_{px}^n \sum_{j=1}^n \eps^{\frac{j}{2}}(h_e^j+h_p^j)\nonumber\\
  &+v_p^n \sum_{j=1}^n \eps^{\frac{j}{2}}(u_{py}^j+\sqrt\eps u_{eY}^j)-g_p^n \sum_{j=1}^n \eps^{\frac{j}{2}}(h_{py}^j+\sqrt\eps h_{eY}^j)
  +\mathcal{E}_u^{(n)}],\\
\label{R^v_n}
  R^{v,n}
  =&\eps^{\frac{n}{2}}[-\Delta_\eps v_p^n+\overline u_s^{(n)}v_{px}^n+u_p^nv_{sx}^{(n)}+v_s^{(n)}v_{py}^n+v_p^n\overline v_{sy}^{(n)}\nonumber\\
  &\qquad -\overline h_s^{(n)}g_{px}^n-h_p^ng_{sx}^{(n)}-g_s^{(n)}g_{py}^n-g_p^n\overline g_{sy}^{(n)}],\\
\label{R^h_n}
  R^{h,n}
  =&\eps^{\frac{n}{2}}[-\eps h_{pxx}^n+ h_{px}^n \sum_{j=1}^n \eps^{\frac{j}{2}}(u_e^j+u_p^j)- u_{px}^n \sum_{j=1}^n \eps^{\frac{j}{2}}(h_e^j+h_p^j)\nonumber\\
  &+v_p^n \sum_{j=1}^n \eps^{\frac{j}{2}}(h_{py}^j+\sqrt\eps h_{eY}^j)-g_p^n \sum_{j=1}^n \eps^{\frac{j}{2}}(u_{py}^j+\sqrt\eps u_{eY}^j)
  +\mathcal{E}_h^{(n)}],\\
\label{R^g_n}
  R^{g,n}
  =&\eps^{\frac{n}{2}}[-\Delta_\eps g_p^n+\overline u_s^{(n)}g_{px}^n+u_p^ng_{sx}^{(n)}+v_s^{(n)}g_{py}^n+v_p^n\overline g_{sy}^{(n)}\nonumber\\
  &\qquad -\overline h_s^{(n)}v_{px}^n-h_p^nv_{sx}^{(n)}-g_s^{(n)}v_{py}^n-g_p^n\overline v_{sy}^{(n)}]+R^{g,n-1}+R_E^{g,n},
\end{align}
where the error terms $\mathcal{E}_u^{(n)},\mathcal{E}_h^{(n)}$ will be prescribed later in \eqref{eps_u^n}-\eqref{eps_h^n}, which are created by the cut-off arguments, and $R^{g,n-1}$ is defined as in \eqref{R^g_i} for taking $i=n-1$, $R_E^{g,n}$ is expressed by
\begin{align}\label{R^g,n_E}
&R_E^{g,n}
=\eps^{\frac{n}{2}} \left[u_e^n \sum_{j=0}^{n-1} \eps^{\frac{j}{2}}g_{px}^j-h_e^n \sum_{j=0}^{n-1} \eps^{\frac{j}{2}}v_{px}^j
  + g_{eY}^n \sum_{j=0}^{n-1} \eps^{\frac{j}{2}}v_p^j-v_{eY}^n \sum_{j=0}^{n-1} \eps^{\frac{j}{2}}g_p^j\right]\nonumber\\
  &\qquad +\eps^{\frac{n-1}{2}} \left[v_e^n\sum_{j=1}^{n-1}\eps^{\frac{j}{2}}g_{py}^j-g_e^n\sum_{j=1}^{n-1}\eps^{\frac{j}{2}}v_{py}^j
  +g_{ex}^n\sum_{j=0}^{n-1} \eps^{\frac{j}{2}}u_p^j-v_{ex}^n\sum_{j=0}^{n-1} \eps^{\frac{j}{2}}h_p^j\right].
\end{align}

Performing similar argument as that for the intermediate layers, we can get
\begin{proposition}\label{u_p_prop}
Suppose that $(u^n_p,h^n_p)$ solve \eqref{u_p^n_system}, then for any $m,k\in\mathbb{N}$ and $n\geq 2$, there holds that
\begin{align}
\label{u_p_estimate_P}
  &\Vert (u^n_p,h^n_p)x^{\frac{1}{4}}\Vert_{\mathcal{P}(\sigma_n)}\leq C(\delta,\sigma,\sigma_i),\\
\label{u_p_estimate_Pk_weighted}
  &\Vert z^m(u^n_p,h^n_p)x^{\frac{1}{4}}\Vert_{\mathcal{P}_k(\sigma_n)}\leq C(\delta,\sigma,\sigma_i;k,m),
\end{align}
where the norms $\mathcal{P}(\sigma_n)$,$\mathcal{P}_k(\sigma_n)$ are defined as in \eqref{u_p^1_norm},\eqref{u_p^1_norm_k} with $\sigma_1$ replaced by $\sigma_n$. The sufficiently small constant $\sigma_n$ is prescribed by $\sigma_n=\frac{1}{1000}$.
\end{proposition}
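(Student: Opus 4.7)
The plan is to mirror the proof of Proposition \ref{u_p^i_prop}, since the system \eqref{u_p^n_system} shares the same quasilinear parabolic structure as \eqref{u_p^i_system} for the intermediate layers. The only structural differences are: (i) the boundary conditions become $(v_p^n,g_p^n)(x,0)=(0,0)$ instead of $(-\bar v_e^{i+1},-\bar g_e^{i+1})$, so the convective terms $\mathcal{P}_u^{(n)},\mathcal{P}_h^{(n)}$ in \eqref{P_un} no longer involve the differences $(v_p^n-\bar v_p^n)$ and $(g_p^n-\bar g_p^n)$; (ii) the vertical components are reconstructed via $(v_p^n,g_p^n)=-\int_0^y\partial_x(u_p^n,h_p^n)$; and (iii) the source terms $(f_u^{(n)},f_h^{(n)})$ obey the analogue of \eqref{f_ui_estimate} with $i=n$ (and weight $\sigma_{n-1}$) since the enhanced decay chain culminates at the final order.

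First I would introduce stream functions $(-\varphi_y,\varphi_x)=(u_p^n,v_p^n)$ and $(-\psi_y,\psi_x)=(h_p^n,g_p^n)$, which converts \eqref{u_p^n_system} into a system of the same form as \eqref{u_p^i_system_streamfunction}, with boundary data $\varphi_y(x,0)=\bar u_e^n(x)$, $\psi_y(x,0)=\bar h_e^n(x)$, $(\varphi,\psi)|_{y\to\infty}=(0,0)$, and initial data obtained by integrating $(u_0^n,h_0^n)$. Then I would carry out the three-step energy estimate identical in structure to \eqref{u_p^i_estimate_1.0}--\eqref{u_p^i_estimate4}: testing by $(\varphi,\psi)x^{-\frac{1}{2}-2\sigma_n}$ for the baseline $L_y^2$ control plus Hardy-type dissipation; testing by $(u_p^n,h_p^n)x^{\frac{1}{2}-2\sigma_n}$ for $\p_y(u_p^n,h_p^n)$; and testing by $(\p_x u_p^n,\p_x h_p^n)x^{\frac{3}{2}-2\sigma_n}$ for $\p_x(u_p^n,h_p^n)$ and $\p_{xy}(u_p^n,h_p^n)$. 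The cancellation structure $\sigma\p_x(\varphi\psi)$ and $(\sigma+h_p^0)\p_x(u_p^nh_p^n)$, together with the smallness of $\sigma$ and $\|h_p^0\|_{L^\infty}$ from Theorem \ref{u_p^0_theorem}, absorbs the coupling terms as before. Finally, the elliptic bound on $\p_y^2(u_p^n,h_p^n)$ closes the system of inequalities just as in \eqref{u_p^i_estimate4}, and the weighted and higher-derivative bounds \eqref{u_p_estimate_Pk_weighted} follow by applying $z^m\p_x^k$ to \eqref{u_p^n_system} and repeating the energy argument inductively.

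The main obstacle lies in controlling the boundary contribution from the trace of $\p_y u_p^n,\p_y h_p^n$ at $y=0$ against $\bar u_e^n,\bar h_e^n$, and in handling the forcing $(f_u^{(n)},f_h^{(n)})$. For the forcing, \eqref{f_ui_estimate} with $i=n$ gives the $L_y^2$ decay $x^{-\frac{5}{4}+2\sigma_{n-1}}$, so pairing against a test function of weight $x^{\frac{3}{4}-\sigma_n}$ produces an integrable tail $x^{-\frac{1}{2}-(\sigma_n-2\sigma_{n-1})}$; since the schedule $\sigma_i=3^i/(3^n\cdot 1000)$ yields $\sigma_n-2\sigma_{n-1}=\frac{1}{1000}(1-\frac{2}{3})=\frac{1}{3000}>0$, this closes. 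Similarly, the boundary traces are bounded via the trace identity \eqref{u_p^1_estimate_1.7} combined with the improved decay $|\bar u_e^n|+|\bar h_e^n|\lesssim x^{-\frac{3}{4}+\sigma_{n-1}}$ from Proposition \ref{u_e^i_prop}, producing an integrable error $x^{-1-(\sigma_n-\sigma_{n-1})}$. A secondary technical point is that $(v_p^n,g_p^n)=-\int_0^y\p_x(u_p^n,h_p^n)$ need not decay as $y\to\infty$, but this does not enter the $L_y^2$ estimates for $(u_p^n,h_p^n)$ themselves: all occurrences of $v_p^n,g_p^n$ in $\mathcal{P}_u^{(n)},\mathcal{P}_h^{(n)}$ are handled by integration by parts in $y$ together with the divergence-free identities $u_{px}^n+v_{py}^n=h_{px}^n+g_{py}^n=0$, exactly as in the intermediate case. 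The lack of decay is only remedied later by the cut-off construction $(\tilde v_p^n,\tilde g_p^n)$ used in the subsequent remainder analysis.
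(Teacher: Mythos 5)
Your proposal is correct and takes essentially the same approach as the paper, which only sketches the proof with the phrase ``performing similar argument as that for the intermediate layers'' and leaves the details to the reader. You have correctly identified the three places where the final layer differs from the intermediate ones (vanishing boundary data for $(v_p^n,g_p^n)$ at $y=0$, the reconstruction $(v_p^n,g_p^n)=-\int_0^y\p_x(u_p^n,h_p^n)$, and the forcing estimate at order $i=n$), and your verification that the schedule $\sigma_i=3^i/(3^n\cdot 1000)$ still yields $\sigma_n-2\sigma_{n-1}=\sigma_{n-1}>0$ and $\sigma_n-\sigma_{n-1}>0$ is precisely what is needed to close the three-step energy argument of Proposition \ref{u_p^i_prop} at $i=n$.
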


With Proposition \ref{u_p_prop} at hands, one can also obtain the following theorem.
\begin{theorem}\label{u_p_theorem}
For any $m,k,j\in\mathbb{N}$, there exists solution $(u^n_p,v^n_p,h^n_p,g^n_p)$ to the linear boundary layer problem \eqref{u_p^n_system} in the domain $\Omega=[1,\infty)\times \mathbb{R}_+$, and the solution satisfies the estimates
\begin{align}
\label{uh_p_estimate}
  &x^{\frac{1}{2}}\Vert z^m\p_x^k\p_y^j (u^n_p,h^n_p)\Vert_{L_y^\infty}+x^{\frac{1}{4}}\Vert z^m\p_x^k\p_y^j (u^n_p,h^n_p)\Vert_{L_y^2}\nonumber\\
  &\qquad\qquad\qquad \leq C(\delta,\sigma,\sigma_i;m,k,j)x^{-k-\frac{j}{2}+\sigma_n},\\
\label{vg_p_estimate}
  &x^{\frac{1}{4}}\Vert \p_x^k(v^n_p,g^n_p)\Vert_{L_y^\infty}\leq C(\delta,\sigma,\sigma_i;m,k,j)x^{-k-\frac{j}{2}-\frac{3}{4}+\sigma_n}.
\end{align}
\end{theorem}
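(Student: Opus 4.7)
The plan is to mirror the strategy that produced Theorems \ref{u_p^1_theorem} and \ref{u_p^i_theorem}, with Proposition \ref{u_p_prop} supplying the weighted $L^2$ energy bounds. Existence of $(u_p^n,h_p^n)$ solving the linear system \eqref{u_p^n_system} is obtained by contraction mapping applied to the natural linearization followed by the a priori estimates \eqref{u_p_estimate_P}--\eqref{u_p_estimate_Pk_weighted}; as before the vanishing pressure $p_p^n=0$ is deduced by evaluating the first equation at $y=\infty$ together with $p_{py}^n=0$.

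First, I would upgrade the weighted energy control to pointwise bounds on $(u^n_p,h^n_p)$. This follows the interpolation/trace pattern of \eqref{u_p^1_lemma_1}--\eqref{u_p^1_lemma_3}: writing
\[
|z^m\p_x^k u^n_p\, x^{k+\tfrac{1}{4}-\sigma_n}|^2\lesssim \Vert z^m\p_x^k u^n_p\, x^{k-\sigma_n}\Vert_{L^2_y}\Bigl(\Vert z^m\p_x^k\p_y u^n_p\, x^{k+\tfrac{1}{2}-\sigma_n}\Vert_{L^2_y}+\Vert z^{m-1}\p_x^k u^n_p\, x^{k-\sigma_n}\Vert_{L^2_y}\Bigr),
\]
and controlling each factor by a component of $\Vert z^m u^n_p\, x^{1/4}\Vert_{\mathcal{P}_k(\sigma_n)}$, delivers the $L^\infty_y$ estimate in \eqref{uh_p_estimate}. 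The extra $x^{1/4}$ present in Proposition \ref{u_p_prop} shifts every decay exponent by $1/4$, producing the $x^{-1/2}$ prefactor in the $L^\infty$ bound and the $x^{-1/4}$ in the $L^2$ bound. Cases with $\p_y^j$ for $j\ge 1$ require one further interpolation in the spirit of \eqref{u_p^1_lemma_3}, pairing $\mathcal{P}_k(\sigma_n)$ with $\mathcal{P}_{k+1}(\sigma_n)$.

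The principal new feature is the representation of the vertical components. Unlike \eqref{v_p^i}, the identity \eqref{v_p} integrates $\p_x u^n_p$ from $y=0$, so $(v^n_p,g^n_p)$ automatically vanish at $\{y=0\}$ but do not a priori decay as $y\to\infty$. To produce \eqref{vg_p_estimate}, I would first estimate
\[
|v^n_p(x,y)|\le \int_0^\infty |\p_x u^n_p(x,\theta)|\,d\theta\le \Vert \p_x u^n_p\, x^{\tfrac{3}{4}-\sigma_n}\Vert_{L^2_y}\cdot\Vert (1+\theta)^{-1}x^{-\tfrac{3}{4}+\sigma_n}\Vert_{L^2_\theta},
\]
together with the Hardy-type device used in \eqref{v_p^1_lemma_1}, which yields the claimed $L^\infty_y$ decay $x^{-3/4+\sigma_n}$; higher $x$-derivatives are obtained by differentiating under the integral and invoking Proposition \ref{u_p_prop} with larger $k$. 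The final correctors $(\tilde v_p^n,\tilde g_p^n)$ are then defined by cutting off $(v^n_p,g^n_p)$ in the self-similar variable $z=y/\sqrt{x}$ at a large but fixed scale; this restores the vanishing behavior as $y\to\infty$ and generates precisely the error terms $\mathcal{E}_u^{(n)}$ and $\mathcal{E}_h^{(n)}$ recorded in \eqref{R^u_n}--\eqref{R^h_n}.

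The delicate point, and the main obstacle, is that both the Hardy-type decay and the smallness of the cut-off errors must be of pure size $\eps^{n/2}$ so that they can be absorbed by the remainder analysis downstream. This is feasible because the enhanced boundary decay of the ideal-MHD data $v_e^n(x,0)=-v_p^{n-1}(x,0)$ supplied by Proposition \ref{u_e^i_prop} together with Theorem \ref{u_p^i_theorem} provides exactly the exponent $\sigma_n=1/1000$ with enough margin to close the scheme, and the weighted estimates \eqref{u_p_estimate_Pk_weighted} with arbitrary $m$ are what allow the $z$-cut-off to be paid for by any prescribed polynomial decay.
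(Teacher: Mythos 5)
Your outline for the tangential components $(u^n_p,h^n_p)$ is consistent with the paper: both proceed by the interpolation/trace chain of \eqref{u_p^1_lemma_1}--\eqref{u_p^1_lemma_3} applied to the weighted energy of Proposition \ref{u_p_prop}, with the extra $x^{1/4}$ in $\Vert z^m(u^n_p,h^n_p)x^{1/4}\Vert_{\mathcal{P}_k(\sigma_n)}$ delivering the stated decay. That part is fine.

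The treatment of the vertical components has a genuine gap. Your display
\[
|v^n_p(x,y)|\le \int_0^\infty |\p_x u^n_p|\,d\theta\le \Vert (1+\theta)\p_x u^n_p\Vert_{L^2_\theta}\Vert (1+\theta)^{-1}\Vert_{L^2_\theta}
\]
is a plain Cauchy--Schwarz and only yields $|v^n_p|\lesssim x^{-3/4+\sigma_n}$, whereas \eqref{vg_p_estimate} at $k=j=0$ requires $\Vert(v^n_p,g^n_p)\Vert_{L^\infty_y}\lesssim x^{-1+\sigma_n}$ (the $x^{1/4}$ on the left shifts the target). You are short a factor of $x^{1/4}$. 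Moreover, you refer to ``the Hardy-type device used in \eqref{v_p^1_lemma_1},'' but that computation squeezes from $y=\infty$ and invokes Lemma \ref{Hardy-type} (valid for $\alpha>p-1$, requiring decay at $\infty$). For $(v^n_p,g^n_p)$ the zero boundary value is at $y=0$, not at $\infty$, so the correct tool is the other Hardy inequality \eqref{Hardy2} of Lemma \ref{Hardy-type-0} (valid for $\alpha<p-1$). The paper makes exactly this substitution: it writes $|v^n_p|^2=-\int_0^y\p_y|v^n_p|^2$, squeezes as $2\Vert v^n_p y^{-1/2-\kappa}\Vert_{L^2_y}\Vert v^n_{py}y^{1/2+\kappa}\Vert_{L^2_y}$, applies Lemma \ref{Hardy-type-0} with $p=2,\alpha=1-2\kappa$ to the first factor, and then converts $y^{\pm(1/2\mp\kappa)}$ to $z^{\pm(1/2\mp\kappa)}x^{(1/4\mp\kappa/2)}$ and uses $v^n_{py}=-\p_x u^n_p$ plus Proposition \ref{u_p_prop} on both factors; the $\kappa$-balanced pair recovers the missing $x^{1/4}$. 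Your one-sided Cauchy--Schwarz loses that balance. (As a side remark, Theorem \ref{u_p_theorem} concerns the uncut profiles $(u^n_p,v^n_p,h^n_p,g^n_p)$; the cut-off $(\tilde u_p^n,\tilde v_p^n,\tilde h_p^n,\tilde g_p^n)$ and the errors $\mathcal{E}_u^{(n)},\mathcal{E}_h^{(n)}$ are handled separately in Theorems \ref{u_p^n_theorem_L^2}, \ref{u_p^n_theorem_weighted} and Lemma \ref{eps_uh^n_lemma}, so they do not belong in this proof.)
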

\begin{proof}
The first estimate is obtained by the similar arguments as in the proof of Theorem \ref{u_p^1_theorem}. For the second estimate, since $(v^n_p,g^n_p)$ vanishes on $\{y=0\}$, for $k=0$ and any small $\kappa>0$, we have
\begin{align}\label{v_p_estimate1}
\begin{split}
  &|v^n_{p}|^2=-\int_0^y \p_y|v^n_p|^2\leq \int_0^\infty 2|v^n_p v^n_{py}|
  \leq 2\left\Vert v^n_{p}y^{-\frac{1}{2}-\kappa}\right\Vert_{L_y^2} \Vert v^n_{py}y^{\frac{1}{2}+\kappa}\Vert_{L_y^2}\\
  =& 2\left\Vert y^{-\frac{1}{2}-\kappa}\int_0^yv^n_{py}\right\Vert_{L_y^2} \Vert v^n_{py}y^{\frac{1}{2}+\kappa}\Vert_{L_y^2}
  \lesssim \Vert v^n_{py}y^{\frac{1}{2}-\kappa}\Vert_{L_y^2}\Vert v^n_{py}y^{\frac{1}{2}+\kappa}\Vert_{L_y^2}\\
  =& \Vert v^n_{py}z^{\frac{1}{2}-\kappa}x^{\frac{1}{4}-\frac{\kappa}{2}}\Vert_{L_y^2}
  \Vert v^n_{py}z^{\frac{1}{2}+\kappa}x^{\frac{1}{4}+\frac{\kappa}{2}}\Vert_{L_y^2}
  \lesssim x^{\frac{1}{2}}\cdot x^{-1-\frac{1}{4}+\sigma_n}\cdot x^{-1-\frac{1}{4}+\sigma_n}\\
  \leq& C(\delta,\sigma,\sigma_1;m,k)x^{-2+2\sigma_n},
\end{split}
\end{align}
where we have used Lemma \ref{Hardy-type-0} with $p=2,\alpha=1-2\kappa$. Similarly, the case of $k\geq 1$ can be also deduced.
\end{proof}

\begin{remark}\label{u_p_remark}
The weighted estimates are not valid for $(v^n_p,g^n_p)$, due to the limitation $\alpha<p-1$ in the Hardy inequality \eqref{Hardy2}. Compared with profiles $(v_p^i,g_p^i)$, since $(v_p^i,g_p^i)$ vanish as $y\to \infty$, the Hardy inequality with $\alpha>p-1$ is valid and thus the weighted estimates can be derived.
\end{remark}

In what follows, we will construct the final boundary layer via cutting off the profiles $(u^n_p,v^n_p,h^n_p,g^n_p)$. Introduce the cut-off function $\chi(\sqrt\eps z)$ supported in $[0,1]$ and define
\begin{align}\label{u_p^n_def}
\begin{split}
  \tilde{v_p^n}:=\chi(\sqrt\eps z)v^n_p,\  \tilde{u_p^n}:=\int_x^\infty \p_y\left[\chi(\sqrt\eps\frac{y}{\sqrt\theta})v^n_p(\theta,y))\right]{\rm d}\theta,
  \end{split}
\end{align}
  \begin{align}\label{h_p^n_def}
\begin{split}
  \tilde{g_p^n}:=\chi(\sqrt\eps z)g^n_p,\  \tilde{h_p^n}:=\int_x^\infty \p_y\left[\chi(\sqrt\eps\frac{y}{\sqrt\theta})g^n_p(\theta,y))\right]{\rm d}\theta.
\end{split}
\end{align}
It is clear that $(\tilde{u_p^n},\tilde{v_p^n},\tilde{h_p^n},\tilde{g_p^n})$ satisfy divergence-free conditions
\begin{equation*}
  \p_x \tilde{u_p^n}+\p_y \tilde{v_p^n}=0,\ \p_x \tilde{h_p^n}+\p_y\tilde{ g_p^n}=0,
\end{equation*}
and the following boundary conditions
\begin{align}\label{u_p^n_bc}
&  (\tilde{u_p^n},\tilde{v_p^n},\tilde{h_p^n},\tilde{g_p^n})|_{y\rightarrow\infty}=(0,0,0,0),\\
& (\tilde{u_p^n},\tilde{v_p^n},\tilde{h_p^n},\tilde{g_p^n})|_{y=0}=(-\overline u_e^n(x),0,-\overline h_e^n(x),0).
\end{align}

By \eqref{u_p^n_def}, one would derive the same decay rates as those in Theorem \ref{u_p_theorem}. First, we derive the following $L^2$ estimates.

\begin{theorem}\label{u_p^n_theorem_L^2}
For $(\tilde{u}_p^n,\tilde{h}_p^n)$, it holds that
\begin{align}
\label{uh_p^n_estimateL2}
  &\Vert \p_x^k\p_y^j (\tilde u_p^n,\tilde h_p^n)\Vert_{L_y^2}\leq C(\delta,\sigma,\sigma_i;k,j)x^{-k-\frac{j}{2}-\frac{1}{4}+\sigma_n}.
\end{align}
\end{theorem}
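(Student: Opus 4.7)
The plan is to reduce $\tilde u_p^n$ (and identically $\tilde h_p^n$) to a principal piece of the form $\chi(\sqrt\eps y/\sqrt x)\,u_p^n(x,y)$ plus remainder terms supported on a thin region of large $y$, then apply the estimates of Theorem~\ref{u_p_theorem}. First I would expand the derivative inside the defining integral by the product rule
\begin{equation*}
\p_y\bigl[\chi(\sqrt\eps y/\sqrt\theta)\,v_p^n(\theta,y)\bigr]
=\chi(\sqrt\eps y/\sqrt\theta)\,\p_y v_p^n(\theta,y)+\tfrac{\sqrt\eps}{\sqrt\theta}\chi'(\sqrt\eps y/\sqrt\theta)\,v_p^n(\theta,y),
\end{equation*}
replace $\p_y v_p^n=-\p_\theta u_p^n$ via the divergence-free relation, and integrate by parts in $\theta$. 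The boundary term at $\theta=\infty$ vanishes by the decay in Theorem~\ref{u_p_theorem}; the boundary term at $\theta=x$ produces the principal contribution; and the derivative falling on $\chi$ contributes $-\tfrac{\sqrt\eps y}{2\theta^{3/2}}\chi'(\sqrt\eps y/\sqrt\theta)$. Combining everything yields the decomposition
\begin{equation*}
\tilde u_p^n(x,y)=\chi\bigl(\sqrt\eps y/\sqrt x\bigr)u_p^n(x,y)+R(x,y),
\end{equation*}
where $R$ is a sum of two integrals of $\chi'(\sqrt\eps y/\sqrt\theta)$ against $u_p^n(\theta,y)$ and $v_p^n(\theta,y)$ over $\theta\in[x,\infty)$.

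For the principal term the required $L_y^2$ bound is immediate: since $|\chi|\leq1$, estimate \eqref{uh_p_estimate} of Theorem~\ref{u_p_theorem} gives $\|\chi u_p^n\|_{L_y^2}\lesssim x^{-1/4+\sigma_n}$, matching the claim at $k=j=0$. For the remainder $R$, the key observation is the support property of $\chi'$: the cut-off derivative is nontrivial only when $\sqrt\eps y/\sqrt\theta$ lies in a fixed interval $[c,1]$, forcing $y\sim\sqrt{\theta/\eps}$, hence $z=y/\sqrt\theta\gtrsim 1/\sqrt\eps$ on the support of the integrand. Consequently I pair pointwise bounds on the integrand with the weighted estimates $\|z^m u_p^n\|_{L_y^2}\lesssim x^{-1/4+\sigma_n}$ and the analogous bound for $v_p^n$ from Theorem~\ref{u_p_theorem}; each factor of $z^{-m}$ on the support produces a gain of $\eps^{m/2}$, so choosing $m$ large makes $R$ higher-order in $\eps$ and well below the target.

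For the higher derivatives $\p_x^k\p_y^j$ I would differentiate the integral representation directly: $\p_x\tilde u_p^n$ equals $-\p_y\bigl[\chi(\sqrt\eps y/\sqrt x)v_p^n(x,y)\bigr]$, and iterating gives a sum of terms of the form (derivatives of $\chi$)$\times(\p_x^{k'}\p_y^{j'}u_p^n$ or $v_p^n)$ evaluated at $\theta=x$, plus analogous remainder integrals against derivatives of $\chi$. The principal contribution in each case is bounded by the $L_y^2$ estimate of the corresponding derivative of $u_p^n$ from Theorem~\ref{u_p_theorem}, while the remainder contributions are again absorbed using large-weight estimates.

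The main obstacle will be the bookkeeping in Step~2 and its derivative analogues: the support of $\chi'(\sqrt\eps y/\sqrt\theta)$ sweeps through a $\theta$-dependent band of $y$-values whose width grows like $\sqrt{\theta/\eps}$, so a naive $L^2$ bound on $R$ acquires a growing factor in $x$ that must be more than cancelled by the decay from the weighted estimates. The delicate point is therefore to balance the polynomial gain $\eps^{m/2}$ against the growth of the support and against the $\theta^{-3/2}$ (or $\theta^{-1/2}$) prefactors, uniformly in $k$ and $j$. This balance is precisely what the arbitrary-order weighted estimates of Theorem~\ref{u_p_theorem} provide, so once the decomposition is in place the estimate follows from a careful tracking of exponents rather than new functional-analytic input.
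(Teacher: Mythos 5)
Your decomposition via integration by parts in $\theta$ is a genuinely different route from the paper's. The paper works directly with the integral: it expands $\tilde h_p^n=\int_x^\infty[\frac{\sqrt\eps}{\sqrt\theta}\chi' g_p^n+\chi g_{py}^n]\,d\theta$, then observes that $\frac{\sqrt\eps}{\sqrt\theta}\chi' g_p^n=(\sqrt\eps\theta^{-1/2}y\chi')\cdot\frac{g_p^n}{y}$, where the parenthesized factor is $O(1)$ on the support of $\chi'$ (this is the trade-off \eqref{u_p^n_tradeoff}), and Hardy's inequality in $y$ gives $\|g_p^n/y\|_{L_y^2}\lesssim\|g_{py}^n\|_{L_y^2}$. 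Since $g_{py}^n=-h_{px}^n$ decays like $\theta^{-5/4+\sigma_n}$ in $L_y^2$ by Theorem~\ref{u_p_theorem}, the $\theta$-integral is convergent and gives $x^{-1/4+\sigma_n}$. Your approach instead extracts a boundary term $\chi(\sqrt\eps y/\sqrt x)u_p^n(x,y)$ at $\theta=x$ plus remainders; this is more elaborate but in principle also viable.

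There is, however, a concrete flaw in your remainder estimate. You propose to control the piece of $R$ involving $v_p^n$ by invoking ``the weighted estimates \dots and the analogous bound for $v_p^n$ from Theorem~\ref{u_p_theorem}.'' But Theorem~\ref{u_p_theorem} gives weighted $L_y^2$ and $L_y^\infty$ estimates only for the horizontal components $(u_p^n,h_p^n)$; for $(v_p^n,g_p^n)$ it gives only the unweighted $L_y^\infty$ bound \eqref{vg_p_estimate}, and Remark~\ref{u_p_remark} states explicitly that weighted estimates for $(v_p^n,g_p^n)$ are \emph{not} valid because the needed Hardy inequality requires $\alpha<p-1$ here. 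So the bound you appeal to does not exist. The step can be repaired exactly as the paper does: absorb the support factor into boundedness via $\sqrt\eps z\chi'\lesssim 1$ and use Hardy to reduce $\|v_p^n/y\|_{L_y^2}$ to $\|v_{py}^n\|_{L_y^2}=\|u_{px}^n\|_{L_y^2}$, for which the weighted estimates do hold. As written, though, your argument relies on a tool the paper has ruled out.
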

\begin{proof}
Let us first recall that the cut off function is supported in $z\leq\frac{1}{\sqrt\eps}$ and thus satisfies
\begin{equation}\label{u_p^n_tradeoff}
  \textit{o}(\chi)|\eps^{\frac{1}{4}+\frac{\kappa}{2}}\langle y \rangle^{\frac{1}{2}+\kappa}x^{-\frac{1}{4}-\frac{\kappa}{2}}|\lesssim 1,
\end{equation}
where the natation $\textit{o}(\chi)$ is used to denote $\chi$ or any derivative of $\chi$. With this, and using the estimate \eqref{uh_p_estimate}, we have
\begin{align}\label{u_p^n_L2_1}
\begin{split}
  \Vert \tilde h_p^n\Vert_{L_y^2}
  \leq& \int_x^\infty \left( \left\Vert\sqrt\eps\theta^{-\frac{1}{2}} y\chi'\cdot\frac{ g_p^n}{y}\right\Vert_{L_y^2}\right){\rm d}\theta
  +\int_x^\infty \Vert \chi g_{py}^n\Vert_{L_y^2}{\rm d}\theta\\
  \lesssim& \int_x^\infty \Vert g_{py}^n\Vert_{L_y^2} {\rm d}\theta
  \lesssim  x^{-\frac{1}{4}+\sigma_n}.
\end{split}
\end{align}

Taking $x$-derivative, one gets
\begin{align}\label{u_p^n_L2_2}
\begin{split}
  \Vert \tilde h_{px}^n\Vert_{L_y^2}\leq \left\Vert\sqrt\eps x^{-\frac{1}{2}} y\chi'\cdot\frac{g_p^n}{y}\right\Vert_{L_y^2}+\Vert\chi g_{py}^n\Vert_{L_y^2}
  \lesssim \Vert g_{py}^n\Vert_{L_y^2} \lesssim x^{-\frac{3}{2}+\sigma_n}.
\end{split}
\end{align}

For the $y$-derivative, we have
\begin{align}\label{u_p^n_L2_3}
\begin{split}
  \Vert \tilde h_{py}^n\Vert_{L_y^2}
  \leq& \int_x^\infty \left(\left\Vert \eps\theta^{-1} y\chi''\cdot\frac{g_p^n}{y}\right\Vert_{L_y^2}
  +\left\Vert\frac{\sqrt\eps}{\sqrt\theta}\chi'g_{py}^n\right\Vert_{L_y^2}+\Vert \chi g_{pyy}^n\Vert_{L_y^2}\right){\rm d}\theta\\
  \lesssim& \int_x^\infty \left(\theta^{-\frac{1}{2}}\Vert g_{py}^n\Vert_{L_y^2}+\Vert g_{pyy}^n\Vert_{L_y^2}\right) {\rm d}\theta
  \lesssim  x^{-\frac{3}{4}+\sigma_n}.
\end{split}
\end{align}

In addition, the above calculations can be also applied to derive the estimates of higher derivatives for the velocity field and we omit the details here.
\end{proof}
Moreover, since the final boundary layer are of $\eps^{\frac{n}{2}}$-order ($n\geq 2$), it ensures us to pay factors of $\eps$ to absorb the weights of $z$. Precisely, it is obtained by the definition that the cut-off function $\chi(\sqrt\eps z)$ is supported in $\{z\leq\frac{1}{\sqrt\eps}\}$. Then, one shall obtain the following weighted $L^\infty$ estimates.
\begin{theorem}\label{u_p^n_theorem_weighted}
For $(\tilde u_p^n,\tilde v_p^n,\tilde h_p^n,\tilde g_p^n)$, there holds that
\begin{align}
\label{uh_p^n_estimate_weighted}
  &\Vert z^m\p_x^k\p_y^j (\tilde u_p^n,\tilde h_p^n)\Vert_{L_y^\infty}\leq \eps^{-\frac{m}{2}}C(\delta,\sigma,\sigma_i;m,k,j)x^{-k-\frac{j}{2}-\frac{1}{2}+\sigma_n},\\
\label{vg_p^n_estimate_weighted}
  &\Vert z^m\p_x^k\p_y^j (\tilde v_p^n,\tilde g_p^n)\Vert_{L_y^\infty}\leq \eps^{-\frac{m}{2}}C(\delta,\sigma,\sigma_i;m,k,j)x^{-k-\frac{j}{2}-1+\sigma_n}.
\end{align}
\end{theorem}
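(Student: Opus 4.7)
The proof of Theorem \ref{u_p^n_theorem_weighted} rests on two elementary facts: first, that the cut-off function $\chi(\sqrt{\eps}z)$ is supported in $\{z \le 1/\sqrt{\eps}\}$, so every power $z^m$ on its support is bounded by $\eps^{-m/2}$; second, that every derivative falling on $\chi$ produces a compensating factor of $\sqrt{\eps}$, via $\p_y \chi(\sqrt\eps y/\sqrt x) = (\sqrt{\eps}/\sqrt{x})\chi'$ and $\p_x\chi(\sqrt\eps y/\sqrt x) = -(\sqrt{\eps}\, y/2x^{3/2})\chi'$. Together these allow us to trade the weight $z^m$ for a factor $\eps^{-m/2}$ without damaging the $x$-decay rates already furnished by Theorem \ref{u_p_theorem}, and this is precisely the source of the $\eps^{-m/2}$ appearing in \eqref{uh_p^n_estimate_weighted}--\eqref{vg_p^n_estimate_weighted}.

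For the vertical components, the argument is direct. Since $\tilde v_p^n = \chi(\sqrt\eps z) v_p^n$, Leibniz applied to $z^m \p_x^k \p_y^j \tilde v_p^n$ produces a sum of terms of the form $z^m \cdot \textit{o}(\chi)(\sqrt{\eps})^{j_1+k_1} \cdot (\text{powers of } x,y) \cdot \p_x^{k_2}\p_y^{j_2} v_p^n$ with $k_1+k_2=k$, $j_1+j_2=j$. Using $z^m \lesssim \eps^{-m/2}$ on $\mathrm{supp}\,\chi$ together with the pointwise bound on $\p_x^{k_2}\p_y^{j_2} v_p^n$ from Theorem \ref{u_p_theorem}, and noting that the $\chi$-chain-rule prefactors $\sqrt\eps/\sqrt x$ and $\sqrt\eps y/x^{3/2}\lesssim x^{-1}$ each produce at least the $x$-decay of one derivative of $v_p^n$, the pieces combine to the required $\eps^{-m/2} x^{-k-j/2-1+\sigma_n}$ bound. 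The case of $\tilde g_p^n$ is identical.

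The tangential components $\tilde u_p^n, \tilde h_p^n$ require slightly more care because they are defined by an improper integral in $x$. Using the divergence-free identity $\p_y v_p^n = -\p_x u_p^n$ and integrating by parts in $\theta$, one obtains the decomposition
\begin{equation*}
\tilde u_p^n(x,y) = \chi\!\left(\tfrac{\sqrt\eps y}{\sqrt x}\right)\! u_p^n(x,y) + \int_x^\infty \left[\tfrac{\sqrt\eps}{\sqrt\theta}\chi' v_p^n(\theta,y) + \tfrac{\sqrt\eps\, y}{2\theta^{3/2}}\chi' u_p^n(\theta,y)\right]\! d\theta,
\end{equation*}
and an analogous identity for $\tilde h_p^n$. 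The boundary term is estimated exactly as $\tilde v_p^n$ above, using the pointwise bound on $u_p^n$ from Theorem \ref{u_p_theorem} (which carries an $x^{-1/2+\sigma_n}$ factor, the better rate). For the integral correction, each integrand is supported on $\{\sqrt\eps y \sim \sqrt\theta\}$, so $z^m \lesssim \eps^{-m/2}$ there; the factor $\sqrt\eps/\sqrt\theta$ provides the $\theta^{-1/2}$ gain needed for convergence, and $\sqrt\eps y/\theta^{3/2} \lesssim \theta^{-1}$ is even better, so that $\int_x^\infty \theta^{-\alpha}\,d\theta \lesssim x^{-\alpha+1}$ matches the claimed $x$-power after combining with the rate from Theorem \ref{u_p_theorem}. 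Derivatives in $x,y$ are handled by differentiating the decomposition and iterating the same bookkeeping.

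The main obstacle is purely bookkeeping: one must track carefully how many derivatives fall on $\chi$ versus on the underlying profile, and verify case by case that the combination of the support trade-off $z^m \lesssim \eps^{-m/2}$, the $\sqrt{\eps}/\sqrt{x}$ gains from chain-rule derivatives of $\chi$, and the decay rates from Theorem \ref{u_p_theorem} align to produce $\eps^{-m/2} x^{-k-j/2-1/2+\sigma_n}$ for the tangential components and $\eps^{-m/2} x^{-k-j/2-1+\sigma_n}$ for the vertical ones, with no residual loss. Unlike Remark \ref{u_p_remark}, there is no obstruction to weighted estimates for the cut-off vertical components: the cut-off confines them to a region where $z$ is already $\eps^{-1/2}$-bounded, so the Hardy-type restriction simply does not arise.
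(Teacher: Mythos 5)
Your argument is correct and structurally parallel to the paper's, but for the tangential components you take a slightly different route. The paper estimates $z^m\,\tilde h_p^n$ by directly bounding the integrand $\p_y[\chi g_p^n] = \tfrac{\sqrt\eps}{\sqrt\theta}\chi' g_p^n + \chi g_{py}^n$, using the pointwise bound $|g_p^n|\lesssim\theta^{-1+\sigma_n}$ from Theorem~\ref{u_p_theorem} for the first piece and the divergence-free identity $g_{py}^n=-h_{px}^n$ (so $|g_{py}^n|\lesssim\theta^{-3/2+\sigma_n}$) for the second, then integrates $\theta^{-3/2+\sigma_n}$ over $[x,\infty)$. You instead convert the $\chi v_{py}^n$ (resp. $\chi g_{py}^n$) piece by writing $v_{py}^n=-u_{px}^n$ and integrating by parts in $\theta$, producing the boundary term $\chi(\sqrt\eps z)u_p^n(x,y)$ plus an integral in which only $v_p^n,u_p^n$ appear undifferentiated. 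This buys a small convenience: the boundary term has the cut-off argument $\sqrt\eps z$ evaluated at the current $x$, so the support trade-off $z^m\lesssim\eps^{-m/2}$ is immediate, and $u_p^n$ already carries weighted decay from Theorem~\ref{u_p_theorem}. On the other hand it introduces extra terms to track, and your appeal to ``convergence'' of the $\theta$-integral is no issue in the paper's version either, since $\theta^{-3/2+\sigma_n}$ is integrable. Both routes land on the same bound. One small slip: $\p_\theta\bigl[\chi(\sqrt\eps y/\sqrt\theta)\bigr]=-\tfrac{\sqrt\eps y}{2\theta^{3/2}}\chi'$, so the second integrand in your decomposition should carry a minus sign; this does not affect the size estimates.
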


\begin{proof}
We begin with the estimate for $\tilde g_p^n$. By the definition, it is direct to infer that
\begin{align}\label{v_p^n_estimate1}
\begin{split}
  z^m|\tilde g_p^n|\leq z^m|\chi g_{p}^n|\lesssim \eps^{-\frac{m}{2}}x^{-1+\sigma_n}.
\end{split}
\end{align}

Taking derivatives, we get
\begin{align}
\label{v_p^n_estimate2}
  z^m|\p_x\tilde g_p^n|\leq z^m|\chi'\frac{\sqrt\eps}{x}zg_p^n|+z^m|\chi g_{px}^n|\lesssim \eps^{-\frac{m}{2}}x^{-2+\sigma_n},\\
\label{v_p^n_estimate3}
  z^m|\p_y\tilde g_p^n|\leq z^m|\chi'\frac{\sqrt\eps}{\sqrt x}g_p^n|+z^m|\chi g_{py}^n|\lesssim \eps^{-\frac{m}{2}}x^{-\frac{3}{2}+\sigma_n}.
\end{align}

For the tangential components, we have
\begin{align}\label{u_p^n_estimate1}
\begin{split}
  z^m|\tilde h_p^n|\leq& |o(\chi)z^m|\int_x^\infty \left(\frac{\sqrt\eps}{\sqrt\theta}|g_p^n|+|g_{py}^n|\right){\rm d}\theta\\
  \lesssim& \eps^{-\frac{m}{2}}\int_x^\infty \theta^{-\frac{3}{2}+\sigma_n}{\rm d}\theta\lesssim \eps^{-\frac{m}{2}}x^{-\frac{1}{2}+\sigma_n}.
\end{split}
\end{align}

And the term with derivative can be bounded by
\begin{align}\label{u_p^n_estimate2}
\begin{split}
  z^m|\tilde h_{py}^n|\leq& z^m\int_x^\infty \left|\frac{\eps}{\theta}\chi''g_p^n+\frac{\sqrt\eps}{\sqrt\theta}\chi'g_{py}^n+\chi g_{pyy}^n\right|{\rm d}\theta\\
  \lesssim& |o(\chi)z^m|\int_x^\infty \theta^{-2+\sigma_n}{\rm d}\theta\lesssim \eps^{-\frac{m}{2}}x^{-1+\sigma_n}.
\end{split}
\end{align}

The above calculations can be applied for higher derivatives and the velocity field as well. Here we omit the details.
\end{proof}

\subsection{The estimates of the error terms}\label{sec5.2}
The main goal of this subsection is to derive the estimates for the remainder terms $(R^{u,n},R^{v,n},R^{h,n},R^{g,n})$ defined as in \eqref{R^u_n}-\eqref{R^g_n}, which is stated as follows.
\begin{theorem}\label{R^un_theorem}
For any $k\geq 0,\gamma\in [0,\frac{1}{4})$, sufficiently small constant $\kappa>0$, and $n\geq 2$, $\sigma_n$ defined as in Proposition \ref{u_p_prop}, then the error terms $R^{u,n},R^{v,n},R^{h,n},R^{g,n}$ satisfy
\begin{align}
\label{R^unterm_estimate}
  &\eps^{-\frac{n}{2}-\gamma}|(\p_x^k R^{u,n},\p_x^k R^{h,n})|\leq C(n,\kappa)\eps^{\frac{1}{4}-\gamma-\kappa}x^{-k-\frac{3}{2}+2\sigma_n},\\
\label{R^vnterm_estimate}
  &\eps^{-\frac{n}{2}-\gamma}|\sqrt\eps(\p_x^k R^{v,n},\p_x^k R^{g,n})|\leq C(n,\kappa)\eps^{\frac{1}{4}-\gamma-\kappa}x^{-k-\frac{3}{2}+2\sigma_n},\\
  &\eps^{-\frac{n}{2}-\gamma}\Vert (\p_x^k R^{u,n},\sqrt\eps\p_x^k R^{v,n},\p_x^k R^{h,n},\sqrt\eps\p_x^k R^{g,n})\Vert_{L_y^2}\nonumber\\
\label{R^unterm_estimate_L2}
  &\qquad\qquad \leq C(n,\kappa)\eps^{\frac{1}{4}-\gamma-\kappa}x^{-k-\frac{5}{4}+2\sigma_n+\kappa}.
\end{align}
\end{theorem}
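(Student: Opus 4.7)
The plan is to prove Theorem \ref{R^un_theorem} by decomposing each remainder $R^{u,n}, R^{v,n}, R^{h,n}, R^{g,n}$ into three categories of terms: (i) viscous terms of the form $\eps^{n/2+1}\p_x^k\p_j(u_p^n, v_p^n, h_p^n, g_p^n)$, (ii) nonlinear products $\eps^{(n+j)/2}$ (profile)(profile) with $j\geq 1$, and (iii) the cut-off error terms $\mathcal{E}_u^{(n)}, \mathcal{E}_h^{(n)}$ generated by the truncation in \eqref{u_p^n_def}-\eqref{u_p^n_def}, along with the contributions from the ideal MHD tail terms $v_e^{n+1}-\bar v_e^{n+1}$ (and analogues) and the auxiliary pressure $p_p^{n,a}$. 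For each term I will apply the appropriate profile estimates: Theorem \ref{u_p^0_theorem} for the leading-order boundary layer, Theorem \ref{u_p^1_theorem} for the $\sqrt\eps$-order corrector, Theorem \ref{u_p^i_theorem} for intermediate correctors, Theorems \ref{u_p^n_theorem_L^2} and \ref{u_p^n_theorem_weighted} for the final cut-off layer, and Propositions \ref{u_e^1_prop}, \ref{u_e^i_prop} for the ideal MHD correctors.

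For the $L_y^\infty$ bounds \eqref{R^unterm_estimate}-\eqref{R^vnterm_estimate}, I will work term by term. The viscous pieces give $\eps^{n/2+1}x^{-5/2+\sigma_n}$, which already beats the target $\eps^{n/2+1/4-\kappa}x^{-3/2+2\sigma_n}$ by a power of $x$ and of $\eps$. For a generic product $\eps^{(n+j)/2}\p_x^k u_p^n \cdot (u_e^j\,\text{or}\,u_p^j)$ with $j\geq 1$, the horizontal boundary layer bound $x^{-\frac{3}{2}+\sigma_n}$ combined with the ideal MHD bound $x^{-1/2}$ (or $x^{-3/4+\sigma_{j-1}}$ for $j\geq 2$) yields an $x$-decay strictly better than $x^{-\frac{3}{2}+2\sigma_n}$ once the hierarchy $\sigma_1<\sigma_2<\cdots<\sigma_n$ is invoked, and the $\eps$ power is at least $\eps^{(n+1)/2}$. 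The products involving the final vertical component $\tilde v_p^n$ (bounded by $x^{-1+\sigma_n}$) against profile derivatives $u_{py}^j$ (bounded by $x^{-5/4+\sigma_j}$ when $j=1$, or $x^{-1+\sigma_j}$ when $j\geq 2$) are handled similarly. The $\sqrt\eps$ prefactor in \eqref{R^vnterm_estimate} compensates for the worse decay of the vertical components.

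The delicate pieces are the cut-off error terms and the "tail" contributions $v_e^{n+1}-\bar v_e^{n+1}$. Derivatives landing on $\chi(\sqrt\eps z)$ produce the factor $\sqrt\eps$ via \eqref{u_p^n_tradeoff}, but these terms are supported in $z\sim 1/\sqrt\eps$. Using the weighted estimates \eqref{u_p^n_estimate_weighted}-\eqref{vg_p^n_estimate_weighted}, with weight $z^m$ for arbitrary $m$, I convert $z^m$ at the boundary of the support into $\eps^{-m/2}$, which together with the rapid decay in $z$ yields arbitrarily small $\eps$ factors; I will choose $m$ large enough so that the cut-off error is $O(\eps^N)$ for any $N$. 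The Taylor tail terms like $\sqrt\eps\, y v_{eY}^{n+1} u_{py}^0$ and $\eps u_{py}^0 \int_0^y\int_y^\theta v_{eYY}^{n+1}$ are controlled by combining the decay of $v_e^{n+1}$ from Proposition \ref{u_e^i_prop} with the weighted Prandtl estimates on $u_{py}^0$, producing the $\eps^{1/4-\kappa}$ saturation: this is where the $\kappa$ loss enters, coming from the need to trade off a finite weight $\langle y\rangle^{1/2+\kappa}$ to convert Taylor remainders into $L_y^\infty$ bounds uniform in $\eps$.

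For the $L_y^2$ estimate \eqref{R^unterm_estimate_L2}, the strategy is Cauchy--Schwarz: writing $\|f\|_{L_y^2}\leq \|\langle y\rangle^{\frac12+\kappa}f\|_{L_y^\infty}\cdot \|\langle y\rangle^{-\frac12-\kappa}\|_{L_y^2}$, the second factor is bounded and the first factor inherits the $L_y^\infty$ bound with an extra $x^{\frac14+\frac{\kappa}{2}}$ loss coming from $\langle y\rangle \leq \sqrt x\,\langle z\rangle$; for the final layer pieces we use Theorem \ref{u_p^n_theorem_L^2} directly. The main obstacle will be the bookkeeping of $\eps$-powers across all the terms in $R^{u,n}$ through $R^{g,n}$, especially verifying that the auxiliary pressure $p_p^{n,a}$ from \eqref{p_p^i,a} satisfies compatible estimates, and confirming that $\kappa$ can be taken uniformly small independent of $n$ once $n$ is fixed large enough so that $\frac{n}{2}+\gamma\geq \frac14-\gamma-\kappa$.
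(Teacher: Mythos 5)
Your overall decomposition (viscous terms, nonlinear products, cut-off errors, Taylor tails) and reliance on the profile estimates matches the paper's approach, which simply references Lemma \ref{eps_uh^n_lemma} and the analogue of Lemma 5.27 in \cite{Iyerglobal1}. However, your treatment of the cut-off error terms $\mathcal{E}_u^{(n)},\mathcal{E}_h^{(n)}$ contains a genuine flaw. You claim that via the weighted estimates \eqref{uh_p^n_estimate_weighted}--\eqref{vg_p^n_estimate_weighted} one can ``convert $z^m$ at the boundary of the support into $\eps^{-m/2}$, which together with the rapid decay in $z$ yields arbitrarily small $\eps$ factors,'' so that $\mathcal{E}_u^{(n)}=O(\eps^N)$. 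This cannot work: the estimate $\|z^m\p_x^k\p_y^j\tilde u_p^n\|_{L_y^\infty}\lesssim \eps^{-m/2}x^{-k-j/2-1/2+\sigma_n}$ has $\eps^{-m/2}$ on the right-hand side \emph{because} the truncated final layer reaches out to $z\sim\eps^{-1/2}$ without rapid $z$-decay. Evaluating $z^{-m}\cdot(z^m\tilde u_p^n)$ on the support edge $z\sim\eps^{-1/2}$ gives $\eps^{m/2}\cdot\eps^{-m/2}=O(1)$: the two factors cancel exactly and there is no $\eps$ gain from this mechanism. Unlike the intermediate layers $u_p^j$ $(j<n)$, the pre-cutoff $u_p^n$ has no weighted decay to spare, and the weighted estimates you cite encode a trade-off, not a gain.

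The actual $\eps$ gain in the pieces of $\tilde{\mathcal{E}}_u^{(n)}$ that involve $u_p^n,v_p^n$ comes from the explicit $\sqrt{\eps/\theta}$ or $\eps/\theta$ prefactors generated when derivatives hit $\chi(\sqrt\eps y/\sqrt\theta)$, and from the rapid $z$-decay of the \emph{lower-order} Prandtl factors (like $u_p^0,u_s^{P,n-1}$) and the $\sqrt\eps$ smallness of the Euler contributions — not from $u_p^n$ itself. Relatedly, you attribute the $\eps^{1/4-\kappa}$ saturation and the $\kappa$-loss to the Taylor-tail terms $\sqrt\eps\,y v_{eY}^{n+1}u_{py}^0$, etc.; but these terms (absorbed into $f_u^{(n)}$, and hence into $(\chi-1)f_u^{(n)}$) inherit the rapid $z$-decay of $u_{py}^0$ and thus genuinely are $O(\eps^N)$ on the support of $1-\chi$. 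The true saturation is the opposite of what you describe: it is the $\tilde{\mathcal{E}}$ pieces involving the final layer, where the $L_y^\infty$ gain is only $\sqrt\eps$ and the $L_y^2$ estimate costs an extra $\eps^{-1/4}$ because the cut-off support has width $\sim\sqrt{x/\eps}$, producing the $\eps^{1/4-\kappa}$ threshold in \eqref{R^unterm_estimate_L2}. The bookkeeping of the $j\ge1$ product terms and the viscous terms you outline is fine.
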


To prove Theorem \ref{R^un_theorem}, let us first specify the error terms $\mathcal{E}_u^{(n)},\mathcal{E}_h^{(n)}$ in the expressions of \eqref{R^u_n} and \eqref{R^h_n} respectively, which are created by the cutting off procedures. Thanks to the definition of $(u_p^n,v_p^n,h_p^n,g_p^n)$ in \eqref{u_p^n_def}, we have
\begin{align}\label{eps_u^n}
\begin{split}
  \mathcal{E}_u^{(n)}
  =&(\chi-1)f_u^{(n)}+\mathcal{\tilde E}_u^{(n)}
\end{split}
\end{align}
and
\begin{align}\label{eps_h^n}
\begin{split}
  \mathcal{E}_h^{(n)}
  =&(\chi-1)f_h^{(n)}+\mathcal{\tilde E}_h^{(n)},
\end{split}
\end{align}
where
\begin{align}\label{E_u^n}
\begin{split}
\mathcal{\tilde E}_u^{(n)}=
  &-(1+u_p^0)\sqrt{\frac{\eps}{x}}\chi'v_p^n+u_{sx}^{(n)}\int_x^\infty \left[\sqrt{\frac{\eps}{\theta}}\chi'v_p^n+\chi'\frac{\sqrt\eps z}{\theta}u_p^n\right]{\rm d}\theta\\
  &+v_s^{(n)}\int_x^\infty \left[\frac{\eps}{\theta}\chi''v_p^n+2\sqrt{\frac{\eps}{\theta}}\chi'v_{py}^n-\sqrt{\frac{\eps}{\theta}}\chi' zu_{py}^n\right]{\rm d}\theta\\
  &+(\sigma+h_p^0)\sqrt{\frac{\eps}{x}}\chi'g_p^n-h_{sx}^{(n)}\int_x^\infty \left[\sqrt{\frac{\eps}{\theta}}\chi'g_p^n+\chi'\frac{\sqrt\eps z}{\theta}h_p^n\right]{\rm d}\theta\\
  &-g_s^{(n)}\int_x^\infty \left[\frac{\eps}{\theta}\chi''g_p^n+2\sqrt{\frac{\eps}{\theta}}\chi'g_{py}^n-\sqrt{\frac{\eps}{\theta}}\chi' zh_{py}^n\right]{\rm d}\theta\\
  &-\int_x^\infty \left[\frac{\sqrt\eps}{\theta}\chi'\frac{z}{2}u_{pyy}^n-3\frac{\eps}{\theta}\chi''v_{py}^n-3\sqrt{\frac{\eps}{\theta}}\chi'v_{pyy}^n
  -\left(\frac{\eps}{\theta}\right)^{\frac{3}{2}}\chi'''v_p^n\right]{\rm d}\theta
\end{split}
\end{align}
and
\begin{align}\label{E_h^n}
\begin{split}
\mathcal{\tilde E}_h^{(n)}=
  &-(1+u_p^0)\sqrt{\frac{\eps}{x}}\chi'g_p^n+h_{sx}^{(n)}\int_x^\infty \left[\sqrt{\frac{\eps}{\theta}}\chi'v_p^n+\chi'\frac{\sqrt\eps z}{\theta}u_p^n\right]{\rm d}\theta\\
  &+v_s^{(n)}\int_x^\infty \left[\frac{\eps}{\theta}\chi''g_p^n+2\sqrt{\frac{\eps}{\theta}}\chi'g_{py}^n-\sqrt{\frac{\eps}{\theta}}\chi' zh_{py}^n\right]{\rm d}\theta\\
  &+(\sigma+h_p^0)\sqrt{\frac{\eps}{x}}\chi'v_p^n-u_{sx}^{(n)}\int_x^\infty \left[\sqrt{\frac{\eps}{\theta}}\chi'g_p^n+\chi'\frac{\sqrt\eps z}{\theta}h_p^n\right]{\rm d}\theta\\
  &-g_s^{(n)}\int_x^\infty \left[\frac{\eps}{\theta}\chi''v_p^n+2\sqrt{\frac{\eps}{\theta}}\chi'v_{py}^n-\sqrt{\frac{\eps}{\theta}}\chi' zu_{py}^n\right]{\rm d}\theta\\
  &-\int_x^\infty \left[\frac{\sqrt\eps}{\theta}\chi'\frac{z}{2}h_{pyy}^n-3\frac{\eps}{\theta}\chi''g_{py}^n-3\sqrt{\frac{\eps}{\theta}}\chi'g_{pyy}^n
  -\left(\frac{\eps}{\theta}\right)^{\frac{3}{2}}\chi'''g_p^n\right]{\rm d}\theta.
\end{split}
\end{align}

Then one can obtain the following lemma for the error terms.
\begin{lemma}\label{eps_uh^n_lemma}
For sufficiently small constant $\kappa>0$ and $n\geq 2$, $\sigma_n$ defined as in Proposition \ref{u_p_prop}, the error terms $\mathcal{ E}_u^{(n)},\mathcal{ E}_h^{(n)}$ satisfy the estimates
\begin{align}
\label{eps_uh^n_estimate1}
  &|\p_x^k\mathcal{ E}_u^{(n)},\p_x^k\mathcal{ E}_h^{(n)}|\leq C(n,\kappa)\eps^{\frac{1}{4}-\kappa}x^{-k-\frac{3}{2}+\sigma_n+\kappa},\\
\label{eps_uh^n_estimate1}
  &\Vert \p_x^k\mathcal{ E}_u^{(n)},\p_x^k\mathcal{ E}_h^{(n)}\Vert_{L_y^2}\leq C(n,\kappa)\eps^{\frac{1}{4}-\kappa}x^{-k-\frac{5}{4}+\sigma_n+\kappa}.
\end{align}
\end{lemma}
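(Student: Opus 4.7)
The decomposition \eqref{eps_u^n}-\eqref{eps_h^n} splits each $\mathcal{E}^{(n)}_\star$ ($\star=u,h$) into two pieces: a cut-off residue $(\chi-1)f^{(n)}_\star$ and the genuinely new contribution $\tilde{\mathcal{E}}^{(n)}_\star$ generated by differentiating $\chi(\sqrt{\varepsilon}z)$. I will handle each piece separately, using a single unifying trade-off that converts spatial decay in $y$ into small factors of $\varepsilon$. Specifically, since $\chi$ is supported in $\{\sqrt{\varepsilon}z\leq 1\}$, both $\chi-1$ and every derivative $\chi^{(j)}(\sqrt{\varepsilon}z)$ ($j\geq 1$) are supported in a region where $\sqrt{\varepsilon}z\gtrsim 1$. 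Consequently, for any $\alpha\geq 0$, one has the pointwise trade-off
\begin{equation*}
|(\chi-1)(\sqrt{\varepsilon}z)|+\sum_{j\geq 1}|\chi^{(j)}(\sqrt{\varepsilon}z)|\lesssim \varepsilon^{\alpha/2}\,z^{\alpha}
\end{equation*}
on the respective supports, which is the only nontrivial tool beyond the already-established profile estimates.

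For the residue piece $(\chi-1)f^{(n)}_\star$, I would apply the trade-off with $\alpha=\tfrac12+2\kappa$ to get
\[
\|\p_x^k[(\chi-1)f^{(n)}_\star]\|_{L^2_y}\lesssim \varepsilon^{\frac14+\kappa}\,\|z^{\frac12+2\kappa}\p_x^k f^{(n)}_\star\|_{L^2_y}\lesssim \varepsilon^{\frac14+\kappa}\,x^{-k-\frac54+2\sigma_{n-1}+\kappa},
\]
by \eqref{f_ui_estimate}; since $\sigma_n=3\sigma_{n-1}$ the $x$-exponent is admissible. The pointwise bound is obtained from the same trade-off and a one-dimensional Sobolev embedding $\|g\|_{L^\infty_y}\lesssim\|g\|_{L^2_y}^{1/2}\|g_y\|_{L^2_y}^{1/2}$ applied to $z^{\alpha}\p_x^k f^{(n)}_\star$, where the derivative in $y$ is again controlled by \eqref{f_ui_estimate} applied to $f^{(n)}_{u,h}$ and one of its weighted $y$-derivatives (or directly from known pointwise estimates on the constituents of $f^{(n)}_\star$).

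For $\tilde{\mathcal{E}}^{(n)}_u$ and $\tilde{\mathcal{E}}^{(n)}_h$, I would process each of the elementary summands in \eqref{E_u^n}-\eqref{E_h^n} by the same pattern. The boundary-type summands carry an explicit $\sqrt{\varepsilon/x}\,\chi'$ factor; using the trade-off with $\alpha=2\kappa$ and the weighted profile bounds of Theorems \ref{u_p^n_theorem_L^2}-\ref{u_p^n_theorem_weighted} gives e.g.\
\[
\Bigl|\sqrt{\tfrac{\varepsilon}{x}}\,\chi'(\sqrt{\varepsilon}z)\,v^n_p\Bigr|\lesssim \varepsilon^{\frac12+\kappa}x^{-\frac12}|z^{2\kappa}v^n_p|\lesssim \varepsilon^{\frac14}x^{-\frac32+\sigma_n+\kappa},
\]
with a parallel $L^2_y$ estimate gaining the expected $x^{1/4}$ (by an extra integration against Theorem \ref{u_p^n_theorem_L^2}). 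The integral summands $\int_x^\infty[\ldots]d\theta$ are treated by producing a pointwise-in-$\theta$ bound on the integrand of order $\varepsilon^{\frac14-\kappa}\theta^{-\frac52+\sigma_n+\kappa}$ (or better), using the trade-off on $\chi',\chi'',\chi'''$, the estimates $|v^n_p|+|g^n_p|\lesssim x^{-1+\sigma_n}$ of Theorem \ref{u_p_theorem}, and the polynomial decay of $(u_s^{(n)},v_s^{(n)},h_s^{(n)},g_s^{(n)})$ from Theorems \ref{u_p^1_theorem} and \ref{u_p^i_theorem}; then $\int_x^\infty \theta^{-\frac52+\sigma_n+\kappa}d\theta\lesssim x^{-\frac32+\sigma_n+\kappa}$ matches the target. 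Higher $x$-derivatives introduce one extra $\theta^{-1}$ factor per derivative, both because $z=y/\sqrt{\theta}$ is $x$-differentiated inside $\chi^{(j)}$ (yielding $\theta^{-1}$ and absorbed by the same trade-off) and because differentiating the integrand itself gains extra $\theta^{-1}$ by the profile estimates, so the final $x^{-k-\frac32+\sigma_n+\kappa}$ rate follows inductively.

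The main obstacle is bookkeeping rather than analysis: $\tilde{\mathcal{E}}^{(n)}_u$ and $\tilde{\mathcal{E}}^{(n)}_h$ contain a long list of products, each pairing a sum of intermediate Prandtl/Euler profiles with a different derivative of $\chi$ and a different derivative of $(u^n_p,v^n_p,h^n_p,g^n_p)$. For every single summand one has to choose the trade-off exponent $\alpha$ so as to both extract exactly the $\varepsilon^{1/4-\kappa}$ we need and to preserve the small $x^{\sigma_n+\kappa}$ growth allowance; miscalibrating $\alpha$ for even a single term would contaminate the whole estimate. Once the uniform choice $\alpha\in\{2\kappa,\tfrac12+2\kappa\}$ (depending on whether the $L^\infty$ or $L^2$ norm is being estimated) is validated against the worst summand --- which is typically the one involving $v^{(n)}_s g^n_{py}$ inside the integral, since $v_s^{(n)}$ grows like $y$ --- all other summands yield the same or better rates, and the claimed bounds follow by summation.
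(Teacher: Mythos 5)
Your overall strategy is correct and is exactly the mechanism the paper intends: the paper's own proof is a one-line deferral to Lemma~5.26 of \cite{Iyerglobal1}, and the trade-off you propose --- converting the support constraint $\sqrt{\varepsilon}z\gtrsim 1$ on $\chi-1$ and on $\chi^{(j)}$ ($j\geq 1$) into small powers of $\varepsilon$ --- is precisely the paper's trade-off \eqref{u_p^n_tradeoff}, already invoked in the proofs of Theorems \ref{u_p^n_theorem_L^2} and \ref{u_p^n_theorem_weighted}. Your decomposition into the residue piece $(\chi-1)f^{(n)}_\star$ (handled via \eqref{f_ui_estimate} and the inclusion $2\sigma_{n-1}=\tfrac23\sigma_n<\sigma_n$) and the integral/boundary pieces of $\tilde{\mathcal{E}}^{(n)}_\star$ is the right skeleton.

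However, there are two points you should tighten. First, your claim that the worst integral summand involves $v_s^{(n)}g^n_{py}$ ``since $v_s^{(n)}$ grows like $y$'' is wrong: all the intermediate Prandtl vertical components $v_p^j$ ($j\leq n-1$) decay as $y\to\infty$ (cf.\ Theorem \ref{u_p^i_theorem}), and the Euler components $v_e^j$ satisfy $\|v_e^j\,Y\|_{L^\infty_Y}\lesssim x^{-1/2}$, so $v_s^{(n)}$ is bounded --- in fact it decays --- and your worry about calibrating $\alpha$ against a growing prefactor does not arise. Second, and more substantively, your claimed pointwise bound $\varepsilon^{1/4-\kappa}\theta^{-5/2+\sigma_n+\kappa}$ on the integrands is not what the direct estimate delivers: for a summand of the type $v_s^{(n)}(x,y)\int_x^\infty\chi'(\cdot)\frac{\sqrt{\varepsilon}z}{2\theta}u^n_{py}(\theta,y)\,d\theta$, one gets on the support $\sqrt{\varepsilon}z\sim 1$, hence $\tfrac{\sqrt{\varepsilon}z}{2\theta}\lesssim\theta^{-1}$, $|z^m u^n_{py}(\theta,\cdot)|\lesssim\theta^{-1+\sigma_n}$, and the $\theta$-dependence of the integrand is $\theta^{-2+\sigma_n}$, not $\theta^{-5/2}$. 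The missing $x^{-1/2}$ must come from the prefactor $|v_s^{(n)}(x,y)|\lesssim\varepsilon^{m'/2}x^{-1/2}$ (or $x^{-1}$ through the Euler $Y^{-1}$-decay), using the fact that the integral is non-trivial only when $y\gtrsim\sqrt{x/\varepsilon}$, i.e.\ $z\gtrsim\varepsilon^{-1/2}$. Your plan survives once this bookkeeping is corrected, but as written the stated integrand rate does not add up to the target. You should also be careful that the weighted estimates of Theorem \ref{u_p^n_theorem_weighted} apply to the cut-off profiles, not to the un-cut-off $(v^n_p,g^n_p)$ appearing inside $\tilde{\mathcal{E}}^{(n)}_\star$; what you actually need there is the direct support observation $z\sim\varepsilon^{-1/2}$ together with Theorem \ref{u_p_theorem}.
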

\begin{proof}
The terms in $\mathcal{ E}_u^{(n)},\mathcal{ E}_h^{(n)}$ can be estimated via similar arguments as in Lemma 5.26 in \cite{Iyerglobal1}. Here we omit the proof.
\end{proof}

\begin{remark}\label{remark_n>2}
Let us mention that the factor $x^{-\frac{5}{4}}$ in the decay rate of \eqref{eps_uh^n_estimate1} is very important for the global-in-$x$ stability of the Prandtl expansion, which is used to control the forcing term of the remainder profile $(u,v,h,g)$ (see Remark \ref{remark_n>2_S6}). It is the reason why we should expand the approximate solutions to $\eps^{\frac{n}{2}}$-order for $n\geq 2$, so as to obtain the enhanced decay rate. See also Remark \ref{remark_enhance} and Theorem \ref{u_p^i_theorem} for details.\end{remark}

 Now we prove Theorem \ref{R^un_theorem}.
\begin{proof}[Proof of Theorem \ref{R^un_theorem}]
Theorem \ref{R^un_theorem} follows directly from Lemma \ref{eps_uh^n_lemma} and the estimates for the profiles $(u^i_p,v^i_p,h^i_p,g^i_p)$ and $(u^i_e,v^i_e,h^i_e,g^i_e)$ constructed before. Note that the remaining terms in $(R^{u,n},R^{v,n},R^{h,n},R^{g,n})$ can be estimated via similar arguments of Lemma 5.27 in \cite{Iyerglobal1}.
 Here we omit the details.
\end{proof}

From now on, for simplicity of the presentation, we still use the notations $(u^n_p,v^n_p,h^n_p,g^n_p)$ instead of $  (\tilde{u_p^n},\tilde{v_p^n},\tilde{h_p^n},\tilde{g_p^n})$.

\section{Construction of the approximate solutions}\label{sec5}

Based on the construction in previous sections, we can summarize the property for each profile. Before that, we introduce the following notation for simplification
\begin{equation}\label{notation1}
\begin{cases}
  u_{s}^{\mathrm{P}}:=\sum\limits_{j=0}^{n} \eps^{\frac{j}{2}} u_{p}^{j},
  \quad u_{s}^{\mathrm{E}}:=1+\sum\limits_{j=1}^{n} \eps^{\frac{j}{2}} u_{e}^{j},
  \quad u_{s}:=\bar{u}_{s}^{(n)}=u_{s}^{\mathrm{P}}+u_{s}^{\mathrm{E}},\\
  u_{s}^{\mathrm{P}, n-1}=\sum\limits_{j=0}^{n-1} \eps^{\frac{j}{2}} u_{p}^{j},
  \quad \text { so that } u_{s}^{\mathrm{P}}=u_{s}^{\mathrm{P}, n-1}+\eps^{\frac{n}{2}} u_{p}^{n}.
\end{cases}
\end{equation}

And similarly, for the vertical components, we denote
\begin{equation}\label{notation2}
\begin{cases}
  v_{s}^{\mathrm{P}}:=\sum\limits_{j=0}^{n} \eps^{\frac{j}{2}} v_{p}^{j},
  \quad v_{s}^{\mathrm{E}}:=\sum\limits_{j=1}^{n} \eps^{\frac{j-1}{2}} v_{e}^{j},
  \quad v_{s}:=\bar{v}_{s}^{(n)}=v_{s}^{\mathrm{P}}+v_{s}^{\mathrm{E}},\\
  v_{s}^{\mathrm{P}, n-1}=\sum\limits_{j=0}^{n-1} \eps^{\frac{j}{2}} v_{p}^{j},
  \quad \text { so that } v_{s}^{\mathrm{P}}=v_{s}^{\mathrm{P}, n-1}+\eps^{\frac{n}{2}} v_{p}^{n}.
\end{cases}
\end{equation}

We also use similar notations for the magnetic fields $(h_p^j,g_p^j)$.

In conclusion, for the approximate solutions, we finally get
\begin{theorem}\label{summary}
Assume $n\geq 2$, the positive constants $\delta,\sigma,\eps$ are sufficiently small relative to universal constants and $\eps\ll\eps_*$. The boundary and in-flow values are specified as \eqref{bc_0_prandtl}-\eqref{cp_2}. Then,

(1) there exist boundary layer profiles $(u_p^i,v_p^i,h_p^i,g_p^i)$ $(i=0,1,\cdots,n-1)$, such that

(i) in the case of $k=0$, the solutions satisfy:
\begin{align}
\label{prantl_estimate1}
  &\Vert \p_y^j (u_s^{P,n-1},h_s^{P,n-1})z^m x^{\frac{j}{2}}\Vert_{L^\infty}\leq \mathcal{O}(\delta,\sigma;m,n,j)~{\rm for}~0\leq j\leq 2,\\
  &\Vert \p_y^j (u_s^{P,n-1},h_s^{P,n-1})z^m x^{\frac{j}{2}}\Vert_{L^\infty}\leq C(m,n,j)~{\rm for}~j> 2,\\
  &\Vert \p_y^j (v_s^{P,n-1},g_s^{P,n-1})z^m x^{\frac{j}{2}+\frac{1}{2}}\Vert_{L^\infty}\leq \mathcal{O}(\delta,\sigma;m,n,j)~{\rm for}~j=0,1,\\
  &\Vert \p_y^j (v_s^{P,n-1},g_s^{P,n-1})z^m x^{\frac{j}{2}+\frac{1}{2}}\Vert_{L^\infty}\leq C(m,n,j)~{\rm for}~j\geq 2,
\end{align}

(ii) in the case of $k\geq 1$, the solutions satisfy:
\begin{align}
  &\Vert \p_x (u_s^{P,n-1},h_s^{P,n-1}) z^mx\Vert_{L^\infty}\leq \mathcal{O}(\delta,\sigma;m,n)~{\rm for}~j=0,\\
  &\Vert \p_x\p_y^j (u_s^{P,n-1},h_s^{P,n-1}) z^mx^{1+\frac{j}{2}}\Vert_{L^\infty}\leq C(m,n,j)~{\rm for}~j\geq 1,\\
  &\Vert \p_x^k\p_y^j (u_s^{P,n-1},h_s^{P,n-1}) z^mx^{k+\frac{j}{2}}\Vert_{L^\infty}\leq C(m,n,k,j)~{\rm for}~k>1,~j\geq 0,\\
  &\Vert \p_x^k\p_y^j (v_s^{P,n-1},g_s^{P,n-1})z^m x^{k+\frac{j}{2}+\frac{1}{2}}\Vert_{L^\infty}\leq C(m,n,k,j)~{\rm for}~k\geq 1,~j\geq 0,
\end{align}

(2) final boundary layer profiles $(u_p^n,v_p^n,h_p^n,g_p^n)$ satisfy
\begin{align}
  &\Vert z^m\p_x^k\p_y^j (u_p^n,h_p^n)x^{k+\frac{j}{2}+\frac{1}{2}-\sigma_n}\Vert_{L^\infty}\leq \eps^{-\frac{m}{2}}C(\delta,\sigma,;m,k,j),
  {\rm for~} m,k,j\geq 0,\\
  \label{prantl_estimate10}
  &\Vert z^m\p_x^k\p_y^j (v_p^n,g_p^n)x^{k+\frac{j}{2}+1-\sigma_n}\Vert_{L^\infty}\leq \eps^{-\frac{m}{2}}C(\delta,\sigma;m,k,j),{\rm for~any~} m,k,j\geq 0,
\end{align}

(3) ideal MHD profiles $(u_e^i,v_e^i,h_e^i,g_e^i)$ $(i=1,\cdots,n)$ satisfy
\begin{align}
\label{ideal_estimate1}
  &\Vert (u_s^E-1,h_s^E)x^{\frac{1}{2}},(u_{sx}^E,h_{sx}^E) x^{\frac{3}{2}}\Vert_{L^\infty}\leq \mathcal{O}(\delta,\sigma),\\
  &\Vert (v_s^E,g_s^E)x^{\frac{1}{2}},(v_{sY}^E,g_{sY}^E) x^{\frac{3}{2}}\Vert_{L^\infty}\leq \mathcal{O}(\delta,\sigma),\\
  &\Vert \p_x^k(v_s^E,g_s^E) x^{k-\frac{1}{2}}Y\Vert_{L^\infty}\leq C(k,j)~{\rm for}~ k\geq 1,~j=0,\\
  &\Vert \p_x^k\p_Y^j (v_s^E,g_s^E)x^{k+j+\frac{1}{2}}\Vert_{L^\infty}\leq C(k,j)~{\rm for}~k+j>0,\\
\label{ideal_estimate5}
  &\Vert \p_x^k\p_Y^j (u_s^E,h_s^E)x^{k+j+\frac{1}{2}}\Vert_{L^\infty}\leq \sqrt\eps C(k,j)~{\rm for}~k+j>0.
\end{align}
\end{theorem}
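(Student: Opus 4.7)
The plan is to assemble the stated estimates by summing the profile-wise bounds already established in Sections \ref{sec2}--\ref{sec4} against the $\eps^{j/2}$ weights attached in the definitions \eqref{notation1}--\eqref{notation2}. Since every component appearing in $u_s^{P,n-1}$, $v_s^{P,n-1}$, $h_s^{P,n-1}$, $g_s^{P,n-1}$, $u_s^{E}-1$, $h_s^{E}$, $v_s^{E}$, $g_s^{E}$, $u_p^n$, $v_p^n$, $h_p^n$, $g_p^n$ has already been quantified in an individual $L^\infty_y$ norm with an explicit power of $x$, the task reduces to (i) matching decay rates across profiles of different orders and (ii) verifying that the smallness factors $\mathcal{O}(\delta,\sigma)$ come only from the leading-order contributions, as advertised in estimates \eqref{prantl_estimate1}--\eqref{ideal_estimate5}.

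First, for the Prandtl part $u_s^{P,n-1} = u_p^0 + \sum_{j=1}^{n-1}\eps^{j/2} u_p^j$ and similarly for $h_s^{P,n-1}$, I would split the sum and invoke Theorem \ref{u_p^0_theorem} (for $j=0$) together with Theorem \ref{u_p^1_theorem} and Theorem \ref{u_p^i_theorem} (for $1\le j\le n-1$). The crucial observation is that the leading profile $u_p^0$ carries the decay $x^{-k-j/2}$ with smallness $\mathcal{O}(\delta,\sigma)$ whenever $2k+j\le 2$, while each higher corrector $\eps^{j/2} u_p^j$ decays like $\eps^{j/2} x^{-k-j/2-1/4+\sigma_j}$. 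By choosing $\eps$ small (so that $\eps^{1/2} x^{-1/4}\ll 1$ uniformly on $x\ge 1$) the higher-order terms are absorbed into $\mathcal{O}(\delta,\sigma;m,n,j)$. The vertical estimates for $v_s^{P,n-1}$ and $g_s^{P,n-1}$ follow from the same splitting combined with Theorem \ref{u_p^0_theorem}(iv)--(v) and estimate \eqref{vg_p^1_estimate}/\eqref{vg_p^i_estimate}, plus the Hardy-type reconstruction $v_p^j = \int_y^\infty \p_x u_p^j\,d\theta$ for $j\ge 1$.

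Next, for the final boundary layer $(u_p^n, v_p^n, h_p^n, g_p^n)$ (which in the present section denotes the cut-off profiles $\tilde u_p^n$ etc.), I would simply transcribe Theorem \ref{u_p^n_theorem_weighted}; the loss $\eps^{-m/2}$ in the weighted bounds is exactly what appears in \eqref{prantl_estimate10} and is compatible with the cut-off support $\{z\le 1/\sqrt\eps\}$. For the ideal MHD part, the estimates \eqref{ideal_estimate1}--\eqref{ideal_estimate5} follow from Proposition \ref{u_e^1_prop} (for the $\sqrt\eps$-order, which carries the small boundary datum and thus supplies the $\mathcal{O}(\delta,\sigma)$ prefactor) and Proposition \ref{u_e^i_prop} (for $i\ge 2$, where the enhanced decay $x^{-3/4+\sigma_{i-1}}$ together with the $\eps^{i/2}$ weight makes each higher term dominated by the $\sqrt\eps$ corrector on $x\ge 1$). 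The magnetic constraints $h_e^i=\sigma u_e^i$ and $g_e^i=\sigma v_e^i$ transfer all estimates automatically to $h_s^E$ and $g_s^E$.

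The main (mild) obstacle I anticipate is not analytical but bookkeeping: one must verify that every cross-term produced by the telescoping sum over $j$ respects both the advertised decay exponent $x^{-k-j/2}$ (resp.\ $x^{-k-j/2-1/2}$ for vertical components) and the smallness regime $2k+j\le 2$ in which $\mathcal{O}(\delta,\sigma)$ is claimed. This requires checking that $\eps^{j/2}\cdot x^{-k-j/2-1/4+\sigma_j}\lesssim \mathcal{O}(\delta,\sigma)\cdot x^{-k-j/2}$ uniformly on $\Omega$, which is ensured by $\eps\ll\eps_*$ together with $\sigma_j\le \sigma_n=1/1000<1/4$. Once this is confirmed, no additional analytic input is needed and the theorem follows by collecting the previously established estimates.
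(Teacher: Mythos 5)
Your proposal is correct and follows exactly the route the paper takes: the theorem is stated without detailed proof as a direct consequence of the profile-wise estimates from Theorems \ref{u_p^0_theorem}, \ref{u_p^1_theorem}, \ref{u_p^i_theorem}, \ref{u_p^n_theorem_weighted} and Propositions \ref{u_e^1_prop}, \ref{u_e^i_prop}, weighted by the $\eps^{i/2}$ factors from \eqref{notation1}--\eqref{notation2}. Your bookkeeping — leading-order profiles supply the $\mathcal{O}(\delta,\sigma)$ smallness in the low-derivative regimes while higher correctors are absorbed via $\eps\ll\eps_*\le\min\{\delta,\sigma\}$ — is precisely the intended argument.
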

\begin{proof}
This theorem is a direct consequence of the estimates obtained in all above sections, and we omit the details here.
\end{proof}

\section{The global-in-$x$ stability for Prandtl expansions: Proof of Theorem \ref{maintheorem}}\label{sec6}
This section is to show the global-in-$x$ stability for the Prandtl expansion, i.e., our main result, Theorem \ref{maintheorem}.
With the approximate solutions constructed in the above sections, we obtain the following system for remainder profiles $(u,v,h,g,p)$
\begin{align}\label{u_epssystem}
\begin{cases}
  -\Delta_\eps u+S_u(u,v,h,g)+p_x=F_u(u,v,h,g),\\
  -\Delta_\eps v+S_v(u,v,h,g)+\frac{p_y}{\eps }=F_v(u,v,h,g),\\
  -\Delta_\eps h+S_h(u,v,h,g)=F_h(u,v,h,g),\\
  -\Delta_\eps g+S_g(u,v,h,g)=F_g(u,v,h,g),\\
  \p_x u+\p_y v=\p_x h+\p_y g=0,
\end{cases}
\end{align}
where
\begin{align}\label{S_u}
\begin{cases}
S_u=&u_s\p_x u+u\p_x u_s+v_s\p_y u+v\p_y u_s\\
   & -(h_s\p_x h+h\p_x h_s+g_s\p_y h+g\p_y h_s),\\
S_v=&u_s\p_x v+u\p_x v_s+v_s\p_y v+v\p_y v_s\\
   & -(h_s\p_x g+h\p_x g_s+g_s\p_y g+g\p_y g_s),\\
S_h=&u_s\p_x h+u\p_x h_s+v_s\p_y h+v\p_y h_s\\
   & -(h_s\p_x u+h\p_x u_s+g_s\p_y u+g\p_y u_s),\\
S_g=&u_s\p_x g+u\p_x g_s+v_s\p_y g+v\p_y g_s\\
   &-(h_s\p_x v+h\p_x v_s+g_s\p_y v+g\p_y v_s),
\end{cases}
\end{align}
in which the source terms $F_u,F_v,F_h,F_g$ are denoted as
\begin{equation}\label{F_u}
  (F_u,F_v,F_h,F_g)=\eps^{-\frac{n}{2}-\gamma}(R^{u,n},R^{v,n},R^{h,n},R^{g,n})-(N^u,N^v,N^h,N^g)
\end{equation}
and the terms $R^{u,n},R^{v,n},R^{h,n},R^{g,n}$ are given by \eqref{R^u_n}-\eqref{R^g_n}. In addition, the nonlinear terms $N^u,N^v,N^h,N^g$ are defined by
\begin{align}\label{N^u}
\begin{cases}
   N^u=\eps^{\frac{n}{2}+\gamma}(u\p_x u +v\p_y u -h\p_x h -g\p_y h),\\
   N^v=\eps^{\frac{n}{2}+\gamma}(u\p_x v +v\p_y v -h\p_x g -g\p_y g),\\
   N^h=\eps^{\frac{n}{2}+\gamma}(u\p_x h +v\p_y h -h\p_x u -g\p_y u),\\
   N^g=\eps^{\frac{n}{2}+\gamma}(u\p_x g +v\p_y g -h\p_x v -g\p_y v).
\end{cases}
\end{align}
Boundary conditions for the remainder profiles are given as follows
\begin{align}\label{u_epsboundary}
  \begin{cases}
  (u ,v ,h ,g)|_{y=0}=(u ,v ,h ,g)|_{y\rightarrow\infty}=(0,0,0,0),\\
  (u ,v ,h ,g)|_{x=1}=(u ,v ,h ,g)|_{x\rightarrow\infty}=(0,0,0,0).
\end{cases}
\end{align}

To solve the problem, let us first localize \eqref{u_epssystem} on domain $\Omega^\beta=\{(x,y)|x\geq 1,0\leq y<\beta\}$ with the following artificial boundary conditions
\begin{align}\label{u_epsboundary_omega_N}
  \begin{cases}
  (u ,v ,h ,g)|_{y=0}=(u ,v ,h ,g)|_{y=\beta}=(0,0,0,0),\\
  (u ,v ,h ,g)|_{x=1}=(u ,v ,h ,g)|_{x\rightarrow\infty}=(0,0,0,0).
\end{cases}
\end{align}
All estimates will be derived independent of $\beta$. And then,  let $\beta\to \infty$ to obtain the solutions on domain $\Omega$ (see Subsection \ref{sec6.2}).

\begin{remark}\label{remark_sec6}
Note that the argument presented in \cite{Iyerglobal2,Iyerglobal3} can be also applied to our problem with slightly modification, for the sake of completeness, we sketch the framework and emphasize key points here, see \cite{Iyerglobal2,Iyerglobal3} for more details.
\end{remark}

\subsection{The linear stability estimates}\label{sec6.1}
In this subsection, our goal is to derive the linear stability estimates for the following linearized equations together with boundary conditions \eqref{u_epsboundary_omega_N}
\begin{align}\label{u_eps_linearized}
\begin{cases}
  -\Delta_\eps u+S_u(u ,v ,h ,g)+p_x=f_u(u,h;\bar u,\bar v,\bar h,\bar g),\\
  -\Delta_\eps v+S_v(u ,v ,h ,g)+\frac{p_y}{\eps }=f_v(\bar u,\bar v,\bar h,\bar g),\\
  -\Delta_\eps h+S_h(u ,v ,h ,g)=f_h(u,h;\bar u,\bar v,\bar h,\bar g),\\
  -\Delta_\eps g+S_g(u ,v ,h ,g)=f_g(\bar u,\bar v,\bar h,\bar g),\\
  \p_x u+\p_y v=\p_x h+\p_y g=0,
\end{cases}
\end{align}
where the linearized source terms $f_u,f_v,f_h,f_g$ are denoted as
\begin{align}\label{f^u_linear}
\begin{split}
  (f_u,f_v,f_h,f_g)=&\eps^{-\frac{n}{2}-\gamma}(R^{u,n},R^{v,n},R^{h,n},R^{g,n})\\
  &-(N^u,N^v,N^h,N^g)(u,h;\bar u,\bar v,\bar h,\bar g)
\end{split}
\end{align}
with terms $(N^u,N^v,N^h,N^g)(u,h;\bar u,\bar v,\bar h,\bar g)$ defined by
\begin{align}\label{N^u_linear}
\begin{cases}
   N^u(u,h;\bar u,\bar v,\bar h,\bar g)
   =\eps^{\frac{n}{2}+\gamma}(\bar u\p_x\bar u +\bar v\p_y u -\bar h\p_x\bar h -\bar g\p_y h),\\
   N^v(\bar u,\bar v,\bar h,\bar g)
   =\eps^{\frac{n}{2}+\gamma}(\bar u\p_x \bar v +\bar v\p_y\bar v -\bar h\p_x\bar g -\bar g\p_y\bar g),\\
   N^h(u,h;\bar u,\bar v,\bar h,\bar g)
   =\eps^{\frac{n}{2}+\gamma}(\bar u\p_x\bar h +\bar v\p_y h -\bar h\p_x\bar u -\bar g\p_y u),\\
   N^g(\bar u,\bar v,\bar h,\bar g)
   =\eps^{\frac{n}{2}+\gamma}(\bar u\p_x\bar g +\bar v\p_y\bar g -\bar h\p_x\bar v -\bar g\p_y\bar v).
\end{cases}
\end{align}

\begin{remark}\label{remark_linear_vu_y_pre}
We remark that some terms are in form of $\bar v u_y, \bar g h_y, \bar v h_y, \bar g u_y$ instead of $\bar v \bar u_y, \bar g \bar h_y, \bar v \bar h_y, \bar g \bar u_y$ for technical reasons, see also Remark \ref{remark_linear_vu_y} for details.
\end{remark}

Let us introduce the $\mathcal{Z}$ norm
\begin{align}\label{norm}
\begin{split}
  \|(u ,v ,h ,g)\|_{ \mathcal{Z}}
  :=&\Vert (u ,v ,h ,g)\Vert_{X_1\cap X_2\cap X_3}+\eps^{N_1}\Vert (u ,v ,h ,g)\Vert_{Y_1}\\
  &+\eps^{N_2}\Vert (u ,v ,h ,g)\Vert_{Y_2}+\|(u ,v ,h ,g)\|_{ \mathcal{U}},
\end{split}
\end{align}
where we have used the notations as follows
\begin{align}
\label{X1norm}
  \|(u ,v ,h ,g)\|_{X_{1}}^{2}
  :=&\left\|\p_y(u,h)\right\|_{L^{2}}^{2}+\left\|\sqrt{\eps}\p_x(v,g) x^{\frac{1}{2}}\right\|_{L^{2}}^{2}+\left\|\p_y(v,g) x^{\frac{1}{2}}\right\|_{L^{2}}^{2},\\
\label{X2norm}
  \|(u ,v ,h ,g)\|_{X_{2}}^{2}
  :=&\left\|\p_{x y}(u,h) \cdot \rho_{2} x\right\|_{L^{2}}^{2}+\left\|\sqrt{\eps} \p_{x x}(v,g)\cdot\left(\rho_{2} x\right)^{\frac{3}{2}}\right\|_{L^{2}}^{2}\nonumber\\
  &+\left\|\p_{x y}(v,g)\cdot\left(\rho_{2} x\right)^{\frac{3}{2}}\right\|_{L^{2}}^{2},\\
\label{X3norm}
  \|(u ,v ,h ,g)\|_{X_{3}}^{2}
  :=&\left\|\p_{x x y}(u,h) \cdot\left(\rho_{3} x\right)^{2}\right\|_{L^{2}}^{2}+\left\|\sqrt{\eps} \p_{x x x}(v,g)\cdot\left(\rho_{3} x\right)^{\frac{5}{2}}\right\|_{L^{2}}^{2}\nonumber\\
  &+\left\|\p_{x x y}(v,g) \cdot\left(\rho_{3} x\right)^{\frac{5}{2}}\right\|_{L^{2}}^{2},\\
\label{Y2norm}
  \|(u ,v ,h ,g)\|_{Y_{1}}^{2}
  :=&\left\|\p_{x y}(u,h)\cdot x\right\|_{L^{2}}^{2}+\left\|\sqrt{\eps}\p_{x x}(v,g)\cdot x^{\frac{3}{2}}\right\|_{L^{2}}^{2}\nonumber\\
  &+\left\|\p_{x y}(v,g)\cdot x^{\frac{3}{2}}\right\|_{L^{2}}^{2}+\left\|\p_{y y}(u,h)\right\|_{L^{2}(x \leq 2000)}^{2},\\
\label{Y3norm}
  \|(u ,v ,h ,g)\|_{Y_{2}}^{2}
  :=&\left\|\p_{x x y}(u,h) \cdot \zeta_{3} x^{2}\right\|_{L^{2}}^{2}+\left\|\sqrt{\eps}\p_{x x x}(v,g) \cdot \zeta_{3} x^{\frac{5}{2}}\right\|_{L^{2}}^{2}\nonumber\\
  &+\left\|\p_{x x y}(v,g) \cdot \zeta_{3} x^{\frac{5}{2}}\right\|_{L^{2}}^{2},
\end{align}
in which the $Y_i$ norm is to descirbe the behavior near the boundary $x=1$. The cut-off functions $\rho_k(x)$ corresponding to $X_k(k=2,3)$ and $\zeta_3(x)$ are given by
\begin{align}\label{rho_k}
\rho_{k}(x)=\left\{\begin{array}{ll}
0 & \text { for } 1 \leq x \leq 50+50(k-2), \\
1 & \text { for } x \geq 60+50(k-2),
\end{array}\right.
\end{align}
and
\begin{align}\label{zeta}
\zeta_{3}(x)=\left\{\begin{array}{ll}
0 & \text { for } 1 \leq x \leq \frac{3}{2},\\
1 & \text { for } x \geq 2.
\end{array}\right.
\end{align}
In addition, define the following uniform norm
\begin{align}\label{uniform_norm}
\begin{split}
  \|(u ,v ,h ,g)\|_{ \mathcal{U}}
  :=&\eps^{N_3}\Vert (u,h)x^{\frac{1}{4}}+\sqrt\eps (v,g)x^{\frac{1}{2}}\Vert_{L^\infty}\\
  &+\eps^{N_4}\sup_{x\geq 20}\Vert (u_x,h_x)x^{\frac{5}{4}}+\sqrt\eps(v_x,g_x)x^{\frac{3}{2}}\Vert_{L^\infty}\\
  &+\eps^{N_5}\sup_{x\geq 20}\Vert (u_y,h_y)x^{\frac{1}{2}}\Vert_{L_y^2}\\
  &+\eps^{N_6}\left[\int_{20}^\infty x^4\Vert \sqrt\eps (v_{xx},g_{xx})\Vert_{L_y^\infty}^2 {\rm d}x\right]^{\frac{1}{2}}.
\end{split}
\end{align}
In above definitions, the constants $N_1,\cdots,N_6$ will be determined latter in \eqref{constants}.

For reader's convenience, let us first explain the structure of $\mathcal{Z}(\Omega^\beta)$ norm (refer to Section 2 in \cite{Iyerglobal2} for more details). Recall that the main goal in this section is to derive $L^\infty$ norm for $(u ,v ,h ,g)$. To this end, we should derive the estimates for $\Vert (u,h)x^{\frac{1}{4}}\Vert_{L^\infty}$ and $\Vert (v,g)x^{\frac{1}{2}}\Vert_{L^\infty}$, which are generated by terms $\bar u \bar u_{x}+\bar h \bar h_{x}, \bar u \bar h_{x}+\bar h \bar u_{x}$ and $\bar v u_{y}+\bar g h_{y},\bar v h_{y}+\bar g u_{y}$ in $N^u,N^h$ respectively. For example, in the basic (first-order) energy estimate, applying multiplier $u$  to equation \eqref{u_epssystem}$_1$, the term $\bar u \bar u_{x}+\bar h \bar h_{x}$ is estimated as
\begin{align}\label{vg_Linfty_origin}
\begin{split}
 \iint \eps^{\frac{n}{2}+\gamma} (\bar u \bar u_{x}+\bar h \bar h_{x}) u
 \leq\eps^{\frac{n}{2}+\gamma}\left\|(\bar u,\bar h) x^{\frac{1}{4}}\right\|_{L^\infty}
  \left\|(\bar u_{x},\bar h_{x}) x^{\frac{1}{2}}\right\|_{L^2}\left\| u x^{-\frac{3}{4}}\right\|_{L^2},
\end{split}
\end{align}
and in the first-order positivity estimate derived by multiplier $v_y x$, we get
\begin{align}\label{vg_Linfty_origin}
\begin{split}
  \iint \eps^{\frac{n}{2}+\gamma} (\bar v u_{y}+\bar g h_{y}) v_{y} x
  \leq\eps^{\frac{n}{2}+\gamma}\left\|(\bar v,\bar g) x^{\frac{1}{2}}\right\|_{L^\infty}
  \left\|(u_{y},h_{y})\right\|_{L^2}\left\|v_{y} x^{\frac{1}{2}}\right\|_{L^2}.
\end{split}\end{align}
To close the estimates in $L^\infty$ sense, the following lemma about the short-$x$ and long-$x$ estimates is needed.
\begin{lemma}\label{short-long-x}
Let $(u,v,h,g)$ be the solution to \eqref{u_eps_linearized} with \eqref{u_epsboundary_omega_N}, then exists large enough $M_1$ independent of $n,\eps,\gamma,\beta$ and $N_i(i=1,\cdots,6)$ such that

(i) short-$x$ estimate:
\begin{align}\label{uvhg_infty}
\begin{split}
  \sup_{x\leq 2000}\Vert (u,v,h,g)\Vert_{L_y^\infty}
  \lesssim& \eps^{-M_1}\left[C+\Vert (\bar u,\bar v,\bar h,\bar g)\Vert_{X_1}
  +\eps^{\frac{n}{2}+\gamma}\Vert (\bar u,\bar v,\bar h,\bar g)\Vert_{L^\infty}\right.\\
  &\left.\qquad \cdot\left(\Vert (\bar u,\bar v,\bar h,\bar g)\Vert_{X_1}+\Vert(u, v,h,g)\Vert_{X_1}\right)\right].
\end{split}
\end{align}

(ii) long-$x$ estimates:
\begin{align}\label{uh_Linfty}
\begin{split}
  \sup_{x\geq 20}\left\|(u,h) x^{\frac{1}{4}}\right\|_{L_y^{\infty}}
  \leq \sup_{x\geq 20}\left\|(u_x,h_x) x^{\frac{5}{4}}\right\|_{L_y^{\infty}}
  \lesssim \|(u ,v ,h ,g)\|_{X_{1} \cap Y_{1} \cap Y_{2}}
\end{split}
\end{align}
and
\begin{align}\label{vg_Linfty}
\begin{split}
  \sup_{x\geq 20}\left\|\sqrt{\eps} (v,g) x^{\frac{1}{2}}\right\|_{L_y^{\infty}}
  \leq \sup_{x\geq 20}\left\|\sqrt{\eps} (v_x,g_x) x^{\frac{3}{2}}\right\|_{L_y^{\infty}}
  \lesssim\|(u ,v ,h ,g)\|_{X_{1} \cap Y_{1} \cap Y_{2}}.
\end{split}
\end{align}
\end{lemma}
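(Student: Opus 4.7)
The plan is to treat the short-$x$ regime and the long-$x$ regime by quite different mechanisms. For the short-$x$ estimate \eqref{uvhg_infty}, restrict \eqref{u_eps_linearized} to the bounded rectangle $[1,2000]\times[0,\beta]$ and view it as an anisotropic Stokes-type elliptic system. Standard elliptic regularity promotes the first-derivative $L^{2}$ control encoded in the $X_{1}$-norm to $H^{2}$ on a slightly smaller rectangle, with a loss of $\eps^{-M_{1}}$ absorbing the anisotropy of $-\Delta_{\eps}=-\eps\p_{xx}-\p_{yy}$ and the pressure; the two-dimensional Sobolev embedding $H^{2}\hookrightarrow L^{\infty}$ then yields \eqref{uvhg_infty}. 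The right-hand source is handled by the $L^{2}$-bound of the remainders in Theorem \ref{R^un_theorem} (producing the absolute constant $C$) together with the crude nonlinear estimate
$$|(N^{u},N^{v},N^{h},N^{g})(u,h;\bar{u},\bar{v},\bar{h},\bar{g})|\lesssim \eps^{\frac{n}{2}+\gamma}\|(\bar{u},\bar{v},\bar{h},\bar{g})\|_{L^{\infty}}\bigl(|\nabla_{x,y}(\bar{u},\bar{v},\bar{h},\bar{g})|+|\nabla_{x,y}(u,v,h,g)|\bigr),$$
which after an $L^{2}$ duality reproduces the product structure on the right of \eqref{uvhg_infty}.

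For the long-$x$ estimates \eqref{uh_Linfty}--\eqref{vg_Linfty} I would first reduce bounds on $(u,h)$ and $\sqrt{\eps}(v,g)$ to bounds on their $x$-derivatives. Using $(u,v,h,g)|_{x\to\infty}=0$,
$$|u(x,y)|\,x^{\frac{1}{4}}\leq x^{\frac{1}{4}}\int_{x}^{\infty}|u_{x}(s,y)|\,ds\leq x^{\frac{1}{4}}\|u_{x}\,s^{\frac{5}{4}}\|_{L^{\infty}_{s,y}}\int_{x}^{\infty}s^{-\frac{5}{4}}\,ds\leq 4\,\|u_{x}\,x^{\frac{5}{4}}\|_{L^{\infty}_{x,y}},$$
and analogously for $h$, $\sqrt{\eps}v$, $\sqrt{\eps}g$ (with weight $x^{3/2}$). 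This reduces both inequalities to bounds on $\|u_{x}x^{5/4}\|_{L^{\infty}}$, $\|h_{x}x^{5/4}\|_{L^{\infty}}$, $\|\sqrt{\eps}v_{x}x^{3/2}\|_{L^{\infty}}$, $\|\sqrt{\eps}g_{x}x^{3/2}\|_{L^{\infty}}$.

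The bulk of the work is then the anisotropic Gagliardo--Nirenberg inequality
$$\|f\|_{L^{\infty}_{x,y}}^{4}\lesssim \|f\|_{L^{2}}\|f_{x}\|_{L^{2}}\|f_{y}\|_{L^{2}}\|f_{xy}\|_{L^{2}},$$
valid for $f$ vanishing on $\{y=0,\beta\}$ and decaying as $x\to\infty$ (proved by two successive one-dimensional Agmon inequalities plus Cauchy--Schwarz). Substituting $f=u_{x}\,x^{5/4}$ and distributing the weight onto the derivatives produces exactly the quantities $\|u_{xy}\,x\|_{L^{2}}$, $\|u_{xxy}\,x^{2}\|_{L^{2}}$ appearing in $X_{2},X_{3}$, while the lower-order factor $\|u_{x}x^{5/4}\|_{L^{2}}$ is recovered from the equation \eqref{u_eps_linearized}$_{1}$ and the $X_{1}$-norm. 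Substituting $f=\sqrt{\eps}v_{x}x^{3/2}$ yields \eqref{vg_Linfty} analogously.

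The principal technical obstacle is the cutoff bookkeeping in the long-$x$ step. The norms $X_{2},X_{3}$ carry cutoffs $\rho_{2},\rho_{3}$ supported in $\{x\geq 60\},\{x\geq 110\}$, so they alone cannot control $L^{\infty}$ on the transition slab $\{20\leq x\leq 110\}$; this is precisely the role of the $Y_{1},Y_{2}$ norms, whose supports (including the interior piece $\|\p_{yy}(u,h)\|_{L^{2}(x\leq 2000)}$ inside $Y_{1}$ and the $\zeta_{3}$-cutoff in $Y_{2}$) cover the complementary region. To implement the Gagliardo--Nirenberg inequality globally on $\{x\geq 20\}$ I will introduce a smooth partition of unity subordinate to $\{\rho_{2},\rho_{3},\zeta_{3}\}$, apply the inequality on each localized piece, and absorb the commutators produced by the cutoff derivatives into the interior $L^{2}$-terms of $Y_{1}$. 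The loss $\eps^{-M_{1}}$ in \eqref{uvhg_infty} is not delicate at this stage: it will eventually be paid for by the small prefactors $\eps^{N_{i}}$ in the $\mathcal{U}$-part of the $\mathcal{Z}$-norm, whose exponents are ultimately fixed in \eqref{constants} to close the nonlinear scheme.
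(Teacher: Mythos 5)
Your reduction of $(u,h,\sqrt\eps v,\sqrt\eps g)$ to their $x$-derivatives is correct, and the short-$x$ step via localized anisotropic elliptic regularity with an $\eps^{-M_1}$ loss is an acceptable route, since an arbitrary power of $\eps$ may be lost there. (The paper gives no proof of this lemma, referring instead to Lemmas 2.11, 2.18, 2.19 of \cite{Iyerglobal2}.)

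The long-$x$ step has a genuine gap. Substituting $f=u_x\,x^{5/4}$ into the product-form inequality $\|f\|_{L^\infty}^4\lesssim\|f\|_{L^2}\|f_x\|_{L^2}\|f_y\|_{L^2}\|f_{xy}\|_{L^2}$ forces the \emph{same} weight $x^{5/4}$ onto all four factors; in particular it produces $\|u_x\,x^{5/4}\|_{L^2}$ and $\|u_{xy}\,x^{5/4}\|_{L^2}$. But the norms $X_1, Y_1, Y_2$ control $u_x=-v_y$ only with weight $x^{1/2}$ and $u_{xy}$ only with weight $x$; on $\{x\geq20\}$ these are strictly weaker weights, so neither factor is bounded by $\|(u,v,h,g)\|_{X_1\cap Y_1\cap Y_2}$. (You flag the first factor but not the second, and the appeal to ``the equation and the $X_1$-norm'' to recover $\|u_x\,x^{5/4}\|_{L^2}$ is not an argument: the equation supplies $u_{yy}$, not an improved $x$-decay of $u_x$.) The correct mechanism applies the 2D Agmon identity to the \emph{unweighted} $u_x$ and distributes the weight asymmetrically only after moving it inside the integral. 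Using $u_x|_{y=0}=0$ and $u_x\to0$ as $x\to\infty$,
\begin{align*}
u_x(x_0,y_0)^2&=-2\int_{x_0}^\infty\int_0^{y_0}\bigl(u_{xy}u_{xx}+u_x u_{xxy}\bigr)\,\mathrm{d}y\,\mathrm{d}x;
\end{align*}
multiply by $x_0^{5/2}$, use $x_0\leq x$ to bring the weight under the integral, and split it differently on each term, $x^{5/2}=x\cdot x^{3/2}$ against $|u_{xy}||u_{xx}|$ and $x^{5/2}=x^{1/2}\cdot x^{2}$ against $|u_x||u_{xxy}|$, so that Cauchy--Schwarz produces exactly
$\|u_{xy}\,x\|_{L^2}\|u_{xx}\,x^{3/2}\|_{L^2}+\|u_x\,x^{1/2}\|_{L^2}\|u_{xxy}\,x^{2}\|_{L^2}$,
matching the ladder of weights in $X_1$, $Y_1$, $Y_2$ via $u_x=-v_y$, $u_{xx}=-v_{xy}$. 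The geometric-mean form of Gagliardo--Nirenberg cannot see this asymmetric splitting, which is the whole point of the estimate. The same computation with $\eps\, v_x^2\,x_0^3$ and the splittings $\eps x^3=(\sqrt\eps x^{3/2})(\sqrt\eps x^{3/2})$, $\eps x^3=(\sqrt\eps x^{1/2})(\sqrt\eps x^{5/2})$ gives \eqref{vg_Linfty}. Finally, the partition-of-unity bookkeeping you propose is not needed: on $\{x\geq20\}$, $Y_1$ carries no cutoff and $\zeta_3\equiv1$, whereas $X_2,X_3$ (supported in $\{x\geq50\},\{x\geq100\}$) would leave a coverage gap if used instead.
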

\begin{proof}
The proof is similar to Lemma 2.11, Lemma 2.18 and Lemma 2.19 in \cite{Iyerglobal2} and we omit it here. See \cite{Iyerglobal2} for details.
\end{proof}
The above lemma implies that the norms $Y_1,Y_2$ should be contained in the $\mathcal{Z}$ norm, which can be controlled by the following lemma.
\begin{lemma}\label{Y_i-estimate}
Suppose that $\|(\bar u,\bar v,\bar h,\bar g)\|_{ \mathcal{Z}(\Omega^\beta)}\leq 1$. Let $(u,v,h,g)$ be the solutions to \eqref{u_eps_linearized} with \eqref{u_epsboundary_omega_N}, and $M_1$ be the constant determined in Lemma \ref{short-long-x}, then there exists large enough $M_2$ independent of $n,\eps,\gamma,\beta$ and $N_i(i=1,\cdots,6)$ such that
\begin{align}\label{Y2_inequality}
\begin{split}
  &\eps^{N_{1}}\|(u,v,h,g)\|_{Y_{1}}\\
  \lesssim & \eps^{N_{1}-M_{1}}+\eps^{\frac{n}{2}+\gamma+N_{1}-M_{1}-N_{3}}\|(\bar u,\bar v,\bar h,\bar g)\|_{\mathcal{Z}}^{2} \\
   &+\left(\eps^{N_{1}-M_{1}}+\eps^{\frac{n}{2}+\gamma+N_{1}-M_{1}-N_{3}}\right)\|(u,v,h,g)\|_{X_{1}}\\
   &+\eps^{N_{1}}\|(u,v,h,g)\|_{X_{2}}
\end{split}
\end{align}
and
\begin{align}\label{Y3_inequality}
\begin{split}
  &\eps^{N_{2}}\|(u,v,h,g)\|_{Y_{2}}\\
  \lesssim& \eps^{N_{2}-M_{2}}\left[C+\|(u,v,h,g)\|_{X_{1} \cap Y_{1}}\right]+\eps^{N_{2}}\|(u,v,h,g)\|_{X_{3}}\\
  &+\eps^{\frac{n}{2}+\gamma-N_{1}-N_{3}-M_{2}+N_{2}}\|(u,v,h,g)\|_{X_{1} \cap Y_{1}}\\
  &+\eps^{N_{2}-M_{2}-2 N_{1}-2 N_{3}+\frac{n}{2}+\gamma}\|(\bar u,\bar v,\bar h,\bar g)\|_{\mathcal{Z}}^{2}.
\end{split}
\end{align}
\end{lemma}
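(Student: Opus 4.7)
I will prove both \eqref{Y2_inequality} and \eqref{Y3_inequality} by a common localization strategy: split the domain $\Omega^\beta$ into an interior region $\{x\geq 60\}$ and a boundary strip $\{1\leq x\leq 60\}$ determined by the cutoffs $\rho_2,\rho_3$. On the interior, $\rho_2\equiv 1$, so the corresponding components of $Y_1$ coincide with those of $X_2$, and analogously the $Y_2$-integrand agrees with that of $X_3$ on $\{x\geq 110\}$. This immediately yields the term $\eps^{N_1}\|(u,v,h,g)\|_{X_2}$ on the right of \eqref{Y2_inequality} and $\eps^{N_2}\|(u,v,h,g)\|_{X_3}$ on the right of \eqref{Y3_inequality}. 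What remains is the boundary-strip contribution together with the pure-$y$ piece $\|\partial_{yy}(u,h)\|_{L^2(x\leq 2000)}$ appearing in $Y_1$.

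\textbf{The boundary-strip estimate for $Y_1$.} I will first recover $\|\partial_{yy}(u,h)\|_{L^2(x\leq 2000)}$ algebraically from the $u$- and $h$-equations in \eqref{u_eps_linearized} by writing $\partial_{yy}u = \eps\partial_{xx}u + S_u(u,v,h,g) + p_x - f_u$ and $\partial_{yy}h = \eps\partial_{xx}h + S_h(u,v,h,g) - f_h$, then taking the $L^2$ norm on $\{x\leq 2000\}$. The viscous term $\eps\partial_{xx}u$ is absorbed into $\|(u,v,h,g)\|_{X_1\cap X_2}$, while $p_x$ is handled through the divergence-free constraint exactly as in the Navier--Stokes treatment of \cite{Iyerglobal2}. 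The convective part $S_u$ is estimated by $\|(u,v,h,g)\|_{X_1}$ combined with the pointwise bounds on the approximate profiles from Theorem \ref{summary}, and the source $\eps^{-n/2-\gamma}R^{u,n}$ produces the constant contribution $\eps^{N_1-M_1}$ via Theorem \ref{R^un_theorem}. The nonlinear pieces $N^u,N^h$ include the mixed-regularity terms $\bar v u_y,\bar g h_y,\bar v h_y,\bar g u_y$ (see Remark \ref{remark_linear_vu_y_pre}); these are handled by $L^\infty\cdot L^2$ pairings where the $L^\infty$ bound on $\bar v,\bar g$ on $\{x\leq 2000\}$ is supplied by the short-$x$ estimate \eqref{uvhg_infty} of Lemma \ref{short-long-x} — this is precisely the source of the factor $\eps^{-M_1}$. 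The remaining $\partial_{xy}(u,h),\sqrt\eps\partial_{xx}(v,g),\partial_{xy}(v,g)$ components supported near $x=1$ are obtained by differentiating \eqref{u_eps_linearized} once in $x$ and testing against the standard multipliers $(u_x,v_x x^{3/2},h_x,g_x x^{3/2})$ localized by $1-\rho_2$, the in-flow conditions $(u,v,h,g)|_{x=1}=0$ eliminating any boundary terms at $x=1$.

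\textbf{Bootstrap to $Y_2$.} With $Y_1$ controlled, the proof of \eqref{Y3_inequality} is structurally identical: differentiate \eqref{u_eps_linearized} twice in $x$, test against multipliers weighted by $\zeta_3 x^{5/2}$, and feed the already-established $\|(u,v,h,g)\|_{X_1\cap Y_1}$ into the right-hand side whenever the product rule forces a second $x$-derivative of a lower-order quantity. The interior contribution is absorbed into $\|(u,v,h,g)\|_{X_3}$, while the boundary-strip contribution requires one additional local elliptic regularity step beyond what $M_1$ already furnished, generating the larger constant $M_2\geq M_1$ in \eqref{Y3_inequality}. The bookkeeping of $\eps$-powers for the cross-nonlinear terms then produces the stated contributions $\eps^{N_2-M_2-2N_1-2N_3+n/2+\gamma}\|(\bar u,\bar v,\bar h,\bar g)\|_{\mathcal Z}^2$ and $\eps^{n/2+\gamma-N_1-N_3-M_2+N_2}\|(u,v,h,g)\|_{X_1\cap Y_1}$, while the residual source $R^{u,n},R^{h,n}$ contributes the constant $\eps^{N_2-M_2}(C+\|(u,v,h,g)\|_{X_1\cap Y_1})$ through Theorem \ref{R^un_theorem}.

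\textbf{Main obstacle.} The delicate point is the appearance of $u_y,h_y$ (rather than $\bar u_y,\bar h_y$) inside $N^u,N^h$, which forces pairing the $L^\infty$ control of $\bar v,\bar g$ with $L^2$ norms of the current iterate. The only uniform $L^\infty$ bound for $\bar v,\bar g$ on $\{x\leq 2000\}$ available to us is Lemma \ref{short-long-x}, which carries the unavoidable loss $\eps^{-M_1}$ (respectively $\eps^{-M_2}$ at the next order); this loss must be compensated by the prefactor $\eps^{n/2+\gamma}$ from the nonlinearity and by the $\eps^{-N_3}$ arising when the long-$x$ $L^\infty$ piece of the $\mathcal U$-component of $\|\cdot\|_{\mathcal Z}$ is used. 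Reconciling these competing powers of $\eps$ is what dictates the precise form of the exponents displayed in \eqref{Y2_inequality}--\eqref{Y3_inequality} and will, a posteriori, constrain the admissible choice of $N_1,\ldots,N_6$ introduced in \eqref{norm}.
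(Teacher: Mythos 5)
The paper omits this proof, deferring entirely to Corollaries 2.12 and 2.14 of \cite{Iyerglobal2}. Your outline captures the correct structure: split the domain at the cutoff scale so that the $Y_1$ integrands reduce to those of $X_2$ where $\rho_2\equiv 1$, obtain $\|\partial_{yy}(u,h)\|_{L^2(x\leq 2000)}$ algebraically from the equations, and pay the loss $\eps^{-M_1}$ through the short-$x$ $L^\infty$ estimate of Lemma \ref{short-long-x} when the mixed-regularity terms $\bar v u_y,\bar g h_y,\bar v h_y,\bar g u_y$ are paired. Two points need care, though. Your claim that testing the $\partial_x$-differentiated system against $(u_x,v_x x^{3/2},h_x,g_x x^{3/2})$ with $1-\rho_2$ localization produces no boundary contributions at $x=1$ because $(u,v,h,g)|_{x=1}=0$ is too quick: divergence-free does give $u_x|_{x=1}=-\phi_{xy}|_{x=1}=0$ from $v|_{x=1}=\phi_x|_{x=1}=0$, but $v_x|_{x=1}=\phi_{xx}|_{x=1}$ is genuinely unprescribed, so the $x=1$ boundary terms arising from $\iint\eps v_{xxx}v_x$ and the convective pairings do not cancel automatically. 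In \cite{Iyerglobal2} the boundary strip is instead handled through a local elliptic estimate for the operator $\Delta_\eps^2$ in the stream-function formulation, for which the Dirichlet data $\phi|_{x=1}=\phi_x|_{x=1}=0$ of \eqref{u_epsboundary_stream} are exactly what is available and whose $\eps$-degenerate ellipticity constant is what generates $M_1$ (resp.\ $M_2$); you invoke ``local elliptic regularity'' for the $Y_2$ step, and the same mechanism should replace the energy-estimate phrasing in the $Y_1$ step. Second, the $\eps$-exponents in \eqref{Y2_inequality}--\eqref{Y3_inequality} are asserted rather than traced through the pairings; in particular the factors $\eps^{-N_3}$ (inverting the $\mathcal U$-norm prefactor when invoking $\|(\bar u,\bar h)x^{1/4}\|_{L^\infty}$) and $\eps^{-N_1}$ (feeding a $Y_1$-controlled quantity back into the $Y_2$ estimate at price $\eps^{N_1}$) deserve a line each, since these losses are exactly what the choice \eqref{constants} of the $N_i$ and the condition \eqref{n-choice} on $n$ are tuned to dominate.
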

\begin{proof}
This lemma can be obtained via the similar arguments as those for Corallary 2.12 and Corallary 2.14 in \cite{Iyerglobal2}, here we omit the proof here.
\end{proof}
The above Lemma \ref{Y_i-estimate} implies that we should add the second and third order norms $X_2,X_3$ to the $\mathcal{Z}$ norm. Thanks to the cut-off and almost linear properties of weight functions introduced in $X_2,X_3$, the arguments  are significantly simplified, compared with $Y_1,Y_2$.

Now we can determine the constant $n$ in (\ref{expansion}) in the following way. Let $M_1,M_2$ be the constants determined in \eqref{Y2_inequality} and \eqref{Y3_inequality} respectively and
\begin{align}\label{constants}
\begin{cases}
N_1\ {\rm is~ chosen~with~}N_1-M_1=100,\\
N_2\ {\rm is~ chosen~with~}N_2-M_2=2N_1,\\
N_k=N_2, \ k=3,\cdots,6,
\end{cases}
\end{align}
then we choose $n$ such that
\begin{equation}\label{n-choice}
\frac{n}{2}+\gamma>2N_1-N_2+N_3+M_2+100.
\end{equation}
Obviously, one has $\frac{n}{2}+\gamma>N_3$.

The last two terms in uniform norm \eqref{uniform_norm} are needed for some terms of $\p_{xx} (N^u,N^h)$, which are parts of the third order estimates (see also \eqref{W3_3})
\begin{align}\label{infty_vxx}
\begin{split}
  & \eps^{\frac{n}{2}+\gamma}\iint\left(\bar{v}_{x x} u_{y}-\bar{g}_{x x} h_{y}\right)\cdot\left( u_{x x} \rho_{3}^{4} x^{4}+ v_{xxy} \rho_{3}^{5} x^{5}\right)\\
  & +\eps^{\frac{n}{2}+\gamma}\iint\left(\bar{v}_{x x} h_y-\bar{g}_{x x} u_{y}\right) \cdot\left( h_{x x} \rho_{3}^{4} x^{4}+ g_{xxy} \rho_{3}^{5} x^{5}\right)\\
  \leq& \eps^{\frac{n}{2}+\gamma}\left[\left\|(u_{x x},h_{x x})x^{\frac{3}{2}}\right\|_{L^{2}}
  +\left\|\left(v_{xxy},g_{xxy}\right)x^{\frac{5}{2}}\right\|_{L^{2}}\right]\\
  & \cdot\sup_{x\geq 20}\left\|\left(u_{y}, h_{y}\right) x^{\frac{1}{2}}\right\|_{L_{y}^{2}} \left( \int_{20}^{\infty} x^{4}\left\|\left(\bar v_{x x}, \bar{g}_{x x}\right)\right\|_{L^\infty}^{2}\right)^{\frac{1}{2}}.
\end{split}
\end{align}
With this, following the similar arguments for Corollary 2.21 in \cite{Iyerglobal2}, we have
\begin{lemma}\label{estimate-uniform-norm}
Let $N_i(i=1,\cdots,6)$ be \eqref{constants}, then it holds that
\begin{align}\label{embedding_inequality}
\begin{split}
  \|(u ,v ,h ,g)\|_{ \mathcal{U}}
  \lesssim \eps^{\max \left\{N_{1}, N_{2}\right\}}\|(u ,v ,h ,g)\|_{X_{1} \cap Y_{1} \cap Y_{2}}.
\end{split}
\end{align}
\end{lemma}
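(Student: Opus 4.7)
The plan is to bound each of the four summands of $\|(u,v,h,g)\|_{\mathcal{U}}$ separately against $\eps^{N_2}\|(u,v,h,g)\|_{X_1\cap Y_1\cap Y_2}$, exploiting $\max\{N_1,N_2\}=N_2$ and $N_3=N_4=N_5=N_6=N_2$ from \eqref{constants}, and relying on the running hypothesis $\|(\bar u,\bar v,\bar h,\bar g)\|_{\mathcal{Z}}\leq 1$ that accompanies Lemma \ref{short-long-x}.

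For the two pointwise-in-$x$ summands I would split the half-line $\{x\geq 1\}$ into the short piece $\{x\leq 2000\}$ and the long tail $\{x\geq 20\}$. On the long tail, \eqref{uh_Linfty}--\eqref{vg_Linfty} give $\sup_{x\geq 20}\|(u,h)x^{1/4}+\sqrt\eps(v,g)x^{1/2}\|_{L^\infty}+\sup_{x\geq 20}\|(u_x,h_x)x^{5/4}+\sqrt\eps(v_x,g_x)x^{3/2}\|_{L^\infty}\lesssim \|(u,v,h,g)\|_{X_1\cap Y_1\cap Y_2}$, which after multiplication by $\eps^{N_3}=\eps^{N_4}=\eps^{N_2}$ yields the desired bound. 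On the short interval the weights $x^{1/4}, x^{1/2}$ are bounded, so the short-$x$ estimate \eqref{uvhg_infty} applies; its nonlinear term $\eps^{n/2+\gamma}\|(\bar u,\bar v,\bar h,\bar g)\|_{L^\infty}(\|(\bar u,\bar v,\bar h,\bar g)\|_{X_1}+\|(u,v,h,g)\|_{X_1})$ is absorbed using $\|(\bar u,\bar v,\bar h,\bar g)\|_{L^\infty}\lesssim \eps^{-N_3}$ inherent in the $\mathcal{U}$-part of $\|(\bar u,\bar v,\bar h,\bar g)\|_{\mathcal{Z}}\leq 1$; the choice of $n$ in \eqref{n-choice} then ensures the resulting power of $\eps$ dominates $\eps^{N_2}$.

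For the third summand I would write $u_y(x,y)=-\int_x^\infty u_{xy}(s,y)\,ds$ (valid by the decay as $x\to\infty$), whence Cauchy--Schwarz against the weight $s^{-2}$ gives
\begin{equation*}
|u_y(x,y)|^2\lesssim x^{-1}\int_x^\infty s^2|u_{xy}(s,y)|^2\,ds,
\end{equation*}
so that Fubini yields $x\|(u_y,h_y)(x,\cdot)\|_{L^2_y}^2\lesssim \|s(u_{xy},h_{xy})\|_{L^2}^2\leq \|(u,v,h,g)\|_{Y_1}^2$, and multiplication by $\eps^{N_5}=\eps^{N_2}$ closes this piece.

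For the fourth summand I would apply the one-dimensional Sobolev trace $\|f\|_{L^\infty_y}^2\leq 2\|f\|_{L^2_y}\|f_y\|_{L^2_y}$ to $f=\sqrt\eps v_{xx}$ (and to $\sqrt\eps g_{xx}$), followed by Cauchy--Schwarz in $x$, to obtain
\begin{equation*}
\int_{20}^\infty x^4\eps\|v_{xx}\|_{L^\infty_y}^2\,dx\lesssim \sqrt\eps\,\bigl\|\sqrt\eps v_{xx}\,x^{3/2}\bigr\|_{L^2}\,\bigl\|v_{xxy}\,x^{5/2}\bigr\|_{L^2}\lesssim \sqrt\eps\,\|(u,v,h,g)\|_{Y_1}\|(u,v,h,g)\|_{Y_2},
\end{equation*}
so the square root is at most $\eps^{1/4}\|(u,v,h,g)\|_{Y_1\cap Y_2}$, and the prefactor $\eps^{N_6}=\eps^{N_2}$ safely absorbs the $\eps^{1/4}$. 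The main obstacle will be the short-$x$ $L^\infty$ step: the $\eps^{-M_1}$ loss in \eqref{uvhg_infty} must be beaten by the smallness inherited from $\|(\bar u,\bar v,\bar h,\bar g)\|_{\mathcal{Z}}\leq 1$ together with \eqref{n-choice}, and it is precisely this balancing of exponents that motivates the hierarchy of constants in \eqref{constants} and the lower bound on $n$.
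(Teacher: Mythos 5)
Most of the proposal is sound: your treatment of the second, third, and fourth summands of $\|\cdot\|_{\mathcal{U}}$ (the $x\geq 20$ pointwise bound via \eqref{uh_Linfty}--\eqref{vg_Linfty}, the tail estimate $u_y(x,y)=-\int_x^\infty u_{xy}(s,y)\,ds$ followed by Cauchy--Schwarz against $s^{-2}$, and the Agmon-in-$y$ plus Cauchy--Schwarz-in-$x$ argument for $\sqrt\eps\|(v_{xx},g_{xx})\|_{L^\infty_y}$) is correct and matches what the $X_1$, $Y_1$, $Y_2$ norms were designed to deliver, with $\eps^{N_k}=\eps^{N_2}$ for $k\geq 3$ absorbing the small losses.

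The gap is in the first summand on the short region $1\leq x\leq 20$. Lemma \ref{estimate-uniform-norm}, as stated, is a pure norm comparison: the right-hand side of \eqref{embedding_inequality} is a multiple of $\|(u,v,h,g)\|_{X_1\cap Y_1\cap Y_2}$ with no additive constant and no reference to $(\bar u,\bar v,\bar h,\bar g)$. But \eqref{uvhg_infty} is an a priori estimate for the linearized system, and its right-hand side contains the additive constant $C$ (coming from the forcing $\eps^{-n/2-\gamma}R^{u,n}$ etc.) and the term $\|(\bar u,\bar v,\bar h,\bar g)\|_{X_1}$. Multiplying by $\eps^{N_3}$ does not turn $\eps^{N_3-M_1}C$ or $\eps^{N_3-M_1}\|(\bar u,\bar v,\bar h,\bar g)\|_{X_1}$ into a multiple of $\|(u,v,h,g)\|_{X_1\cap Y_1\cap Y_2}$; neither term is homogeneous in $(u,v,h,g)$. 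Absorbing only the nonlinear term, as you propose, leaves these two behind, so your argument proves a weaker statement than the lemma claims. (If the lemma were allowed to carry such extra terms, the $\mathcal{Z}$-closure in Lemma \ref{Z-embeding} would need to be re-derived to accommodate them.)

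What should replace \eqref{uvhg_infty} on $\{x\leq 20\}$ is a genuine embedding. The norms already contain exactly what is needed: $\|u_y\|_{L^2}$ and $\|v_y x^{1/2}\|_{L^2}$ from $X_1$, and from $Y_1$ the terms $\|u_{xy}\cdot x\|_{L^2}$ and $\|\p_{yy}(u,h)\|_{L^2(x\leq 2000)}$, together with the vanishing at $x=1$ and $y=0$. Agmon's inequality in $y$ gives
\begin{align*}
\|u(x,\cdot)\|_{L^\infty_y}^2\lesssim\|u(x,\cdot)\|_{L^2_y}\,\|u_y(x,\cdot)\|_{L^2_y},
\end{align*}
while $u|_{x=1}=0$ and $u_x=-v_y$ give $\|u(x,\cdot)\|_{L^2_y}\lesssim(x-1)^{1/2}\|v_y\|_{L^2(\{s\leq x\})}$, and $u_y|_{x=1}=0$ gives $\sup_{x\leq 20}\|u_y(x,\cdot)\|_{L^2_y}^2\lesssim\|u_y\|_{L^2}\|u_{xy}\|_{L^2(x\leq 20)}$. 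Combining these, and repeating for $h$, $v$, $g$ (for $v$ and $g$ the $\eps^{-1/2}$ loss from $\|\sqrt\eps v_x x^{1/2}\|_{L^2}$ is compensated by the explicit $\sqrt\eps$ weight in the $\mathcal{U}$-norm), yields $\sup_{x\leq 20}\|(u,h)+\sqrt\eps(v,g)\|_{L^\infty_y}\lesssim\|(u,v,h,g)\|_{X_1\cap Y_1}$, and then the prefactor $\eps^{N_3}=\eps^{N_2}$ gives the claimed inequality without any appeal to \eqref{uvhg_infty} or to the auxiliary hypothesis on $\bar u,\bar v,\bar h,\bar g$.
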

With the above preparations at hands, one can derive the following embedding lemma.
\begin{lemma}\label{Z-embeding}
Suppose that $\|(\bar u,\bar v,\bar h,\bar g)\|_{ \mathcal{Z}(\Omega^\beta)}\leq 1$ and $N_i(i=1,\cdots,6)$ are determined in \eqref{constants}, then there exists $\omega(N_{i})$ independent of $\eps,\gamma,\beta$ such that
\begin{align}\label{Z_inequality}
\begin{split}
\|(u,v,h,g)\|_{\mathcal{Z}(\Omega^{\beta})}
  \lesssim& \eps^{100}+\|(u,v,h,g)\|_{X_{1} \cap X_{2} \cap X_{3}(\Omega^{\beta})}\\
  &+\eps^{\frac{n}{2}+\gamma-\omega(N_{i})}\|(\bar u,\bar v,\bar h,\bar g)\|_{\mathcal{Z}(\Omega^{\beta})}^{2}.
\end{split}
\end{align}
\end{lemma}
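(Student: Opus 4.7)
The plan is to decompose the $\mathcal{Z}$ norm into its four constituent pieces $X_1 \cap X_2 \cap X_3$, $\varepsilon^{N_1}\|\cdot\|_{Y_1}$, $\varepsilon^{N_2}\|\cdot\|_{Y_2}$, and $\|\cdot\|_{\mathcal{U}}$, and to estimate each piece separately by assembling the auxiliary Lemmas \ref{short-long-x}--\ref{estimate-uniform-norm} already at hand. The $X_1\cap X_2\cap X_3$ piece appears unchanged on the right-hand side of \eqref{Z_inequality}, so it needs no further treatment; the work concentrates on the remaining three.

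First I would handle the $Y_1$ contribution by multiplying \eqref{Y2_inequality} by $\varepsilon^{N_1}$ and invoking the choice $N_1 - M_1 = 100$ from \eqref{constants}, together with $\frac{n}{2}+\gamma > N_3$ (a direct consequence of \eqref{n-choice}). This converts $\varepsilon^{N_1-M_1}$ into $\varepsilon^{100}$, and the mixed exponent $\varepsilon^{\frac{n}{2}+\gamma+N_1-M_1-N_3} = \varepsilon^{100+\frac{n}{2}+\gamma-N_3}$ is bounded above by $\varepsilon^{100}$. The residual $\varepsilon^{N_1}\|(u,v,h,g)\|_{X_2}$ is absorbed into the $X_1\cap X_2\cap X_3$ term already on the right-hand side, since $\varepsilon^{N_1}\lesssim 1$. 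Thus $\varepsilon^{N_1}\|\cdot\|_{Y_1}$ respects the target bound.

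Next, for the $Y_2$ contribution I would multiply \eqref{Y3_inequality} by $\varepsilon^{N_2}$ and use $N_2 - M_2 = 2N_1$. The term $\varepsilon^{N_2-M_2}\|(u,v,h,g)\|_{X_1\cap Y_1}$ becomes $\varepsilon^{2N_1}\|\cdot\|_{X_1\cap Y_1} = \varepsilon^{N_1}\cdot(\varepsilon^{N_1}\|\cdot\|_{Y_1})+\varepsilon^{2N_1}\|\cdot\|_{X_1}$, both of which are controlled by the previous $Y_1$ step. The nonlinear exponent $N_2 - M_2 - 2N_1 - 2N_3 + \frac{n}{2} + \gamma = -2N_3 + \frac{n}{2}+\gamma$ is positive—in fact large—by the choice \eqref{n-choice}, so that coefficient is $\lesssim\varepsilon^{100}$ times $\|(\bar u,\bar v,\bar h,\bar g)\|_{\mathcal{Z}}^2$. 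The remaining $\varepsilon^{N_2}\|\cdot\|_{X_3}$ piece is again swallowed into the $X_1\cap X_2\cap X_3$ norm on the right. Finally, the $\mathcal{U}$ norm is handled by Lemma \ref{estimate-uniform-norm}, giving $\|\cdot\|_{\mathcal{U}} \lesssim \varepsilon^{\max\{N_1,N_2\}}\|\cdot\|_{X_1\cap Y_1\cap Y_2}$; since $N_2 > N_1$, combining with the $Y_1$ and $Y_2$ bounds just derived reduces it to the same three categories.

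The main obstacle is really bookkeeping: at every step one has to keep rigorous track of the $\varepsilon$-exponents to guarantee that they remain $\geq 100$ (or at least stay positive and bounded below independently of $n,\gamma,\beta$) after each multiplication and substitution. This is precisely what the hierarchy \eqref{constants}--\eqref{n-choice} was engineered to enforce: $N_1 - M_1 = 100$ produces the $\varepsilon^{100}$ linear residue; $N_2 - M_2 = 2N_1$ lets the $Y_1$ bound be reinvested into the $Y_2$ bound at the cost of one power of $\varepsilon^{N_1}$; and $\frac{n}{2}+\gamma$ is taken large enough to dominate every appearance of $N_i$ in the nonlinear exponents. Any loosening of these inequalities would prevent the resulting constant $\omega(N_i)$ in \eqref{Z_inequality} from being a universal constant independent of $\varepsilon$, and would break the contraction-mapping argument needed for existence of $(u,v,h,g)$.
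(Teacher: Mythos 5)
Your overall strategy is the right one, and matches the bookkeeping argument the paper intends (the paper itself defers to Theorem 2.22 of \cite{Iyerglobal2}). The $X_1\cap X_2\cap X_3$ piece passes through, the $Y_1$ piece is controlled by \eqref{Y2_inequality} using $N_1-M_1=100$ and $\tfrac n2+\gamma>N_3$, the $Y_2$ piece by \eqref{Y3_inequality} using $N_2-M_2=2N_1$ together with the $Y_1$ bound, and the $\mathcal{U}$ piece by Lemma \ref{estimate-uniform-norm}. Small reading point: \eqref{Y2_inequality} and \eqref{Y3_inequality} already carry the factors $\eps^{N_1}$, $\eps^{N_2}$ on their left sides, so there is no further multiplication to perform; you simply invoke them as stated.

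However, one computation is overstated. In the $Y_2$ step you assert that the nonlinear exponent $N_2-M_2-2N_1-2N_3+\tfrac n2+\gamma=-2N_3+\tfrac n2+\gamma$ is ``positive — in fact large — by the choice \eqref{n-choice},'' so that the term is $\lesssim\eps^{100}\|(\bar u,\bar v,\bar h,\bar g)\|_{\mathcal{Z}}^2$. This does not follow. With $N_2=N_3$, \eqref{n-choice} reads $\tfrac n2+\gamma>2N_1+M_2+100=N_3+100$, hence $\tfrac n2+\gamma-N_3>100$ (which is exactly what validates your $Y_1$ step), but $\tfrac n2+\gamma-2N_3>100-N_3$ and $N_3=M_2+2N_1\geq 200$, so positivity of $\tfrac n2+\gamma-2N_3$ is \emph{not} guaranteed by \eqref{constants}--\eqref{n-choice}. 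Fortunately this over-claim is harmless for the lemma as stated: the lemma only requires exhibiting some $\omega(N_i)$ independent of $\eps,\gamma,\beta$ for which the nonlinear contribution fits the form $\eps^{\frac n2+\gamma-\omega(N_i)}\|(\bar u,\bar v,\bar h,\bar g)\|_{\mathcal{Z}}^2$. Taking $\omega(N_i)=2N_3$ (more precisely, the maximum of the exponents appearing in each step, including those regenerated when the $Y_1$ bound is substituted into the $Y_2$ estimate) produces exactly that form, whether or not the resulting exponent is positive at this stage. You should therefore drop the assertion of positivity and simply record the term as $\eps^{\frac n2+\gamma-\omega(N_i)}\|\cdot\|_{\mathcal{Z}}^2$.
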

\begin{proof}
The proof is similar to that for Theorem 2.22 in \cite{Iyerglobal2}, here we omit the proof. See \cite{Iyerglobal2} for details.
\end{proof}

It remains to derive the estimates of $\|(u,v,h,g)\|_{X_{1} \cap X_{2} \cap X_{3}(\Omega^{\beta})}$ for the system \eqref{u_eps_linearized}. Fortunately, applying 1-st to 3-rd order energy estimates and positivity estimates (similar to the proof for Theorem 3.19 in \cite{Iyerglobal2}), we have the following lemma:

\begin{lemma}\label{lemma_Y23Z}
Assume that $\|(\bar u,\bar v,\bar h,\bar g)\|_{ \mathcal{Z}(\Omega^\beta)}\leq 1$, we have the following linear estimates
\begin{equation}\label{X123_estimate_pre}
  \|(u,v,h,g)\|_{X_{1} \cap X_{2} \cap X_{3}(\Omega^{\beta})}^{2} \lesssim \mathcal{W}_{1}+\mathcal{W}_{2}+\mathcal{W}_{3},
\end{equation}
where the terms $\mathcal{W}_{1},\mathcal{W}_{2},\mathcal{W}_{3}$ are given by
\begin{align}
\label{def_W1}
&\mathcal{W}_{1}=\mathcal{W}_{1, E}+\mathcal{W}_{1, P},\\
\label{def_W1E}
&\mathcal{W}_{1, E}
  =\left|\iint f_{u}\cdot u \right|+\iint \eps\left| f_{v}\right| \cdot \left|v \right|
  +\left|\iint f_{h}\cdot h \right|+\iint \eps\left| f_{g}\right| \cdot \left|g \right|,\\
\label{def_W1P}
&\mathcal{W}_{1, P}
  =\iint \left|f_{u}\right||v_y|x+\iint \eps\left| f_{v}\right||v_x|x
  +\iint \left|f_{h}\right||g_y|x+\iint \eps\left| f_{g}\right||g_x|x,
\end{align}
and
\begin{align}
\label{def_W2}
&\mathcal{W}_{2}=\mathcal{W}_{2, E}+\mathcal{W}_{2, P},\\
\label{def_W2E}
&\mathcal{W}_{2, E}
  =\iint|\p_{x}f_u|\cdot|u_{x} |\cdot|\rho_{2}^{2} x^{2}|+\iint \eps|\p_{x}f_v|\cdot|v_{x} |\cdot|\rho_{2}^{2}x^{2}|\nonumber\\
  &\qquad\quad +\iint|\p_{x}f_h|\cdot|h_{x} |\cdot|\rho_{2}^{2} x^{2}|+\iint \eps|\p_{x}f_g|\cdot|g_{x} |\cdot|\rho_{2}^{2}x^{2}|,\\
\label{def_W2P}
&\mathcal{W}_{2, P}
  =\iint|\p_{x}f_u|\cdot|v_{x y} |\cdot|\rho_{2}^{3} x^{3}|+\iint \eps|\p_{x}f_v|\cdot|v_{x x} |\cdot|\rho_{2}^{3} x^{3}|\nonumber\\
  &\qquad\quad +\iint|\p_{x}f_h|\cdot|g_{x y} |\cdot|\rho_{2}^{3} x^{3}|+\iint \eps|\p_{x}f_g|\cdot|g_{x x} |\cdot|\rho_{2}^{3} x^{3}|,
\end{align}
and
\begin{align}
\label{def_W3}
&\mathcal{W}_{3}=\mathcal{W}_{3, E}+\mathcal{W}_{3, P},\\
\label{def_W3E}
&\mathcal{W}_{3, E}
  =\iint|\p_{xx}f_u|\cdot|u_{x x} |\rho_{3}^{4}x^4+\iint \eps|\p_{xx}f_v|\cdot|v_{x x} |\rho_{3}^{4}x^4\nonumber\\
  &\qquad\quad +\iint|\p_{xx}f_h|\cdot|h_{x x} |\rho_{3}^{4}x^4+\iint \eps|\p_{xx}f_g|\cdot|g_{x x} |\rho_{3}^{4}x^4,\\
\label{def_W3P}
&\mathcal{W}_{3, P}
  =\iint|\p_{xx}f_u|\cdot|u_{x x x} | \rho_{3}^{5}x^5+\iint \eps|\p_{xx}f_v|\cdot|v_{x x x} |\rho_{3}^{5}x^5\nonumber\\
  &\qquad\quad +\iint|\p_{xx}f_h|\cdot|h_{x x x} | \rho_{3}^{5}x^5+\iint \eps|\p_{xx}f_g|\cdot|g_{x x x} |\rho_{3}^{5}x^5.
\end{align}
The subscript $E$ and $P$ represent the forcing terms produced by the energy and positivity estimates respectively, $f_u,f_v,f_h,f_g$ are given by \eqref{f^u_linear}, and the cut-off functions $\rho_2,\rho_3$ are defined as in \eqref{rho_k} for $k=2,3$.
\end{lemma}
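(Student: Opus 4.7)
The strategy is to establish \eqref{X123_estimate_pre} through a three-level hierarchy of energy and positivity estimates applied to \eqref{u_eps_linearized}, adapting the Navier--Stokes framework of \cite{Iyerglobal2,Iyerglobal3} to the coupled MHD system. Each level produces one of the six weighted $L^2$ contributions appearing in \eqref{X1norm}--\eqref{X3norm}, and each level uses two multipliers: an ``energy'' multiplier returning the unknown itself (yielding the $\p_y$ dissipation of $(u,h)$) and a ``positivity'' multiplier of the form $v_y x$ or $-\eps v_x x$ (yielding the $x$-weighted control of $\p_x v,\p_y v$). The magnetic equations are tested in parallel with their velocity counterparts, and the divergence-free constraints $\p_x u+\p_y v=0,\ \p_x h+\p_y g=0$ together with \eqref{u_epsboundary_omega_N} annihilate all pressure and boundary integrals from integration by parts.

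At the first level I will multiply the $u$- and $h$-equations by $u,h$ and the $v$- and $g$-equations by $\eps v,\eps g$. After integration by parts of $-\Delta_\eps$ this produces the dissipation $\|\p_y(u,h)\|_{L^2}^2$; then testing the same system against $(v_y x,\,-\eps v_x x,\,g_y x,\,-\eps g_x x)$ and distributing $\p_x$ on the $x$-weight (using divergence-free identities to turn $u_s u_x v_y$ into an $x$-derivative of a positive quadratic form) yields the remaining $X_1$-norms $\|\sqrt\eps\p_x(v,g)x^{1/2}\|_{L^2}^2+\|\p_y(v,g)x^{1/2}\|_{L^2}^2$. The right-hand sides \eqref{f^u_linear} paired with these multipliers are precisely the integrals collected in $\mathcal{W}_{1,E}$ and $\mathcal{W}_{1,P}$.

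At the second and third levels I differentiate \eqref{u_eps_linearized} in $x$ once and twice respectively, and repeat the energy/positivity scheme with the cutoff-weighted multipliers dictated by \eqref{X2norm}--\eqref{X3norm}: $(\p_x u\,\rho_2^2 x^2,\ \eps\p_x v\,\rho_2^2 x^2,\ \p_{xy}v\,\rho_2^3 x^3,\ -\eps\p_{xx}v\,\rho_2^3 x^3)$ at the second level and the analogous $\rho_3,\ x^4,\ x^5$ versions at the third, together with their magnetic partners. Because $\rho_k$ vanishes on $[1,50+50(k-2)]$ by \eqref{rho_k}, no in-flow data at $\{x=1\}$ appear, and the commutator errors generated when derivatives fall on $\rho_k$ or on $x^k$ are strictly lower order and absorbable inductively using the already-established $X_1,X_2$ contributions. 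The forcing pairings then produce exactly $\mathcal{W}_{2,E}\cup\mathcal{W}_{2,P}$ and $\mathcal{W}_{3,E}\cup\mathcal{W}_{3,P}$.

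The main obstacle, absent in the hydrodynamic analysis of \cite{Iyerglobal2,Iyerglobal3}, is to show that the quadratic forms produced by testing the convective operators $S_u,S_v,S_h,S_g$ of \eqref{S_u} are coercive on the full vector $(u,v,h,g)$ rather than just on $(u,v)$. The key structural input is that the principal part of the tangential transport has coefficient matrix $\begin{pmatrix} u_s & -h_s\\ -h_s & u_s\end{pmatrix}$, whose smallest eigenvalue $u_s-h_s$ reduces at leading order to $1+u_p^0-(\sigma+h_p^0)\geq c_1>0$ by \eqref{1+u_p^0-sigma-h_p^0}, with the $\eps^{i/2}$-order corrections rendered $\mathcal{O}(\delta,\sigma)$-small by Theorem~\ref{summary}. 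Passing to the Els\"asser-type variables $u\pm h$ decouples the principal symbol into two Prandtl-type transports to which the multiplier calculus of \cite{Iyerglobal2} applies, while the remaining cross terms arising from $\p_x u_s,\p_y u_s,\p_x h_s,\p_y h_s$ and from the unknown-dependent pieces $\bar v u_y,\ \bar g h_y,\ \bar v h_y,\ \bar g u_y$ of the linearized nonlinearity in \eqref{N^u_linear} are handled by Cauchy--Schwarz together with the profile decay of Theorem~\ref{summary} and the $L^\infty$ control on $(\bar u,\bar v,\bar h,\bar g)$ supplied by the $\mathcal{U}$-component of the hypothesis $\|(\bar u,\bar v,\bar h,\bar g)\|_{\mathcal{Z}(\Omega^\beta)}\leq 1$.
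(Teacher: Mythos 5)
Your proposal is correct and follows essentially the same route as the paper, which itself delegates the details to Theorem 3.19 of \cite{Iyerglobal2}: three levels of energy estimates (multipliers $u, \eps v, h, \eps g$ and their $\rho_k^{2k-2}x^{2k-2}\p_x^{k-1}$ analogues) paired with positivity estimates (multipliers $v_y x, \eps v_x x, g_y x, \eps g_x x$ and cutoff-weighted $\p_x^{k-1}$ analogues), and the resulting pairings with $f_u,f_v,f_h,f_g$ are read off as $\mathcal{W}_{1},\mathcal{W}_{2},\mathcal{W}_{3}$. Your identification of the $2\times 2$ transport matrix $\begin{pmatrix} u_s & -h_s\\ -h_s & u_s\end{pmatrix}$ as the MHD-specific coercivity obstruction, and the eigenvalue bound $u_s-|h_s|\geq c_1>0$ coming from \eqref{1+u_p^0-sigma-h_p^0}, is exactly the structural point that makes the positivity estimate go through when one pairs $u_su_x-h_sh_x$ with $-u_x x$ and $u_sh_x-h_su_x$ with $-h_x x$; the paper uses this implicitly rather than recasting in Els\"asser variables, but your decomposition is a clean and equivalent way to see it. Two small imprecisions worth noting: first, $u_s u_x v_y x = -u_s u_x^2 x$ is already a sign-definite quadratic, not an $x$-derivative of one (the perfect-$\p_x$-derivative structure appears instead when the Laplacian is paired with $v_y x$); second, the control of the first-derivative pieces $u\,\p_x u_s,\ v\,\p_y u_s,\ h\,\p_x h_s,\ g\,\p_y h_s$ in $S_u,\dots,S_g$ relies specifically on the weighted Hardy inequalities \eqref{Hardy_remainder_x}--\eqref{Hardy_remainder_y} in tandem with the profile decay of Theorem~\ref{summary}, as spelled out in Remark~\ref{remark_global_hardy}; a bare Cauchy--Schwarz without the Hardy step would lose a factor of $x$ and fail to close globally in $x$.
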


\begin{remark}\label{remark_global_hardy}
The Hardy inequalities \eqref{Hardy_remainder_x} and \eqref{Hardy_remainder_y} play the key roles in the estimates for the terms in $S_u,S_v,S_h,S_g$ in \eqref{S_u}.
Let us sketch the main idea to bound some terms in $S_u$ and others are similar. For example, in the 1-st order energy estimate, thanks to Theorem \ref{summary}, $h h_{sx}$ is controlled by
\begin{align}\label{remark_hardy1}
\begin{split}
  \iint h h_{sx} \cdot u
  =&\iint \left[h_{sx}^{P,n-1}+\eps^{\frac{n}{2}} h_{p x}^{n}+h_{s x}^{E} \right]\cdot h u\\
  =&\iint y^{2} h_{sx}^{P,n-1} \cdot \frac{h}{y}\cdot \frac{u}{y}
  +\eps^{\frac{n}{2}}\iint h_{p x}^{n} y x^{1-\sigma_{n}} \cdot\frac{h}{y} \cdot \frac{u}{x^{1-\sigma_{n}} }\\
  &+\iint h_{s x}^{E}x^{\frac{3}{2}}\cdot hx^{-\frac{3}{4}} \cdot ux^{-\frac{3}{4}} \\
  \leq& \left\|z^{2} h_{s x}^{P,n-1}x\right\|_{L^\infty} \left\|(u_{y},h_{y})\right\|_{L^{2}}
  +\left\|h_{s x}^{E} x^{\frac{3}{2}} \right\|_{L^\infty} \left\|(u_{x},h_{x}) x^{\frac{1}{4}}\right\|_{L^{2}} \\
  &+\varepsilon^{\frac{n}{2}}\left\|z h_{p x}^{n} x^{\frac{3}{2}-\sigma_{n}}\right\|_{L^{\infty}} \cdot\left\|h_{y}\right\|_{L^{2}} \cdot\left\|u_{x} x^{\sigma_{n}}\right\|_{L^{2}}\\
  \leq& \left[\mathcal{O}(\delta, \sigma)+\sqrt\eps\right] \cdot\left[\left\|(u_{y},h_{y})\right\|_{L^{2}}^{2}+\left\|(u_{x},h_{x}) x^{\frac{1}{4}}\right\|_{L^{2}}^{2}\right] .
\end{split}
\end{align}
Similarly, the singular term $g h_{sy}$ can be bounded by
\begin{align}\label{remark_hardy2}
\begin{split}
  \iint g h_{sy} \cdot u
  =&\iint \left[h_{sy}^{P,n-1}+\eps^{\frac{n}{2}} h_{p y}^{n}+\sqrt\eps h_{s Y}^{E} \right]\cdot g u\\
   \leq& \left\|z^{2} h_{sy}^{P,n-1} x^{\frac{1}{2}}\right\|_{L^\infty} \cdot\left\|g_{y} x^{\frac{1}{2}}\right\|_{L^{2}} \cdot\left\|u_{y}\right\|_{L^{2}}\\
  &+\varepsilon^{\frac{n}{2}}\left\|z h_{p y}^{n} x^{1-\sigma_{n}}\right\|_{L^{\infty}} \cdot\left\|g_{y} x^{\frac{1}{2}}\right\|_{L^{2}} \cdot\left\|u_{x} x^{\sigma_{n}}\right\|_{L^{2}}\\
  &+\left\|h_{sY}^E x^{\frac{3}{2}}\right\|_{L^{\infty}} \cdot\left\|\sqrt{\varepsilon} g_{x} x^{\frac{1}{4}}\right\|_{L^{2}}\cdot\left\|u_{x} x^{\frac{1}{4}}\right\|_{L^{2}}\\
  \lesssim& \left[\mathcal{O}(\delta, \sigma)+\sqrt\eps\right] \cdot\left[\left\|u_{y}\right\|_{L^{2}}^2+\left\|(u_{x},h_{x}) x^{\frac{1}{2}}\right\|_{L^{2}}^2
  +\left\|\sqrt{\varepsilon} g_{x} x^{\frac{1}{4}}\right\|_{L^{2}}^2\right].
\end{split}
\end{align}

The estimates \eqref{remark_hardy1} and \eqref{remark_hardy2} are completely different from those in \cite{DLXmhd} (see also \cite{GN17}), where the $L^2$ norm is bounded in the following way
\begin{align*}
  \| (u,v,h,g)\|_{ L^2}^2\leq L\| (u_x,v_x,h_x,g_x)\|_{ L^2}^2.
\end{align*}
This is why only local-in-$x$ stability is obtained in \cite{DLXmhd}.
\end{remark}

The following lemma is to derive the estimates for $\mathcal{W}_{1},\mathcal{W}_{2},\mathcal{W}_{3}$, which completes the estimates.
\begin{lemma}\label{lemma_X123}
Suppose that $\|(\bar u,\bar v,\bar h,\bar g)\|_{ \mathcal{Z}(\Omega^\beta)}\leq 1$. For any small viscosity coefficient $\eps\ll\eps_*$, and large enough $n$ with \eqref{n-choice}, then it holds that
\begin{align}\label{W123_estimate}
\begin{split}
  \left|\mathcal{W}_{1}+\mathcal{W}_{2}+\mathcal{W}_{3}\right|
  \lesssim & \eps^{\frac{1}{4}-\gamma-\kappa}+\eps^{\frac{1}{4}-\gamma-\kappa}\|(u,v,h,g)\|_{X_{1} \cap X_{2} \cap X_{3}}^{2} \\
  &+\eps^{\frac{n}{2}-\omega\left(N_{i}\right)}\|(u,v,h,g)\|_{\mathcal{Z}}^{2}
  +\eps^{\frac{n}{2}-\omega\left(N_{i}\right)}\|(\bar u,\bar v,\bar h,\bar g)\|_{\mathcal{Z}}^{4},
\end{split}
\end{align}
where $\omega(N_{i})$, $N_i$ are determined in Lemma \ref{Z-embeding}, \eqref{constants} respectively and $\gamma, \kappa$ are arbitrary constants with $0\leq\gamma<\frac{1}{4}, 0<\kappa\ll 1$ and $\gamma+\kappa<\frac{1}{4}$.
\end{lemma}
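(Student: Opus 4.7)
The plan is to split each $\mathcal{W}_k$ ($k=1,2,3$) according to the decomposition
\[
  f_{\ast}=\underbrace{\eps^{-\frac{n}{2}-\gamma}R^{\ast,n}}_{\text{remainder part}}\;-\;\underbrace{N^{\ast}(u,h;\bar u,\bar v,\bar h,\bar g)}_{\text{nonlinear part}},\qquad \ast\in\{u,v,h,g\},
\]
so that $\mathcal{W}_k=\mathcal{W}_k^{R}+\mathcal{W}_k^{N}$. The remainder pieces will be absorbed into the $\eps^{\frac{1}{4}-\gamma-\kappa}$ contribution of \eqref{W123_estimate} using Theorem \ref{R^un_theorem}, while the nonlinear pieces will be paid for by the $\eps^{\frac{n}{2}-\omega(N_i)}$ contributions, the extra powers of $\eps$ being traded for $\eps^{-N_i}$ losses coming from the $\mathcal{U}$ and $Y_i$ parts of the $\mathcal{Z}$-norm.

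For the remainder contributions I plan to use Theorem \ref{R^un_theorem} together with the $x$-weighted Hardy/Poincar\'e inequalities \eqref{Hardy_remainder_x}--\eqref{Hardy_remainder_y} that are already invoked in Remark \ref{remark_global_hardy}. Concretely, at the first order I pair $\|\eps^{-\frac{n}{2}-\gamma}R^{u,n}\,x^{\frac{3}{4}-\frac{\kappa}{2}}\|_{L^{2}}\lesssim \eps^{\frac{1}{4}-\gamma-\kappa}x^{-\frac{1}{2}+2\sigma_n+\frac{\kappa}{2}}$ with $\|u\,x^{-\frac{3}{4}+\frac{\kappa}{2}}\|_{L^{2}}\lesssim \|(u,v,h,g)\|_{X_1}$ (Hardy in $y$ or in $x$ depending on which vanishing boundary is used), so that a Cauchy--Schwarz and a small-$\eps$ absorption give the desired factor $\eps^{\frac{1}{4}-\gamma-\kappa}\,\|(u,v,h,g)\|_{X_1\cap X_2\cap X_3}^2+\eps^{\frac{1}{4}-\gamma-\kappa}$. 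The second- and third-order remainder estimates proceed identically after noting that $\p_x^k R^{\ast,n}$ acquires an additional $x^{-k}$, and that the cut-offs $\rho_2,\rho_3,\zeta_3$ remove the boundary $x=1$, so the $x$-integrability $x^{-5/4+2\sigma_n+\kappa}\in L^2_x$ holds for $\kappa$ small.

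For the nonlinear contributions, written schematically as $\eps^{\frac{n}{2}+\gamma}\,\bar u\,\p_x\bar u,\;\eps^{\frac{n}{2}+\gamma}\,\bar v\,\p_y u$, etc., I intend to split each trilinear term by placing the factor that carries the worst derivative count in $L^{2}_{x,y}$ (controlled by $\|\cdot\|_{X_1\cap X_2\cap X_3}$), the profile solution in $L^{\infty}$ via the bounds built into $\|\cdot\|_{\mathcal{U}}$ (namely $\|(\bar u,\bar h)x^{1/4}\|_{L^\infty}$, $\|\sqrt\eps(\bar v,\bar g)x^{1/2}\|_{L^\infty}$ and their $x$-derivative versions), and the remaining factor in an $L^{2}_{x,y}$ weighted norm using Hardy to redistribute powers of $x$. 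This is what forces the weights $\rho_2 x$, $\rho_3 x$ inside $X_2, X_3$: the cut-off avoids the boundary $x=1$ where Hardy in $x$ fails, while the nearly-linear growth of $\rho_2 x,\rho_3 x$ is exactly what is produced by Hardy in $x$. The $\eps$-accounting then reads, after invoking $\|\cdot\|_{\mathcal{U}}\lesssim \eps^{-\max(N_3,\ldots,N_6)}\|\cdot\|_{\mathcal{Z}}$, as $\eps^{\frac{n}{2}+\gamma-\omega(N_i)}\|(\bar u,\bar v,\bar h,\bar g)\|_{\mathcal{Z}}(\|(\bar u,\bar v,\bar h,\bar g)\|_{\mathcal{Z}}+\|(u,v,h,g)\|_{\mathcal{Z}})$, which, by Cauchy's inequality and the choice \eqref{n-choice} of $n$, produces exactly the last two terms on the right of \eqref{W123_estimate}.

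The main obstacle is the third-order positivity piece $\mathcal{W}_{3,P}$, specifically the terms coming from differentiating the nonlinearity twice: $\eps^{\frac{n}{2}+\gamma}\iint(\bar v_{xx}u_y-\bar g_{xx}h_y)(\,u_{xx}\rho_3^4 x^4+v_{xxy}\rho_3^5 x^5\,)$ and its magnetic counterpart. Here two derivatives have landed on $\bar v$, so the natural $L^\infty$-control on $\bar v_{xx}$ is \emph{not} available from the bulk of the $\mathcal{Z}$-norm; this is precisely the motivation for the last two summands of $\|\cdot\|_{\mathcal{U}}$ in \eqref{uniform_norm}, namely $\sup_{x\geq 20}\|(u_y,h_y)x^{1/2}\|_{L^2_y}$ and $(\int x^4\|\sqrt\eps(v_{xx},g_{xx})\|_{L^\infty}^2\,dx)^{1/2}$. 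The strategy for this piece will be to apply Cauchy--Schwarz in $x$ first, pairing the $L^\infty_y$ norm of $\sqrt\eps \bar v_{xx}$ weighted by $x^2$ with the $L^2_y$ norm of $u_y$ weighted by $x^{1/2}$ as in \eqref{infty_vxx}, and then pair the resulting $L^2_x$ expression against $\|u_{xx}\rho_3^2 x^{3/2}\|_{L^2}$ from $X_3$. Once this term is under control, all remaining pieces of $\mathcal{W}_3$ admit analogous but easier treatments, completing the proof modulo the bookkeeping of the $\eps^{N_i}$ powers, which is arranged precisely so that \eqref{n-choice} leaves a positive power of $\eps$ in front of the cubic and quartic $\mathcal{Z}$-norms.
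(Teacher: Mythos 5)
Your proposal follows the paper's proof essentially step by step: the same split of $f_\ast$ into $\eps^{-\frac{n}{2}-\gamma}R^{\ast,n}$ plus the nonlinear $N^\ast$, the same use of Theorem \ref{R^un_theorem} combined with the $x$-Hardy inequality \eqref{Hardy_remainder_x} for the remainder pieces, the same trilinear pairing of an $L^\infty$ factor from $\mathcal{U}$, a weighted $L^2$ factor from $X_1\cap X_2\cap X_3$, and a Hardy-shifted factor for the nonlinear pieces, and you correctly single out the $\bar v_{xx}u_y-\bar g_{xx}h_y$ pairing in the third-order estimate as the motivating term for the last two summands of the $\mathcal{U}$-norm, treated exactly as in \eqref{infty_vxx}.

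One concrete flaw: the weight you assign to the remainder at first order does not give an integrable power of $x$. You place $x^{\frac{3}{4}-\frac{\kappa}{2}}$ on $R^{u,n}$, so that by \eqref{R^unterm_estimate_L2} the integrand behaves like $x^{-\frac{1}{2}+2\sigma_n+\frac{\kappa}{2}}$, and its square has $x$-exponent $-1+4\sigma_n+\kappa>-1$, hence $\|\eps^{-\frac{n}{2}-\gamma}R^{u,n}x^{\frac{3}{4}-\frac{\kappa}{2}}\|_{L^2_{x,y}}=\infty$ regardless of how small $\sigma_n$ and $\kappa$ are. The feasible window for the exponent $a$ placed on $R^{u,n}$ is $\frac{1}{2}<a<\frac{3}{4}-2\sigma_n-\kappa$ (lower bound from the $x$-Hardy inequality applied to $u x^{-a}$ so that it can be absorbed by $\|u_x x^{\frac{1}{2}}\|_{L^2}$, upper bound from the $L^2_x$ integrability of the remainder bound), and $\frac{3}{4}-\frac{\kappa}{2}$ lies strictly above this window. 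The paper uses $a=\frac{1}{2}+\sigma'$ for a small positive $\sigma'$ in \eqref{W1_1}, which sits safely inside the window. Replacing your exponent by any such $a$ restores the argument; the rest of the proposal, including the $\eps$-bookkeeping, the Cauchy/Young step producing $\eps^{\frac{1}{4}-\gamma-\kappa}(1+\|(u,v,h,g)\|_{X_1\cap X_2\cap X_3}^2)$, and the choice \eqref{n-choice} of $n$ absorbing the $\eps^{-\omega(N_i)}$ losses, is sound.
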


\begin{proof}
The proof of \eqref{W123_estimate} will be completed in three steps, according to the definition of $\mathcal{W}_{1},\mathcal{W}_{2},\mathcal{W}_{3}$ in \eqref{def_W1}-\eqref{def_W3P}.

{\bf{Step 1: Estimates for $\mathcal{W}_{1}$ (generated by first-order estimate)}.}

Let us begin with the terms $R^{u,n},R^{v,n},R^{h,n},R^{g,n}$. By the estimate \eqref{R^unterm_estimate_L2} and Hardy inequality \eqref{Hardy_remainder_x}, we get that
\begin{align}\label{W1_1}
\begin{split}
  &\eps^{-\frac{n}{2}-\gamma}\iint\left[|R^{u,n}|\cdot \left(|u|+|v_{y}x|\right)+|R^{h,n}|\cdot \left(|h|+|g_{y}x|\right)\right]\\
  &+\eps^{1-\frac{n}{2}-\gamma}\iint\left[ |R^{v,n}|\cdot \left(|v|+|v_{x}x|\right)+ |R^{g,n}|\cdot \left(|g|+|g_{x}x|\right)\right]\\
  \leq& \eps^{-\frac{n}{2}-\gamma}\left\|\left(R^{u, n}, R^{h, n}, \sqrt{\eps} R^{v, n}, \sqrt{\eps} R^{g, n}\right) x^{\frac{1}{2}+\sigma^{\prime}}\right\|_{L^{2}}\left[\left\|\left(u, h\right) x^{-\frac{1}{2}-\sigma^{\prime}}\right\|_{L^{2}}\right.\\
  &\left. +\left\|\left(\sqrt{\eps} v, \sqrt{\eps} g\right) x^{-\frac{1}{2}-\sigma^{\prime}}\right\|_{L^{2}}+\left\|\left(v_{y}, g_{y}, \sqrt{\eps} v_{x}, \sqrt{\eps}g_{x}\right) x^{\frac{1}{2}}\right\|_{L^{2}}\right]\\
  \leq& \eps^{\frac{1}{4}-\gamma-\kappa}\left\|x^{-\frac{5}{4}+2 \sigma_{n}+\kappa+\frac{1}{2}+\sigma^{\prime}}\right\|_{L^{2}} \cdot\left[\left\|\left(u_{x}, h_{x}, \sqrt{\eps} v_{x}, \sqrt{\eps} g_{x}\right) x^{\frac{1}{2}-\sigma^{\prime}}\right\|_{L^{2}}\right.\\
  &\left.+\left\|\left(v_{y}, g_{y}, \sqrt{\eps} v_{x}, \sqrt{\eps} g_{x}\right) x^{\frac{1}{2}}\right\|_{L^{2}}\right]\\
  \leq& c(n) \eps^{\frac{1}{4}-\gamma-\kappa}\|(u, v, h, g)\|_{X_{1}}.
\end{split}
\end{align}

We continue to estimate the other terms in $\mathcal{W}_{1,E}$. Thanks to the expressions of $N^u,N^h$ and Hardy inequality \eqref{Hardy_remainder_x}, we have
\begin{align}\label{W1_2}
\begin{split}
  &\left|\eps^{\frac{n}{2}+\gamma} \iint\left(\bar{u}\bar{u}_{x}-\bar{h} \bar{h}_{x}\right) \cdot u+\left(\bar u \bar h_{x}-\bar h \bar u_{x}\right) \cdot h\right|\\
  \leq& \eps^{\frac{n}{2}+\gamma} \Vert(\bar{u}_{x}, \bar{h}_{x}) x^{\frac{1}{4}}\Vert_{L^\infty} \cdot\Vert(\bar{u}_{x}, \bar{h}_{x}) x^{\frac{1}{2}}\Vert_{L^{2}} \cdot\Vert(u_{x}, h_{x}) x^{\frac{1}{4}}\Vert_{L^{2}} \\
  \leq& \eps^{\frac{n}{2}+\gamma-\omega\left(N_{i}\right)} \Vert(\bar{u}, \bar{v}, \bar{h}, \bar{g})\Vert_{\mathcal{Z}}^{2} \cdot\|(u, v, h, g)\|_{\mathcal{Z}}.
\end{split}
\end{align}
On the other hand, by Hardy inequality \eqref{Hardy_remainder_x} and the definition of $\mathcal{Z}$, one has
\begin{align}\label{W1_3}
\begin{split}
  &\left|\eps^{\frac{n}{2}+\gamma} \iint(\bar{v} u_{y}-\bar{g} h_{y})\cdot u+(\bar{v} h_{y}-\bar{g} u_{y}) \cdot h\right|\\
  =& \left|\eps^{\frac{n}{2}+\gamma} \iint\frac{1}{2}\bar{v}\p_y |u|^{2}+\frac{1}{2}\bar v\p_y |h|^{2}-\bar{g} \p_y(u h)\right|\\
  =&\left|\eps^{\frac{n}{2}+\gamma} \iint \frac{1}{2} \bar{v}_{y}\left(u^{2}+h^{2}\right)-\bar{g}_{y}\cdot u h\right|\\
  \leq& \eps^{\frac{n}{2}+\gamma} \left\|(u, h) x^{\frac{1}{4}}\right\|_{L^{\infty}}\cdot\left\|\left(\bar{v}_{y}, \bar{g}_{y}\right) x^{\frac{1}{2}}\right\|_{L^{2}}\cdot\left\|\left(u_{x}, h_{x}\right) x^{\frac{1}{4}}\right\|_{L^{2}}\\
  \leq& \eps^{\frac{n}{2}+\gamma-\omega\left(N_{i}\right)}\|(\bar{u}, \bar{v}, \bar{h}, \bar{g})\|_{\mathcal{Z}}\cdot\|(u, v, h, g)\|_{\mathcal{Z}}^{2} .
\end{split}
\end{align}

As for $N^v,N^g$, there holds that
\begin{align}
  &\eps^{1+\frac{n}{2}+\gamma}\iint\left[\left|\bar{u} \bar{v}_{x}+\bar{v} \bar{v}_{y}-\bar{h} \bar{g}_{x}-\bar{g} \bar{g}_{y}\right|\cdot |v|
  +\left|\bar{u} \bar{g}_{x}+\bar{v} \bar{g}_{y}-\bar{h} \bar{v}_{x}-\bar{g} \bar{v}_{y}\right|\cdot |g|\right] \nonumber\\
  \leq& \eps^{\frac{n}{2}+\gamma}\left\|(\bar{u}, \bar{h}) x^{\frac{1}{4}}\right\|_{L^\infty} \cdot\left\|\sqrt\eps\left(\bar{v}_{x}, \bar{g}_{x}\right) x^{\frac{1}{2}}\right\|_{L^{2}} \cdot\left\|\sqrt\eps (v, g) x^{-\frac{3}{4}}\right\|_{L^{2}}\nonumber\\
  &+\eps^{\frac{n}{2}+\gamma}\left\|\sqrt{\eps}(\bar{v}, \bar{g}) x^{\frac{1}{2}}\right\|_{L^{\infty}}\cdot \left\|\left(\bar{v}_{y}, \bar{g}_{y}\right) x^{\frac{1}{2}}\right\|_{L^{2}} \cdot \|\left.\sqrt{\varepsilon}(v, g) x^{-\frac{3}{4}}\right\|_{L^{2}}\nonumber\\
  \leq& \eps^{\frac{n}{2}+\gamma-\omega\left(N_{i}\right)}\|(\bar{u}, \bar{v}, \bar{h}, \bar{g})\|_{\mathcal{Z}}^{2}\cdot\|(u, v, h, g)\|_{\mathcal{Z}}\label{W1_4}
,
\end{align}
in which Hardy inequality \eqref{Hardy_remainder_x} has been used for the last term.

And then, for the term $\mathcal{W}_{1,P}$ from positivity estimate procedures, with respect to $N^u,N^h$, it is direct to get that
\begin{align}\label{W1_5}
\begin{split}
  &\iint \left|N^u|\cdot |v_{y} x\right|+\iint\left|N^h|\cdot |g_{y} x\right|\\
  \leq& \eps^{\frac{n}{2}+\gamma}\left\|(\bar{u}, \bar{h})\right\|_{L^{\infty}}\left\|\left(\bar{u}_{x}, \bar{h}_{x}\right) x^{\frac{1}{2}}\right\|_{L^{2}}\left\|\left(v_{y}, g_{y}\right) x^{\frac{1}{2}}\right\|_{L^{2}}\\
  &+\eps^{\frac{n}{2}+\gamma}\left\|(\bar{v}, \bar{g}) x^{\frac{1}{2}}\right\|_{L^{\infty}}\left\|\left(u_y, h_y\right)\right\|_{L^{2}}
  \cdot\left\|\left(v_{y}, g_{y}\right) x^{\frac{1}{2}}\right\|_{L^{2}},\\
  \leq& \eps^{\frac{n}{2}+\gamma-\omega\left(N_{i}\right)}\|(\bar{u}, \bar{v}, \bar{h}, \bar{g})\|_{\mathcal{Z}}^{2}\cdot\|(u, v, h, g)\|_{\mathcal{Z}}\\
  &+\eps^{\frac{n}{2}+\gamma-\omega\left(N_{i}\right)}\|(\bar{u}, \bar{v}, \bar{h}, \bar{g})\|_{\mathcal{Z}} \cdot\|(u, v, h, g)\|_{\mathcal{Z}}^{2}.
\end{split}
\end{align}
For the terms $N^v,N^g$, it follows that
\begin{align}\label{W1_6}
\begin{split}
  &\eps\iint\left|N^v\right|\cdot |v_{x} x|+\eps\iint\left|N^g\right| \cdot |g_{x} x| \\
  \leq &\eps^{\frac{n}{2}+\gamma}\left\|(\bar{u},\bar h) x^{\frac{1}{4}}\right\|_{L^{\infty}} \cdot \left\|\sqrt\eps (\bar{v}_{x}, \bar{g}_{x}) x^{\frac{1}{2}}\right\|_{L^{2}}
  \cdot\left\| \sqrt\eps(v_{x}, g_{x}) x^{\frac{1}{2}} \right\|_{L^{2}} \\
  &+\eps^{\frac{n}{2}+\gamma}\left\|\sqrt{\varepsilon}(\bar{v}, \bar{g}) x^{\frac{1}{2}}\right\|_{L^{\infty}} \cdot\left\|\left(\bar{v}_{y}, \bar{g}_{y}\right) x^{\frac{1}{2}}\right\|_{L^{2}} \cdot\left\|\sqrt{\varepsilon}\left(v_{x}, g_{x}\right) x^{\frac{1}{2}}\right\|_{L^{2}}\\
    \leq& \eps^{\frac{n}{2}+\gamma-\omega\left(N_{i}\right)}\|(\bar{u}, \bar{v}, \bar{h}, \bar{g})\|_{\mathcal{Z}}^{2}\cdot\|(u, v, h, g)\|_{\mathcal{Z}}.
\end{split}
\end{align}

{\bf{Step 2: Estimates for $\mathcal{W}_{2}$ (generated by second-order estimate).}}

Similar to the first step, for linear terms $\p_x(R^{u,n},R^{v,n},R^{h,n},R^{g,n})$, we have
\begin{align}\label{W2_1}
  &\eps^{-\frac{n}{2}-\gamma}\iint|\p_x R^{u,n}| \left(|u_x|\rho_2^2x^2+|v_{xy}|\rho_2^3x^3\right)
  +|\p_x R^{h,n}| \left(|h_x|\rho_2^2x^2+|g_{xy}|\rho_2^3x^3\right)\nonumber\\
  &+\eps^{1-\frac{n}{2}-\gamma}\iint|\p_x R^{v,n}| \left(|v_x|\rho_2^2x^2+|v_{xx}|\rho_2^3x^3\right)
  + |\p_x R^{g,n}| \left(|g_x|\rho_2^2x^2+|g_{xx}|\rho_2^3x^3\right)\nonumber\\
  \leq& \eps^{-\frac{n}{2}-\gamma}\left\|\p_x\left(R^{u, n}, R^{h, n}, \sqrt{\eps} R^{v, n}, \sqrt{\eps} R^{g, n}\right) x^{\frac{3}{2}+\sigma^{\prime}}\right\|_{L^{2}}\left[\left\|\left(u_x, h_x,\sqrt\eps v_x,\sqrt\eps g_x\right) x^{\frac{1}{2}}\right\|_{L^{2}}\right.\nonumber\\
  &\left. +\left\|\left(v_{xy}, g_{xy}\right) (\rho_2x)^{\frac{3}{2}}\right\|_{L^{2}}
  +\left\|\sqrt\eps\left(v_{xx}, g_{xx}\right) (\rho_2x)^{\frac{3}{2}}\right\|_{L^{2}}\right]\nonumber\\
  \leq& \eps^{\frac{1}{4}-\gamma-\kappa}\left\|x^{-1-\frac{5}{4}+2 \sigma_{n}+\kappa+\frac{3}{2}+\sigma^{\prime}}\right\|_{L^{2}}
  \|(u, v, h, g)\|_{X_{1}\cap X_{2}}\nonumber\\
  \leq& c(n) \eps^{\frac{1}{4}-\gamma-\kappa}\|(u, v, h, g)\|_{X_{1}\cap X_{2}},
\end{align}

Taking $x$ derivative to $N^{u},N^{h}$, we get
\begin{align}
\label{N^u_partial_x}
  \eps^{-(\frac{n}{2}+\gamma)}\p_x N^{u}=
  &\left|\bar{u}_{x}\right|^{2}-\left|\bar{h}_{x}\right|^{2}+\bar{v}_{x} u_{y}-\bar{g}_{x} h_{y}\nonumber\\
  &
  +\bar{u} \bar{u}_{x x}-\bar h\bar{h}_{x x}+\bar{v} u_{y x}-\bar{g} h_{y x}, \\
\label{N^h_partial_x}
  \eps^{-(\frac{n}{2}+\gamma)}\p_{x} N^{h}=
  &\bar{v}_{x} h_{y}-\bar{g}_{x} u_{y}+\bar{u} \bar{h}_{x x}-\bar{h} \bar{u}_{x x}+\bar{v} h_{y x}-\bar{g} u_{y x},
\end{align}
which give
\begin{align}\label{W2_2}
\begin{split}
  &\eps^{\frac{n}{2}+\gamma}\iint\left|\left(\bar{u}_{x}\right)^{2}-\left(\bar{h}_{x}\right)^{2}+\bar{u} \bar{u}_{x x}-\bar{h}\bar h_{x x}\right|\cdot
  (|u_{x}|\rho_{2}^{2}x^{2}+|v_{xy}|\rho_{2}^{3}x^{3}) \\
  &+\eps^{\frac{n}{2}+\gamma}\iint \left|\bar{u} \bar{h}_{x x}-\bar{h} \bar{u}_{x x}\right|\cdot (|h_{x}|\rho_{2}^{2}x^{2}+|g_{xy}|\rho_{2}^{3}x^{3})\\
  \leq& \eps^{\frac{n}{2}+\gamma} \left\|(\bar u_{x},\bar h_{x}) x^{\frac{5}{4}}\right\|_{L^{\infty}(x \geq 20)}\cdot \left\|(\bar{u}_x, \bar{h}_x) x^{\frac{1}{2}}\right\|_{L^{2}}\cdot \left(\left\|{u}_x x^{\frac{1}{2}}\right\|_{L^{2}}+\left\|{v}_{xy} x^{\frac{3}{2}}\right\|_{L^{2}}\right) \\
  &+\eps^{\frac{n}{2}+\gamma}\|(\bar{u}, \bar{h})\|_{L^{\infty}}\left\|\left(\bar{u}_{x x}, \bar{h}_{x x}\right) x^{\frac{3}{2}}\right\|_{L^{2}} \left(\left\|\left(u_{x}, h_{x}\right) x^{\frac{1}{2}}\right\|_{L^{2}}+\left\|\left(v_{xy}, g_{xy}\right) x^{\frac{3}{2}}\right\|_{L^{2}}\right).
\end{split}
\end{align}

And the remainder terms in $\p_x(N^u,N^h)$ can be handled by
\begin{equation}\label{W2_3}
\begin{aligned}
  &\eps^{\frac{n}{2}+\gamma}\iint\left|\bar{v}_{x} u_{y}-\bar{g}_{x} h_{y}+\bar{v} u_{yx}-\bar{g} h_{yx}\right|\cdot\left(|u_{x}| \rho_{2}^{2} x^{2}+|v_{xy}| \rho_{2}^{3} x^{3}\right) \\
  &+\eps^{\frac{n}{2}+\gamma} \iint\left|\bar{v}_{x} h_{y}-\bar{g}_{x} u_{y}+\bar{v} h_{yx}-\bar{g} u_{yx}\right|\cdot\left(|h_{x}| \rho_{2}^{2} x^{2}+|g_{xy}|
  \rho_2^3 x^3\right) \\
  \leq&\eps^{\frac{n}{2}+\gamma} \left[\left\|\left(u_{x},h_{x}\right) x^{\frac{1}{2}}\right\|_{L^{2}}+\left\|\left(v_{xy},g_{xy}\right) x^{\frac{3}{2}}\right\|_{L^{2}}\right]\cdot \Bigg[\|(u_{y},h_{y})\|_{L^{2}}\\
  &\times\left\|\left(\bar{v}_{x}, \bar{g}_{x}\right) x^{\frac{3}{2}}\right\|_{L^\infty(x\geq 20)}+ \left\|(\bar{v},\bar{g}) x^{\frac{1}{2}}\right\|_{L^{\infty}}\cdot\left\|\left(u_{xy}, h_{xy}\right) x\right\|_{L^2}\Bigg].
\end{aligned}
\end{equation}

Taking $x$ derivative to $N^{v},N^{g}$, one can obtain
\begin{align}
\label{N^v_partial_x}
  \eps^{-(\frac{n}{2}+\gamma)}\p_x N^{v}
  =\bar{u}_{x} \bar{v}_{x}+\bar{v}_{x} \bar{v}_{y}-\bar{h}_{x} \bar{g}_{x}-\bar{g}_{x} \bar{g}_{y}+\bar{u} \bar{v}_{x x}+\bar{v} \bar{v}_{x y}-\bar{h} \bar{g}_{xx}-\bar{g} \bar{g}_{xy}, \\
\label{N^g_partial_x}
  \eps^{-(\frac{n}{2}+\gamma)}\p_{x} N^{g}
  =\bar{u}_{x} \bar{g}_{x}+\bar{v}_{x} \bar{g}_{y}-\bar{h}_{x} \bar{v}_{x}-\bar{g}_{x} \bar{v}_{y}+\bar{u} \bar{g}_{x x}+\bar{v} \bar{g}_{x y}-\bar{h} \bar{v}_{x x}-\bar{g} \bar{v}_{x y},
\end{align}
which yield that
\begin{align}\label{W2_4}
\begin{split}
  &\eps\iint\left[|\p_x N^{v}|\cdot(|v_x|\rho_{2}^{2} x^{2}+|v_{xx}|\rho_{2}^{3} x^{3})+|\p_x N^{g}|\cdot(|g_x|\rho_{2}^{2} x^{2}+|g_{xx}|\rho_{2}^{3} x^{3})\right]\\
  \leq& \eps^{\frac{n}{2}+\gamma}\left[\left\|\sqrt{\eps}\left(v_{x},g_{x}\right) x^{\frac{1}{2}} \right\|_{L^{2}}
  +\left\|\sqrt{\eps}\left(v_{xx},g_{xx}\right) x^{\frac{3}{2}} \right\|_{L^{2}}\right]
  \cdot\left[\left\|\left(\bar{u}_{x},\bar{h}_{x}\right) x^{\frac{5}{4}}\right\|_{L^{2}(x\geq 20)} \right.\\
  &\left. \cdot\left\|\sqrt\eps(\bar{v}_{x},\bar{g}_{x})x^{\frac{1}{2}}\right\|_{L^{2}}
  +\left\|\sqrt\eps\left(\bar{v}_{x},\bar{g}_{x}\right) x^{\frac{3}{2}}\right\|_{L^\infty}
  \cdot\left\|\left(\bar{v}_{y},\bar{g}_{y}\right) x^{\frac{1}{2}}\right\|_{L^{2}}
  +\left\|(\bar{u}, \bar{h}) x^{\frac{1}{4}}\right\|_{L^{\infty}} \right.\\
  &\left.\cdot\left\|\sqrt\eps\left(\bar{v}_{x x}, \bar{g}_{x x}\right) x^{\frac{3}{2}}\right\|_{L^{2}}
   +\left\| \sqrt{(\bar{v}}, \bar{g}) x^{\frac{1}{2}}\right\|_{L^{\infty}}
  \cdot\left\|\left(\bar{v}_{x y}, \bar{g}_{x y}\right) \gamma^{\frac{3}{2}}\right\|_{L^{2}}\right].
\end{split}
\end{align}

{\bf{Step 3: Estimates for $\mathcal{W}_{3}$ (generated by third-order estimate).}}

For the linear terms $\p_{xx}(R^{u,n},R^{v,n},R^{h,n},R^{g,n})$, we have
\begin{align}\label{W3_1}
\begin{split}
  &\eps^{-\frac{n}{2}-\gamma}\iint |\p_x^2 R^{u,n}| \left(|u_{xx}|\rho_3^4x^4+|v_{xxy}|\rho_3^5x^5\right)\\
  &+\eps^{-\frac{n}{2}-\gamma}\iint |\p_x^2 R^{h,n}| \left(|h_{xx}|\rho_3^4x^4+|g_{xxy}|\rho_3^5x^5\right)\\
  &+\eps^{1-\frac{n}{2}-\gamma}\iint |\p_x^2 R^{v,n}| \left(|v_{xx}|\rho_3^4x^4+|v_{xxx}|\rho_3^5x^5\right)\\
  &+\eps^{1-\frac{n}{2}-\gamma}\iint |\p_x^2 R^{g,n}| \left(|g_{xx}|\rho_3^4x^4+|g_{xxx}|\rho_3^5x^5\right)\\
  \leq& \eps^{-\frac{n}{2}-\gamma}\left\|\p_x^2\left(R^{u, n}, R^{h, n}, \sqrt{\eps} R^{v, n}, \sqrt{\eps} R^{g, n}\right) x^{\frac{5}{2}+\sigma^{\prime}}\right\|_{L^{2}}\\
  &\cdot\left[\left\|\left(u_{xx}, h_{xx},\sqrt\eps v_{xx},\sqrt\eps g_{xx}\right) (\rho_3x)^{\frac{5}{2}}\right\|_{L^{2}}\right.\\
  &\left. +\left\|\left(v_{xxy}, g_{xxy}\right) (\rho_3x)^{\frac{3}{2}}\right\|_{L^{2}}
  +\left\|\sqrt\eps\left(v_{xxx}, g_{xxx}\right) (\rho_3x)^{\frac{5}{2}}\right\|_{L^{2}}\right]\\
  \leq& \eps^{\frac{1}{4}-\gamma-\kappa}\left\|x^{-2-\frac{5}{4}+2 \sigma_{n}+\kappa+\frac{5}{2}+\sigma^{\prime}}\right\|_{L^{2}}
  \|(u, v, h, g)\|_{X_{1}\cap X_{2}\cap X_{3}}\\
  \leq& c(n) \eps^{\frac{1}{4}-\gamma-\kappa}\|(u, v, h, g)\|_{X_{1}\cap X_{2}\cap X_{3}}.
\end{split}
\end{align}

Taking $x$ derivative to $\p_xN^{u},\p_xN^{h}$, we get
\begin{align}
\label{N^u_partial_xx}
  \eps^{-(\frac{n}{2}+\gamma)}\p_{xx} N^{u}=
  &3 \bar{u}_{x} \bar{u}_{x x}-3 \bar{h}_{x} \bar{h}_{x x}+\bar{u} \bar{u}_{x x x}-\bar{h} \bar{h}_{x x x} \nonumber\\
  &+2 \bar{v}_{x} u_{x y}-2 \bar{g}_{x} h_{x y}+\bar{v}_{x x} u_{y}-\bar{g}_{x x} h_{y}+\bar{v} u_{x x y}-\bar{g} h_{x x y}, \\
\label{N^h_partial_xx}
  \eps^{-(\frac{n}{2}+\gamma)}\p_{xx} N^{h}=
  &\bar{u}_{x} \bar{h}_{x x}-\bar{h}_{x} \bar{u}_{x x}+\bar{u} \bar{h}_{x x x}-\bar{h} \bar{u}_{x x x} \nonumber\\
  &+2 \bar{v}_{x} h_{x y}-2 \bar{g}_{x} u_{x y}+\bar{v}_{x x} h_{y}-\bar{g}_{x x} u_{y}+\bar{v} h_{x x y}-\bar{g} u_{x x y}.
\end{align}

Then we have
\begin{align}\label{W3_2}
\begin{split}
  & \eps^{\frac{n}{2}+\gamma}\iint\left|3 \bar{u}_{x} \bar{u}_{x x}-3 \bar{h}_{x} \bar{h}_{x x}+\bar{u} \bar{u}_{x x x}-\bar{h} \bar{h}_{x x x}\right|
  \cdot \left(|u_{x x}| \rho_{3}^{4} x^{4}+|v_{xxy}| \rho_{3}^{5} x^{5}\right) \\
  &+\eps^{\frac{n}{2}+\gamma}\iint \left|\bar{u}_{x} \bar{h}_{x x}-\bar{h}_{x} \bar{u}_{x x}+\bar{u} \bar{h}_{x x x}-\bar{h} \bar{u}_{x x x}\right|
  \cdot \left(|h_{x x}| \rho_{3}^{4} x^{4}+|g_{xxy}| \rho_{3}^{5} x^{5}\right)  \\
  \leq& \eps^{\frac{n}{2}+\gamma}\left[\left\|\left(u_{xx}, h_{xx}\right) x^{\frac{3}{2}}\right\|_{L^{2}}+\left\|\left(v_{xxy}, g_{xxy}\right) x^{\frac{5}{2}}\right\|_{L^{2}}\right]\cdot
  \left[\left\|\left(\bar{u}_{x}, \bar{h}_{x}\right) x^{\frac{5}{4}}\right\|_{L^\infty} \right.\\
  &\left. \cdot\left\|\left(\bar{u}_{xx}, \bar{h}_{xx}\right) x^{\frac{3}{2}}\right\|_{L^{2}}
  +\left\|(\bar{u}, \bar{h}) x^{\frac{1}{4}}\right\|_{L^\infty}\left\|\left(\bar{u}_{xxx}, \bar{h}_{xxx}\right) x^{\frac{5}{2}}\right\|_{L^{2}}\right].
\end{split}
\end{align}

For the remaining terms in $\p_x^2(N^u,N^h)$, there holds that
\begin{align}\label{W3_3}
\begin{split}
  & \eps^{\frac{n}{2}+\gamma}\iint\left|2 \bar{v}_{x} u_{x y}-2 \bar{g}_{x} h_{x y}+\bar{v}_{x x} u_{y}-\bar{g}_{x x} h_{y}\right|\cdot
  \left( |u_{x x}| \rho_{3}^{4} x^{4}+ |v_{xxy}| \rho_{3}^{5} x^{5}\right)\\
  &+ \eps^{\frac{n}{2}+\gamma}\iint\left|\bar{v} u_{x x y}-\bar{g} h_{x x y}\right|\cdot
  \left( |u_{x x}| \rho_{3}^{4} x^{4}+ |v_{xxy}| \rho_{3}^{5} x^{5}\right)\\
  &+ \eps^{\frac{n}{2}+\gamma}\iint\left|-2 \bar{v}_{x} h_{x y}-2 \bar{g}_{x} u_{x y}+\bar{v}_{x x} h_y-\bar{g}_{x x} u_{y}\right| \cdot
  \left( |h_{x x}| \rho_{3}^{4} x^{4}+ |g_{xxy}| \rho_{3}^{5} x^{5}\right)\\
  &+ \eps^{\frac{n}{2}+\gamma}\iint\left|\bar{v} h_{x x y}-\bar{g} u_{x x y}\right| \cdot
  \left( |h_{x x}| \rho_{3}^{4} x^{4}+ |g_{xxy}| \rho_{3}^{5} x^{5}\right)\\
  \leq& \eps^{\frac{n}{2}+\gamma}\left[\left\|\left(u_{x x},h_{x x}\right) x^{\frac{3}{2}}\right\|_{L^{2}}+\left\|\left(v_{xxy},g_{xxy}\right)x^{\frac{5}{2}}\right\|_{L^{2}}\right]
  \cdot\left[\left\|(\bar v_x, \bar g_x) x^{\frac{3}{2}}\right\|_{L^{\infty}}\right.\\
  &\left. \cdot\left\|\left(h_{xy}, u_{xy}\right) x\right\|_{L^{2}}+\left\|(\bar{v}, \bar{g}) x^{\frac{1}{2}}\right\|_{L^{\infty}} \cdot\left\|\left(u_{xxy}, h_{xxy}\right) x^{2}\right\|_{L^2(x\geq 20)}\right.\\
  &\left. +\sup_{x\geq 20}\left\|\left(u_{y}, h_{y}\right) x^{\frac{1}{2}}\right\|_{L_{y}^{2}} \left( \int_{20}^{\infty} x^{4}\left\|\left(\bar v_{x x}, \bar{g}_{x x}\right)\right\|_{L^\infty}^{2}\right)^{\frac{1}{2}} \right].
\end{split}
\end{align}

Taking $x$ derivative to $\p_xN^{v},\p_xN^{g}$, we get
\begin{align}
\label{N^v_partial_xx}
  \eps^{-(\frac{n}{2}+\gamma)}\p_{xx} N^{v}=
  &\left(\bar{u}_{x} \bar{v}_{x x}+\bar{v}_{x} \bar{v}_{x y}-\bar{h}_{x} \bar{g}_{x x}-\bar{g}_{x} \bar{g}_{x y}\right)\nonumber\\
  &+\bar{u} \bar{v}_{x x x}+\bar{v} \bar{v}_{x x y}-\bar{h} \bar{g}_{x x x}-\bar{g} \bar{g}_{x x y},\\
\label{N^g_partial_xx}
  \eps^{-(\frac{n}{2}+\gamma)}\p_{xx} N^{g}=
  &3\left(\bar{u}_{x} \bar{g}_{x x}+\bar{v}_{x} \bar{g}_{x y}-\bar{h}_{x} \bar{v}_{x x}-\bar{g}_{x} \bar{v}_{x y}\right)\nonumber\\
  &+\bar{u} \bar{g}_{x x x}+\bar{v} \bar{g}_{x x y}-\bar{h} \bar{v}_{x x x}-\bar{g} \bar{v}_{x x y}.
\end{align}

Then it follows that
\begin{align}\label{W3_4}
\begin{split}
  &\iint \left[|\p_{xx}N^{v} |\cdot \left(|v_{xx}| \rho_{3}^{4} x^{4}+|v_{xxx}| \rho_{3}^{5} x^{5}\right)
  +|\p_{xx} N^{g}| \cdot \left(|g_{xx}| \rho_{3}^{4} x^{4}+|g_{xxx}| \rho_{3}^{5} x^{5}\right)\right]\\
  \leq& \eps^{\frac{n}{2}+\gamma} \left[\left\|\sqrt\eps\left(v_{xx}, g_{xx}\right) x^{\frac{3}{2}}\right\|_{L^{2}}
  +\left\|\sqrt\eps\left(v_{xxx}, g_{xxx}\right) x^{\frac{5}{2}}\right\|_{L^{2}}\right]
  \cdot\left[\left\|\sqrt{\eps}\left(\bar v_{x}, \bar{g}_{x}\right) x^{\frac{3}{2}}\right\|_{L^\infty}\right.\\
  &\left. \left\|\left(\bar{u}_{x x}, \bar{h}_{x x}\right) x^{\frac{3}{2}}\right\|_{L^{2}}
  +\left\|\left(\bar{u}_{x}, \bar{h}_{x}\right) x^{\frac{5}{4}}\right\|_{L^{\infty}} \cdot\left\|\sqrt{\varepsilon}\left(\bar{v}_{x x}, \bar{g}_{x x}\right) x^{\frac{3}{2}}\right\|_{L^{2}}+\left\|\left(\bar{u},\bar{h}\right) x^{\frac{1}{4}}\right\|_{L^\infty} \right.\\
  &\left. \cdot\left\|\sqrt{\eps}\left(v_{xxx},{g}_{xxx}\right) x^{\frac{5}{2}}\right\|_{L^{2}}+\left\|\sqrt{\eps}(\bar{v}, \bar{g}) x^{\frac{1}{2}}\right\|_{L^{\infty}} \cdot\left\|\left(\bar{v}_{xxy} , \bar{g}_{xxy}\right) x^{\frac{5}{2}}\right\|_{L^{2}}\right].
\end{split}
\end{align}

Summing up, one concludes that the estimate \eqref{W123_estimate} follows from the above estimates.
\end{proof}

\begin{remark}\label{remark_n>2_S6}
It is emphasized that the estimate \eqref{R^unterm_estimate_L2} of $(R^{u, n}, R^{h, n}, R^{v, n},R^{g, n})$ has played the key roles in the analysis. Precisely,  observe that in  \eqref{W1_1},\eqref{W2_1},\eqref{W3_1}, the decay rates with respect to $x$ variable ensure that the first term is integrable in $L_x^2$ norm. Actually, it is realized by expanding the approximate solutions to $\eps^{\frac{n}{2}}$-order for $n\geq 2$ in \eqref{expansion}.
\end{remark}

\begin{remark}\label{remark_linear_vu_y}
In estimate \eqref{W1_3}, the differential structures of $\bar v\p_y|u|^2$, $\bar v\p_y|h|^2$, $\bar g\p_y(u h)$ are important in the proof, which come from the linearized terms as stated in Remark \ref{remark_linear_vu_y_pre}. Otherwise, if considering the linearized terms with structures of $\bar v \bar u_y, \bar g \bar h_y, \bar v \bar h_y, \bar g \bar u_y$, for example, we do not know how to control the last term as follows
\begin{align*}
  \iint \bar v\bar u_y\cdot u
  \leq \Vert \bar v x^{\frac{1}{2}}\Vert_{L^\infty}\cdot \Vert \bar u_y\Vert_{L^2}\cdot \Vert u x^{-\frac{1}{2}}\Vert_{L^2},
\end{align*}
in which Hardy inequality \eqref{Hardy_remainder_x} is invalid.
\end{remark}

Finally, putting the above arguments together, we get the {\it a priori estimate} of this subsection in the following theorem.
\begin{theorem}\label{theorem_priori_estimate}
 Let $\omega(N_{i})$, $N_i$ be the constants determined in Lemma \ref{Z-embeding}, \eqref{constants} respectively. For any fixed positive number $\beta>0$ and $n\in \mathbb{N}$ satisfying \eqref{n-choice}, assume that $\|(\bar u,\bar v,\bar h,\bar g)\|_{ \mathcal{Z}(\Omega^\beta)}\leq 1$, then for small viscosity coefficient $\eps\ll \eps_*$, the solution $(u,v,h,g)\in \mathcal{Z}(\Omega^\beta)$ to the problem \eqref{u_eps_linearized} with \eqref{u_epsboundary_omega_N} satisfies the following estimate
  \begin{equation}\label{priori_estimate}
  \|(u,v,h,g)\|_{ \mathcal{Z}(\Omega^\beta)}^2\lesssim \eps^{\frac{1}{4}-\gamma-\kappa}+\eps^{\frac{n}{2}-\omega(N_i)}\|(\bar u,\bar v,\bar h,\bar g)\|_{ \mathcal{Z}(\Omega^\beta)}^4,
\end{equation}
in which $\gamma, \kappa$ are arbitrary constants with $0\leq\gamma<\frac{1}{4}, 0<\kappa\ll 1$ and $\gamma+\kappa<\frac{1}{4}$.
\end{theorem}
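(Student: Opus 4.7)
The plan is to obtain Theorem \ref{theorem_priori_estimate} as a clean synthesis of the three main lemmas already established in this subsection: the embedding estimate in Lemma \ref{Z-embeding}, the reduction from the full $X_1\cap X_2\cap X_3$ norm to the forcing quantities $\mathcal{W}_1+\mathcal{W}_2+\mathcal{W}_3$ in Lemma \ref{lemma_Y23Z}, and the quantitative bound on these forcing quantities in Lemma \ref{lemma_X123}. Because the technical heavy lifting (energy/positivity estimates at three orders, together with the short-$x$/long-$x$ $L^\infty$ reductions) has been absorbed into those lemmas, the remaining task is a short absorption argument.

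First I would chain Lemma \ref{lemma_Y23Z} with Lemma \ref{lemma_X123}. This gives
\begin{equation*}
\|(u,v,h,g)\|_{X_1\cap X_2\cap X_3}^2 \lesssim \eps^{\frac{1}{4}-\gamma-\kappa}+\eps^{\frac{1}{4}-\gamma-\kappa}\|(u,v,h,g)\|_{X_1\cap X_2\cap X_3}^2+\eps^{\frac{n}{2}-\omega(N_i)}\|(u,v,h,g)\|_{\mathcal{Z}}^2+\eps^{\frac{n}{2}-\omega(N_i)}\|(\bar u,\bar v,\bar h,\bar g)\|_{\mathcal{Z}}^4.
\end{equation*}
Since $\eps\ll\eps_*$ and $\tfrac14-\gamma-\kappa>0$, the second term on the right is absorbed into the left-hand side. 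Next I would feed the resulting inequality into the square of the embedding bound from Lemma \ref{Z-embeding}, namely
\begin{equation*}
\|(u,v,h,g)\|_{\mathcal{Z}}^2 \lesssim \eps^{200}+\|(u,v,h,g)\|_{X_1\cap X_2\cap X_3}^2+\eps^{n+2\gamma-2\omega(N_i)}\|(\bar u,\bar v,\bar h,\bar g)\|_{\mathcal{Z}}^4,
\end{equation*}
to obtain a self-bounding inequality of the form
\begin{equation*}
\|(u,v,h,g)\|_{\mathcal{Z}}^2 \lesssim \eps^{\frac{1}{4}-\gamma-\kappa}+\eps^{\frac{n}{2}-\omega(N_i)}\|(u,v,h,g)\|_{\mathcal{Z}}^2+\eps^{\frac{n}{2}-\omega(N_i)}\|(\bar u,\bar v,\bar h,\bar g)\|_{\mathcal{Z}}^4,
\end{equation*}
in which the $\eps^{200}$ contribution is dominated by $\eps^{\frac14-\gamma-\kappa}$.

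Finally, the condition \eqref{n-choice} on $n$ (together with the choice of $N_1,\dots,N_6$ in \eqref{constants}) ensures $\tfrac{n}{2}-\omega(N_i)>0$ with a large positive margin. Choosing $\eps_*$ small enough so that the implicit constant times $\eps^{\frac{n}{2}-\omega(N_i)}$ is less than $\tfrac12$, I would absorb the middle term on the right back into the left to conclude \eqref{priori_estimate}.

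The genuinely delicate point is conceptual rather than computational: one must verify that the hypothesis $\|(\bar u,\bar v,\bar h,\bar g)\|_{\mathcal{Z}(\Omega^\beta)}\le 1$ required by Lemma \ref{Y_i-estimate}, Lemma \ref{lemma_X123} and Lemma \ref{Z-embeding} is actually available when these lemmas are applied \emph{in series}, and that the same universal constant $\omega(N_i)$ works uniformly through all three steps. The main obstacle in turning the plan into a proof is therefore bookkeeping---tracking how the $\eps$-powers $N_1,\dots,N_6$ propagate through the embedding \eqref{embedding_inequality} and the $Y_i$-reduction \eqref{Y2_inequality}--\eqref{Y3_inequality} so that the final exponent $\tfrac{n}{2}-\omega(N_i)$ is strictly positive under \eqref{n-choice}, while the good exponent $\tfrac14-\gamma-\kappa$ produced by the remainder estimates of Theorem \ref{R^un_theorem} survives intact. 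Once this bookkeeping is done, the absorption argument above closes the estimate and yields the theorem.
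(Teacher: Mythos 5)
Your proposal is correct and follows essentially the same route as the paper: chain Lemma \ref{lemma_Y23Z} with Lemma \ref{lemma_X123}, absorb the small-in-$\eps$ $X_1\cap X_2\cap X_3$ term, feed into the squared embedding of Lemma \ref{Z-embeding}, and absorb the $\eps^{\frac{n}{2}-\omega(N_i)}\|(u,v,h,g)\|_{\mathcal{Z}}^2$ term using \eqref{n-choice}. The paper leaves this synthesis implicit ("putting the above arguments together"), and your worry in the last paragraph is moot because $\|(\bar u,\bar v,\bar h,\bar g)\|_{\mathcal{Z}(\Omega^\beta)}\le 1$ is an assumption of the theorem itself and $\omega(N_i)$ is a single fixed constant independent of $\eps,\gamma,\beta$.
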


\subsection{Nonlinear stability: Proof of Theorem \ref{maintheorem}}\label{sec6.2} Based on the estimate \eqref{priori_estimate} obtained in the last subsection, we are ready to prove Theorem \ref{maintheorem}. Precisely, we will prove global-in-$x$ theory for nonlinear system \eqref{u_epssystem} with \eqref{u_epsboundary}.

First, the analysis will be centered on the following vorticity formulation of the system \eqref{u_epssystem}
\begin{equation}\label{u_epssystem_stream}
\begin{cases}
  \Delta_\eps^2\phi+\p_y S_u-\eps \p_x S_v=\p_y F_u-\eps\p_x F_v,\\
  \Delta_\eps^2\psi+\p_y S_h-\eps \p_x S_g=\p_y F_h-\eps\p_x F_g,
\end{cases}
\end{equation}
by introducing the stream functions $\phi,\psi$ satisfying
\begin{equation}\label{stream}
  (u,v,h,g)=(-\phi_y,\phi_x,-\psi_y,\psi_x).
\end{equation}
In addition, one can deduce the boundary conditions for the stream functions
\begin{equation}\label{u_epsboundary_stream}
\begin{cases}
  (\phi,\psi)|_{y=0}=(\phi,\psi)|_{y\rightarrow\infty}=(\phi,\psi)|_{x=1}=(\phi,\psi)|_{x\rightarrow\infty}=(0,0),\\
  (\phi_x,\psi_x)|_{x=1}=(\phi_x,\psi_x)|_{x\rightarrow\infty}=(0,0).
\end{cases}
\end{equation}

Inspired by Iyer \cite{Iyerglobal3}, we will first construct the approximate solution $(u^{\alpha,\beta}$, $v^{\alpha,\beta}$, $h^{\alpha,\beta}$, $g^{\alpha,\beta})$ to system \eqref{u_epssystem} on fixed domain $\Omega^\beta:=\{(x,y)|x\geq 1,0\leq y<\beta\}$ by solving an auxiliary system for the stream functions $\phi$ and $\psi$ with positive terms $\alpha W(\phi), \alpha W(\psi)$ uniform in $\alpha$ as follows. And then, solution $(u,v,h,g)$ to system \eqref{u_epssystem} can be obtained by taking the limits $\alpha\rightarrow 0$ and $\beta\rightarrow\infty$ up to subsequences. Note that the desired compactness is obtained from the positive terms $W(\phi),W(\psi)$.

Now we introduce the following auxiliary system for stream functions and construct its solution
\begin{equation}\label{u_epssystem_auxiliary}
\begin{cases}
  \Delta_\eps^2\phi+\alpha W(\phi)+S_1(\phi,\psi; v, g)=\p_y\tilde F_u-\eps\p_xF_v,\\
  \Delta_\eps^2\psi+\alpha W(\psi)+S_2(\phi,\psi; v, g)=\p_y\tilde F_h-\eps\p_xF_g,\\
  (\phi,\psi)|_{y=0}=(\phi,\psi)|_{y=\beta}=(0,0),\\
  (\phi,\psi)|_{x=1}=(\phi,\psi)|_{x\rightarrow\infty}=(0,0),\\
  (\phi_x,\psi_x)|_{x=1}=(\phi_x,\psi_x)|_{x\rightarrow\infty}=(0,0),
\end{cases}
\end{equation}
where
\begin{align}\label{W_phi}
\begin{split}
  W(\phi)=&\phi x^{2m}-\p_y^2\phi x^{2m+2}-\p_x(\phi_xx^{2m+2})+\p_y^4\phi x^{2m+4}\\
  &+\p_x(\p_y^2\phi_x x^{2m+4})+\p_x^2(\p_x^2\phi x^{2m+4})
\end{split}
\end{align}
and
\begin{align}\label{W_psi}
\begin{split}
  W(\psi)=&\psi x^{2m}-\p_y^2\psi x^{2m+2}-\p_x(\psi_xx^{2m+2})+\p_y^4\psi x^{2m+4}\\
  &+\p_x(\p_y^2\psi_x x^{2m+4})+\p_x^2(\p_x^2\psi x^{2m+4})
\end{split}
\end{align}
with the uniform coefficient $\alpha>0$. Here, the linear terms $S_1(\phi,\psi; v, g),S_2(\phi,\psi; v, g)$ are denoted as
\begin{align}
\label{S_1}
  S_1(\phi,\psi; v, g)
  =&\p_y\left[-u_s\phi_{xy}-\phi_y\p_x u_s-(v_s-\eps^{\frac{n}{2}+\gamma} v)\phi_{yy}+\phi_x\p_y u_s\right]\nonumber\\
  &+\p_y\left[h_s\psi_{xy}+\psi_y\p_x h_s+(g_s-\eps^{\frac{n}{2}+\gamma} g)\psi_{yy}-\psi_x\p_y h_s\right]\nonumber\\
  &-\eps\p_x\left[u_s\phi_{xx}-\phi_y\p_x v_s+v_s\phi_{xy}+\phi_y\p_y v_s\right]\nonumber\\
  &+\eps\p_x\left[h_s\psi_{xx}-\psi_y\p_x g_s+g_s\psi_{xy}+\psi_y\p_y g_s\right],\\
\label{S_2}
  S_2(\phi,\psi; v, g)
  =&\p_y\left[-u_s\psi_{xy}-\phi_y\p_x h_s-(v_s-\eps^{\frac{n}{2}+\gamma} v)\psi_{yy}+\phi_x\p_y h_s\right]\nonumber\\
  &+\p_y\left[h_s\phi_{xy}+\psi_y\p_x u_s+(g_s-\eps^{\frac{n}{2}+\gamma} g)\phi_{yy}-\psi_x\p_y u_s\right]\nonumber\\
  &-\eps\p_x\left[u_s\psi_{xx}-\phi_y\p_x g_s+v_s\psi_{xy}+\phi_x\p_y g_s\right]\nonumber\\
  &+\eps\p_x\left[h_s\phi_{xx}-\psi_y\p_x v_s+g_s\phi_{xy}+\psi_x\p_y v_s\right].
\end{align}
In other words, we have
\begin{align*}
  &S_1(\phi,\psi; v, g)=\p_yS_u-\eps\p_xS_v+\eps^{\frac{n}{2}+\gamma}\p_y( v\phi_{yy}- g\psi_{yy}),\\
  &S_2(\phi,\psi; v, g)=\p_yS_h-\eps\p_xS_g+\eps^{\frac{n}{2}+\gamma}\p_y( v\psi_{yy}- g\phi_{yy}).
\end{align*}

The nonlinear terms $F_v,F_g$ are defined as in \eqref{F_u} and $\tilde F_u,\tilde F_h$ are given by
\begin{align*}
\begin{cases}
   \tilde F_u=\eps^{-\frac{n}{2}-\gamma}R^{u,n}-\eps^{\frac{n}{2}+\gamma}(u\p_x u-h\p_x h),\\
   \tilde F_h=\eps^{-\frac{n}{2}-\gamma}R^{h,n}-\eps^{\frac{n}{2}+\gamma}(u\p_x h-h\p_x u).
\end{cases}
\end{align*}

\begin{proposition}\label{existence_prop_pre}
There exists a solution $(u^{\alpha,\beta},v^{\alpha,\beta},h^{\alpha,\beta},g^{\alpha,\beta})$ determined by auxiliary system \eqref{u_epssystem_auxiliary} and \eqref{stream} satisfying the following estimate independent of the $\alpha$ and $\beta$
\begin{equation}\label{prop_estimate}
  \|(u^{\alpha,\beta},v^{\alpha,\beta},h^{\alpha,\beta},g^{\alpha,\beta})\|_{ \mathcal{Z}(\Omega^\beta)}\lesssim \eps^{\frac{1}{4}-\gamma-\kappa},
\end{equation}
where $\gamma, \kappa$ are arbitrary constants with $0\leq\gamma<\frac{1}{4},0<\kappa\ll1$ and $\gamma+\kappa<\frac{1}{4}$.
\end{proposition}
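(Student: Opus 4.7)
The plan is to obtain $(u^{\alpha,\beta},v^{\alpha,\beta},h^{\alpha,\beta},g^{\alpha,\beta})$ as the fixed point of a contraction map built from the linear theory of the previous subsection. Define $T:(\bar u,\bar v,\bar h,\bar g)\mapsto (u,v,h,g)$ by taking $(u,v,h,g)$ to be the solution of the linearised version of \eqref{u_epssystem_auxiliary} in which the quadratic self-interactions are frozen at the barred variables exactly as in \eqref{N^u_linear}. For each fixed $\alpha>0$ and $\beta<\infty$, existence for this regularised linear problem will follow from Lax--Milgram in a weighted $H^3$-type space on $\Omega^\beta$: the sixth-order operator $\Delta_\eps^2+\alpha W$, together with the boundary conditions in \eqref{u_epssystem_auxiliary}, is coercive thanks to the structure of \eqref{W_phi}--\eqref{W_psi} and the weights $x^{2m}$, while the lower-order couplings $S_1,S_2$ are absorbed via Theorem \ref{summary} and the smallness $\mathcal{O}(\delta,\sigma)+\mathcal{O}(\sqrt\eps)$.

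The key observation for obtaining the $\alpha$- and $\beta$-independent bound is that $\alpha W(\phi)$ and $\alpha W(\psi)$ produce only non-negative quadratic contributions when tested against every multiplier used in the 1st, 2nd and 3rd order energy and positivity estimates of Lemma \ref{lemma_Y23Z}. Indeed, each of the six summands of $W$ is conjugate, after one integration by parts and using the boundary vanishing \eqref{u_epsboundary_stream}, to one of the weighted squared norms defining $X_1$, $X_2$ or $X_3$. Consequently $\alpha W$ is absorbed into the left-hand side of the a priori estimate and does not modify the right-hand side, so Theorem \ref{theorem_priori_estimate} applies verbatim and yields
\begin{equation*}
  \Vert T(\bar u,\bar v,\bar h,\bar g)\Vert_{\mathcal{Z}(\Omega^\beta)}^{2}
  \lesssim \eps^{\frac{1}{4}-\gamma-\kappa}
  + \eps^{\frac{n}{2}-\omega(N_i)}\Vert (\bar u,\bar v,\bar h,\bar g)\Vert_{\mathcal{Z}(\Omega^\beta)}^{4},
\end{equation*}
with constants uniform in $\alpha$ and $\beta$. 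Setting $R:=C_0\eps^{\frac{1}{8}-\frac{\gamma}{2}-\frac{\kappa}{2}}$ and exploiting \eqref{n-choice} so that $\eps^{\frac{n}{2}-\omega(N_i)}R^{4}\ll R^{2}$, one verifies $T(B_R)\subset B_R$ for $\eps\ll\eps_*$. Running the same energy/positivity scheme on the difference of two iterates gives a contraction estimate with constant $\lesssim \eps^{\tau}R$ for some $\tau>0$, hence strict contraction; Banach's fixed-point theorem then produces the required $(u^{\alpha,\beta},v^{\alpha,\beta},h^{\alpha,\beta},g^{\alpha,\beta})$ satisfying \eqref{u_epssystem_auxiliary} together with \eqref{prop_estimate}.

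The principal obstacle will be verifying the sign-preservation property of $\alpha W(\phi),\alpha W(\psi)$ against every multiplier appearing in Lemma \ref{lemma_Y23Z}: the boundary integrals produced by integrating $\p_x(\phi_x x^{2m+2})$, $\p_x(\p_y^2\phi_x x^{2m+4})$ and $\p_x^2(\phi_{xx} x^{2m+4})$ by parts must all vanish, which requires combining the conditions $\phi|_{x=1}=\phi_x|_{x=1}=0$, the vanishing of $\phi$ on $\{y=0,\beta\}$, and the decay as $x\to\infty$; the analogous steps for $\psi$ and for the coupled positivity multipliers carry over by symmetry. A secondary, but more routine, point is ensuring coercivity of the weighted sixth-order form over the unbounded-in-$x$ strip $\Omega^\beta$, for which the weights $x^{2m}$ in $W$ are tuned to dominate the $x$-growth generated by the lower-order couplings $S_1,S_2$.
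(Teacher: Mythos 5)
Your overall architecture (regularise by $\alpha W$, solve the linearised problem, then use the a priori estimate of Theorem \ref{theorem_priori_estimate} to get an $\alpha$- and $\beta$-uniform bound, then take a fixed point) is the correct one, and indeed is essentially what the paper does — but you invoke the \emph{wrong fixed-point theorem}, and this is not a cosmetic difference. The paper applies \emph{Schaefer's} fixed-point theorem, which only needs continuity of the map, compactness (supplied by the extra regularity and the weights $x^{2m}$ built into $\alpha W$), and the a priori bound from Theorem \ref{theorem_priori_estimate}; it does \emph{not} require a contraction estimate. You instead claim a Banach contraction in $\mathcal{Z}(\Omega^\beta)$, asserting that ``running the same energy/positivity scheme on the difference of two iterates gives a contraction estimate.'' This step does not close in $\mathcal{Z}$. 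The difference of two linearised problems produces forcing terms such as $\eps^{\frac{n}{2}+\gamma}(\bar v_1-\bar v_2)\p_y u_2$, and the corresponding energy pairing $\iint(\bar v_1-\bar v_2)\p_y u_2\cdot\hat u$ cannot be put into the perfect-derivative form $\bar v\,\p_y|u|^2$ that \eqref{W1_3} relies on; estimating it requires a bound on $\|\hat u\,x^{-\frac12}\|_{L^2}$, which sits exactly at the forbidden borderline $\sigma'=\frac12$ of the Hardy inequality \eqref{Hardy_remainder_x}. The paper is explicit about this obstruction: its uniqueness proof (Theorem \ref{existencetheorem}) must pass to the weaker norm $\mathcal{Z}_b$ with the extra weight $x^{-b}$, $b>0$, precisely to make Hardy applicable to the difference terms $v_1\p_y u_1-v_2\p_y u_2$, etc. A contraction proof would face the same issue and so would have to be carried out in $\mathcal{Z}_b$ while keeping ball-invariance in $\mathcal{Z}$; your proposal does not mention this two-norm scheme, so as written the argument has a gap.

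A secondary concern is your claim that $\alpha W(\phi), \alpha W(\psi)$ ``produce only non-negative quadratic contributions when tested against every multiplier used in the 1st, 2nd and 3rd order energy and positivity estimates.'' This is correct for the energy-type multiplier $\phi$ (each of the six summands of $W$ pairs with $\phi$ to a weighted square after integration by parts), but already the first summand $\phi x^{2m}$ paired with the positivity multiplier $\phi_x x$ yields $\iint\phi\phi_x x^{2m+1}=-\frac{2m+1}{2}\iint\phi^2 x^{2m}$, which is negative, so sign-definiteness against the positivity multipliers is not automatic. In the Schaefer framework this is handled differently: $\alpha W$ supplies extra coercivity for the \emph{existence} of the regularised linear problem (your Lax--Milgram step is fine for this), and its contribution to the a priori estimate is then controlled by that extra regularity, vanishing as $\alpha\to 0$; one does not need pointwise positivity against every multiplier. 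You should either switch to Schaefer's theorem as the paper does, or, if you keep Banach, perform the contraction in $\mathcal{Z}_b$ and treat the $\alpha W$ contributions by absorption rather than by a sign argument.
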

\begin{proof}
Inspired by \cite{Iyerglobal3}, let us consider the map $\mathscr{M}_\alpha$ in domain $\Omega^\beta$ as
\begin{align}\label{map}
 &\mathscr{M}_{\alpha,\beta}\left(\overline{u}^{\alpha,\beta},\overline{v}^{\alpha,\beta},\overline{h}^{\alpha,\beta},\overline{g}^{\alpha,\beta}\right)=\left(u^{\alpha,\beta},v^{\alpha,\beta},h^{\alpha,\beta},g^{\alpha,\beta}\right)\nonumber\\
 \Longleftrightarrow&\mathscr{L}_{\alpha,\beta}\left({u}^{\alpha,\beta},{v}^{\alpha,\beta},{h}^{\alpha,\beta},{g}^{\alpha,\beta}\right)
 =(\tilde{F}_u, F_v,\tilde{F}_h,F_g)\left(\overline{u}^{\alpha,\beta},\overline{v}^{\alpha,\beta},\overline{h}^{\alpha,\beta},\overline{g}^{\alpha,\beta}\right),
\end{align}
where problem \eqref{map} corresponds to the following problem of vorticity form
\begin{equation}\label{u_epssystem_linear}
\begin{cases}
  \Delta_\eps^2\phi^{\alpha,\beta}+\alpha W(\phi^{\alpha,\beta})+S_1(\phi^{\alpha,\beta},\psi^{\alpha,\beta};\bar{v}^{\alpha,\beta},\bar{g}^{\alpha,\beta})\\
  \qquad =\p_y \tilde{F}_u(\overline{u}^{\alpha,\beta},\overline{v}^{\alpha,\beta},\overline{h}^{\alpha,\beta},\overline{g}^{\alpha,\beta})-\eps\p_x F_v(\overline{u}^{\alpha,\beta},\overline{v}^{\alpha,\beta},\overline{h}^{\alpha,\beta},\overline{g}^{\alpha,\beta}),\\
  \Delta_\eps^2\psi^{\alpha,\beta}+\alpha W(\psi^{\alpha,\beta})+S_2(\phi^{\alpha,\beta},\psi^{\alpha,\beta};\bar{v}^{\alpha,\beta},\bar{g}^{\alpha,\beta})\\
  \qquad=\p_y\tilde{F}_h(\overline{u}^{\alpha,\beta},\overline{v}^{\alpha,\beta},\overline{h}^{\alpha,\beta},\overline{g}^{\alpha,\beta})-\eps\p_x F_g(\overline{u}^{\alpha,\beta},\overline{v}^{\alpha,\beta},\overline{h}^{\alpha,\beta},\overline{g}^{\alpha,\beta}),\\
  (\phi^{\alpha,\beta},\psi^{\alpha,\beta})|_{y=0}=(\phi^{\alpha,\beta},\psi^{\alpha,\beta})|_{y=\beta}=(0,0),\\
  (\phi^{\alpha,\beta},\psi^{\alpha,\beta})|_{x=1}=(\phi^{\alpha,\beta},\psi^{\alpha,\beta})|_{x\rightarrow\infty}=(0,0),\\
  (\phi_x^{\alpha,\beta},\psi_x^{\alpha,\beta})|_{x=1}=(\phi_x^{\alpha,\beta},\psi_x^{\alpha,\beta})|_{x\rightarrow\infty}=(0,0) ,
\end{cases}
\end{equation}
where $\p_y\tilde{F}_u-\eps\p_x F_v\in H^{-1}$ and $\p_y\tilde{F}_h-\eps\p_x F_g\in H^{-1}$. Then the fixed point of \eqref{u_epssystem_linear} yields  a solution to  \eqref{u_epssystem_auxiliary}.

The linear theory for \eqref{u_epssystem_linear}
 can be obtained by the standard arguments, see also \cite{Iyerglobal3}. For the nonlinear theory, with the help of \eqref{priori_estimate}, following the similar arguments as those in \cite{Iyerglobal3}, one can deduce that the Schaefer's fixed point theorem admits a fixed point of the map $\mathscr{M}_{\alpha,\beta}$ uniform in $\alpha,\beta$, i.e., the solution $(u^{\alpha,\beta},v^{\alpha,\beta},h^{\alpha,\beta},g^{\alpha,\beta})$ to auxiliary system \eqref{u_epssystem_auxiliary}. We do not repeat the details.
\end{proof}

Next, we have the following theorem about the solution to \eqref{u_epssystem} with \eqref{u_epsboundary}.
\begin{theorem}\label{existencetheorem}
Under the assumptions of Theorem \ref{maintheorem}, for small viscosity coefficient $\eps\ll \eps_*$, there exists unique global remainder solution $(u,v,h,g)$ for the system \eqref{u_epssystem} with boundary conditions \eqref{u_epsboundary} in the domain $\Omega=[1,\infty)\times\mathbb{R}_+$ satisfying the estimate
\begin{equation}\label{theorem_estimate}
  \|(u,v,h,g)\|_{ \mathcal{Z}(\Omega)}\lesssim \eps^{\frac{1}{4}-\gamma-\kappa},
\end{equation}
where $\gamma, \kappa$ are arbitrary constants with $0\leq\gamma<\frac{1}{4},0<\kappa\ll1$ and $\gamma+\kappa<\frac{1}{4}$.
\end{theorem}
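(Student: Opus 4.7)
The plan is to build the global solution by combining the uniform family produced in Proposition \ref{existence_prop_pre} with a two-stage limiting procedure (first $\alpha\to 0$ on $\Omega^{\beta}$, then $\beta\to\infty$ on $\Omega$), and to obtain uniqueness by applying the linear \emph{a priori} estimate \eqref{priori_estimate} to the difference of two solutions. Throughout I will rely on the fact that $\mathcal{Z}$ controls not only weighted $L^{2}$ norms of $(u,v,h,g)$ up to three $x$-derivatives but also the uniform norm $\mathcal{U}$ from \eqref{uniform_norm}, which is exactly what is needed to pass quadratic nonlinearities to the limit.

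First, fix $\beta>0$ and consider the family $\{(u^{\alpha,\beta},v^{\alpha,\beta},h^{\alpha,\beta},g^{\alpha,\beta})\}_{\alpha>0}$ given by Proposition \ref{existence_prop_pre}, with uniform bound $\eps^{\frac{1}{4}-\gamma-\kappa}$ in $\mathcal{Z}(\Omega^{\beta})$. Because the penalization terms $\alpha W(\phi^{\alpha,\beta}),\alpha W(\psi^{\alpha,\beta})$ appear in \eqref{u_epssystem_auxiliary} with a fixed coefficient while the $\mathcal{Z}$-bound is $\alpha$-independent, the quantities $\alpha W(\phi^{\alpha,\beta})$ and $\alpha W(\psi^{\alpha,\beta})$ tend to zero in $H^{-2}$. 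Banach--Alaoglu yields a weak-$*$ limit $(u^{\beta},v^{\beta},h^{\beta},g^{\beta})\in\mathcal{Z}(\Omega^{\beta})$ along a subsequence; Rellich--Kondrachov on bounded sub-rectangles, combined with the uniform $\mathcal{U}$-control and a Cantor diagonal extraction, upgrades the weak convergence to strong convergence of $(u^{\alpha,\beta},v^{\alpha,\beta},h^{\alpha,\beta},g^{\alpha,\beta})$ and their first derivatives in $L^{2}_{loc}$. This is enough to identify each quadratic term in $(N^{u},N^{v},N^{h},N^{g})$ at the limit, so $(u^{\beta},v^{\beta},h^{\beta},g^{\beta})$ solves \eqref{u_epssystem} on $\Omega^{\beta}$ with boundary data \eqref{u_epsboundary_omega_N} and inherits the bound $\|\cdot\|_{\mathcal{Z}(\Omega^{\beta})}\lesssim\eps^{\frac{1}{4}-\gamma-\kappa}$.

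Next, extend each $(u^{\beta},v^{\beta},h^{\beta},g^{\beta})$ by zero across $\{y=\beta\}$ to all of $\Omega$. The resulting extensions still obey the $\mathcal{Z}$-bound with a constant independent of $\beta$, since the weights in \eqref{X1norm}--\eqref{uniform_norm} and in \eqref{Y2norm}--\eqref{Y3norm} are $\beta$-independent and the $y$-decay is encoded through the Hardy estimates used in Section \ref{sec6.1}. Extracting once more a weak-$*$ limit $(u,v,h,g)\in\mathcal{Z}(\Omega)$ as $\beta\to\infty$ and repeating the compact-embedding argument on $[1,R]\times[0,R]$ for every $R>0$, one verifies that the limit solves \eqref{u_epssystem}--\eqref{u_epsboundary} and satisfies \eqref{theorem_estimate}. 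For uniqueness, given two such solutions, the difference $(U,V,H,G)$ solves a linear problem of the form \eqref{u_eps_linearized} whose forcing $(f_{u},f_{v},f_{h},f_{g})$ is purely bilinear in the difference and one of the original solutions, so applying Theorem \ref{theorem_priori_estimate} with $(\bar u,\bar v,\bar h,\bar g)$ replaced by either solution and $(u_{i},v_{i},h_{i},g_{i})$ playing the role of the forcing produces $\|(U,V,H,G)\|_{\mathcal{Z}(\Omega)}^{2}\lesssim \eps^{\frac{n}{2}-\omega(N_{i})}\|(U,V,H,G)\|_{\mathcal{Z}(\Omega)}^{2}$, which forces $(U,V,H,G)\equiv 0$ under \eqref{n-choice}.

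The main obstacle will be the passage to the limit in the boundary conditions on $y\to\infty$ and $x\to\infty$ and in the nonlinear convective terms $\bar v\,u_{y}, \bar g\,h_{y}, \bar v\,h_{y}, \bar g\,u_{y}$ that appear (in linearized form) in \eqref{N^u_linear}. The difficulty is that these are products of a function with a derivative for which only weak compactness is available a priori; the resolution is to exploit the genuinely stronger control given by $\mathcal{U}$ for $(\bar v,\bar g)x^{1/2}$ in $L^{\infty}$ together with weighted $L^{2}$ control of $(u_{y},h_{y})$ from $X_{1}$, yielding $L^{1}_{x,y}$ convergence of the products on every bounded subdomain, exactly as in the treatment of \eqref{W1_3}. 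The boundary condition at infinity is recovered from the decay encoded by the weights $x^{\sigma}$ and $y$-weights in the $\mathcal{Z}$-norm combined with the Hardy inequalities used throughout Section \ref{sec6.1}. Once these compactness points are in place, the extraction of the limit, the uniqueness argument, and the final estimate \eqref{theorem_estimate} follow mechanically from Theorem \ref{theorem_priori_estimate} and Proposition \ref{existence_prop_pre}.
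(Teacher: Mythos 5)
Your existence argument follows the paper's route (Proposition 6.1 followed by successive limits $\alpha\to 0$, $\beta\to\infty$ using weak-$*$ compactness and the uniform $\mathcal{Z}$-bound to identify the nonlinear terms); the details you fill in about passing to the limit in the convective terms are plausible and consistent with the compactness argument the paper refers to.

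The uniqueness step, however, has a genuine gap. You propose to apply the linear a priori estimate (Theorem \ref{theorem_priori_estimate}) directly to the difference $(U,V,H,G)=(u_1-u_2,v_1-v_2,h_1-h_2,g_1-g_2)$ and conclude $\|(U,V,H,G)\|_{\mathcal{Z}}^2 \lesssim \eps^{\frac{n}{2}-\omega(N_i)}\|(U,V,H,G)\|_{\mathcal{Z}}^2$. But the difference of the nonlinear convective terms decomposes as, e.g., $v_1\p_y u_1 - v_2\p_y u_2 = v_1\p_y U + V\p_y u_2$, and the second piece $V\p_y u_2$ has precisely the "bad" structure $\bar v\,\bar u_y$ (with $\bar v=V$, $\bar u=u_2$) that Remark \ref{remark_linear_vu_y} singles out: testing it against $U$ produces $\|U x^{-1/2}\|_{L^2}$, which sits exactly at the borderline $\sigma'=\tfrac12$ where the Hardy inequality \eqref{Hardy_remainder_x} fails. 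The careful arrangement $\bar v\,\p_y u$ (derivative on the unknown, not on the barred field) in the linearization \eqref{N^u_linear} was made precisely to avoid this term, and the difference problem reintroduces it unavoidably. So the estimate cannot close in the original $\mathcal{Z}$-norm. The paper's proof resolves this by replacing $\mathcal{Z}$ with the weaker-weighted norm $\mathcal{Z}_b$ defined in \eqref{norm_b}--\eqref{Y3norm_b}, with all $x$-weights reduced by $x^{-b}$ for some $0<b<1$: testing against $U x^{-2b}$ then produces $\|U x^{-\frac12-b}\|_{L^2}$, for which \eqref{Hardy_remainder_x} applies with $\sigma'=\tfrac12-b<\tfrac12$. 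Your argument as written does not contain this device and therefore would not close; you would need to introduce an analogue of $\mathcal{Z}_b$ before concluding $\widehat{(u,v,h,g)}\equiv 0$.
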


\begin{proof}
The existence theory to system \eqref{u_epssystem} with boundary conditions \eqref{u_epsboundary} can be obtained by compactness methods, via taking limits $\alpha\rightarrow 0$ and $\beta\rightarrow\infty$ for solution $(u^{\alpha,\beta},v^{\alpha,\beta},h^{\alpha,\beta},g^{\alpha,\beta})$ constructed in Proposition \ref{existence_prop_pre}.

It remains to show the uniqueness. Suppose that $(u_1,v_1,h_1,g_1)$ and $(u_2,v_2,h_2,g_2)$ are both solutions to boundary value problem \eqref{u_epssystem}, \eqref{u_epsboundary}. Introduce the new unknowns
\begin{align}
  (\widehat{u},\widehat{v},\widehat{h},\widehat{g},\widehat{p})=(u_1-u_2,v_1-v_2,h_1-h_2,g_1-g_2,p_1-p_2),
\end{align}
which solve the following boundary value problem
\begin{align}\label{u_epssystem_uniqueness}
\begin{cases}
  -\Delta_\eps \widehat{u}+S_u(\widehat{u},\widehat{v},\widehat{h},\widehat{g})+\widehat{p}_x=\widehat{F}_u,\\
  -\Delta_\eps \widehat{v}+S_v(\widehat{u},\widehat{v},\widehat{h},\widehat{g})+\frac{\widehat{p}_y}{\eps }=\widehat{F}_v,\\
  -\Delta_\eps \widehat{h}+S_h(\widehat{u},\widehat{v},\widehat{h},\widehat{g})=\widehat{F}_h,\\
  -\Delta_\eps \widehat{g}+S_g(\widehat{u},\widehat{v},\widehat{h},\widehat{g})=\widehat{F}_g,\\
  \p_x \widehat{u}+\p_y \widehat{v}=\p_x \widehat{h}+\p_y \widehat{g}=0,\\
  (\widehat{u},\widehat{v},\widehat{h},\widehat{g})|_{y=0}=(\widehat{u},\widehat{v},\widehat{h},\widehat{g})|_{x=1}=(0,0,0,0),
\end{cases}
\end{align}
in which the source terms $\widehat{F}_u,\widehat{F}_v,\widehat{F}_h,\widehat{F}_g$ are denoted as
\begin{align}\label{F^u_uniqueness}
\begin{cases}
   \widehat{F}_u=&\eps^{\frac{n}{2}+\gamma}(u_1\p_x u_1-u_2\p_x u_2 +v_1\p_y u_1-v_2\p_y u_2\\
   &\qquad\quad -h_1\p_x h_1+h_2\p_x h_2 -g_1\p_y h_1 +g_2\p_y h_2),\\
   \widehat{F}_v=&\eps^{\frac{n}{2}+\gamma}(u_1\p_x v_1 -u_2\p_x v_2+v_1\p_y v_1 -v_2\p_y v_2 \\
   &\qquad\quad -h_1\p_x g_1 +g_1\p_y g_1-h_2\p_x g_2 +g_2\p_y g_2),\\
   \widehat{F}_h=&\eps^{\frac{n}{2}+\gamma}(u_1\p_x h_1-u_2\p_x h_2 +v_1\p_y h_1-v_2\p_y h_2\\
   &\qquad\quad -h_1\p_x u_1+h_2\p_x u_2-g_1\p_y u_1+g_2\p_y u_2),\\
   \widehat{F}_g=&\eps^{\frac{n}{2}+\gamma}(u_1\p_x g_1-u_2\p_x g_2 +v_1\p_y g_1-v_2\p_y g_2\\
   &\qquad\quad -h_1\p_x v_1+h_2\p_x v_2-g_1\p_y v_1+g_2\p_y v_2).
\end{cases}
\end{align}

Observe that the structure of system \eqref{u_epssystem_uniqueness} is similar to that of \eqref{u_epssystem}, thus the analysis in Subsection \ref{sec6.1} can be also applied here, we do not repeat the detailed arguments for simplicity.

One should pay attention to the only difference that a weaker weight $x^{-b}$ is adopted to $(\widehat{u},\widehat{v},\widehat{h},\widehat{g})$ for closing the estimate. Here, the parameter $b$ is chosen to satisfy $\delta\ll b<1$. It ensures that the Hardy-type inequality \eqref{Hardy_remainder_x} is available for controlling the different terms $v_1\p_y u_1-v_2\p_y u_2,-g_1\p_y h_1 +g_2\p_y h_2$ in $\widehat{F}_u$ and $v_1\p_y h_1-v_2\p_y h_2,-g_1\p_y u_1+g_2\p_y u_2 $ in $\widehat{F}_h$.

Thus, corresponding to the $\mathcal{Z}$ norm, we define the following $\mathcal{Z}_b$ norm for $(\widehat{u},\widehat{v},\widehat{h},\widehat{g})$
\begin{align}\label{norm_b}
\begin{split}
  \|(\widehat{u},\widehat{v},\widehat{h},\widehat{g})\|_{ \mathcal{Z}_b}
  :=&\Vert (\widehat{u},\widehat{v},\widehat{h},\widehat{g})\Vert_{X_{1,b}\cap X_{2,b}\cap X_{3,b}}
  +\eps^{N_1}\Vert (\widehat{u},\widehat{v},\widehat{h},\widehat{g})\Vert_{Y_{1,b}}\\
  &+\eps^{N_2}\Vert (\widehat{u},\widehat{v},\widehat{h},\widehat{g})\Vert_{Y_{2,b}}
  +\|(\widehat{u},\widehat{v},\widehat{h},\widehat{g})\|_{ \mathcal{U}_b},
\end{split}
\end{align}
where
\begin{align}
\label{X1norm_b}
  \|(\widehat{u},\widehat{v},\widehat{h},\widehat{g})\|_{X_{1,b}}^{2}
  :=&\left\|\p_y(\widehat u,\widehat h)x^{-b}\right\|_{L^{2}}^{2}+\left\|\sqrt{\eps}\p_x(\widehat v,\widehat g) x^{\frac{1}{2}-b}\right\|_{L^{2}}^{2}+\left\|\p_y(\widehat v,\widehat g) x^{\frac{1}{2}-b}\right\|_{L^{2}}^{2},\\
\label{X2norm_b}
  \|(\widehat{u},\widehat{v},\widehat{h},\widehat{g})\|_{X_{2,b}}^{2}
  :=&\left\|\p_{x y}(\widehat u,\widehat h) \cdot \rho_{2} x^{1-b}\right\|_{L^{2}}^{2}+\left\|\sqrt{\eps} \p_{x x}(\widehat v,\widehat g)\cdot\left(\rho_{2} x\right)^{\frac{3}{2}-b}\right\|_{L^{2}}^{2}\nonumber\\
  &+\left\|\p_{x y}(\widehat v,\widehat g)\cdot\left(\rho_{2} x\right)^{\frac{3}{2}-b}\right\|_{L^{2}}^{2},\\
\label{X3norm_b}
  \|(\widehat{u},\widehat{v},\widehat{h},\widehat{g})\|_{X_{3,b}}^{2}
  :=&\left\|\p_{x x y}(\widehat u,\widehat h) \cdot\left(\rho_{3} x\right)^{2-b}\right\|_{L^{2}}^{2}
  +\left\|\sqrt{\eps} \p_{x x x}(\widehat v,\widehat g)\cdot\left(\rho_{3} x\right)^{\frac{5}{2}-b}\right\|_{L^{2}}^{2}\nonumber\\
  &+\left\|\p_{x x y}(\widehat v,\widehat g) \cdot\left(\rho_{3} x\right)^{\frac{5}{2}-b}\right\|_{L^{2}}^{2},\\
\label{Y2norm_b}
  \|(\widehat{u},\widehat{v},\widehat{h},\widehat{g})\|_{Y_{1,b}}^{2}
  :=&\left\|\p_{x y}(\widehat u,\widehat h)\cdot x^{1-b}\right\|_{L^{2}}^{2}
  +\left\|\sqrt{\eps}\p_{x x}(\widehat v,\widehat g)\cdot x^{\frac{3}{2}-b}\right\|_{L^{2}}^{2}\nonumber\\
  &+\left\|\p_{x y}(\widehat v,\widehat g)x^{\frac{3}{2}-b}\right\|_{L^{2}}^{2}+\left\|\p_{y y}(\widehat u,\widehat h)\right\|_{L^{2}(x \leq 2000)}^{2},\\
\label{Y3norm_b}
  \|(\widehat{u},\widehat{v},\widehat{h},\widehat{g})\|_{Y_{2,b}}^{2}
  :=&\left\|\p_{x x y}(\widehat u,\widehat h) \cdot \zeta_{3} x^{2-b}\right\|_{L^{2}}^{2}
  +\left\|\sqrt{\eps}\p_{x x x}(\widehat v,\widehat g) \cdot \zeta_{3} x^{\frac{5}{2}-b}\right\|_{L^{2}}^{2}\nonumber\\
  &+\left\|\p_{x x y}(\widehat v,\widehat g) \cdot \zeta_{3} x^{\frac{5}{2}-b}\right\|_{L^{2}}^{2},\\
\label{}
  \|(\widehat{u},\widehat{v},\widehat{h},\widehat{g})\|_{ \mathcal{U}_b}
  :=&\eps^{N_3}\Vert (u,h)x^{\frac{1}{4}-b}
  +\sqrt\eps (\widehat v,\widehat g)x^{\frac{1}{2}-b}\Vert_{L^\infty}\nonumber\\
  &+\eps^{N_4}\sup_{x\geq 20}\Vert (\widehat u_x,\widehat h_x)x^{\frac{5}{4}-b}+\sqrt\eps(\widehat v_x,\widehat g_x)x^{\frac{3}{2}-b}\Vert_{L^\infty}\nonumber\\
  &+\eps^{N_5}\sup_{x\geq 20}\Vert (\widehat u_y,\widehat h_y)x^{\frac{1}{2}-b}\Vert_{L_y^2}\nonumber\\
  &+\eps^{N_6}\left[\int_{20}^\infty x^{4-b}\Vert \sqrt\eps (\widehat v_{xx},\widehat g_{xx})\Vert_{L_y^\infty}^2 {\rm d}x\right]^{\frac{1}{2}}.
\end{align}

Therefore, one obtains
\begin{equation}\label{uniqueness_esitmae}
  \Vert (\widehat{u},\widehat{v},\widehat{h},\widehat{g})\Vert_{\mathcal{Z}_b}^2
  \lesssim C(b)\eps^{\frac{n}{2}+\gamma-\omega(N_i)}\Vert (\widehat{u},\widehat{v},\widehat{h},\widehat{g})\Vert_{\mathcal{Z}_b}^2,
\end{equation}
which yields that $\Vert (\widehat{u},\widehat{v},\widehat{h},\widehat{g})\Vert_{\mathcal{Z}_b}=0$. Combining the boundary conditions \eqref{u_epssystem_uniqueness}$_{6}$, we get $(\widehat{u},\widehat{v},\widehat{h},\widehat{g})=(0,0,0,0)$.

The proof is completed.
\end{proof}
Now we turn to prove Theorem \ref{maintheorem}.
\begin{proof}[Proof of Theorem \ref{maintheorem}]
Let $n$ be selected as \eqref{n-choice}. Thanks to Theorem \ref{existencetheorem}, one can deduce that the expansion \eqref{expansion} is valid globally on $\Omega$. In addition, the estimate \eqref{maintheorem_estimate}
 is the consequence of \eqref{theorem_estimate} in Theorem \ref{existencetheorem}. Therefore Theorem \ref{maintheorem} follows.
\end{proof}

\appendix
\section{The Hardy inequality}
The Hardy-type inequalities are useful in our analysis, which are the key ingredients to obtain the global-in-$x$ estimate.
\begin{lemma}\label{Hardy-type}
For $p>1$ and $\alpha>p-1$, for any measurable non-negative function $f$, it holds that
\begin{align}\label{Hardy1}
\begin{split}
  \int_0^\infty \left(\frac{1}{y}\int_y^\infty f(t){\rm d}t\right)^p y^\alpha {\rm d}y
  \leq \left(\frac{p}{\alpha+1-p}\right)^p\int_0^\infty f(y)^p y^\alpha {\rm d}y.
\end{split}
\end{align}
\end{lemma}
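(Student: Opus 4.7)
The plan is to prove this by the classical method: introduce the primitive $F(y):=\int_y^\infty f(t)\,dt$, rewrite the left side as $\int_0^\infty F(y)^p y^{\alpha-p}\,dy$, integrate by parts, and then apply H\"older's inequality. Throughout one may first assume, by a standard truncation/monotone convergence argument, that $f$ is nonnegative, compactly supported in $(0,\infty)$, and smooth, so that all integrals are finite and boundary terms can be handled rigorously; the general case follows by passing to the limit on both sides.

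First I would introduce $F(y)=\int_y^\infty f(t)\,dt$, noting $F'(y)=-f(y)\le 0$ and $F\ge 0$. Since $\alpha-p>-1$, integration by parts yields
\begin{equation*}
\int_0^\infty F(y)^p y^{\alpha-p}\,dy
=\frac{1}{\alpha-p+1}\Big[F(y)^p y^{\alpha-p+1}\Big]_0^\infty
+\frac{p}{\alpha-p+1}\int_0^\infty F(y)^{p-1} f(y)\, y^{\alpha-p+1}\,dy.
\end{equation*}
The key step is that the boundary contributions vanish: at $y=\infty$ the compact support of $f$ makes $F$ eventually zero, and at $y=0^+$ one uses $F(y)\le F(0)<\infty$ together with $\alpha-p+1>0$, so $F(y)^p y^{\alpha-p+1}\to 0$. (For the limiting argument, one truncates $f$ to $f_n=f\chi_{[1/n,n]}$ and later invokes monotone convergence.)

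Next I would apply H\"older's inequality to the right-hand integral with conjugate exponents $p/(p-1)$ and $p$, splitting the weight as
\begin{equation*}
F^{p-1} f\, y^{\alpha-p+1}
=\bigl[F^{p-1} y^{(\alpha-p)(p-1)/p}\bigr]\cdot\bigl[f\, y^{(\alpha-p)/p+1}\bigr].
\end{equation*}
Raising the first bracket to the power $p/(p-1)$ reproduces $F^p y^{\alpha-p}$, and raising the second to the power $p$ gives $f^p y^\alpha$. Hence
\begin{equation*}
\int_0^\infty F^{p-1} f\, y^{\alpha-p+1}\,dy
\le \Big(\int_0^\infty F^p y^{\alpha-p}\,dy\Big)^{(p-1)/p}
\Big(\int_0^\infty f^p y^\alpha\,dy\Big)^{1/p}.
\end{equation*}
Inserting this into the previous identity and dividing by the common factor $\bigl(\int F^p y^{\alpha-p}\bigr)^{(p-1)/p}$ (finite by the truncation reduction) yields
\begin{equation*}
\Big(\int_0^\infty F^p y^{\alpha-p}\,dy\Big)^{1/p}
\le \frac{p}{\alpha-p+1}\Big(\int_0^\infty f^p y^\alpha\,dy\Big)^{1/p},
\end{equation*}
and taking $p$-th powers gives \eqref{Hardy1}. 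Finally, removing the truncation via monotone convergence (both sides are monotone in $f\ge 0$) concludes the proof.

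The main obstacle is largely bookkeeping rather than a genuine analytic difficulty: one has to handle the boundary term at $y=0^+$ carefully (this is where the hypothesis $\alpha>p-1$ enters in a sharp way) and justify the division step by ensuring a priori finiteness of $\int F^p y^{\alpha-p}\,dy$, which is why the truncation step is essential; without it the chain of inequalities would be formally valid but would require independent verification that the intermediate quantities are not infinite.
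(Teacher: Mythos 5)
Your argument is correct. The paper itself does not give a proof of this lemma; it simply refers the reader to the monograph of Kufner, Maligranda and Persson and moves on, so there is no in-paper argument to compare against. What you have written is the standard textbook proof: set $F(y)=\int_y^\infty f(t)\,dt$, integrate $\int F^p y^{\alpha-p}\,dy$ by parts (the boundary term at $0^+$ dies precisely because $\alpha-p+1>0$, and the one at $\infty$ because of compact support), apply H\"older with the weight split you indicate, divide through by the $(p-1)/p$ power of the left-hand integral, and finally remove the truncation by monotone convergence. Each step is sound, and you are right that the truncation step is not cosmetic: it is what guarantees $\int F^p y^{\alpha-p}\,dy<\infty$ so that the division is legitimate and the boundary terms genuinely vanish. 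Two small points worth tightening but that do not affect correctness: you do not actually need smoothness of $f$, only local integrability (absolute continuity of $F$ on compacts suffices for the integration by parts, and this follows once you reduce to the case $\int f^p y^\alpha\,dy<\infty$ and truncate to $[1/n,n]$); and in the division step one should note that if $\int F_n^p y^{\alpha-p}\,dy=0$ the inequality is trivial, otherwise one may divide. With those remarks, this is a complete and self-contained proof of the cited inequality, matching the form the paper invokes with the sharp constant $\bigl(\frac{p}{\alpha+1-p}\bigr)^p$.
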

\begin{lemma}\label{Hardy-type-0}
For $p>1$ and $\alpha<p-1$, for any measurable non-negative function $f$, it holds that
\begin{align}\label{Hardy2}
\begin{split}
  \int_0^\infty \left(\frac{1}{y}\int_0^y f(t){\rm d}t\right)^p y^\alpha {\rm d}y
  \leq \left(\frac{p}{p-\alpha-1}\right)^p\int_0^\infty f(y)^p y^\alpha {\rm d}y.
\end{split}
\end{align}
\end{lemma}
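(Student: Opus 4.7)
The plan is to establish this as the classical weighted Hardy inequality via integration by parts against an explicit antiderivative of the weight, followed by H\"older's inequality. By monotone convergence I would first reduce to the case where $f$ is smooth and compactly supported in $(0,\infty)$, for which all integrals below are manifestly finite and boundary contributions can be computed pointwise; the general measurable non-negative case then follows by taking $f_n := \min(f,n)\chi_{[1/n,n]}$ and letting $n \to \infty$ on both sides.

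First I would set $F(y) := \int_0^y f(t)\,dt$, so the left-hand side equals $\int_0^\infty F(y)^p\,y^{\alpha-p}\,dy$, and introduce $\beta := p - 1 - \alpha > 0$ so that $y^{\alpha - p} = y^{-\beta - 1} = \frac{d}{dy}\bigl(-y^{-\beta}/\beta\bigr)$. Integration by parts then gives
\begin{equation*}
\int_0^\infty F(y)^p\,y^{\alpha-p}\,dy = \Bigl[-\tfrac{1}{\beta}F(y)^p\,y^{-\beta}\Bigr]_0^\infty + \frac{p}{\beta}\int_0^\infty F(y)^{p-1} f(y)\,y^{-\beta}\,dy.
\end{equation*}
The boundary contribution vanishes at $y = 0^+$ because $F(y) = O(y)$, yielding $F^p y^{-\beta} \lesssim y^{p-\beta} = y^{\alpha+1} \to 0$ (using $\alpha > -1$, which is implicit in the hypothesis $\alpha < p - 1$ together with $p > 1$), and at $y = \infty$ because $F$ is eventually constant while $y^{-\beta} \to 0$.

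Next I would apply H\"older's inequality with conjugate exponents $p/(p-1)$ and $p$ to the remaining integral, splitting $F^{p-1}f\,y^{-\beta}$ as $\bigl[F\,y^{(\alpha-p)/p}\bigr]^{p-1} \cdot \bigl[f\,y^{\alpha/p}\bigr]$; the exponents match since $(p-1)(\alpha-p)/p + \alpha/p = \alpha - p + 1 = -\beta$. Writing $A$ for the LHS and $B$ for the RHS of \eqref{Hardy2} divided by $(p/(p-1-\alpha))^p$, this yields
\begin{equation*}
A \leq \frac{p}{\beta}\,A^{(p-1)/p}\,B^{1/p},
\end{equation*}
so after dividing by $A^{(p-1)/p}$ (finite in the approximation) and raising to the $p$-th power one recovers $A \leq (p/\beta)^p\,B$ with the sharp constant $(p/(p-1-\alpha))^p$. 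The only subtle point is the boundary analysis at $y=0$, which is why the condition $\alpha < p-1$ cannot be dropped; this is the hypothesis dual to the one used in Lemma \ref{Hardy-type}, where the opposite endpoint is the delicate one. Since everything is elementary once the approximation is in place, there is no serious obstacle beyond bookkeeping.
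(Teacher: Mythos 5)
Your argument is the standard integration-by-parts plus H\"older proof of the one-dimensional weighted Hardy inequality; the paper itself offers no proof at all, writing only that the statement ``can be found in \cite{hardy},'' so there is nothing in the paper's own argument to compare against, and the route you take is the expected one.

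One slip deserves a correction. You justify the vanishing of the boundary term at $y=0^+$ by $F(y)=O(y)$ and $F^p y^{-\beta}\lesssim y^{\alpha+1}\to 0$, ``using $\alpha>-1$, which is implicit in the hypothesis $\alpha<p-1$ together with $p>1$.'' That implication is false: $p=2$, $\alpha=-5$ satisfies both hypotheses but not $\alpha>-1$, and indeed the lemma is asserted for all $\alpha<p-1$ with no lower bound on $\alpha$. The rescue is already built into your approximation: with $f_n=\min(f,n)\chi_{[1/n,n]}$ the primitive $F_n$ vanishes identically on $(0,1/n)$, so the boundary term $-\tfrac{1}{\beta}F_n(y)^p y^{-\beta}$ is exactly zero near $y=0$ regardless of the sign of $\alpha$, and the $O(y)$ argument is not needed at all. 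With that justification substituted for the parenthetical, the integration by parts, the exponent bookkeeping $(p-1)(\alpha-p)/p+\alpha/p=-\beta$, the H\"older step, and the monotone-convergence passage to general nonnegative measurable $f$ are all correct and yield the stated sharp constant $(p/(p-1-\alpha))^p$.
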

The proof of the above lemmas can be found in \cite{hardy} and we omit the proof here.

\begin{lemma}\label{Hardy-type-1}
It holds that

(i) \textit{Weighted Hardy inequality} in $x-$direction with small constant $\sigma'<\frac{1}{2}$:
\begin{align}\label{Hardy_remainder_x}
\begin{split}
  \left\|\frac{f}{x^{1-\sigma'}}\right\|_{L^{2}}
  \leq\left\|\left\| \frac{f}{(x-1)^{1-\sigma'}}\right\|_{L_{x}^{2}}\right\|_{L_{y}^{2}}
  \lesssim\|\| f_{x}(x-1)^{\sigma'}\left\|_{L_{x}^{2}}\right\|_{L_{y}^{2}}
  \lesssim\left\|f_{x} x^{\sigma'}\right\|_{L^{2}}
\end{split}
\end{align}
for any measurable non-negative function with condition $f|_{x=1}=0$;

(ii) \textit{Hardy inequality} in $y-$direction with arbitrary constant $\alpha$:
\begin{align}\label{Hardy_remainder_y}
\begin{split}
  \left\|\frac{f}{y} x^{\alpha}\right\|_{L^{2}}
  =\left\|\left\| \frac{f}{y}\right\|_{L_{y}^{2}} x^{\alpha}\right\|_{L_{x}^{2}}\leq\left\|f_{y} x^{\alpha}\right\|_{L^{2}}
\end{split}
\end{align}
for any measurable non-negative function $f$ with $f|_{y=0}=0$.
\end{lemma}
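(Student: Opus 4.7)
The two parts are essentially corollaries of the classical Hardy inequality stated in Lemma \ref{Hardy-type-0}, once we account for the location of the boundary condition and the integration order.

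\textbf{Part (i).} The inequality consists of three links, and the plan is to handle each separately. The first link $\|f/x^{1-\sigma'}\|_{L^2} \leq \|f/(x-1)^{1-\sigma'}\|_{L^2_x L^2_y}$ is purely pointwise: since $x \geq x-1 > 0$ on the domain $x \geq 1$ and $1-\sigma' > 0$, we have $(x-1)^{1-\sigma'} \leq x^{1-\sigma'}$, so $1/x^{1-\sigma'} \leq 1/(x-1)^{1-\sigma'}$, and one simply notes the two $L^2$ norms over $\Omega$ coincide with the iterated $L^2_xL^2_y$ norm by Fubini. For the middle link, introduce the shifted variable $\tilde x = x-1 \in [0,\infty)$; the boundary condition $f|_{x=1}=0$ becomes $f|_{\tilde x=0}=0$ and yields the representation $f(\tilde x, y) = \int_0^{\tilde x} f_{\tilde x}(t,y)\,dt$. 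For each fixed $y$, apply Lemma \ref{Hardy-type-0} with $p=2$ and $\alpha = 2\sigma'$: the admissibility condition $\alpha < p-1$ becomes exactly $\sigma' < 1/2$, and the lemma gives
\begin{equation*}
\int_0^\infty \left(\frac{1}{\tilde x}\int_0^{\tilde x} f_{\tilde x}(t,y)\,dt\right)^2 \tilde x^{2\sigma'}\,d\tilde x \;\leq\; \Big(\tfrac{2}{1-2\sigma'}\Big)^2 \int_0^\infty f_{\tilde x}(\tilde x,y)^2\,\tilde x^{2\sigma'}\,d\tilde x,
\end{equation*}
which is precisely $\|f/(x-1)^{1-\sigma'}\|_{L^2_x}^2 \lesssim \|f_x(x-1)^{\sigma'}\|_{L^2_x}^2$ in the original variable. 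Integrating in $y$ and taking square roots then produces the middle link. The last link $\|f_x(x-1)^{\sigma'}\|_{L^2} \lesssim \|f_x x^{\sigma'}\|_{L^2}$ again uses $x-1\leq x$ with $\sigma'>0$ pointwise.

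\textbf{Part (ii).} This is Hardy in the $y$-direction with an $x$-dependent weight. Since $f|_{y=0}=0$, write $f(x,y) = \int_0^y f_y(x,t)\,dt$, and apply Lemma \ref{Hardy-type-0} with $p=2$ and $\alpha=0$ pointwise in $x$ to obtain $\|f(x,\cdot)/y\|_{L^2_y} \lesssim \|f_y(x,\cdot)\|_{L^2_y}$. The weight $x^\alpha$ is inert with respect to the $y$-integration, so multiplying both sides by $x^\alpha$ and taking the $L^2_x$ norm (using Fubini to identify the resulting iterated norm with the $L^2(\Omega)$ norm as indicated in the statement) completes the proof.

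Since both parts are direct applications of already-stated inequalities combined with elementary pointwise comparisons and Fubini, there is no substantive obstacle; the only thing to be careful about is the sharpness of the exponent condition $\sigma' < 1/2$, which is forced by the constraint $\alpha < p-1$ in Lemma \ref{Hardy-type-0} when one requires exactly the weight $\tilde x^{2\sigma'}$ on the right-hand side.
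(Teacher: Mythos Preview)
Your proposal is correct and matches the paper's own treatment: the paper simply states that the lemma ``follows directly from Lemma~\ref{Hardy-type} and~\ref{Hardy-type-0}, and we omit the proof here,'' and your argument supplies exactly those details via the shift $\tilde x = x-1$, the pointwise comparisons $x-1\le x$, and Fubini. The only cosmetic remark is that you invoke only Lemma~\ref{Hardy-type-0} (the $\int_0^y$ version), which is indeed sufficient here since both boundary conditions are at the lower endpoint; the paper's reference to Lemma~\ref{Hardy-type} as well is not strictly needed for this particular statement.
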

This lemma follows directly from Lemma \ref{Hardy-type} and \ref{Hardy-type-0}, and we omit the proof here.

\smallskip
{\bf Acknowledgment.}

Ding's research is supported by the Key Project of Natural Science Foundation of China (No. 12131010), the Natural Science Foundation of China (No. 11771155, 11871005) and by the Natural Science Foundation of Guangdong Province (No. 2021A1515010249, 2021A1515010303). Ji's research is supported by the Natural Science Foundation of Guangdong Province (No. 2021A1515010249). Lin's research is partially supported by the Natural Science Foundation of China (No. 11971009).



\bigskip

\end{document}